\newcommand{\spec}{\mathrm{Spec}\,}
\newcommand{\holim}{\mathrm{holim}}
\newcommand{\einf}{\mathrm{CAlg}}
\renewcommand{\mod}{\mathrm{Mod}}
\newcommand{\rf}{\mathrm{Refine}}
\newcommand{\thref}[1]{\Cref{#1}}
\renewcommand{\sp}{\mathrm{Sp}}
\newcommand{\otop}{\mathcal{O}^{\mathrm{top}}}
\newcommand{\mell}{M_{{ell}}}
\newcommand{\mellc}{M_{\overline{ell}}}
\renewcommand{\hom}{\mathrm{Hom}}
\newtheorem{lemma}{Lemma}[section]
\newtheorem{corollary}[lemma]{Corollary}
\newtheorem{theorem}[lemma]{Theorem}
\newtheorem*{thm}{Theorem}
\newtheorem*{mthm}{Main Theorem}
\newtheorem{proposition}[lemma]{Proposition}
\newcommand{\XX}{\mathfrak{X}}
\newcommand{\YY}{\mathfrak{Y}}
\newcommand{\CC}{\mathcal{C}}
\newcommand{\Z}{\mathbb{Z}}
\newcommand{\Q}{\mathbb{Q}}
\newcommand{\gx}{\Gamma(\mathfrak{X}, \otop_\XX)}
\newcommand{\FF}{\mathcal{F}}
\newcommand{\LL}{\mathcal{L}}
\newcommand{\II}{\mathcal{I}}
\newcommand{\tensor}{\otimes}
\newcommand{\hocolim}{\mathrm{hocolim}}
\DeclareMathOperator{\colim}{colim}
\DeclareMathOperator{\Spec}{Spec\,}
\DeclareMathOperator{\id}{id}
\DeclareMathOperator{\Map}{Map}
\DeclareMathOperator{\Aut}{Aut}
\DeclareMathOperator{\Tot}{Tot}
\DeclareMathOperator{\pr}{pr}
\Crefname{lemma}{Lemma}{Lemma}
\Crefname{theorem}{Theorem}{Theorem}
\Crefname{proposition}{Proposition}{Proposition}
\Crefname{example}{Example}{Example}
\begin{document}

\setlength{\parskip}{0.5mm}
\renewcommand{\rightrightarrows}{\begin{smallmatrix} \to \\
\to \end{smallmatrix} }
\newcommand{\triplearrows}{\begin{smallmatrix} \to \\ \to \\ 
\to \end{smallmatrix} }

\newcommand{\sh}{\mathbf{Sh}}
 
\renewcommand{\ltimes}{\stackrel{\mathbb{L}}{\otimes}}
\newcommand{\psh}{\mathbf{PSh}}
\theoremstyle{definition}
\newtheorem{definition}[lemma]{Definition}

\newtheorem{cons}[lemma]{Construction}
\newcommand{\cd}{\mathrm{cd}}
\newcommand{\OO}{\widetilde{\mathcal{O}}}
\newcommand{\A}{\mathbb{A}}
\newcommand{\F}{\mathcal{F}}
\renewcommand{\P}{\mathbb{P}}
\newcommand{\bl}{\bullet}
\newcommand{\Tmf}{\mathrm{Tmf}}
\newcommand{\TMF}{\mathrm{TMF}}
\renewcommand{\ell}{\mathrm{Ell}}
\newcommand{\tmf}{\mathrm{tmf}}
 
 \theoremstyle{definition}
\newtheorem{remark}[lemma]{Remark}
\newcommand{\rng}{\mathrm{Ring}}
\newcommand{\gpd}{\mathbf{Gpd}}
\newcommand{\ei}{\mathbb{E}_1}
\renewcommand{\A}{\mathcal{A}_*}

\newcommand{\qcoha}{\qcoh^{\mathrm{ab}}}
\newtheorem{example}[lemma]{Example}
\newtheorem*{exm}{Example}

\title{Affineness and chromatic homotopy theory}
\date{\today}
\author{Akhil Mathew and Lennart Meier}
\email{amathew@math.harvard.edu; lmeier@math.uni-bonn.de} 
\address{Department of Mathematics, Harvard University, Cambridge, MA 02138; 
Mathematisches Institut, Universit{\"a}t Bonn, 
53115 Bonn, Germany}

\maketitle

\newcommand{\qcoh}{\mathrm{QCoh}}
\newcommand{\md}{\mathrm{Mod}}

\begin{abstract}
Given an algebraic stack $X$, one may compare the derived category of
quasi-coherent sheaves on $X$ with the category of dg-modules over the dg-ring of
functions on $X$. 
We study the analogous question in stable homotopy theory, for derived stacks
that arise via realizations of diagrams
of Landweber-exact homology theories. We identify a condition (quasi-affineness of
the map to the moduli stack of formal groups) under which the two categories
are equivalent, and 
study applications to topological modular forms. In particular, we provide new
examples of Galois extensions of ring spectra and vanishing results for Tate spectra. 
\end{abstract}

\tableofcontents
\setlength{\parskip}{0.8mm}

\section{Introduction}

Let $X$ be a scheme (or, more generally, an algebraic stack). Then one has a
natural abelian category $\qcoha(X)$ of
\emph{quasi-coherent sheaves} on $X$, which comes with a left exact functor 
\[ \Gamma\colon \qcoha(X) \to \md^{\mathrm{ab}}( R),  \]
into the abelian category $\md^{\mathrm{ab}}(R)$ of $R$-modules,
where $R = \Gamma(X, \mathcal{O}_X)$ is the ring of regular functions on $X$. 
When $X$ is affine, so that $X = \spec R$, the functor $\Gamma$ is an equivalence of
categories (and is in particular exact). 
Conversely, a classical result of Serre implies that if $X$ is a quasi-compact
scheme, then the converse holds: if $\Gamma$ is an
equivalence, then $X \simeq \spec \Gamma(X, \mathcal{O}_X)$, so that $X$ can be
recovered as the spectrum of the ring of global sections on $X$, which in turn
is determined by the category of $\Gamma(X, \mathcal{O}_X)$-modules.

Namely, one has: 

\begin{thm}[Serre]
Let $X$ be a quasi-compact scheme. Suppose that 
the higher cohomologies $\{H^i(X, \mathcal{F}) \}_{i \geq 1}$ vanish, for every
quasi-coherent sheaf $\mathcal{F} \in \qcoha(X)$. Then $X$ is affine. 
\end{thm}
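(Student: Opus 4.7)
The plan is to establish that the canonical morphism $\varphi\colon X \to \Spec R$, where $R = \Gamma(X, \mathcal{O}_X)$, is an isomorphism, by covering $X$ with finitely many distinguished affine opens $X_f := \{y \in X : f(y) \neq 0\}$ associated to elements $f \in R$ that together generate the unit ideal. The cohomological vanishing hypothesis will enter at two points: to produce enough ``bump functions'' in $R$ to separate points of $X$ from arbitrary closed sets, and then to extract a partition of unity from the resulting geometric cover.

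For the first point, I would fix $x \in X$, choose an affine open neighborhood $U \ni x$, and consider $Z := X \setminus U$ with its reduced closed subscheme structure together with $Z' := Z \cup \{x\}$, with quasi-coherent ideal sheaves $\mathcal{I}_{Z'} \subseteq \mathcal{I}_Z$. The quotient $\mathcal{I}_Z/\mathcal{I}_{Z'}$ is a skyscraper at $x$ with stalk $\kappa(x)$, and applying the hypothesis $H^1(X, \mathcal{I}_{Z'}) = 0$ to the short exact sequence
\[
0 \to \mathcal{I}_{Z'} \to \mathcal{I}_Z \to \mathcal{I}_Z/\mathcal{I}_{Z'} \to 0
\]
would yield a surjection $\Gamma(X, \mathcal{I}_Z) \twoheadrightarrow \kappa(x)$. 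Picking $f \in \Gamma(X, \mathcal{I}_Z) \subseteq R$ with nonzero image gives $x \in X_f$ and $X_f \subseteq U$ (since $f$ vanishes on $Z$), making $X_f$ a distinguished affine open of $U$.

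Next, covering $X$ by such opens and invoking quasi-compactness, I would extract a finite subcover $X = X_{f_1} \cup \cdots \cup X_{f_n}$. The map of quasi-coherent sheaves $\mathcal{O}_X^n \to \mathcal{O}_X$, $(g_i) \mapsto \sum g_i f_i$, is surjective precisely because the $X_{f_i}$ cover $X$; its kernel $\mathcal{K}$ is quasi-coherent, so $H^1(X, \mathcal{K}) = 0$ would force surjectivity of the induced map $R^n \to R$, producing $g_i \in R$ with $\sum g_i f_i = 1$. Therefore $\Spec R = \bigcup D(f_i)$, and $\varphi$ restricts to morphisms of affine schemes $X_{f_i} \to D(f_i)$. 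These restrictions are isomorphisms once one identifies $\Gamma(X_{f_i}, \mathcal{O}_X) \cong R_{f_i}$, which follows from the standard fact that taking global sections of quasi-coherent sheaves commutes with localization at $f_i$ on a qcqs scheme (itself another incarnation of the same cohomological circle of ideas, via the filtered colimit along multiplication by $f_i$). Hence $\varphi$ is an isomorphism locally and therefore globally.

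The hard part will be the construction in the second paragraph: extracting a globally defined regular function that behaves like a bump function at $x$, using as the only tool the vanishing of $H^1$. The rest of the argument is bookkeeping. A technical wrinkle I anticipate having to address is that reduced induced subscheme structures are best behaved when $X$ is Noetherian; in full generality I would likely either impose that hypothesis (as is classical) or replace $\mathcal{I}_{Z'}$ by a suitable finite-type quasi-coherent ideal sheaf cutting out the same support, so as to keep the short exact sequence and its cohomology under control.
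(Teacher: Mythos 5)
The paper does not prove this theorem; it cites the Stacks Project [Tag 01XF] and moves on, so there is no internal argument to compare against. Your proposal is the standard textbook proof of Serre's criterion, and the overall structure — bump functions via $H^1$-vanishing to produce affine distinguished opens $X_f$, then a second application of $H^1$-vanishing to a Koszul-type kernel to obtain a partition of unity $\sum g_i f_i = 1$, then $X_{f_i} = \varphi^{-1}(D(f_i))$ — is the right one.

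There is one genuine gap, which is different from the wrinkle you anticipate at the end. You write $Z' := Z \cup \{x\}$, give it a reduced closed subscheme structure, and compute $\mathcal{I}_Z/\mathcal{I}_{Z'}$ to be a skyscraper with stalk $\kappa(x)$. Both steps require $\{x\}$ to be closed in $X$. For a non-closed $x$ the set $Z \cup \{x\}$ is not closed, so $\mathcal{I}_{Z'}$ is undefined; and if you replace $\{x\}$ by its closure, the quotient is no longer a skyscraper, so the surjection $\Gamma(X,\mathcal{I}_Z) \twoheadrightarrow \Gamma(X,\mathcal{I}_Z/\mathcal{I}_{Z'})$ no longer hits $\kappa(x)$. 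Thus your construction produces affine opens $X_f$ around each \emph{closed} point only, and you need one more step to upgrade this to a cover of $X$: in a quasi-compact scheme every nonempty closed subset contains a closed point of $X$ (a Zorn's lemma argument), so the complement of $\bigcup_f X_f$ is a closed set containing no closed point, hence empty. By contrast, the Noetherian concern you raise is not an issue — the reduced induced structure on a closed subset exists on an arbitrary scheme and has a quasi-coherent ideal sheaf, so the short exact sequence and the application of the $H^1$-vanishing hypothesis are both legitimate in full generality. One further small point: you invoke $\Gamma(X_f,\mathcal{F}) \cong \Gamma(X,\mathcal{F})_f$ "on a qcqs scheme" while the hypothesis is only quasi-compactness; this is fine, because once you have the finite affine cover $\{X_{f_i}\}$ with affine pairwise intersections $X_{f_i} \cap X_{f_j} = (X_{f_i})_{f_j}$, the scheme $X$ is automatically quasi-separated, so the localization lemma applies.
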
 

For a modern reference (and  the strongest statement), we refer to \cite[Tag
01XF]{stacks-project}.\footnote{These references can be looked up on \url{http://stacks.math.columbia.edu/tag}.
} 
One can consider the equivalent question in the derived setting. 
Given a stack ${X}$, one has a natural \emph{derived} category of
quasi-coherent sheaves on ${X}$, which we will denote by
$\qcoh({X})$. 
Rather than being an abelian category, it is a triangulated category or,
better, the underlying homotopy category of a stable $\infty$-category in the
sense of \cite{higheralg}. 
One has a similar (derived) global sections functor
\[ \Gamma\colon \qcoh(X) \to \md( R),  \]
where $R = \Gamma(X, \mathcal{O}_X) = R\Gamma(X, \mathcal{O}_X)$ is now no longer a commutative ring, but
itself a derived ring: it is a coconnective $E_\infty$-ring spectrum,
 obtained as the homotopy limit of
the discrete rings that map to $X$. In characteristic zero, $R$ is a
commutative, differential graded algebra such that $H^i(R) = 0$ for $i < 0$. 
In particular, $\md(R)$ itself is a stable $\infty$-category: if $R$ is
discrete, it is the derived category of the abelian category
$\md^{\mathrm{ab}}(R)$ of ordinary (i.e., discrete) 
$R$-modules. 

Certain phenomena work better in the derived context. For example, $\Gamma$ is
always ``exact'' in the stable sense, which means that it respects finite
homotopy limits and homotopy colimits. (Indeed, it respects arbitrary homotopy limits.) Unlike in the ordinary setting, it is possible for $\Gamma$ to be an
equivalence even if $X$ is not affine, although in these cases $R$ will usually be non discrete. 

\begin{exm} 
\label{BGa}
In this example, we work over the rational numbers. 
Let ${X} = B \mathbb{G}_a$ be the classifying stack of the additive
group. 
Then $\Gamma( {X}, \mathcal{O}_{{X}}) = \mathbb{Q}[x_{-1}]$
is the free $E_\infty$-algebra (over $\mathbb{Q}$) on a generator in degree
$-1$, i.e., the cochains on the circle $S^1$. 
Then it is known that taking global sections establishes an equivalence between
the derived $\infty$-category $\qcoh( {X})$ and the $\infty$-category $\md(
\mathbb{Q}[x_{-1}])$ of modules (i.e., module spectra) over $\mathbb{Q}[x_{-1}]$. 
Generalizations of this phenomenon have been explored in \cite{toen,
DAGQC}.\footnote{This result
can be extracted from \cite{DAGQC} as follows. The stack $X = B\mathbb{G}_a$
sends a rational connective $E_\infty$-ring $R$ to $\Omega^\infty ( \Sigma R)$. In the notation
of \cite[\S 4]{DAGQC}, one has $ X = \mathrm{c} \spec A$ for $A  = \mathbb{Q}[x_{-1}]$.  Now, as in \cite[\S 4.5]{DAGQC} one has a $t$-structure on $\md(A)$
whose connective objects are those $A$-modules $M$ such that $M \otimes_A
\mathbb{Q}$ (for the map $A \to \mathbb{Q}$, unique up to homotopy) is
 connective. One also has a $t$-structure on $\qcoh(X)$. The left
 adjoint $\md(A) \to \qcoh(X)$ 
 exhibits $\qcoh(X)$ as
the left completion of $\md(A)$ by \cite[Remark 4.5.6]{DAGQC}. 
But we claim that $\md(A)$ is already left complete, i.e., for any $M \in
\md(A)$, the natural map $M \to \varprojlim_{n} \tau_{\leq n} M$ is an
equivalence. In fact, if $N \in \md(A)_{\geq n}$, then  $N \otimes_{A}
\mathbb{Q} $ is an $(n-1)$-connective spectrum by definition. 
However, in view of the cofiber sequence of $A$-modules
\( \Omega \mathbb{Q} \to A \to \mathbb{Q} ,  \)
this implies easily 
that $N$ is a $(n-2)$-connective spectrum. 
It follows that for any $M \in \md(A)$, the cofiber of $M \to \tau_{\leq n} M$
is $n$-connective as a spectrum, so that $M \to \varprojlim_n \tau_{\leq n} M$ is an
equivalence of $A$-modules as desired. Thus, $\qcoh(X) \simeq \md(A)$. This equivalence of $\infty$-categories
also gives us $\Gamma(X, \mathcal{O}_{X}) \simeq A$.}

\end{exm} 
%In fact, To\"en has shown \cite{toen} that such stacks correspond naturally 
%to rational homotopy types.

\newcommand{\affm}{\mathrm{Aff}^{\mathrm{et}}_{/{M}_{ell}}}
\newcommand{\mellb}{\mathfrak{M}_{ell}}
\newcommand{\mellbc}{\mathfrak{M}_{\overline{ell}}}

The purpose of this paper is to study this sort of affineness in a different setting, namely derived stacks in chromatic homotopy theory. 
Our motivational example is the (periodic) spectrum $\TMF$ of \emph{topological modular
forms}. It arises as the global sections of a sheaf of $E_\infty$-ring spectra
$\otop$ on the moduli stack of elliptic curves $\mell$, constructed by Goerss,
Hopkins and Miller, and later by Lurie. 

There are two natural $\infty$-categories one can associate to this construction: 
\begin{enumerate}
\item The $\infty$-category $\qcoh( \mellb)$ of quasi-coherent sheaves on the derived stack $\mellb = (\mell, \otop)$.
\item The $\infty$-category $\md(\TMF)$ of $\TMF$-modules. 
\end{enumerate}

An example of the affineness result we prove is:
\begin{thm} 
The global sections functor establishes an equivalence of symmetric
monoidal $\infty$-categories
$\qcoh(\mellb) \simeq \md(\TMF)$. 
\end{thm}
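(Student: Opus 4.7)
The plan is to reduce this to the general affineness criterion that (as indicated by the abstract) governs the rest of the paper: namely, that $\qcoh(\XX) \simeq \md(\Gamma(\XX, \otop_\XX))$ whenever the underlying map to the moduli stack $M_{FG}$ of formal groups is quasi-affine. Granting that criterion, the theorem reduces to the classical algebro-geometric assertion that $\mell \to M_{FG}$, which sends an elliptic curve to its formal group at the identity, is quasi-affine.

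First I would set up the global sections adjunction $L \dashv \Gamma$. The functor $\Gamma : \qcoh(\mellb) \to \md(\TMF)$ preserves all homotopy limits and, in suitable generality, homotopy colimits as well; its left adjoint $L$ is characterized by $L(\TMF) = \otop_{\mellb}$ and extended by preservation of colimits. By construction $L$ is symmetric monoidal, so it is enough to show the adjunction $(L,\Gamma)$ is an equivalence of underlying $\infty$-categories --- the symmetric monoidal structure then transfers automatically. The unit on the monoidal unit, $\TMF \to \Gamma L(\TMF)$, is visibly an equivalence, so the job reduces to showing that $\Gamma$ is conservative and commutes with the relevant colimits, or equivalently that the counit $L\Gamma(\FF) \to \FF$ is an equivalence for every $\FF \in \qcoh(\mellb)$.

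Second I would verify the quasi-affineness of $\mell \to M_{FG}$. This is a representable morphism of stacks: for any flat map $\Spec R \to M_{FG}$ classifying a formal group $F$, the fiber is the moduli of elliptic curves over $R$ equipped with an identification of their formal group with $F$, which is open in an affine scheme of Weierstrass-type data. Quasi-affineness then follows from the standard presentation of $\mell$ as the open substack of the Weierstrass stack where the discriminant is invertible, together with the fact that the Weierstrass presentation provides a representable affine morphism to $M_{FG}$ after trivializing invariant differentials.

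With these ingredients in place, the theorem follows by applying the paper's main affineness criterion to $\mellb$, whose classical truncation is $\mell$. The main obstacle is not the verification of quasi-affineness --- this is essentially classical --- but the proof of the general criterion itself: one must argue that quasi-affineness of the map to $M_{FG}$ forces $\qcoh(\mellb)$ to be compactly generated by the unit $\otop_{\mellb}$, which typically requires a careful descent argument along a flat cover of $M_{FG}$ by affines carrying Landweber-exact formal group laws, combined with a compactness/dualizability argument to ensure that the endomorphism ring of the unit recovers $\TMF$ and that no higher pro-nilpotent phenomena obstruct the equivalence.
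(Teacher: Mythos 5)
Your overall strategy coincides with the paper's: reduce the theorem to the main affineness criterion (Theorem~\ref{main}) and then verify that $\mell \to M_{FG}$ is quasi-affine. The framing in terms of the adjunction $L \dashv \Gamma$, the unit/counit argument, and the observation that the real work is establishing the general criterion are all accurate (and are indeed the content of Section~\ref{Section3}--\ref{Section4}).

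However, your verification of (quasi-)affineness is imprecise in a way that matters. You assert that the fiber over $\Spec R \to M_{FG}$ is ``open in an affine scheme of Weierstrass-type data'' and that ``the Weierstrass presentation provides a representable affine morphism to $M_{FG}$ after trivializing invariant differentials.'' Trivializing the invariant differential (i.e.\ rigidifying $\omega$, which amounts to fixing a coordinate to first order) is \emph{not} enough: the subgroup of Weierstrass transformations $x \mapsto u^2 x + r$, $y \mapsto u^3 y + u^2 s x + t$ that preserves the invariant differential is the unipotent piece with $u=1$, which is nontrivial, so after trivializing $\omega$ one still has a stack, not a scheme. The paper's argument is sharper and correct: it uses the moduli stack $M_{FG}^{\leq 4}$ of formal groups with a coordinate to order four, which is affine over $M_{FG}$, and observes that $\mell \times_{M_{FG}} M_{FG}^{\leq 4}$ is exactly $\Spec \mathbb{Z}[a_1,a_2,a_3,a_4,a_6][\Delta^{-1}]$ --- the coordinate to order four is precisely what is needed to kill the remaining unipotent Weierstrass automorphisms and put the curve in canonical Weierstrass form (cf.\ \cite[Proposition 12.2]{rezk512}). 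From this one concludes that $\mell \to M_{FG}$ is in fact \emph{affine}, not merely quasi-affine (quasi-affineness is only what one gets for the compactified stack $\mellc$, where $\Delta$ is no longer inverted). Since affine implies quasi-affine, your conclusion is not wrong, but your stated justification would not by itself establish it; you should replace the ``trivializing invariant differentials'' step by the higher-order coordinate argument.
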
 

This theorem was originally established away from the prime 2 by the second author in \cite{meier},
and is useful for both theoretical and computational purposes. The result was
also known to Lurie in unpublished work (by a different argument). We also prove  
a version for the compactified moduli stack of elliptic curves $\mellc$, which
carries a sheaf of $E_\infty$-ring spectra $\otop$ as well, defining a derived
stack $\mellbc = (\mellc, \otop)$. The global sections $\Gamma(\mellbc, \otop)$
are denoted by $\Tmf$. 
\begin{thm}The global sections functor establishes an equivalence of symmetric
monoidal $\infty$-categories
$\qcoh(\mellbc) \simeq \md(\Tmf)$. 
\end{thm}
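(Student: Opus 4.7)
The plan is to deduce the compactified statement from the preceding theorem for $\mellb$ via a descent argument along the open immersion $\mellb \hookrightarrow \mellbc$. First, I would set up the symmetric monoidal adjunction between $\md(\Tmf)$ and $\qcoh(\mellbc)$, with left adjoint $L$ sending $\Tmf$ to $\otop_{\mellbc}$ and right adjoint the derived global sections functor $\Gamma$. Since $L$ is symmetric monoidal, to show that the adjunction is an equivalence it suffices to verify that $\otop_{\mellbc}$ is a compact generator of $\qcoh(\mellbc)$ and that the unit $\Tmf \to \Gamma(\mellbc, \otop_{\mellbc})$ is an equivalence (the latter holds by construction of $\Tmf$).

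To obtain these properties I would use a Mayer--Vietoris pullback square
\[
\xymatrix{
\qcoh(\mellbc) \ar[r] \ar[d] & \qcoh(\mellb) \ar[d] \\
\qcoh(\mathcal{V}) \ar[r] & \qcoh(\mathcal{V} \cap \mellb),
}
\]
where $\mathcal{V}$ is the formal completion of $\mellbc$ at the cuspidal divisor, so that $\mathcal{V} \cap \mellb$ is the punctured formal neighborhood (obtained by inverting $q$). The underlying ordinary stack of $\mathcal{V}$ is essentially $\Spf \Z[[q]]$ modulo the $\{\pm 1\}$-action arising from inversion on elliptic curves; the restriction of $\otop$ to $\mathcal{V}$ is an even-periodic Landweber-exact ring spectrum closely related to Tate $K$-theory, so the affineness statement for $\mathcal{V}$ either is elementary or falls under the general quasi-affineness criterion established earlier in the paper. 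Combined with the affineness statement for $\mellb$ (the previous theorem), this gives us the desired equivalences at the three non-initial vertices with the corresponding module categories.

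One then constructs the analogous Mayer--Vietoris square for module categories with $\md(\Tmf)$ at the initial vertex, arising from the arithmetic fracture of $\Tmf$ along the cuspidal ideal. The global sections functor induces a morphism between the two squares, and equivalences at the three non-initial vertices yield an equivalence at the initial vertex, proving the claim. The symmetric monoidal structure is automatic from the symmetric monoidality of each comparison.

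The main obstacle is the analysis at the cusp. One must verify that the square of $E_\infty$-ring spectra with $\Tmf$ at the initial vertex and $\TMF$, $\Gamma(\mathcal{V}, \otop)$, $\Gamma(\mathcal{V} \cap \mellb, \otop)$ at the other three vertices is in fact a pullback, and that its associated square of module categories matches the square of $\qcoh$'s under the comparison functors. Concretely, this requires that the completion of $\Tmf$ at the cuspidal ideal reproduces the global sections over $\mathcal{V}$ and that inverting $q$ recovers the sections over the punctured formal neighborhood. Once these compatibilities are established, the gluing is formal and yields the desired equivalence $\qcoh(\mellbc) \simeq \md(\Tmf)$.
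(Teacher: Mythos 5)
Your proposal takes a genuinely different route from the paper. The paper's own proof is direct and short: it observes that the moduli stack of generalized elliptic curves with a coordinate to order four on the formal group is the quasi-affine scheme $\spec \mathbb{Z}[a_1,\dots,a_6]\setminus V((c_4,\Delta))$, hence that $\mellc\to M_{FG}$ is quasi-affine, and then simply invokes the main affineness theorem (\Cref{main}). No decomposition of $\mellbc$ is needed at all.

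Your formal-gluing strategy, while conceptually appealing and related to how $\Tmf$ is sometimes built, contains real gaps that would need substantial new theory to close. First, the square you write with $\mathcal{V}$ the formal completion at the cusps is not a Zariski Mayer--Vietoris square: $\mathcal{V}$ is not an open substack, so the finite-open-cover descent result proved in the appendix (\Cref{CubeLimit}, \Cref{ZarColim}) does not apply. What you need is a formal-gluing (Beauville--Laszlo type) theorem for $\qcoh$ of derived stacks of the paper's type, which is a genuinely different and harder statement not established anywhere in the paper. Second, even granting such a square, the $0$-affineness of the piece $\mathcal{V}$ does not ``fall under the general quasi-affineness criterion'' as you suggest: \Cref{main} is stated for noetherian separated Deligne--Mumford stacks flat over $M_{FG}$, and a formal completion along a divisor is not such an object, so some new argument (or an extension of the theorem to a suitable class of formal stacks) is required. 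Third, the compatibilities you flag at the end---that completing $\Tmf$ at the cuspidal ideal recovers $\Gamma(\mathcal{V},\otop)$ and that inverting $q$ recovers the punctured sections, and that the resulting $E_\infty$-ring square is cartesian and matches the $\qcoh$ square under the comparison functors---are precisely the hard content, and leaving them unverified means the proof is not yet complete. In short, the paper buys a one-paragraph proof by computing a global quasi-affine presentation of $\mellc$ over $M_{FG}$, avoiding the cuspidal analysis entirely; your approach trades that for several pieces of nontrivial infrastructure that the paper does not supply.
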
 

The main purpose of this paper is to prove these theorems in a more general
context, as a consequence of nilpotence technology.

Given any noetherian and separated Deligne-Mumford stack $X$ with a flat morphism $X \to
M_{FG}$ to the moduli stack $M_{FG}$ of formal groups, one
can construct 
a presheaf of even periodic Landweber-exact homology theories on $X$. Sometimes, it can be lifted to $E_\infty$-rings to produce a derived stack $\mathfrak{X}$, as in the case $X = \mell$. If it can be lifted, then one can ask the same
question as above: is the $\infty$-category $\qcoh(\mathfrak{X})$ of quasi-coherent sheaves on
$\mathfrak{X}$ equivalent to the $\infty$-category of modules over $\Gamma(\mathfrak{X},
\otop)$? 
This is certainly true when $X$ is affine. We show that the same conclusion holds in the following setting:
\begin{mthm}
If $X \to M_{FG}$ is quasi-affine, then the global sections functor establishes an
equivalence of symmetric monoidal $\infty$-categories $ \qcoh( \mathfrak{X})
\simeq \md( \Gamma( \mathfrak{X}, \otop))$. 
\end{mthm} 

Recall here that a map $X\to M_{FG}$ is \textit{quasi-affine} if for every map
$\Spec A \to M_{FG}$ the pullback $\Spec A\times_{M_{FG}} X$ is quasi-affine,
i.e., a quasi-compact open subscheme of an affine scheme. The result is proved via a consequence of derived Morita (Schwede-Shipley)
theory together with a version of the Hopkins--Ravenel smash product theorem.
The latter states that the localization functor $L_n$ commutes with homotopy
colimits. Likewise, a crucial part of our main theorem is that the global
sections functor commutes with homotopy colimits. This turns out to be true
even if $X\to M_{FG}$ is not quasi-affine, but only \textit{tame}, i.e., the order of every automorphism of a point of $X$ not detected by the formal group is invertible on $X$. 

We apply our main theorem to the study of Galois extensions of $E_\infty$-rings (in the sense of Rognes
\cite{rognes}) and to vanishing results about Tate spectra. As an example, we
consider the moduli stack of elliptic curves with $\Gamma(n)$-level structure
$\mell(n)$ and its compactified version $\mellc(n)$. These classify
(generalized) elliptic curves with a chosen isomorphism between the $n$-torsion
points and $(\Z/n\Z)^2$. The action of $GL_2(\Z/n\Z)$ on $(\Z/n\Z)^2$ defines
$GL_2(\Z/n\Z)$-actions on $\mell(n)$ and $\mellc(n)$. Both of these stacks
carry sheaves of $E_\infty$-ring spectra $\otop$, whose global sections are
denoted by $\TMF(n)$ and $\Tmf(n)$, respectively. The latter was recently
defined by work of Goerss--Hopkins and Hill--Lawson \cite{HillLawson}. 

We can prove the following two theorems: 
\begin{thm}For every $n$, the $E_\infty$-ring spectrum $\TMF(n)$ is a faithful
$GL_2(\Z/n\Z)$-Galois extension of $\TMF[\frac1n]$.\end{thm}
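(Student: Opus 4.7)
The plan is to verify the three defining conditions of a faithful $G$-Galois extension in Rognes' sense \cite{rognes} for $G = GL_2(\Z/n\Z)$: namely, (a) the unit $\TMF[\tfrac{1}{n}] \to \TMF(n)^{hG}$ is an equivalence, (b) the canonical map $\TMF(n) \otimes_{\TMF[\tfrac{1}{n}]} \TMF(n) \to \prod_{g \in G} \TMF(n)$ is an equivalence, and (c) $\TMF(n)$ is dualizable and faithful as a module over $\TMF[\tfrac{1}{n}]$. I would translate each condition into a statement about the derived stacks $\mellb(n)$ and $\mellb[\tfrac{1}{n}]$ and then apply the Main Theorem.

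First, the Main Theorem applies to both stacks: $\mell \to M_{FG}$ is quasi-affine and $\mell(n) \to \mell[\tfrac{1}{n}]$ is finite étale (hence affine), so the composition $\mell(n) \to M_{FG}$ is quasi-affine. This produces symmetric monoidal equivalences $\qcoh(\mellb(n)) \simeq \md(\TMF(n))$ and $\qcoh(\mellb[\tfrac{1}{n}]) \simeq \md(\TMF[\tfrac{1}{n}])$. The $GL_2(\Z/n\Z)$-action on $\mell(n)$, together with the functoriality of the sheaf $\otop$, endows $\TMF(n)$ with a $G$-action through $E_\infty$-$\TMF[\tfrac{1}{n}]$-algebra maps.

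Classically, $\mell(n) \to \mell[\tfrac{1}{n}]$ is a finite étale $G$-torsor, exhibiting $\mell[\tfrac{1}{n}]$ as the stack quotient $\mell(n)/G$ and identifying the self-fiber product $\mell(n) \times_{\mell[\tfrac{1}{n}]} \mell(n) \simeq \mell(n) \times G$. Since $\otop$ on $\mell(n)$ is the étale pullback of $\otop$ on $\mell[\tfrac{1}{n}]$, both identifications promote to the derived stacks. Taking global sections then yields (a) as the homotopy-fixed-points descent statement $\Gamma(\mellb[\tfrac{1}{n}], \otop) \simeq \Gamma(\mellb(n), \otop)^{hG}$, while (b) follows from the Main Theorem, which identifies $\Gamma$ with a symmetric monoidal equivalence and therefore translates fiber products of derived stacks into relative tensor products of $E_\infty$-rings.

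For condition (c), since $\mell(n) \to \mell[\tfrac{1}{n}]$ is finite locally free, its pushforward of the structure sheaf is dualizable in $\qcoh(\mellb[\tfrac{1}{n}])$, which under the Main Theorem translates to dualizability of $\TMF(n)$ over $\TMF[\tfrac{1}{n}]$. Faithfulness reduces to conservativity of base change along $\TMF[\tfrac{1}{n}] \to \TMF(n)$, which via the Main Theorem becomes conservativity of pullback along the faithfully flat cover $\mellb(n) \to \mellb[\tfrac{1}{n}]$, a standard fact. The main technical obstacle is rigorously lifting the classical torsor structure to the derived setting, i.e., identifying $\mellb(n) \times_{\mellb[\tfrac{1}{n}]} \mellb(n)$ with $\mellb(n) \times G$ and verifying descent along $\mellb(n) \to \mellb[\tfrac{1}{n}]$; this ultimately rests on the étale-sheaf nature of $\otop$ and on the well-behaved interaction between $\Gamma$ and base change given by the Main Theorem.
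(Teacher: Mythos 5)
Your proposal is correct and follows essentially the same route as the paper. The paper packages the argument into a general statement (its Theorem on $G$-Galois extensions arising from $G$-torsors over a $0$-affine base, \Cref{stackgivesgalois}): it checks the Galois condition \'etale-locally on the base using \Cref{etalegal}, then applies the symmetric monoidal global sections equivalence, and obtains faithfulness from the conservativity of pullback along a faithfully flat map (\Cref{DerivedFaithfulness}) --- exactly the points (b) and (c) you identify, with your point (a) obtained from Galois descent of quasi-coherent sheaves (\Cref{GaloisDesc}). One small remark: in Rognes' definition, dualizability of $B$ over $A$ is a consequence of the (faithful) Galois property rather than part of the definition, so your condition (c) verifying dualizability of $\TMF(n)$ over $\TMF[\tfrac1n]$ is not logically required; the faithfulness part of (c), however, is needed and you handle it correctly. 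The ``technical obstacle'' you flag --- promoting the classical torsor structure to derived stacks --- is resolved in the paper by checking on affine \'etale covers, which is precisely what the \'etale-sheaf structure of $\otop$ together with \Cref{topinv}/\Cref{rem:etale} provides.
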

\begin{thm}For every $n$, the norm map $\Tmf(n)_{hGL_2(\Z/n\Z)} \to
\Tmf(n)^{hGL_2(\Z/n\Z)}$ is an equivalence. Equivalently, the Tate spectrum
$\Tmf(n)_{tGL_2(\Z/n\Z)}$ vanishes.\end{thm}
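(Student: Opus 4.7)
Write $G := GL_2(\Z/n\Z)$ for brevity, and set $\mellbc(n) := (\mellc(n), \otop)$. The plan is to use the Main Theorem to translate Tate vanishing into a statement about the derived stack $\mellbc(n)$, and then exploit the tameness of the $G$-cover $\mellbc(n) \to \mellbc[\tfrac1n]$. The stated equivalence between norm being an equivalence and vanishing of the Tate spectrum is formal, so I would focus on proving Tate vanishing.

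First I would apply the Main Theorem to the map $\mellc(n) \to M_{FG}$. For $n$ large enough, $\mellc(n)$ is representable by a scheme, so the map is quasi-affine; for small $n$ one can pass to an auxiliary level $nm$ and descend along the surjection $GL_2(\Z/nm\Z) \twoheadrightarrow G$. This produces a $G$-equivariant symmetric monoidal equivalence $\qcoh(\mellbc(n)) \simeq \md(\Tmf(n))$. Taking $G$-homotopy fixed points on both sides and using that the quotient stack is $\mellbc[\tfrac1n]$, I obtain $\Tmf(n)^{hG} \simeq \Tmf[\tfrac1n]$, and the norm map is identified with a natural comparison between pushforwards of orbits and fixed points of $\otop$ along $\mellbc(n) \to \mellbc[\tfrac1n]$.

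Tate vanishing is then a local statement on $\mellbc[\tfrac1n]$. Away from the cusps, the cover restricts to $\mellb(n) \to \mellb[\tfrac1n]$, corresponding to the faithful $G$-Galois extension $\TMF[\tfrac1n] \to \TMF(n)$ from the previous theorem; Rognes' theory \cite{rognes} then gives vanishing there. Near the cusps, the cover is only tamely ramified: the point-stabilizers are cyclic with orders dividing $n$, hence invertible on $\mellbc[\tfrac1n]$, so a transfer-style argument using normalized idempotents should yield Tate vanishing in a formal étale neighborhood of each cusp. The main obstacle will be this cuspidal analysis: one must identify the local model of $\mellc(n)$ near each cusp (via the Tate curve), lift it to a model of $\otop$, and verify that the tame action translates into an honest vanishing of the Tate construction at the level of $E_\infty$-rings, compatibly with the identifications of the previous step. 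Once the open and cuspidal cases are both established, gluing them (using that $\Gamma$ preserves the relevant limits and, by the tame/quasi-affine hypothesis, colimits) yields the desired global equivalence.
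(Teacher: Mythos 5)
Your proposal identifies the right \emph{flavor} of argument---tame ramification near the cusps is indeed the crucial input, and the observation that $\Gamma$ preserves colimits under tameness is close to the heart of the matter---but the execution has a genuine gap and the route is different from the paper's.

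The critical error is the claim that ``the quotient stack is $\mellbc[\tfrac1n]$,'' i.e.\ that $\mellc(n)/GL_2(\Z/n\Z) \simeq \mellc[\tfrac1n]$. This is false: the $GL_2(\Z/n\Z)$-torsor quotient of $\mellc(n)$ is the stack $\mellc^{(n)}$ of generalized elliptic curves with N\'eron $n$-gon degenerations, \emph{not} $\mellc$. The two differ at the cusps (a N\'eron $n$-gon has automorphism group $\Z/2 \rtimes \mu_n$ while a nodal cubic has $\Z/2$), and the contraction map $\mellc^{(n)} \to \mellc[\tfrac1n]$ is not an isomorphism. In particular $\mellc(n) \to \mellc[\tfrac1n]$ is ramified and is not a torsor---this is precisely the reason, emphasized in the introduction, that $\Tmf[\tfrac1n] \to \Tmf(n)$ fails to be Galois and why the theorem cannot be deduced from Rognes' Galois theory alone. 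The identification $\Tmf(n)^{hG} \simeq \Tmf[\tfrac1n]$ that you deduce from this is not asserted in the paper and is not correct.

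The second issue is the ``glue over the cusps'' strategy: a Tate construction is built from an infinite homotopy limit (homotopy fixed points) and an infinite colimit (homotopy orbits), and vanishing is not obviously a Zariski-local statement. The ``transfer-style argument using normalized idempotents'' near the cusps is left vague, and the proposed gluing step is precisely where the real work would have to happen. The paper's proof avoids this entirely: it proves a single abstract theorem (a special case of which is that if $X \to M_{FG}$ is flat, noetherian, separated, and \emph{tame}, and $Y \to X$ is a $G$-torsor, then $\Gamma(\mathfrak{Y}, q^*\mathcal{F})_{tG} \simeq 0$ for all $\mathcal{F}\in\qcoh(\mathfrak{X})$), and then applies it with $X = \mellc^{(n)}$ and $Y = \mellc(n)$. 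The proof of the abstract theorem itself has exactly three moving parts: tameness forces $\Gamma$ to commute with homotopy colimits (an integral statement, proved prime-by-prime using the $E_n$-based nilpotence machinery and an arithmetic square), Galois descent gives $\qcoh(\mathfrak{X}) \simeq \qcoh(\mathfrak{Y})^{hG}$, and then a formal observation shows that any colimit-preserving homotopy fixed point functor has vanishing Tate construction. The remaining computation is the elementary stabilizer check that $\mellc^{(n)} \to M_{FG}$ is tame (the $\mu_n$ factors of cuspidal stabilizers die in the kernel of the map to $M_{FG}$-stabilizers and are invertible). None of this local-to-global gluing machinery is needed, and it is doubtful it could be made to work without essentially reproving the colimit-preservation result.
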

Note that the vanishing of Tate spectra is automatic for faithful Galois
extensions, but $\Tmf[\frac1n]\to \Tmf(n)$ is not a Galois extension. Note
furthermore that the second theorem was proven by Stojanoska in \cite{St12} in
the case $n=2$ in her investigation of the Anderson self-duality of $\Tmf$. We hope that our results about the vanishing of Tate spectra will have future applications to duality.\\

In \Cref{Section2}, we will discuss various background material. This includes the relationship between formal groups and even periodic ring spectra and, furthermore, derived stacks and coarse moduli spaces. The knowledgeable reader may just want to pick up our definitions of an even periodic refinement (\Cref{EPR}), of a derived stack (\Cref{DS}) and of a tame morphism (\Cref{DefinitionTame}). In \Cref{Section3}, we discuss first an abstract characterization of derived stacks for which the global sections functor is an equivalence (via the Schwede--Shipley theorem). Then we show certain descent and ascent properties of this class of derived stacks. In \Cref{Section4}, we specialize these abstract theorems to chromatic homotopy theory and obtain our main theorem. \Cref{Section5} contains our abstract theorems about Galois extensions and the vanishing of Tate spectra, which are then applied to examples in Sections \ref{Section6} and \ref{AppTMF}. \Cref{AppHLS} discusses the behavior of $\infty$-category 
valued sheaves with respect to a finite open cover of a topological space (as used in \Cref{Section3}). 

Throughout this paper, we use the language of quasicategories (i.e., 
$\infty$-categories) of \cite{Joy02} and \cite{HTT}, and the theory of structured ring
spectra as developed originally in \cite{EKMM}, and
formulated in $\infty$-categorical terms in \cite{higheralg}. 
We will let $\mathcal{S}$ denote the $\infty$-category of spaces, $\sp$ the
$\infty$-category of spectra, and we will write $\otimes$
for the smash product in the latter.

\subsection*{Acknowledgments}
We would like to thank  Dan Dugger, Mike Hopkins, Tyler Lawson, Jacob Lurie, Niko Naumann, and Vesna Stojanoska for several helpful
discussions related to the subject of this paper. 
The first author is supported by the NSF Graduate Research Fellowship
under grant DGE-110640. 

\section{Derived stacks}\label{Section2}

We will take a naive approach to derived stacks in this paper, and avoid the most
general theory. In this section, we summarize what we need, and briefly review the
role of formal groups. Furthermore, we will review the theory of coarse moduli spaces and the Zariski topology for algebraic stacks.

\subsection{Even periodic ring spectra and formal groups}\label{Even-FG}

Recall first:

\begin{definition} Let $M_{FG}$ be the \emph{moduli stack of formal groups}: that is, it is the
(infinite-dimensional) stack assigning to a commutative ring $R$ the groupoid
of one-dimensional, commutative formal groups over $R$ and isomorphisms between them. 
\end{definition}

Define $MUP = \bigvee_{k\in\Z} \Sigma^{2k} MU$ to be a periodic version of
complex bordism $MU$. A theorem of Quillen (see, e.g., \cite{Adams}) shows that $MU_* = MUP_0$
 is isomorphic to the Lazard ring $L$, which carries the universal formal group
 law. Even more is true: the simplicial scheme $\spec \pi_0( MUP^{\tensor
 \bullet+1})$ is isomorphic to the simplicial scheme $(\spec L)^{\times_{M_{FG}} \bullet+1}$. In particular, if we use the notation $\spec W = \spec L\times_{M_{FG}} \spec L$, it is true that the Hopf algebroids $(MUP_0, MUP_0 MUP)$ and $(L,W)$ are isomorphic. 

\begin{cons}
Given a spectrum $X$, both $MUP_0(X)$ and $MUP_1(X)$ are comodules over the Hopf algebroid $(MUP_0, MUP_0MUP)$. Via the equivalence between $(MUP_0, MUP_0 MUP)$-comodules and quasi-coherent sheaves on
 $M_{FG}$, this defined a $\Z/2$-graded sheaf $\FF_*(X)$. This sheaf is
 characterized by the property that the evaluation of $\FF_0(X)$ on $\spec L$
 agrees with $MUP_0(X)$ as comodules over $(MUP_0, MUP_0 MUP) \cong (L,W)$, and 
 evaluation of $\FF_1(X)$ on $\spec L$ agrees with $MUP_1(X)$. \end{cons}

The $\mathbb{Z}/2$-graded sheaf $\mathcal{F}_*(X)$
can often be
described explicitly. For example, the sphere $S^{-2}$ is associated to a line bundle
$\omega \in \mathrm{Pic}(M_{FG})$ which assigns to every formal group the dual
of its \emph{Lie algebra}. Moreover, the theorem above by Quillen implies that $\FF_0(MUP) = (\phi_L)_* \mathcal{O}_{\spec L}$ and $\FF_0(MUP\tensor MUP) = (\phi_W)_*\mathcal{O}_{\spec W}$, where $\phi_L\colon \spec L \to M_{FG}$ and $\phi_L\colon \spec W \to M_{FG}$ are the obvious maps. 
This point of view has been very fruitful in describing large-scale features of
stable homotopy theory via the special geometry of $M_{FG}$. 

An example of this connection is the partial correspondence between certain
ring spectra and formal groups: one can associate a formal group to certain
ring spectra, and in some cases one can recover the value of the
associated cohomology theory in terms of the $\mathbb{Z}/2$-graded
quasi-coherent sheaf $\mathcal{F}_*(X)$ on $M_{FG}$. 
In this way, complex bordism allows one to manufacture a great deal of new ring
spectra. 

\begin{definition}[\cite{AHS}] 
A homotopy commutative ring spectrum $E$ is said to be \emph{even periodic} if
$\pi_{i} E = 0$ for $i $ odd  and if $\pi_2 E$ is an invertible module over
$\pi_0 E$, with inverse $\pi_{-2} E$, such that 
\[ \pi_{2k} E \simeq (\pi_2 E)^{\otimes k}, \quad k \in \mathbb{Z},  \]
under multiplication. This is slightly weaker than the definition in
\cite{AHS}, which requires $\pi_2 E$ to be the trivial invertible module, i.e.\ to contain a unit. We will refer to such ring
spectra as \emph{strongly even periodic.}
\end{definition} 

Given an even periodic ring spectrum $E$, the Atiyah-Hirzebruch spectral
sequence for the $E$-cohomology of any \emph{even} space $X$ (i.e., with
integral homology free and concentrated in even dimensions) degenerates. For
example, if $E$ is strongly even periodic, there is an isomorphism of rings $E^0( \mathbb{CP}^\infty) = \pi_0
E[[x]]$, where the generator $x \in \widetilde{E}^0(\mathbb{CP}^\infty)$ is
noncanonical (and called a \emph{complex orientation} of $E$). The
multiplication on $\mathbb{CP}^\infty$ is dual to a \emph{comultiplication}
in $E^0(\mathbb{CP}^\infty)$, which gives $E^0(\mathbb{CP}^\infty)$ the
structure of a (continuous) commutative, cocommutative Hopf algebra.
Equivalently, the formal scheme $\mathrm{Spf} E^0(\mathbb{CP}^\infty)$ is canonically a
\emph{formal group} over $\pi_0 E$. This persists for a general even periodic
ring spectrum, although the formal group need only admit a coordinate Zariski
locally on $\pi_0 E$, and we get a map $\spec \pi_0 E \to M_{FG}$.
This is one direction of the correspondence between ring spectra  and
formal groups. 

In some cases, one can reconstruct the cohomology theory (and even the ring
spectrum)
from the formal group. 
For example, the \emph{Landweber exact functor theorem} \cite{lex} gives a concrete and
often easily checked criterion for a map $\phi\colon \spec R \to M_{FG}$ to be flat. Given
such a flat map,  and a spectrum $X$, one can pull back  the $\mathbb{Z}/2$-graded quasi-coherent
sheaf
$\mathcal{F}_*(X)$ to $\spec R$ to define an invariant of $X$, which is in
fact an even periodic homology theory $E$. More precisely, we define
\begin{align*}
E_{2k}(X) &:=  \Gamma(\spec R, \phi^*(\FF_0(X)\tensor \omega^{\tensor k})) = (\FF_0(X)\tensor \omega^{\tensor k})(\spec R)\\
E_{2k+1}(X) &:=  \Gamma(\spec R, \phi^*(\FF_1(X)\tensor \omega^{\tensor k})) = (\FF_1(X)\tensor \omega^{\tensor k})(\spec R)
\end{align*}

Given a flat morphism $\phi\colon \spec R \to M_{FG}$, the formal group of the Landweber-exact even periodic cohomology theory
that one obtains is precisely classified by the map $\phi$. Conversely, given
an even periodic ring spectrum $E$, one obtains a map $\phi\colon \spec \pi_0 E \to
M_{FG}$ classifying the formal group $\mathrm{Spf} E^0(\mathbb{CP}^\infty)$; if
this map is flat, then $E$ is the Landweber-exact theory obtained from 
$\phi$. An important example of such a theory is given by complex $K$-theory
$KU$, as was first
shown (without using Landweber's theorem) by Conner and Floyd.

The following proposition is well-known.
\begin{proposition}\label{LandweberSmash}
Given two flat morphisms $\phi_R\colon \spec R \to M_{FG}$ and $\phi_{R'}\colon \spec R' \to M_{FG}$, we denote the corresponding Landweber exact spectra by $E_R$ and $E_{R'}$. With this notation, we have an isomorphism 
\[ \pi_{2k}(E_R\tensor E_{R'} )\cong \omega^{\tensor k}(\spec R\times_{M_{FG}}\spec R').\]
\end{proposition}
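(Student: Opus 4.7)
The plan is to apply the Landweber formula for $E_R$-homology to the spectrum $X = E_{R'}$, which by the definitions just stated reads
\begin{equation*}
\pi_{2k}(E_R \tensor E_{R'}) = E_{R,2k}(E_{R'}) = \Gamma(\spec R,\, \phi_R^*(\FF_0(E_{R'}) \tensor \omega^{\tensor k})),
\end{equation*}
and then to identify the sheaf $\FF_0(E_{R'})$ on $M_{FG}$ explicitly as a pushforward from $\spec R'$. Once this identification is in hand, flat base change together with the projection formula will collapse the global sections to sections of $\omega^{\tensor k}$ on the fiber product.

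The key step is to show that $\FF_0(E_{R'}) \cong (\phi_{R'})_* \mathcal{O}_{\spec R'}$ as a quasi-coherent sheaf on $M_{FG}$. Since the flat cover $\phi_L \colon \spec L \to M_{FG}$ detects isomorphisms of quasi-coherent sheaves, it suffices to verify this after pulling back along $\phi_L$. By definition $\phi_L^* \FF_0(E_{R'}) = MUP_0(E_{R'}) = \pi_0(MUP \tensor E_{R'})$, while flat base change identifies $\phi_L^*(\phi_{R'})_* \mathcal{O}_{\spec R'}$ with the coordinate ring of $\spec L \times_{M_{FG}} \spec R'$. These match because Landweber exactness of $E_{R'}$ gives $\pi_*(MUP \tensor E_{R'}) \cong MUP_*(MUP) \otimes_{MUP_*} R' \cong W \otimes_L R'$, which under the identification $(MUP_0, MUP_0 MUP) \cong (L, W)$ is precisely the coordinate ring of $\spec W \times_{\spec L} \spec R' = \spec L \times_{M_{FG}} \spec R'$.

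With this identification, flat base change along $\phi_R$ yields $\phi_R^*(\phi_{R'})_* \mathcal{O}_{\spec R'} \cong (\pr_R)_* \mathcal{O}_{\spec R \times_{M_{FG}} \spec R'}$, where $\pr_R$ denotes projection onto $\spec R$. Combining this with the projection formula and the observation that the pullback of $\omega$ along either leg of the fiber square agrees with the restriction of $\omega$ to the fiber product, we obtain
\begin{equation*}
\Gamma(\spec R,\, \phi_R^*(\FF_0(E_{R'}) \tensor \omega^{\tensor k})) \cong \Gamma(\spec R \times_{M_{FG}} \spec R',\, \omega^{\tensor k}) = \omega^{\tensor k}(\spec R \times_{M_{FG}} \spec R'),
\end{equation*}
which is the asserted formula.

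I expect the main obstacle to be the identification $\FF_0(E_{R'}) \cong (\phi_{R'})_* \mathcal{O}_{\spec R'}$, i.e.\ matching the $(L, W)$-comodule structure on $MUP_*(E_{R'})$ produced by Landweber exactness with the descent datum encoding the pushforward sheaf. Once this sheaf-theoretic input is secured, the remaining steps are formal manipulations with flat base change and the projection formula, which are unambiguous here since $\spec R$ and $\spec R'$ are affine and the relevant maps to $M_{FG}$ are therefore quasi-compact and quasi-separated.
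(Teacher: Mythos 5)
Your proof is correct and follows essentially the same route as the paper's, merely swapping the roles of $R$ and $R'$: both arguments reduce to the identification $\FF_0(E_{R'}) \cong (\phi_{R'})_*\mathcal{O}_{\spec R'}$ (the paper establishes this by evaluating on $\spec L$ and $\spec W$; you establish it by pulling back along the flat cover $\phi_L$ and invoking the Landweber formula $MUP_*(E_{R'}) \cong MUP_*MUP \otimes_{MUP_*} R'$, which amounts to the same computation), and both then finish with flat base change and the projection formula. The subtlety you flag about matching comodule structure with descent data is real, but the paper's check on $\spec W$ is precisely that verification, so the two write-ups address the same point in equivalent ways.
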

\begin{proof}
We first investigate the situation $\phi_{R'} = \phi_L$ so that $E_{R'} = MUP$. By definition,
\[MUP_0(E_R) = (E_R)_0(MUP) = (\FF_0(MUP))(\spec R).\]
The latter agrees with
 \[((\phi_L)_*\mathcal{O}_{\spec L})(\spec R) = \mathcal{O}_{M_{FG}}(\spec L\times_{M_{FG}} \spec R) = ((\phi_R)_*\mathcal{O}_{\spec R})(\spec L).\]
 Similarly, $(MUP\tensor MUP)_0(E_R) = ((\phi_R)_*\mathcal{O}_{\spec R})(\spec W)$, which implies $\FF(E_R) = (\phi_R)_*\mathcal{O}_{\spec R}$. 
 
 In the general case, we get now:
 \begin{align*}\pi_{2k}(E_R\tensor E_{R'} ) &= (E_{R'})_{2k}(E_R) \\
 &= ((\phi_R)_*\mathcal{O}_{\spec R} \tensor \omega^{\tensor k})(\spec R') \\
 &= ((\phi_R)_*(\phi_R)^*\omega^{\tensor k})(\spec R') \\
 &=\omega^{\tensor k}(\spec R\times_{M_{FG}}\spec R')
 \end{align*}
\end{proof}

\newcommand{\affx}{\mathrm{Aff}^{et}_{/X}}
\newcommand{\aff}{\mathrm{Aff}^{et}}

\subsection{Even periodic enhancements and derived stacks}
The upshot of the discussion of the previous section is that there
is a \emph{presheaf of even periodic homology theories} on the affine flat site of $M_{FG}$.
Equivalently, for every commutative ring $R$ with a formal 
group over $R$ classified by a flat map $\spec R \to M_{FG}$, one
obtains an even periodic homology theory $E$ with $E_0$ given by $R$, and
one obtains morphisms between homology theories from morphisms of formal groups. 
One can show that one actually gets a homotopy commutative
ring spectrum from each such Landweber-exact homology theory, and that
each map of formal groups gives a map of ring spectra, such that all
functoriality holds up to homotopy, albeit not coherent homotopy (see
\cite[Theorem 2.8]{HoveyS} or \cite[Lecture 18]{chromatic}).

For the purposes of homotopy theory, a diagram such as above, which takes
values in the \emph{homotopy category} of spectra, is insufficient to make
many natural constructions, such as homotopy limits and colimits. For example, 
there is no way to extend the above construction to a non-affine scheme (or
stack) flat over $M_{FG}$. 
Given a (discrete) group acting on a formal group, that does not produce a
strict group action on the
associated spectrum. 
Moreover, the ring spectra one obtains do not have
the structure needed to perform algebraic constructions with them: for example,
one cannot 
 generally obtain a good theory of modules 
(e.g., a triangulated category or stable $\infty$-category) over an
unstructured ring spectrum. 

However, in certain
restricted cases, it is possible to realize diagrams of 
homology theories much more rigidly. A survey of this problem, including a
general result of Lurie, is in
\cite{goerss}.

Let $X$ be a Deligne-Mumford stack together with a flat map\footnote{A map $X
\to M_{FG}$, not necessarily representable, is \emph{flat} if for every \'etale
covering $\spec R \to X$, the composite $\spec R \to X \to M_{FG}$ is
flat in the sense that for every map $\spec A \to M_{FG}$, the pull-back $\spec
A \times_{M_{FG}} \spec R \to \spec A$ (which is a map of schemes) is flat.} 
\[ X \to M_{FG} , \]
so that, as above, one obtains a presheaf of multiplicative homology theories
on the affine flat site of $X$. 
Let $\affx$ be the affine, \emph{\'etale} site of $X$. 

\begin{definition}\label{EPR}
An \emph{even periodic enhancement}  or \emph{even periodic refinement} $\mathfrak{X}$ of $X$ is a sheaf $\otop$
of even periodic $E_\infty$-rings on the site $\affx$, lifting the above
diagram of homology theories on $\affx$. %We will refer to objects such as $\mathfrak{X}$ as (examples of) \emph{derived stacks.}
\end{definition} 

In other words, for an \'etale map $\spec R \to X$, the $E_\infty$-ring $\otop(
\spec R)$ defines an even periodic cohomology theory, with formal group given by 
the classifying map $\spec R \to X \to M_{FG}$: it yields the Landweber-exact
(co)homology theory associated to this  formal group. The ``sheaf'' condition is
actually redundant here, because by construction, the homotopy groups of
$\otop$ already form a sheaf on the affine \'etale site. Note that the phrase ``sheaf
of spectra'' refers to a functor from the category $\affx$ into the
$\infty$-category of spectra (e.g., realized via a functor into some model
category) and does not refer to the homotopy category. 

Even periodic enhancements are examples of (even periodic) derived stacks. Our notion of a derived stack is a special case of the notion of
a nonconnective spectral Deligne--Mumford stack in Lurie's DAG series (see \cite[8.5 and 8.42]{DAGss} for his definition). We
prefer to spell this special case out (informally) for the convenience of the reader.
\begin{definition}\label{DS}A \textit{derived stack} $\XX$ will be for us a
Deligne--Mumford stack $X$ together with a sheaf of $E_\infty$-ring spectra
$\otop = \otop_\XX$ on $\affx$ and an isomorphism $\pi_0 \otop_{\mathfrak{X}}
\cong \mathcal{O}_X$. Here $\pi_i \otop_{\mathfrak{X}}$ is the sheaf $U \mapsto \pi_i \left( \otop_{\mathfrak{X}}(U)\right)$ on $\affx$. 
Furthermore, one requires $\pi_i \otop_{\mathfrak{X}} $ to be quasi-coherent as
an $\mathcal{O}_X$-module. 

The derived stack $\XX$ is called \textit{even periodic} if $\omega
= \pi_2\otop_\XX$ is a line bundle %with inverse $\pi_{-2}\otop_\XX$ 
such that multiplication induces isomorphisms 
\[\pi_{2k}\otop_\XX \otimes \pi_{2l} \otop_{\XX}\cong
\pi_{2(k+l)}\otop_\XX, \quad k, l \in \mathbb{Z},\]
 and we have 
\[\pi_i\otop_\XX = 0, \quad \text{for }i \ \text{odd}.\] 
\end{definition}
Next, we want to define morphisms of derived stacks. If $f\colon Y \to X$ is a map of Deligne--Mumford stacks and $\FF$ a sheaf of spectra on $\affx$, then we can define a sheaf of spectra $f^{-1}\FF$ on $\aff_{/Y}$ as the sheafification of the presheaf given by
\[(f^{-1}_{pre}\FF)(U\to Y) = \hocolim_{U\to V \to X,\, V\to X \text{ \'etale}}\, \FF(V).\]
As this homotopy colimit is filtered, in a 2-categorical sense, it follows that if $\otop_\XX$ is a sheaf of
$E_\infty$-ring spectra, then $f^{-1}_{pre}\otop_\XX$ is a presheaf of $E_\infty$-ring spectra on $\aff_{/Y}$ (\cite[3.2.3.2]{higheralg}) and thus $f^{-1}\otop_\XX$ a sheaf of $E_\infty$-ring spectra (\cite[1.15]{DAGss}). Furthermore $f^{-1}\pi_*(\FF) \to\pi_*(f^{-1}\FF)$ is an isomorphism. 
\begin{definition}Let $\XX = (X,\otop_\XX)$ and $\YY = (Y,\otop_\YY)$ be
derived stacks. Then a morphism $f\colon \YY \to \XX$ of derived stacks
consists of a morphism $f_0\colon  Y \to X$ of the underlying Deligne--Mumford
stacks and a morphism $\alpha\colon f_0^{-1}\otop_\XX \to \otop_\YY$ of
$E_\infty$-ring spectra such that $\pi_0 \alpha$ coincides with the morphism
$f_0^{-1}\mathcal{O}_X \to \mathcal{O}_Y$ defined by $f_0$.\end{definition}
Given such a morphism $f\colon \YY \to \XX$ of derived stacks and an $\otop_\XX$-module $\FF$, we define $f^*\FF$ as $f^{-1}\FF \otimes_{f^{-1}\otop_\XX} \otop_\YY$.

In an evident manner, an even periodic enhancement of $X$ defines even periodic
enhancements of each stack \'etale over $X$. 
Given an even periodic enhancement, it follows that 
one \emph{can} evaluate the sheaf $\otop$ on 
any stack $Y$ \'etale over $X$. Namely, one defines
\[ \otop( Y) = \mathrm{holim}_{\spec R \to Y} \otop( \spec R),  \]
as $\spec R \to Y$ ranges over all the \'etale morphisms from affine schemes. 
Then $\otop(Y)$ is naturally an $E_\infty$-ring. 
Such spectra will generally fail (if $Y$ is not affine) to be Landweber-exact
or even periodic, and may exhibit intricate torsion phenomena. 
For example, we can consider $\otop( X)$ itself, which we can think of the ring
of ``functions.''
Below, we will write $\Gamma( \mathfrak{X}, \otop)$ for this.

\begin{remark}
It is also fruitful to consider derived stacks as representing some type of
moduli problem for (possibly nonconnective) structured ring spectra. This point
of view was used by Lurie to give a construction of the even periodic
enhancement  of the moduli
stack of elliptic curves in \cite{survey}, producing the spectrum of
topological modular forms $\TMF$.  
\end{remark}

\subsection{Quasi-coherent sheaves}
In this subsection, we will review the basics of quasi-coherent sheaves 
on derived stacks. Fix one such $\mathfrak{X}=({X}, \otop)$. Given an $E_\infty$-ring $A$, we write
$\md(A)$ for the stable $\infty$-category of $A$-modules. 

\begin{definition} 
\label{def:qcoh}
The $\infty$-category $\qcoh(\mathfrak{X})$ of quasi-coherent sheaves on $\mathfrak{X}$
is the homotopy limit
\[ \qcoh(\mathfrak{X}) \stackrel{\mathrm{def}}{=} \mathrm{holim}_{( \spec R \to X ) \in
\affx}  \md ( \otop( \spec R)).  \]

In other words, a quasi-coherent sheaf on $\mathfrak{X}$ assigns to every
\'etale map $\spec R \to X$, a module  $M_R$ over $\otop( \spec R)$, together
with equivalences
\[ M_R \otimes_{\otop(\spec R)} \otop(\spec R') \simeq M_{R'} , \]
for each (2-)commutative diagram 
\begin{equation} \label{2comm} \xymatrix{
\spec R' \ar[rr] \ar[rd] & &  \spec R \ar[ld] \\
& X
},\end{equation}
and 
appropriate compatibility data between these equivalences. 
\end{definition} 

We note that one is constructing a homotopy limit of presentable, stable
$\infty$-categories under colimit-preserving, exact functors. It follows that the
homotopy limit is itself a presentable, stable $\infty$-category where homotopy  colimits
are computed ``pointwise'' (see \cite[Proposition 5.5.3.13]{HTT}). 

Using the derived version of flat descent theory \cite{DAGdesc}, which states that the
assignment $A \mapsto \md(A)$ for an $E_\infty$-ring $A$ is a sheaf of
$\infty$-categories in the flat
topology on affine (derived) schemes, it follows that one can give an alternative
definition. Suppose first $X$ has affine diagonal.  Choose an \'etale
\emph{surjection} $\spec R \to X$. Then 
$\qcoh(\mathfrak{X})$ is the homotopy limit of the cosimplicial diagram
of $\infty$-categories
\[ 
\md( \otop(\spec R)) \rightrightarrows \md( \otop(\spec R \times_{{X}}
\spec R)) 
\triplearrows 
\dots . 
\]
If the diagonal of $X$ is not affine, then one should use an \'etale hypercover
rather than a Cech cover. 

Let $\mathcal{F}$ be a quasi-coherent sheaf on $\mathfrak{X}$. Then, for each
$k$, the
assignment 
\[ (\spec R \to X) \in \affx \mapsto \pi_k \left( \mathcal{F}( \spec R)\right) , \]
defines a quasi-coherent sheaf $\pi_k \mathcal{F}$ on the \emph{ordinary} stack $X$: that is, it
assigns an $R$-module (in the classical sense) to each \'etale map $\spec R \to
X$, together with appropriate equivalences and compatibility data. We note that 
no further sheafification is required since we are working with affine schemes. Given a 2-commuting diagram
\eqref{2comm}, the map $\otop( \spec R) \to \otop(\spec R')$ is flat (even \'etale)
on homotopy groups, and it follows that one has canonical \emph{isomorphisms}
\[  \pi_k \left( \mathcal{F}( \spec R)\right) \otimes_{R} R' \simeq 
\pi_k \left( \mathcal{F}( \spec R')\right) .
\]
and thus $\pi_k\FF$ is quasi-coherent.
In the even periodic case, only $\pi_0$ and $\pi_1$ are necessary for bookkeeping, because 
\[ \pi_{n + 2k} \mathcal{F} \simeq \pi_n \mathcal{F} \otimes \omega^k,  \]
where $\omega = \pi_2\otop$. 

\begin{example} 
Let $T$ be a spectrum. Then one has a quasi-coherent sheaf $\otop \otimes T \in
\qcoh( \mathfrak{X})$, given by 
\[ ( \spec  R\to X ) \mapsto \otop( \spec R) \otimes T.  \]
In fact, the category $\qcoh(\mathfrak{X})$ (like any presentable, stable
$\infty$-category) is canonically \emph{tensored} over spectra in this way.

Suppose $\mathfrak{X}$ is an even periodic
refinement of a flat map $X \to M_{FG}$. 
Then the  homotopy groups $\pi_0( \otop \otimes T), \pi_1( \otop \otimes T)$ are given by the pull-back of the
$\mathbb{Z}/2$-graded sheaf $\mathcal{F}_*(T)$ on $M_{FG}$ to $X$ via
the given map $X \to M_{FG}$, since we have assumed that the diagram $\otop$
of $E_\infty$-rings lifts the diagram of Landweber-exact homology theories. 
\end{example}

These homotopy groups $\pi_k \mathcal{F}$ are important for several reasons;
one is that the homotopy groups of the \emph{global sections} 
\[ \Gamma( \mathfrak{X}, \mathcal{F}) = \mathrm{holim}_{(\spec R \to X) \in
\affx} \mathcal{F}(\spec R)  \]
of $\FF$ are the abutment of a
\emph{descent spectral sequence}
\[ H^i(X, \pi_j \mathcal{F} ) \implies \pi_{j-i} \Gamma(
\mathfrak{X, }\mathcal{F}). \]
We will sometimes abbreviate the descent spectral sequence to DSS. 

Let $\Gamma(\mathfrak{X}, \otop)$ be the 
$E_\infty$-ring of global sections of the structure sheaf. Then the global
sections functor on $\qcoh(\mathfrak{X})$ takes values in $\Gamma(\mathfrak{X}, \otop)$-modules. Indeed, one has a
functor of ``tensoring up''
\[ \md (\Gamma(\mathfrak{X}, \otop)) \to \qcoh(\mathfrak{X}),  \]
that sends an $\Gamma(\mathfrak{X}, \otop)$-module $M$ to the quasi-coherent
sheaf $$( \spec R \to X) \mapsto \otop( \spec R) \otimes_{\Gamma(\mathfrak{X},
\otop)} M.$$ The global sections functor is the right adjoint to ``tensoring up.''
The relation between these two $\infty$-categories given by this adjoint pair
is the main subject of this paper. 

In the rest of the next two subsections, we will discuss several important examples of even
periodic refinements. 

\subsection{Affine schemes}
We begin with the following basic observation: 
\begin{proposition} 
\label{affinecase}
Let $A$ be a Landweber exact, even periodic $E_\infty$-ring. Then the affine scheme $\spec
\pi_0 A $, together with the natural map $\spec \pi_0 A \to M_{FG}$, has a
canonical even periodic enhancement, and its category of quasi-coherent
sheaves is equivalent to $\md(A)$. 
\end{proposition}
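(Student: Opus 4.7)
The plan is to first construct the even periodic enhancement $\mathfrak{X}$ of $\spec \pi_0 A$ using Lurie's theorem on étale extensions, and then to deduce the equivalence $\qcoh(\mathfrak{X}) \simeq \md(A)$ by observing that the affine étale site $\affx$ has a terminal object, causing the homotopy limit that defines $\qcoh(\mathfrak{X})$ to collapse.

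For the construction, set $X = \spec \pi_0 A$. For each étale map $\pi_0 A \to R$, Lurie's theorem on étale extensions of $E_\infty$-rings (\cite{higheralg}) produces an essentially unique $E_\infty$-$A$-algebra $A_R$, étale over $A$, with $\pi_0 A_R \cong R$; define $\otop(\spec R) := A_R$. The essential uniqueness of the étale lift makes this assignment functorial up to contractible choice, and flat descent for $E_\infty$-rings (\cite{DAGdesc}) shows that it is in fact a sheaf on $\affx$. Since $A \to A_R$ is étale, one has $\pi_* A_R \cong \pi_* A \otimes_{\pi_0 A} R$; this shows that $A_R$ is even periodic and that $\Spf A_R^0(\mathbb{CP}^\infty)$ is the pullback of $\Spf A^0(\mathbb{CP}^\infty)$ along $\pi_0 A \to R$, hence agrees with the formal group classified by $\spec R \to X \to M_{FG}$. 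This verifies that $\mathfrak{X} = (X, \otop)$ is an even periodic enhancement, and the natural choice is essentially unique by the same uniqueness theorem.

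For the equivalence, \Cref{def:qcoh} gives
\[ \qcoh(\mathfrak{X}) = \mathrm{holim}_{(\spec R \to X) \in \affx} \md(\otop(\spec R)). \]
Because $X$ is affine, the identity $X \to X$ is a terminal object of the indexing category $\affx$, so the homotopy limit is computed by the value of the diagram at this terminal object, which is $\md(\otop(X)) = \md(A)$. The symmetric monoidal structure is preserved, since the tensor product on $\qcoh(\mathfrak{X})$ is computed pointwise as $\otimes_{\otop(\spec R)}$ and hence, at the terminal object, as $\otimes_A$. The main obstacle in this plan is the invocation of Lurie's theorem on étale extensions, which provides the functorial and sheaf-compatible construction of $\otop$; once this is in hand, both the enhancement property and the equivalence with $\md(A)$ are formal consequences.
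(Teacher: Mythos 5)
Your proof is correct and follows essentially the same route as the paper's: both construct $\otop$ on $\affx$ via the (functorial) topological invariance of the \'etale site (\Cref{topinv} / \cite[\S 8.4]{higheralg}), and both reduce the $\qcoh$-equivalence to the fact that the defining homotopy limit collapses. The paper treats the second part as immediate and does not spell it out; you make it explicit by observing that $\mathrm{id}_X$ is a terminal object of $\affx$, hence initial in $\affx^{op}$, so the limit computing $\qcoh(\mathfrak{X})$ is just $\md(\otop(X)) = \md(A)$, and your remark that the sheaf condition holds because one can cite flat descent is consistent with (if slightly stronger than needed for) the paper's observation that the sheaf condition is automatic since the homotopy groups of $\otop$ already form a sheaf.
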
 
\begin{proof} 
It suffices to show that for every \'etale $\pi_0 A$-algebra $A'_0$, there
exists an even periodic $E_\infty$-$A$-algebra $A'$ with the
property that 
$\pi_0 A' \simeq A'_0$, and that this construction can be done functorially in
$A'_0$. This follows from \S 8.4 of \cite{higheralg}, reviewed below, which implies
that the $\infty$-category of such $A$-algebras is equivalent to the discrete
category of \'etale $\pi_0 A$-algebras. 
\end{proof}

The basic result about $E_\infty$-rings needed for the above is the following
derived version of the ``topological invariance of the \'etale site,''
a proof of which appears in \S 8.4 of \cite{higheralg}:

\begin{theorem} \label{topinv}Let $R$ be an $E_\infty$-ring, and consider
the $\infty$-category $\einf_{R/}$ of $E_\infty$-rings under $R$. Let
$\einf_{R/}^{\mathrm{et}} \subset \einf_{R/}$ be the subcategory of \'etale
$R$-algebras: that is, those $R$-algebras $R'$ with the properties that:
\begin{enumerate}
\item $\pi_0 R'$ is \'etale over $\pi_0 R$. 
\item  $\pi_* R' \simeq \pi_* R \otimes_{\pi_0 R} \pi_0 R'$.\end{enumerate}

Then we have an equivalence of $\infty$-categories
\[ \einf_{R/}^{\mathrm{et}} \stackrel{\pi_0}{\simeq} \rng_{\pi_0R/}^{\mathrm{et}},\]
with the (discrete) category of \'etale $\pi_0R$-algebras. 
\end{theorem}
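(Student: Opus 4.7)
The plan is to prove that $\pi_0 \colon \einf_{R/}^{\mathrm{et}} \to \rng_{\pi_0 R/}^{\mathrm{et}}$ is both fully faithful and essentially surjective. The guiding principle is that an \'etale morphism of $E_\infty$-rings should be \emph{formally \'etale}, i.e., have vanishing relative cotangent complex, and that formal \'etaleness combined with flatness makes all deformation-theoretic obstructions vanish.

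\textbf{Step 1 (cotangent complex vanishes).} First I would establish that if $R \to R'$ is \'etale in the stated sense, then the relative cotangent complex $L_{R'/R}$ vanishes. The idea is to reduce to the discrete case: by flatness, $L_{R'/R} \otimes_{R'} \pi_0 R' \simeq L_{\pi_0 R'/\pi_0 R}$, which is zero because $\pi_0 R \to \pi_0 R'$ is \'etale in the classical sense. A connectivity/Postnikov argument, using that $L_{R'/R}$ is connective and its homotopy groups are $\pi_0 R'$-modules of the appropriate form, then forces $L_{R'/R} \simeq 0$.

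\textbf{Step 2 (full faithfulness).} Given two \'etale $R$-algebras $R', R''$, I want to show that the map of spaces
\[ \Map_{\einf_{R/}}(R', R'') \longrightarrow \Hom_{\rng_{\pi_0R/}}(\pi_0 R', \pi_0 R'') \]
is an equivalence, with the target viewed as a discrete space. The right-hand side being discrete corresponds to $\Map_{\einf_{R/}}(R',R'')$ having no higher homotopy, which I would extract from the vanishing $L_{R'/R} \simeq 0$: obstructions to lifting maps along the Postnikov tower of $R''$ lie in groups of the form $\mathrm{Ext}^*(L_{R'/R}, -)$ over $R'$, and likewise for the higher homotopy of the mapping space, so both obstructions and ambiguities vanish. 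Surjectivity on $\pi_0$ of mapping spaces (i.e., every classical $\pi_0 R$-algebra map lifts) then follows from the same obstruction-theoretic argument applied to the tower $\tau_{\leq n} R''$.

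\textbf{Step 3 (essential surjectivity).} Here I would construct, for each \'etale $\pi_0 R$-algebra $A_0$, an \'etale $E_\infty$-$R$-algebra $R'$ with $\pi_0 R' \cong A_0$. The construction proceeds by inductively lifting $A_0$ up the Postnikov tower of $R$: given a lift $R'_n$ over $\tau_{\leq n} R$, the obstruction to extending to a lift $R'_{n+1}$ over $\tau_{\leq n+1} R$ is a class in an Andr\'e--Quillen cohomology group whose coefficient module is killed by multiplication with elements of $L_{A_0/\pi_0 R}$, hence vanishes since $A_0$ is \'etale over $\pi_0 R$; the space of such lifts is similarly a torsor over a vanishing group and is therefore contractible. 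Taking the limit and invoking (convergence of) the Postnikov tower of $R$ produces the desired $R'$, and functoriality in $A_0$ follows from the contractibility of the space of lifts together with Step~2.

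\textbf{Main obstacle.} The delicate part is the obstruction theory of Step~3: one needs a robust theory of square-zero extensions of $E_\infty$-rings and the identification of obstructions to lifting algebras (not just modules) in terms of the cotangent complex. This is exactly what \cite[\S 8.4]{higheralg} provides, and so the cleanest route is to invoke that machinery rather than redevelop it; the remaining work is the verification (Step~1) that the \'etaleness hypotheses of the statement match the input required by that obstruction theory.
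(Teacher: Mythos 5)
The paper does not prove this statement itself; it cites \S 8.4 of \cite{higheralg}, whose argument is precisely the cotangent-complex and Postnikov-tower obstruction theory you sketch (vanishing of $L_{R'/R}$ via flat base change to the discrete case, then discreteness of mapping spaces and contractible lifting spaces). Your outline correctly reconstructs that route, so it matches the approach of the cited source.
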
 

In other words, if $A \to B$ is an \'etale morphism in $\einf$, then for any $
B' \in \einf$, we have a homotopy cartesian square
of spaces
\begin{equation} \label{fiberhomsquare} \xymatrix{
\hom_{\einf}(B, B') \ar[d] \ar[r] &  \hom_{\einf}(A, B') \ar[d]  \\
\hom_{\mathrm{Ring}}( \pi_0 B, \pi_0 B') \ar[r] & \hom_{\mathrm{Ring}}( \pi_0 A, \pi_0 B')
},\end{equation}
where both horizontal arrows are given by precomposition. 
 It will be useful to have the following slight generalization (and corollary) of \Cref{topinv}. 
\begin{corollary} 
\label{rem:etale}
Let $\mathcal{C}$ be an $\infty$-category, and 
\( F\colon \mathcal{C} \to \einf,  \)
be a functor to $E_\infty$-rings. Consider the composite $\overline{F} \colon \mathcal{C}
\to \einf \stackrel{\pi_0}{\to} \rng$, and consider an extension
\(  \overline{G} \in \mathrm{Fun}( \mathcal{C} \times \Delta^1, \rng), \)
of $\overline{F}$, in the sense that the restriction of $\overline{G}$ to the
first vertex is identified with $\overline{F}$. Suppose that for each $x \in
\mathcal{C}$, the morphism $\overline{G}(x)$ is \'etale. Then there is a unique
extension 
\( G \in \mathrm{Fun}( \mathcal{C} \times \Delta^1 , \einf),  \)
of both $F$ and $\overline{G}$. 
\end{corollary}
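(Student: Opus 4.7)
My approach is to encode the statement as the claim that a certain forgetful functor of $\infty$-categories is an equivalence, and then feed the pair $(F, \overline{G})$ into that equivalence. Let $\mathrm{Fun}(\Delta^1, \einf)^{\mathrm{et}} \subseteq \mathrm{Fun}(\Delta^1, \einf)$ be the full subcategory spanned by the étale morphisms, and define $\mathrm{Fun}(\Delta^1, \rng)^{\mathrm{et}}$ similarly. Consider the functor
\[ \Phi\colon \mathrm{Fun}(\Delta^1, \einf)^{\mathrm{et}} \longrightarrow \einf \times_{\rng} \mathrm{Fun}(\Delta^1, \rng)^{\mathrm{et}} \]
that sends $(A \to B)$ to the pair $\bigl(A,\; \pi_0 A \to \pi_0 B\bigr)$, where the pullback is formed along evaluation at the $0$-vertex.

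The main step of the plan is to show that $\Phi$ is an equivalence of $\infty$-categories. Essential surjectivity is immediate from \Cref{topinv}: one object at a time, every étale $\pi_0 A$-algebra is the $\pi_0$ of a uniquely determined (up to contractible choice) étale $A$-algebra. For fully faithfulness, I would pick objects $A \to B$ and $A' \to B'$ of the source, unwind the mapping space in the functor category as a fibration over $\Map_{\einf}(A, A')$ whose fiber above a morphism $f\colon A \to A'$ is the space $\Map_{\einf_{A/}}(B, B')$ of $A$-algebra maps (with $B'$ given the $A$-algebra structure coming from $f$), and do the same for the target, where the analogous fiber above $f$ is $\Map_{\rng_{\pi_0 A/}}(\pi_0 B, \pi_0 B')$. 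The homotopy cartesian square \eqref{fiberhomsquare} applied to the étale map $A \to B$ then identifies these two fibers, which is enough to conclude that $\Phi$ is fully faithful.

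Granting that $\Phi$ is an equivalence, the corollary is a short piece of diagram chasing. The data $(F, \overline{G})$, together with the identification $\pi_0 \circ F \simeq \overline{G}|_{\mathcal{C} \times \{0\}}$, assembles into a functor $\mathcal{C} \to \einf \times_{\rng} \mathrm{Fun}(\Delta^1, \rng)^{\mathrm{et}}$, where one uses the hypothesis that every edge $\overline{G}(x)$ is étale to land in the étale full subcategory. Composing with a homotopy inverse of $\Phi$ and transposing along the adjunction $\mathrm{Fun}(\mathcal{C}, \mathrm{Fun}(\Delta^1, -)) \simeq \mathrm{Fun}(\mathcal{C} \times \Delta^1, -)$ yields the desired $G \in \mathrm{Fun}(\mathcal{C} \times \Delta^1, \einf)$; both existence and uniqueness of $G$ are inherited from the equivalence. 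The only genuine work is in verifying that $\Phi$ is an equivalence, which is just a careful bookkeeping exercise built on \Cref{topinv} and the cartesian square \eqref{fiberhomsquare}; I expect this to be the main obstacle only in the sense of tracking the fibrations cleanly, not in terms of ideas.
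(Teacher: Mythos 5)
Your proposal is correct and follows essentially the same route as the paper: both prove that the forgetful functor $\mathrm{Fun}^{\mathrm{et}}(\Delta^1, \einf) \to \einf \times_{\rng} \mathrm{Fun}^{\mathrm{et}}(\Delta^1, \rng)$ is an equivalence using essential surjectivity from \Cref{topinv} and fully faithfulness from the cartesian square \eqref{fiberhomsquare}, then apply $\mathrm{Fun}(\mathcal{C}, -)$. The only cosmetic difference is that you phrase the fully faithfulness step as comparing fibers of two fibrations over $\Map_{\einf}(A,A')$, whereas the paper writes out the mapping space in the arrow category explicitly as the fiber product $\hom_{\einf}(A,A') \times_{\hom_{\einf}(A,B')} \hom_{\einf}(B,B')$ and then substitutes; these are the same computation.
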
 
\begin{proof} 
\newcommand{\fet}{\mathrm{Fun}^{\mathrm{et}}}
Let $\fet(\Delta^1, \einf)$ denote the full subcategory of $\mathrm{Fun}(\Delta^1,
\einf)$ spanned by those morphisms of $E_\infty$-rings which are \'etale. 
Define $\fet( \Delta^1, \mathrm{Ring})$ similarly.
Then \Cref{topinv} gives us that the natural functor
\[ \fet( \Delta^1, \einf) \to \einf \times_{\mathrm{Ring}}
\fet(\Delta^1, \mathrm{Ring}), \quad (A \mapsto B) \mapsto \left\{A,
\pi_0 A, \pi_0 A \to \pi_0 B\right\}  \]
is  an equivalence of $\infty$-categories.

Indeed, the existence part of \Cref{topinv}
gives essential surjectivity. 
To see full faithfulness, consider two objects $A \to B, A' \to B'$ in
$\fet(\Delta^1, \einf)$. 
Then 
\begin{align*} \hom_{\fet (\Delta^1, \einf)}( (A \to B), (A' \to B')) 
& \simeq   \hom_{\einf}(A, A')\times_{\hom_{\einf}(A, B')}\hom_{\einf}(B, B') \\
& \simeq \hom_{\einf}(A, A') \times_{\hom_{\mathrm{Ring}(\pi_0 A, \pi_0 B')}} 
\hom_{\mathrm{Ring}}(\pi_0 B, \pi_0 B') ,\end{align*}
where the last equivalence holds because \eqref{fiberhomsquare} is 
homotopy cartesian. 
This shows that our functor is fully faithful. 

Finally, we find that 
\[ \mathrm{Fun}(\mathcal{C}, \fet(\Delta^1, \einf)) 
\simeq \mathrm{Fun}(\mathcal{C}, \einf) \times_{\mathrm{Fun}(\mathcal{C},
\mathrm{Ring}) 
}
\mathrm{Fun}(\mathcal{C}, \fet(\Delta^1, \mathrm{Ring}))
\]
is an equivalence of $\infty$-categories, which is equivalent to the desired statement. 
\end{proof} 

In other words, the ``topological invariance of
the \'etale site'' can be done functorially.

\begin{example} 
Let $G$ be a group acting on an $E_\infty$-ring $R$. Suppose given an \'etale extension $T_0$ of
$\pi_0 R$. Suppose $T_0$ is given a $G$-action in such a way that 
\[ \pi_0 R \to T_0  \]
is $G$-equivariant. Then the \'etale extension $R \to T$ constructed in
\Cref{topinv} canonically has a $G$-action in view of \Cref{rem:etale}. 
\end{example} 
\subsection{Group actions}
\label{group:actions}
In this subsection, we give the most basic non-affine example of an even periodic
refinement. 

Let $R$ be an even periodic, Landweber-exact $E_\infty$-ring, and let $G$ be a
finite group acting on $R$ (in the $\infty$-category of $E_\infty$-rings). 
Then we get a map from even periodicity,
$\spec \pi_0 R \to M_{FG}$, which has canonically the structure of a $G$-equivariant
map: that is,   $G$ acts compatibly on the formal group. Consequently, we get a map of stacks,
\[ (\spec \pi_0 R)/G \to M_{FG} .\]

\newcommand{\spf}{\mathrm{Spf}}
For example, take $R$ to be complex $K$-theory $KU$. Then there is a
$\mathbb{Z}/2$-action on $KU$ coming from complex conjugation of vector
bundles, which can be made into a $\mathbb{Z}/2$-action in $E_\infty$-rings.
At the level of formal groups, one has
\[ \spf KU^0 ( \mathbb{CP}^\infty) \simeq \widehat{\mathbb{G}_m},  \]
i.e., the formal group of $KU$ is the formal multiplicative group, which is
classified by a flat map
\[ \spec \mathbb{Z} \to M_{FG},  \]
so that $KU$ is Landweber-exact. 
The $\mathbb{Z}/2$-action on $KU$ corresponds to the involution of
$\widehat{\mathbb{G}_m}$ given by $x \mapsto x^{-1}$. In particular, one
obtains a map 
\[ B \mathbb{Z}/2 \to M_{FG}.  \]
This map takes a $\mathbb{Z}/2$-torsor over a scheme $\spec R$ and outputs the 
formal completion of  the associated 
\emph{one-dimensional torus} over $\spec R$, not necessarily split, in such a
way that the $\mathbb{Z}/2$-action on a torsor maps to the
$\mathbb{Z}/2$-action on the torus given by inversion. Since $\mathrm{Aut}(
\mathbb{G}_m) \simeq \mathbb{Z}/2$, the stack $B \mathbb{Z}/2$ classifies
precisely one-dimensional tori.

The next result shows that we can obtain an even periodic refinement of stacks
such as $B \mathbb{Z}/2$. 
\begin{proposition} \label{gpquotient}
If $R$ and $G$ are as above and $R$ is Landweber exact, then there is a canonical even periodic
refinement of $(\spec \pi_0 R)/G \to M_{FG}$. 
\end{proposition}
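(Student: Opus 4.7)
The plan is to build $\otop$ on $\affx$ for $X = (\spec \pi_0 R)/G$ by first enhancing the atlas $p\colon \spec \pi_0 R \to X$ $G$-equivariantly and then descending by taking $G$-homotopy fixed points.

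First I would produce a sheaf $\otop_0$ of even periodic $E_\infty$-rings on $(\spec \pi_0 R)^{\mathrm{et}}$, as in \Cref{affinecase}, but additionally equipped with a $G$-action compatible with the $G$-action on $\spec \pi_0 R$. The functorial topological invariance of \Cref{rem:etale}, applied to the constant functor $BG \to \einf$ at $R$, lifts the $G$-equivariant diagram of \'etale $\pi_0 R$-algebras to a $G$-equivariant diagram of \'etale $R$-algebras; the resulting assignment $T \mapsto \widetilde{T}$ is functorial in morphisms of \'etale $\pi_0 R$-algebras and $G$-equivariant, so yields a $G$-equivariant sheaf $\otop_0$. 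For each affine \'etale $U = \spec S \to X$, I would then form the pullback $V = \spec \pi_0 R \times_X U$. Since $p$ is affine (being a $G$-torsor), $V = \spec T$ is affine, \'etale over $\spec \pi_0 R$, and carries a free $G$-action with $T^G \cong S$. Define
\[
\otop(U) := \otop_0(V)^{hG}.
\]
Functoriality in $U$ and the sheaf property on $\affx$ follow from the functoriality of the pullback and from the fact that $(-)^{hG}$ commutes with limits.

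The main verification is that $\pi_*\otop(U) \cong \pi_* R \otimes_{\pi_0 R} S$, which yields the identification $\pi_0 \otop \simeq \mathcal{O}_X$, even periodicity with $\omega$ the descent of $\pi_2 R$ to $X$, and the correct formal group. The key input is that $G$ acts freely on $V$, so $T$ is \'etale Galois over $S$ with group $G$: hence the homotopy fixed point spectral sequence
\[
E_2^{s,t} = H^s\bigl(G; \pi_t R \otimes_{\pi_0 R} T\bigr) \Rightarrow \pi_{t-s} \otop_0(V)^{hG}
\]
collapses to the zero line by faithfully flat Galois descent, giving $\pi_t \otop(U) = (\pi_t R \otimes_{\pi_0 R} T)^G = \pi_t R \otimes_{\pi_0 R} S$. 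Since $\otop(U)$ is then a Landweber-exact even periodic ring spectrum whose homotopy is the pullback of $\pi_* R$ along $\spec S \to (\spec \pi_0 R)/G$, its formal group is automatically the pullback of the formal group classified by $U \to X \to M_{FG}$.

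I expect the main obstacle to be the bookkeeping around $G$-equivariance: promoting the pointwise statement of \Cref{topinv} to a functorial construction that yields a genuine sheaf of $G$-equivariant $E_\infty$-rings rather than merely a compatible system of $E_\infty$-rings with $G$-actions defined only up to homotopy. This is precisely what \Cref{rem:etale} supplies---it upgrades the lifting from the level of objects to the level of arbitrary diagrams, including those indexed by $BG$---so the equivariance comes for free once one sets up the functor properly.
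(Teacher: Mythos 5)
Your proposal is correct and follows essentially the same route as the paper: pull back the \'etale map to the atlas $\spec \pi_0 R$, use the functorial topological invariance of the \'etale site (\Cref{rem:etale}) to endow the resulting \'etale $R$-algebra with a compatible $G$-action, take $G$-homotopy fixed points, and observe that the homotopy fixed point spectral sequence collapses because the $G$-action on the pullback is free (Galois descent / coinduced representations). The only cosmetic difference is that you package the intermediate data as a $G$-equivariant sheaf $\otop_0$ on the atlas before taking fixed points, whereas the paper works directly one \'etale map at a time; also, the phrase ``constant functor $BG \to \einf$ at $R$'' is slightly misleading --- the functor encoding a nontrivial $G$-action is not constant, and to lift the whole sheaf in one go one should apply \Cref{rem:etale} with $\mathcal{C}$ encoding both $BG$ and the \'etale site --- but the intent is clear and matches the paper.
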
 

See also \cite{LN2} for the example of $KU$-theory. 
\begin{proof} 
Consider an \'etale map
\(  \spec T \to ( \spec \pi_0 R)/G,\) from which we form the pull-back
\[ \xymatrix{
\spec T' \ar[d] \ar[r] & \spec \pi_0 R \ar[d] \\
\spec T \ar[r] &  (\spec \pi_0 R)/G
}.\]

Since $\spec T' $ is \'etale over $\spec \pi_0 R$, we have defined an even
periodic, Landweber-exact $E_\infty$-ring $\otop( \spec T')$, which is
\'etale over $R$. 
Since the group $G$ acts on $R$, it follows (\Cref{rem:etale}) that it acts on $\otop( \spec T')$
in a compatible manner; we set
\[ \otop( \spec T ) \stackrel{\mathrm{def}}{=} \otop( \spec T')^{h G}.  \]

Since $G$ acts freely on $\spec T'$ (that is, the map $\spec T' \to \spec T$
is a $G$-torsor),  it follows that 
there is no higher cohomology for the $G$-action on $\pi_* \otop( \spec T')$. 
This is a consequence of the fact that $T \to T'$ is a $G$-Galois extension
of commutative rings: that is, after the faithfully flat base-change $T \to T'$,
we have an equivalence of $T'$-modules with $G$-action, $T' \otimes_T T'
\simeq \prod_G T'$. Moreover, coinduced representations of $G$ have no higher cohomology. 
The homotopy fixed point spectral sequence thus 
degenerates and we get
\[ \pi_* \otop( \spec T) \simeq \left(\pi_* \otop( \spec T')\right)^G,  \]
which implies that $\otop( \spec T')$ is the desired even periodic, Landweber
exact $E_\infty$-ring. 

This procedure thus gives, for any \'etale map $\spec T \to (\spec \pi_0 R)/G$,
a even periodic, Landweber-exact $E_\infty$-ring $\otop(\spec T)$, and this is the
structure sheaf for the even periodic refinement of $(\spec \pi_0 R)/G$ desired. 
\end{proof}

This result has a converse. If $\mathfrak{X}=({X}, \otop)$ is an even periodic
refinement of $X \to M_{FG}$, and if $X$ is the quotient $(\spec R)/G$ for a
finite group $G$ acting on an affine scheme $\spec R$, then $\mathfrak{X}$ arises in this way from the $G$-action on the $E_\infty$-ring $\otop(
\spec R)$.

Let $R$ be an $E_\infty$-ring with a $G$-action as above, let
$\mathfrak{X}$ be the associated even periodic refinement of $\spec \pi_0 R$,
and let $\mathfrak{Y}$ be the associated even periodic refinement of $(\spec
\pi_0 R )/G$. The next result describes quasi-coherent sheaves on
$\mathfrak{Y}$ in terms of $\mathfrak{X}$. 
Note first that since $G$ acts on $R$, it acts on the stable $\infty$-category
$\mod(R)$, in symmetric monoidal $\infty$-categories. 

\begin{proposition}\label{GaloisDesc}
One has equivalences of symmetric monoidal $\infty$-categories $$\qcoh( \mathfrak{Y}) \simeq \qcoh( \mathfrak{X})^{h G} \simeq
\mod(R)^{hG}. $$
\end{proposition}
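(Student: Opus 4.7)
The plan is to combine the affine identification from \Cref{affinecase} with derived étale descent along the $G$-torsor $\mathfrak{X}\to\mathfrak{Y}$.

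First, for the second equivalence $\qcoh(\mathfrak{X})^{hG}\simeq \md(R)^{hG}$: by \Cref{affinecase}, the ``tensor up'' functor $M \mapsto \otop_{\mathfrak{X}}\otimes_R M$ is a symmetric monoidal equivalence $\md(R)\simeq\qcoh(\mathfrak{X})$. This equivalence is natural in the input $E_\infty$-ring, so the $G$-action on $R$ (together with the induced $G$-action on $\mathfrak{X}$ coming from \Cref{rem:etale}) transports to a compatible $G$-action on $\qcoh(\mathfrak{X})$ in the $\infty$-category of symmetric monoidal presentable stable $\infty$-categories. Taking homotopy fixed points then yields the claim.

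For the first equivalence $\qcoh(\mathfrak{Y})\simeq \qcoh(\mathfrak{X})^{hG}$, I would invoke derived flat descent \cite{DAGdesc}. The map $\mathfrak{X}\to\mathfrak{Y}$ is an étale surjection with affine diagonal (the latter since $G$ is finite), so $\qcoh(\mathfrak{Y})$ is the totalization of the \v{C}ech cosimplicial diagram $[n]\mapsto \qcoh\bigl(\mathfrak{X}^{\times_{\mathfrak{Y}}(n+1)}\bigr)$. Because $\mathfrak{X}\to\mathfrak{Y}$ is a $G$-torsor, the $(n+1)$-fold fiber product is identified with $\mathfrak{X}\times G^n$, a disjoint union of $|G|^n$ copies of $\mathfrak{X}$, and hence $\qcoh\bigl(\mathfrak{X}^{\times_{\mathfrak{Y}}(n+1)}\bigr)\simeq \qcoh(\mathfrak{X})^{\times G^n}$. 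This is exactly the cobar cosimplicial diagram whose totalization computes $\qcoh(\mathfrak{X})^{hG}$, and the resulting equivalence is symmetric monoidal because the descent equivalence of \cite{DAGdesc} is.

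The main obstacle will be matching the \v{C}ech face and degeneracy maps with those of the cobar diagram for $\qcoh(\mathfrak{X})^{hG}$. Concretely, one must check that the \v{C}ech nerve of the Galois cover $\mathfrak{X}\to\mathfrak{Y}$ coincides, as a simplicial object in derived stacks, with the bar construction $\mathfrak{X}\times G^\bullet$ for the $G$-action on $\mathfrak{X}$. This is a standard property of $G$-torsors in an $\infty$-topos, but it is the step where all the coherent $G$-equivariance supplied by \Cref{rem:etale} is genuinely needed to promote the ``pointwise'' identification above to an identification of the full cosimplicial objects.
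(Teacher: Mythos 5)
Your proof is correct and follows essentially the same approach as the paper: identify $\qcoh(\mathfrak{Y})$ with the totalization of the \v{C}ech cosimplicial diagram for the \'etale cover $\spec \pi_0 R \to (\spec \pi_0 R)/G$, then observe that because this cover is a $G$-torsor, the \v{C}ech nerve is the bar construction, so the cosimplicial diagram is the usual cobar construction computing homotopy $G$-fixed points. The paper collapses your two equivalences into one step, since at each level of the \v{C}ech nerve the stacks are affine and $\qcoh$ is by definition the module category, but the substance is identical, including the (unproved but standard) identification of simplicial objects you flag at the end.
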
 
\begin{proof} 
This is a formal descent-theoretic statement: in an appropriate $\infty$-category of derived
stacks, $\mathfrak{Y}$ is the homotopy quotient $(\mathfrak{X})_{hG}$, and the
construction $\qcoh$ is defined so as to send homotopy colimits to homotopy limits (of stable
$\infty$-categories). Let us prove it directly in our setup. 

We have an \'etale cover $\spec \pi_0 R \to (\spec \pi_0 R)/G$, and therefore
\[ \qcoh( \mathfrak{Y}) \simeq \mathrm{holim} \left( 
\mod( \otop( \spec \pi_0 R) ) \rightrightarrows \mod( \otop( \spec \pi_0 R \times_{(\spec
\pi_0 R )/G} \spec \pi_0 R)) \triplearrows \dots \right).
\]
Since $\spec \pi_0 R \to (\spec \pi_0 R)/G$ is a $G$-torsor, the iterated fiber
products that appear in the above construction are precisely 
\[ G \times G \times \dots \times \spec \pi_0 R,  \]
the rings in question are $\prod_{G^n} R$, and the above cosimplicial diagram
is the usual cobar construction for homotopy fixed points: the
construction $R \mapsto \mod(R)$ sends products in $R$ to products of
$\infty$-categories.  
\end{proof}

In other words, to give a quasi-coherent sheaf on $\mathfrak{Y}$ is equivalent
to giving an $R$-module $M$, together with a $G$-action on $M$ intertwining the
$G$-action on $R$.

\begin{example}
\label{KO:example}
The most basic example of all this comes from the $\mathbb{Z}/2$-action on $KU$
described above. 
By \Cref{gpquotient}, it endows the stack $B\mathbb{Z}/2$ of one-dimensional
tori with an even periodic refinement. The global sections of the structure
sheaf give $KU^{h \mathbb{Z}/2} \simeq KO$. 

The $\infty$-category of quasi-coherent sheaves on the derived version 
of $B \mathbb{Z}/2$ is precisely $\md(KU)^{h \mathbb{Z}/2}$, where the
$\mathbb{Z}/2$-action is by complex conjugation on $KU$-modules: it takes a
$KU$-module $M$ and ``twists'' the $KU$-action by $\Psi^{-1}$. We will show later (as is
well-known) that this is
equivalent to the $\infty$-category $\md(KO)$. 
\end{example}

\begin{example}[Classical Galois descent] \label{galoisdescclass}
In classical commutative algebra, recall that if $R \to R'$ 
is a morphism of rings which is a $G$-torsor for a finite group $G$ (or rather,
$\spec R' \to \spec R$ is a $G$-torsor), then we have an equivalence between
the category of (ordinary) $R$-modules and the homotopy fixed points of the
$G$-action on the category of $R'$-modules. 

Namely, given an $R$-module $M$, we can form the tensor product $M \otimes_R
R'$, which acquires a $G$-action with $G$ acting on the second factor.
Conversely, given an $R'$-module $M'$ with a compatible $G$-action, the
$G$-fixed points $M'^{G}$ define an $R$-module, which is the inverse of the
previous functor. 

This equivalence persists at the level of derived $\infty$-categories, with homotopy
fixed points replacing fixed points. 
\end{example} 

\subsection{Coarse moduli spaces}
It is crucial for our purposes to give criteria for when an algebraic
(Artin) stack has
finite cohomological dimension. In a quasi-compact and separated setting, every
scheme and even every algebraic space has finite cohomological dimension. The
best approximation of an algebraic stack by an algebraic space is the coarse
moduli space. Later in this subsection we will define the notion of tameness,
which allows us to conclude that an algebraic stack already has finite
cohomological dimension, by relating it to its coarse moduli space. Throughout the subsection we choose implicitly a base scheme $S$. 

Recall first that a \emph{coarse moduli space} of an algebraic stack $X$ is an algebraic space $Y$ together with a map $f\colon 
X\to Y$ which
\begin{enumerate}
 \item is initial among all maps from $X$ to algebraic spaces, and
 \item induces a bijection $\pi_0 X(\Spec k) \to \pi_0 Y(\Spec k)$ for every algebraically closed field $k$, where $\pi_0$ denotes the set of isomorphism classes.
\end{enumerate}

The following was first proven by Keel and Mori (\cite{Keel-Mori}) and reformulated by Brian Conrad in \cite{Con05}.

\begin{theorem}\label{Coarse}Let $X$ be an algebraic stack locally of finite presentation over a
base scheme $S$ with finite inertia stack $\pi\colon X \times_{X\times_S X} X\to X$.
Then $X$ has a coarse moduli space $f\colon X \to Y$. The algebraic space $Y$ is
separated if $X$ is separated, and the map $f$ is proper and quasi-finite.
Moreover, the formation of coarse moduli spaces commutes with flat base change.  \end{theorem}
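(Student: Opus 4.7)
The plan is to construct the coarse moduli space étale-locally as an invariant-theoretic quotient and then glue. First I would use the hypothesis that the inertia stack $I_X = X \times_{X \times_S X} X$ is finite over $X$ to control the automorphism groups of geometric points: each $G_x := \Aut(x)$ is finite. The heart of the argument is an étale-local presentation theorem, which I would prove via an Artin-type approximation / Luna slice-type result: for any point $x \in X$ with residue field $\kappa(x)$ and residue gerbe $BG_x$, there exist an affine scheme $U = \Spec A$ equipped with a $G_x$-action, a fixed geometric point $u \in U$ with stabilizer all of $G_x$, and an étale morphism
\[ [U/G_x] \to X \]
inducing an isomorphism of residue gerbes at $u$ and $x$. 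This is obtained by choosing a smooth presentation of $X$ through $x$, restricting to a suitably chosen $G_x$-invariant locally closed subscheme, and then using Artin approximation to pass from a formal neighborhood to an étale one.

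Next I would observe that for such a local model, the coarse moduli space of $[U/G_x]$ is simply $\Spec A^{G_x}$. The invariant ring $A^{G_x}$ sits as a subring over which $A$ is module-finite (every element of $A$ is integral over $A^{G_x}$, satisfying the characteristic polynomial of its $G_x$-orbit), so $\Spec A^{G_x}$ is of finite type, and the map $[U/G_x] \to \Spec A^{G_x}$ is universal for morphisms to algebraic spaces while inducing a bijection on geometric isomorphism classes. It is moreover proper (since $U \to \Spec A^{G_x}$ is finite and $U \to [U/G_x]$ is a finite étale cover) and quasi-finite.

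Then I would glue. Two local charts $[U_i/G_i] \to X$ ($i=1,2$) give candidate patches $\Spec A_i^{G_i}$, and the étaleness of $[U_1/G_1] \times_X [U_2/G_2] \to [U_i/G_i]$ translates, via the finiteness of the group quotient and the fact that taking invariants by a finite (étale-locally constant) group commutes with flat base change, into an étale morphism of the invariant rings. This produces an étale equivalence relation on the disjoint union $\bigsqcup \Spec A_i^{G_i}$ whose quotient is an algebraic space $Y$ together with a map $f\colon X \to Y$. The universal property, separatedness (deduced from the diagonal of $X$), quasi-finiteness, and properness of $f$ are all étale-local on $Y$ and hence follow from the local model $[U/G] \to \Spec A^G$; flat base change likewise reduces to the classical statement $(A \otimes_{A^G} B)^G \cong B$ for flat $A^G$-algebras $B$.

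The main obstacle is the first step, the étale-local presentation as $[U/G_x]$ near an arbitrary point. This is precisely where the finiteness of the inertia stack is used in a nontrivial way, and where one must invoke Artin's approximation theorem; the rest of the argument is essentially classical invariant theory together with descent in the étale topology.
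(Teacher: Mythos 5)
The paper does not prove this theorem at all: it states it and cites Keel--Mori \cite{Keel-Mori} and Conrad \cite{Con05}, so there is no in-paper argument for your sketch to be compared against. I will therefore comment on the sketch on its own terms.

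You correctly identify the crux --- an \'etale-local presentation --- and the rest of your outline (local invariant-theoretic quotients, descent of properness/quasi-finiteness/separatedness, flat base change) is indeed the Keel--Mori strategy. But the local presentation you ask for, namely an \'etale chart $[U/G_x]\to X$ with $G_x=\Aut(x)$ a \emph{finite group} acting on an affine scheme $U$, is a genuinely harder theorem than the one you are trying to prove, and it is not available in the generality of the statement. The theorem is about arbitrary algebraic stacks with finite inertia, not only Deligne--Mumford stacks. In positive characteristic the stabilizer group schemes can be non-\'etale (e.g.\ $\mu_p$, $\alpha_p$) or non-linearly-reductive (e.g.\ $\mathbb{Z}/p$, $\alpha_p$ over $\mathbb{F}_p$). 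Your invocation of a Luna-slice-type result is implicitly the Alper--Hall--Rydh local structure theorem, which requires linear reductivity of the stabilizers and postdates Keel--Mori and Conrad by two decades; it is not a preliminary reduction but a separate, deeper theorem. Your invariant-theory step (``characteristic polynomial of the $G_x$-orbit'') and your gluing step (``invariants by a finite (\'etale-locally constant) group'') also silently assume that the stabilizers are abstract finite groups, i.e.\ that $X$ is Deligne--Mumford. Conrad's proof is engineered precisely to avoid this: the local model is not a quotient $[U/G]$ by a group action but a quotient $[V/R]$ by a \emph{finite flat groupoid} $R\rightrightarrows V$ with $V$ affine, which one can always produce \'etale-locally from a presentation of the stack without any splitting; the local coarse space is then the spectrum of the subring of $\mathcal{O}(V)$ equalizing the two maps to $\mathcal{O}(R)$ (Grothendieck's invariant theory for finite flat equivalence relations, which does not require exhibiting a group). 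Passing from groupoids to group actions is exactly the step that fails outside the tame/DM setting. For the separated Deligne--Mumford stacks that actually occur in this paper your sketch is essentially sound, but as a proof of the theorem as stated it has a real gap at the local-presentation step, and the cited proofs take a materially different route there.
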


For example, every locally noetherian, separated Deligne--Mumford stack has
finite inertia. Indeed, by \cite[Lemme 4.2]{L-M00}, we know that the diagonal
$X \to X\times_S X$ of every Deligne--Mumford stack is quasi-finite. Since  $X$
is separated, the diagonal is proper; hence, the diagonal is finite. 

In the following, we will always assume implicitly that our algebraic stacks are locally of finite presentation over a base scheme $S$ and have finite inertia. 

A very convenient class of algebraic stacks is given by the so-called \textit{tame stacks} as studied in \cite{AOV08}. 

\begin{definition}An algebraic stack $X$ is called \textit{tame} if the map $f\colon X \to Y$ to its coarse moduli space induces an exact functor $f_*\colon \qcoh(X) \to \qcoh(Y)$.\end{definition}

The question remains how to decide whether a stack is tame. This was completely answered in \cite{AOV08}. We begin with a few preliminary definitions and propositions.
In the following, we ignore the notation from the introduction and, for
an ordinary stack $X$, we write $\qcoh(X)$ for the ordinary (abelian) category of
quasi-coherent sheaves on $X$. If $G$ is a group scheme over $X$, we let
$\qcoh^G(X)$ be the ordinary category of $G$-representations in $\qcoh(X)$. 

\begin{definition}A group scheme $G\to S$ is \textit{linearly reductive} if the
functor $\qcoh^G(S) \to \qcoh(S)$, $F\mapsto F^G$, sending an equivariant sheaf to its fixed points, is exact. Note that this is
equivalent to the tameness of the stack quotient $S/G$ as $\qcoh^G(S) \simeq \qcoh(S/G)$.\end{definition}

Recall that the datum of an affine group scheme $G$ over $\Spec R$ is
equivalent to that of a commutative Hopf algebra $\Gamma$ over $R$. The group
scheme $G$ is commutative if and only if $\Gamma$ is cocommutative. For
example, given a (discrete) abelian group $G'$, we can form the group algebra $R[G']$. The corresponding group scheme is called \textit{diagonalizable}. Examples include $\mu_n$ (with $G'=\Z/n\Z$). 
\begin{theorem}[{\cite[Theorem 2.19]{AOV08}}]Let $G\to S$ be a finite, flat group scheme of finite presentation. Then $G$ is linearly reductive if and only if fpqc locally, we can write $G$ as a semidirect product $\Delta \rtimes H$, where $\Delta$ is diagonalizable and $H$ is constant of an order prime to all residue characteristics of $S$. \end{theorem}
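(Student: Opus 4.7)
The proof naturally splits into the easy ``if'' direction and the substantive ``only if'' direction. For the ``if'' direction, note first that linear reductivity is itself an fpqc-local property: the exactness of $F \mapsto F^G$ can be checked after a faithfully flat cover of $S$, so we may assume $G = \Delta \rtimes H$ globally. A diagonalizable group scheme $\Delta$ is linearly reductive because any $\Delta$-representation canonically decomposes as a direct sum of character eigenspaces, and taking invariants simply selects the trivial eigenspace, which is exact. A constant group scheme $H$ of order invertible in all residue fields is linearly reductive by the Maschke averaging argument: the idempotent $\tfrac{1}{|H|} \sum_{h \in H} h$ is well-defined on $S$ and splits the inclusion of invariants. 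The functor $(-)^G$ factors as $((-)^\Delta)^H$, and since $H$ permutes the character eigenspaces of $\Delta$, taking $\Delta$-invariants is $H$-equivariant, making the composition exact.

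For the ``only if'' direction, I would first reduce to the case $S = \Spec \bar{k}$ for an algebraically closed field: fpqc-locality allows passing to strict henselizations of local rings of $S$, and the claimed local structure is preserved by arbitrary base change, so it suffices to establish a splitting at each geometric point and then show that splittings lift to an fpqc neighborhood. The classification over $\bar{k}$ of characteristic $p \geq 0$ proceeds by analyzing the connected-\'etale sequence
\[ 1 \to G^0 \to G \to \pi_0(G) \to 1, \]
in which $\pi_0(G)$ is automatically a constant finite group. Linear reductivity of $G$ forces $\pi_0(G)$ to have order prime to $p$, since a subgroup of order $p$ would make the augmentation sequence on the group algebra a nonsplit extension of the trivial representation, violating exactness of invariants. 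Similarly, $G^0$ must be linearly reductive; the core structural lemma is that a connected, linearly reductive, finite group scheme over $\bar{k}$ is necessarily of multiplicative type, hence diagonalizable. This is established by ruling out nontrivial unipotent/additive constituents such as $\alpha_p$ in the cotangent space at the identity, each of which gives an explicit nonsplit exact sequence of representations.

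Granting the structure of $G^0$ and $\pi_0(G)$, one needs to produce a splitting of the connected-\'etale sequence. Over $\bar{k}$, the obstruction lies in $H^2(\pi_0(G), G^0)$ computed in the fppf topology; since $|\pi_0(G)|$ is invertible and $G^0$ is of multiplicative type (hence divisible by every integer prime to $p$ on points), this cohomology vanishes and a splitting exists. To globalize, I would identify the functor on $S$-schemes parametrizing such semidirect-product presentations with a scheme $P$ over $S$. Using that the obstructions to lifting splittings through square-zero thickenings lie in the same vanishing cohomology, $P \to S$ is smooth (in particular, formally smooth), and smoothness provides sections after an fpqc, indeed \'etale, cover of $S$.

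The principal obstacle is the structural lemma that a connected linearly reductive finite group scheme over an algebraically closed field is of multiplicative type; this is not a formal consequence of exactness and requires an explicit analysis of the possible Dieudonn\'e-module or Hopf-algebra structure. A secondary difficulty is converting the pointwise splitting into a uniform fpqc-local splitting in family, for which the smoothness of the parameter space $P$, rather than mere pointwise existence, is essential.
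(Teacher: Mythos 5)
The paper does not prove this theorem at all; it simply cites it as \cite[Theorem 2.19]{AOV08}, so there is no in-paper proof to compare against. I will therefore assess your sketch on its own merits, measured against the argument in Abramovich--Olsson--Vistoli.

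Your ``if'' direction is correct and essentially standard: exactness of $(-)^G$ is fppf-local, $(-)^\Delta$ is exact by the eigenspace decomposition, $(-)^H$ is exact by Maschke's idempotent, and the invariants factor through the normal subgroup $\Delta$. One small simplification in the ``only if'' direction over $\bar{k}$: you invoke vanishing of $H^2(\pi_0(G),G^0)$ to split the connected--\'etale sequence, but over a perfect field this sequence \emph{always} splits (since $G_{\mathrm{red}}$ is a subgroup scheme mapping isomorphically onto $\pi_0(G)$); no cohomology is needed there. You correctly isolate the genuinely hard lemma --- that a connected linearly reductive finite group scheme over $\bar{k}$ is of multiplicative type --- and you are right that it requires structure theory (passing linear reductivity to subquotients and exhibiting an $\alpha_p$-quotient of any local-local constituent), not just exactness formalism.

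The real gap is the globalization. You propose a parameter scheme $P$ of ``semidirect-product presentations'' and assert its smoothness over $S$, but you neither construct $P$ nor verify that the relevant deformation functor is unobstructed; in particular, lifting the \emph{embedding} $\Delta_k \hookrightarrow G_k$ to a subgroup over a thickening is not a formal consequence of the rigidity of $\Delta_k$ as an abstract group scheme, and one must also track how the isomorphism type of the pair $(\Delta,H)$ varies over $S$. The argument in \cite{AOV08} instead reduces to a strictly henselian local base and lifts the fiber decomposition concretely: the Cartier dual of $\Delta_k$ is \'etale and lifts uniquely, hence $\Delta_k$ lifts to a diagonalizable subgroup $\Delta\subset G$; the quotient $G/\Delta$ is \'etale of tame order and becomes constant after a further \'etale extension; and a fibral-isomorphism criterion identifies $G$ with the resulting semidirect product. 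Your ``smooth parameter space'' idea is pointing in the right direction but would need the deformation-theoretic content of that argument spelled out to close the gap.
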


\begin{theorem}[{\cite[Theorem 3.2]{AOV08}} ]An algebraic stack $X$ is tame if and only if for every geometric point $\Spec k \to X$ and object $\xi\in X(\Spec k)$, the automorphism group scheme $\underline{\Aut}_k(\xi)$ is linearly reductive over $\Spec k$. Here, $\underline{\Aut}_k(\xi)$ is defined to be the scheme equivalent to the pullback of the inertia stack $X\times_{X\times_S X} X$ along the map $\Spec k \to X$ classifying $\xi$ and the group structure is induced by the diagonal $X \to X\times_S X$. \end{theorem}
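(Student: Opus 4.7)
The plan is to prove both directions separately, with the reverse implication being the more substantial.

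For the direction tame $\Rightarrow$ linearly reductive stabilizers, I would fix a geometric point $\xi \in X(\Spec k)$ and consider the residual gerbe $\mathcal{G}_\xi \hookrightarrow X$, which is isomorphic to $B\underline{\Aut}_k(\xi)$ over $\Spec k$. Since tameness is preserved under flat base change (as coarse moduli spaces commute with flat base change by \Cref{Coarse}) and passes to closed substacks of the fiber over a point, the gerbe $B\underline{\Aut}_k(\xi)$ is itself tame. Since its coarse moduli space is $\Spec k$, the exactness of the pushforward functor $\qcoh(B\underline{\Aut}_k(\xi)) \to \qcoh(\Spec k)$ is precisely the statement that the invariants functor $\qcoh^{\underline{\Aut}_k(\xi)}(\Spec k) \to \qcoh(\Spec k)$ is exact, i.e., linear reductivity.

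For the reverse direction, I would argue \'etale locally on the coarse moduli space $Y$. The key technical input is a local structure theorem: whenever all geometric stabilizers are linearly reductive, after an \'etale cover $\Spec A \to Y$, the pullback $X \times_Y \Spec A$ admits a presentation as a quotient stack $[\Spec B / G]$ where $G$ is a finite flat linearly reductive group scheme over $\Spec A$ acting on $\Spec B$ with $A = B^G$. To establish this, I would first reduce to the complete local case, constructing the group $G$ as a deformation of the geometric stabilizer $\underline{\Aut}_k(\xi)$ using the linear reductivity hypothesis (which ensures vanishing of the relevant obstruction groups in $H^2$), then descend from the complete local ring to an \'etale neighborhood via Artin approximation.

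Granted the local structure theorem, the conclusion follows formally: exactness of $f_*\colon \qcoh(X) \to \qcoh(Y)$ can be checked \'etale locally on $Y$, and in the local quotient model $[\Spec B/G] \to \Spec B^G$, the pushforward is identified with $M \mapsto M^G$ on the category $\qcoh^G(\Spec B)$. This functor is exact precisely because $G$ is linearly reductive over $\Spec A$, and linear reductivity is stable under the flat base change $\Spec A \to \Spec B^G$.

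The main obstacle is unquestionably the local structure theorem. Producing the flat linearly reductive group scheme $G$ over a neighborhood of a point, extending the geometric stabilizer, requires combining the deformation theory of finite flat group schemes with a careful application of Artin-style approximation techniques; linear reductivity must be used both to kill obstructions to deforming $G$ and to guarantee that the stack is locally a global quotient by this group, rather than a more general gerbe or twisted form.
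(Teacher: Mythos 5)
This theorem is cited verbatim from \cite{AOV08} and the paper supplies no proof of its own, so your proposal has to be compared against the argument in that reference. Your outline---the easy implication via the residual gerbe, the hard implication via an \'etale-local quotient structure theorem obtained by deforming the geometric stabilizer over the complete local ring and descending by Artin approximation---is precisely the strategy of \cite{AOV08}, and you correctly identify the local structure theorem as the main technical obstacle.

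One step in your forward direction does not go through as stated. Base-changing the coarse moduli map $f\colon X \to Y$ along a geometric point $\Spec k \to Y$ is \emph{not} a flat base change, so \Cref{Coarse}, which asserts commutation of coarse moduli formation only with flat base change, does not by itself let you conclude that $X \times_Y \Spec k \to \Spec k$ is again a tame stack with coarse space $\Spec k$. What is needed is the stronger statement, special to tame stacks, that coarse moduli formation commutes with \emph{arbitrary} base change (this is Corollary 3.3 of \cite{AOV08}); it is itself a consequence of tameness and has to be established before the reduction to the residual gerbe, rather than invoked from \Cref{Coarse}. Once this is in place, passing to the closed substack $\mathcal{G}_\xi \simeq B\underline{\Aut}_k(\xi)$ and using exactness and faithfulness of pushforward along the closed immersion does give tameness of the gerbe as you intend. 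A smaller caveat: the deformation theory in \cite{AOV08} is not organized around a vanishing $H^2$ obstruction group but rather around the structure theorem that a finite linearly reductive group scheme is, fppf-locally, a semidirect product $\Delta\rtimes H$ with $\Delta$ diagonalizable and $H$ constant of invertible order, both of which deform rigidly; your cohomological framing is morally right but would need to be made precise along these lines.
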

\begin{remark}Let $q: Y \to \Spec k$ be a $k$-scheme and $q^*\xi\in X(Y)$ be the pullback of $\xi$. Then $\underline{\Aut}_k(\xi)(Y)$ is isomorphic to the automorphism group of $q^*\xi$ in the groupoid $X(Y)$. 

Indeed, a morphism $Y \to  \spec k \times_{X\times_S X} X$ over $\spec k$ consists of the choice of $\eta \in X(Y)$ together with two isomorphisms $f: q^*\xi \to \eta$ and $g: q^*\xi \to \eta$ in $X(Y)$ agreeing in $S(Y)$. A morphism between $(\eta, f,g)$ and $(\eta',f',g')$ consists of $i:\eta \to \eta'$ such that $f'=if$ and $g' = ig$. Thus, every morphism $Y \to  \spec k \times_{X\times_S X} X$ over $\spec k$  is isomorphic to a unique $(q^*\xi, f: q^*\xi \to q^*\xi, \id: q^*\xi \to q^*\xi)$.
\end{remark}

Suppose $X$ is Deligne-Mumford, so that the automorphism group schemes of geometric points are
\'etale and thus constant. Then $X$ is tame if and only if the orders of the automorphism groups
 (at geometric points) are invertible on $X$. 

\begin{example}\label{Char0Tame}Let $X$ be a Deligne--Mumford stack over a field of
characteristic zero.  Then $X$ is tame. \end{example}

\begin{example}If $U$ is a scheme over a base scheme $S$ and $G$ a finite group
acting on $U$ such that $|G|$ is invertible on $S$,
the quotient stack $U/G$ is tame. For example, one can take the moduli stack of elliptic
curves over $\Z[\frac16]$. Indeed, $\mell[1/6] \simeq \mell(3)[\frac
16]/GL_2(\mathbb{F}_3)$, where $\mell(3)$ is the moduli \textit{scheme} of
elliptic curves with level $3$ structures.  \end{example}

Recall for the next proposition that a stack $X$ is called \textit{quasi-compact} if for every collection $\{f_i: U_i\to X\}_{i\in I}$ of open immersions such that $\coprod_{i\in I}f_i: U_i \to X$ is surjective, there exists a finite subset $J\subset I$ such that $\coprod_{j\in J}f_j: U_j \to X$ is still surjective. We will see in the next subsection that $X$ is quasi-compact if and only if its coarse moduli space is. 

\begin{proposition}\label{tbcd}Let $X$ be a tame algebraic stack that is quasi-compact and separated. Then there is a natural number $n$ such that $H^i(X; \FF) = 0$ for all $i>n$ and all quasi-coherent $\mathcal{O}_X$-modules $\FF$.\end{proposition}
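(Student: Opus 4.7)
The plan is to reduce the cohomological computation on $X$ to one on its coarse moduli space $Y$, using tameness to kill the higher direct images, and then invoke the fact that a quasi-compact, separated algebraic space has bounded cohomological dimension for quasi-coherent sheaves.

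First, by \Cref{Coarse}, the map $f\colon X \to Y$ to the coarse moduli space exists, is proper and quasi-finite, and $Y$ is a separated algebraic space. Quasi-compactness of $Y$ follows from that of $X$ since $f$ is surjective (this is the content of the very next subsection, which I would cite once it is in place).

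Second, I would show that $R^i f_* \mathcal{F} = 0$ for $i>0$ and every quasi-coherent $\mathcal{F}$ on $X$. The sheaf $R^i f_*\mathcal{F}$ is the sheafification on $Y$ of $U \mapsto H^i(X\times_Y U, \mathcal{F}|_{X\times_Y U})$, so it suffices to work étale-locally on $Y$. By the local structure theorem of Abramovich--Olsson--Vistoli (\cite{AOV08}), a tame stack is étale-locally on its coarse moduli space of the form $[\Spec A / G]$, where $G$ is a finite linearly reductive group scheme acting on $\Spec A$ and $Y$ locally equals $\Spec A^G$. For such a quotient presentation, $\qcoh([\Spec A/G]) \simeq \qcoh^G(\Spec A)$, and by the definition of linear reductivity the fixed-points functor is exact; it is then standard that all higher $G$-equivariant cohomology on an affine scheme vanishes. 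Hence $R^if_* \mathcal{F} = 0$ for $i>0$, and the Leray spectral sequence collapses to give
\[ H^i(X, \mathcal{F}) \cong H^i(Y, f_*\mathcal{F}) \]
for every quasi-coherent $\mathcal{F}$.

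Third, I would bound the cohomological dimension of $Y$. Because $Y$ is a quasi-compact and separated algebraic space, one can choose a scallop decomposition (in the sense of the Stacks Project) or, equivalently, an étale cover of $Y$ by a single affine scheme with affine diagonal of bounded length; the Čech-to-derived spectral sequence for such a cover provides a uniform bound $n$ on the cohomological dimension of $Y$ for quasi-coherent sheaves. Composing with $f_*$, the same $n$ works as a bound on the cohomological dimension of $X$.

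The main technical obstacle is step two: one needs the étale-local quotient presentation of tame stacks from \cite{AOV08} in order to pass the vanishing of higher cohomology of quasi-coherent sheaves from the affine case to $X$. Step three is classical but nontrivial; if one prefers, a similar bound can be extracted directly from the statement that for a quasi-compact, separated algebraic space $Y$, any finite affine étale atlas together with the vanishing of higher quasi-coherent cohomology on affine schemes gives the desired $n$ via Čech descent.
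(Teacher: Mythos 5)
Your proposal follows the same strategy as the paper: push forward to the coarse moduli space $Y$ via the Leray spectral sequence, use tameness to kill the higher direct images, and invoke the bounded cohomological dimension of the quasi-compact, separated algebraic space $Y$ (the paper cites \cite[072B]{stacks-project} directly for the last step). The one genuine difference is in how you establish $R^q f_* \FF = 0$ for $q>0$: the paper takes this essentially as immediate from its definition of tameness ($f_*$ is exact on quasi-coherent sheaves), whereas you re-derive it from the \'etale-local quotient presentation $[\Spec A/G]$ with $G$ linearly reductive coming from \cite{AOV08}. Your route is a bit longer but has the merit of addressing head-on the (mild) subtlety that exactness of $f_*$ on $\qcoh$ and vanishing of the sheaf-theoretic $R^q f_*$ are a priori computed with different injectives; either way, both proofs are correct and structurally identical.
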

\begin{proof}In the case of $X$ an algebraic space, this is \cite[072B]{stacks-project}.

In the general case, denote by $f\colon X\to Y$ the map to the coarse moduli space and let $\FF$ be a quasi-coherent sheaf on $X$. Then we have a Leray spectral sequence 
 \[H^p(Y; R^qf_*(\FF)) \Rightarrow H^{p+q}(X;\FF)\]
with $R^qf_*(\FF) = 0$ for $q>0$. The result follows as $Y$ is a quasi-compact
and separated algebraic space and thus has finite cohomological dimension.
\end{proof}

Now we want to introduce a relative version of tameness. For this, we will no longer assume our stacks to have finite inertia. 

\begin{definition}\label{DefinitionTame}Let $f\colon X \to Y$ be a morphism of stacks. We
call $f$ \textit{tame} if for every geometric point $\xi \colon \spec k \to X$, the kernel of the induced map $\underline{\Aut}_k(\xi) \to \underline{\Aut}_k(f(\xi))$ is finite and linearly reductive over $\spec k$.\end{definition}

If $X$ is Deligne-Mumford, this is equivalent to assuming that for every $\xi\in X(k)$ for an algebraically closed field $k$, the kernel
of the map $\Aut_{X(k)}(\xi) \to \Aut_{Y(k)}(f(\xi))$ has
order coprime to the characteristic of $k$. Indeed, the automorphism group scheme of $\xi$ is a discrete group (scheme). 

Recall for the next proposition that a map $X\to Y$ of stacks is \textit{quasi-compact} if for every map $\spec A \to Y$, the stack $X\times_Y \spec A$ is quasi-compact.

\begin{proposition}\label{tamebcd}Let $f\colon X \to Y$ be a quasi-compact, separated and tame morphism of stacks, where we assume $X$ to be algebraic, but for $Y$ only that the diagonal $Y\to Y\times_S Y$ is representable by an algebraic stack. Then $\spec A \times_Y X$ has finite cohomological dimension for every map $q\colon \spec A \to Y$.\end{proposition}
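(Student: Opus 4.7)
My plan is to set $Z = \spec A \times_Y X$ and then verify that $Z$ satisfies the hypotheses of \Cref{tbcd}, namely that $Z$ is a tame algebraic stack which is quasi-compact and separated; the desired bound on cohomological dimension will then follow immediately from that proposition.

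First, I would show that $Z$ is an algebraic stack. The stack $Z$ may be realized as the pullback of the diagonal $Y \to Y \times_S Y$ along the map $(f, q)\colon X \times_S \spec A \to Y \times_S Y$, and by hypothesis this diagonal is representable by algebraic stacks. Hence $Z$ is algebraic. The base change $p\colon Z \to \spec A$ of $f$ inherits quasi-compactness and separatedness, and since $\spec A$ itself is both quasi-compact and separated, the total space $Z$ is quasi-compact and separated as well.

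Next I would compute the automorphism group schemes at geometric points of $Z$, in order to apply the Abramovich--Olsson--Vistoli criterion. A geometric point $\eta\colon \spec k \to Z$ amounts to a triple $(a\colon \spec k \to \spec A,\, \xi \in X(\spec k),\, \phi\colon q\circ a \simeq f(\xi))$. Because $\spec A$ contributes no non-trivial automorphisms, an automorphism of $\eta$ is precisely an automorphism of $\xi$ in $X(\spec k)$ whose image under $f$ is the identity on $f(\xi)$. Thus $\underline{\Aut}_k(\eta)$ is the kernel of $\underline{\Aut}_k(\xi) \to \underline{\Aut}_k(f(\xi))$, which by the tameness hypothesis on $f$ is finite and linearly reductive. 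This establishes the linearly-reductive-stabilizer condition required for tameness of $Z$.

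The main obstacle I anticipate is promoting this pointwise information to the global statement that the inertia $I_Z \to Z$ is a finite morphism, as is needed for $Z$ to have a coarse moduli space via \Cref{Coarse} and to be tame in the Abramovich--Olsson--Vistoli sense. Since $\spec A$ is a scheme, $I_Z$ coincides with the relative inertia $I_{Z/\spec A}$, which is the pullback of the diagonal $Z \to Z \times_{\spec A} Z$ along itself. The separatedness of $p$ forces this diagonal to be proper, so $I_Z \to Z$ is proper; combined with the finiteness of geometric fibers supplied by the previous paragraph, $I_Z \to Z$ is proper with finite fibers, hence finite. With finite inertia and linearly reductive stabilizers in hand, $Z$ is a tame algebraic stack, and \Cref{tbcd} yields the required finite cohomological dimension.
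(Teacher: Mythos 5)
Your proof is correct and takes essentially the same route as the paper's: both realize $Z$ as a pullback along the diagonal of $Y$ to establish algebraicity, both identify the automorphism group schemes of geometric points of $Z$ with the kernels $\ker(\underline{\Aut}_k(\xi)\to\underline{\Aut}_k(f(\xi)))$ to import linear reductivity from the tameness hypothesis on $f$, both upgrade quasi-finite inertia to finite inertia using separatedness, and both finish by invoking \Cref{tbcd}. Your version is, if anything, slightly more explicit about why the inertia is finite (proper plus quasi-finite) and about why $Z$ is quasi-compact and separated, steps which the paper leaves more implicit.
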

We do not assume that $Y$ is an an Artin stack because the moduli stack of formal groups $M_{FG}$ is not an Artin stack. But it has still representable (even affine) diagonal. 

\begin{proof}Let $q\colon \spec A \to Y$ be a morphism. We have to show that $Z
= X \times_Y \spec A$ has finite cohomological dimension. First note that
$X\times_Y \spec A \cong (X\times_S \spec A) \times_{Y\times_S Y} Y$ is an algebraic stack. Let now $\xi\colon \spec
R \to Z$ be an $R$-valued point. This corresponds to a point $\xi_X\colon \spec
R \to X$, a point $\xi_A\colon \spec R \to \spec A$ and an isomorphism
$\phi\colon f(\xi_X) \to q(\xi_A)$. An automorphism of this is an automorphism
$\psi$ of $\xi_X$ such that $\phi\circ f(\psi) = \phi$. This is equivalent to
$\psi$ being in the kernel of $\underline{\Aut}(\xi)(R) \to
\underline{\Aut}(f(\xi))(R)$. In particular, it follows that $Z$ has
quasi-finite inertia and all automorphism group schemes of geometric points of
$Z$ are kernels of $\underline{\Aut}_k(\xi) \to \underline{\Aut}_k(f(\xi))$ for
geometric points $\xi$ of $X$. Since $Z$ is separated, $Z$ has actually finite inertia. Thus, $Z$ is tame. 
 By \Cref{tbcd} it follows that $Z$ has also finite cohomological dimension. 
\end{proof}

\subsection{The Zariski topology}
In this subsection, we will define and investigate the Zariski site of an
algebraic stack. This is important because certain properties only allow
Zariski descent and not \'etale or fpqc descent as we will see in Section \ref{ZarDescent}. 

\begin{definition}Let $X$ be an algebraic stack. The \textit{Zariski site} of $X$ is given by all open immersions into $X$ and open immersions between them, where a covering is a jointly surjective map. Recall here that a map $Y\to X$ is an open immersion if it is representable and for every map $Z\to X$ from a scheme, the map $Y\times_X Z \to Z$ is an open immersion.\end{definition}
We define an algebraic stack $X$ to be \emph{quasi-compact} if every Zariski cover of $X$ has a finite subcover. 

The Zariski site of an algebraic stack $X$ is actually always equivalent to the site
of open subsets of the \textit{underlying space} $|X|$ of $X$. We will first define the space $|X|$, following \cite{L-M00}, and then prove this equivalence.

The points of $|X|$ are equivalence classes of objects in the groupoids $X(\spec k)$ for $k$ a field. Two such objects $x_1 \in X(\spec k_1)$ and $x_2\in X(\spec k_2)$ are equivalent if there is a common field extension $K$ of $k_1$ and $k_2$ such that $(x_1)_K$ and $(x_2)_K$ are isomorphic in $X(\spec K)$. The open subsets of $|X|$ are those of the form $|U|$ for an open substack $U$ of $X$. Recall that substack means in particular that $U(Z)$ is a full subcategory of $X(Z)$ for every scheme $Z$. The construction $X \mapsto |X|$ is functorial (see \cite[Section 5]{L-M00} for details). 

If $X$ satisfies the conditions of Theorem \ref{Coarse}, the map $X\to Y$ in its coarse moduli space induces a homeomorphism $|X| \to |Y|$. Indeed, it is clearly a continuous bijection and it is also closed as $X\to Y$ is proper.

\begin{lemma}\label{PresLemma}Let $X$ be an algebraic stack and $U \to X$ be a
presentation, i.e., a smooth surjective map from an algebraic space. Then $|X|$ is the coequalizer of the maps $\pr_1,\pr_2\colon |U|\times_{|X|} |U| \to |U|$.\end{lemma}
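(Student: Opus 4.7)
I would prove this in two stages: first check that $|X|$ is the coequalizer of $\pr_1,\pr_2\colon |U\times_X U|\to |U|$ as a set, and then that the quotient topology on $|U|/{\sim}$ agrees with the topology on $|X|$ defined in the paper.

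\textbf{Set-theoretic coequalizer.} First I would show the natural map $q\colon|U|\to |X|$ is surjective: given any point of $|X|$, represented by some $\xi\colon\spec k\to X$, the pullback $U\times_X \spec k$ is a nonempty algebraic space, smooth over $\spec k$, and so has a point in some field extension $K/k$; the composite $\spec K\to U$ is a lift of $\xi$ to $|U|$. Next I would check that two points $u_1\colon\spec k_1\to U$ and $u_2\colon\spec k_2\to U$ have the same image in $|X|$ iff they lie in the image of a common point of $|U\times_X U|$. If they agree in $|X|$, then on some common extension $K$ of $k_1,k_2$ there is an isomorphism $(u_1)_K\simeq (u_2)_K$ in $X(\spec K)$; this isomorphism together with $(u_1)_K$ and $(u_2)_K$ gives a morphism $\spec K \to U\times_X U$, and its two projections recover the classes of $u_1,u_2$ in $|U|$. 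Conversely, any point of $|U\times_X U|$ manifestly gives two $|U|$-points with the same image in $|X|$.

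\textbf{Topology.} The map $q$ is continuous: the preimage of an open substack $V\subset X$ is represented by the open subspace $U\times_X V\subset U$. To see that $q$ is a quotient map, I would take a subset $W\subset |X|$ whose preimage $W_U = q^{-1}(W)$ is open in $|U|$ and produce an open substack $V\subset X$ with $|V|=W$. Let $V_U\subset U$ be the unique open subspace with $|V_U|=W_U$. Because $W_U$ is saturated with respect to the equivalence relation from the first part, we get
\[ \pr_1^{-1}(V_U) \;=\; \pr_2^{-1}(V_U) \]
as open subspaces of $U\times_X U$ (this can be tested on $|\cdot|$ using the bijection of the first part). By fpqc descent of open immersions applied to the smooth surjective cover $U\to X$, this invariant open subspace descends to an open substack $V\subset X$ with $U\times_X V = V_U$. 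By construction $q^{-1}(|V|)=|V_U|=W_U=q^{-1}(W)$, and since $q$ is surjective this forces $|V|=W$. Hence $W$ is open.

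\textbf{Main obstacle.} The only nontrivial point is the descent step, i.e.\ that an open subspace $V_U\subset U$ with $\pr_1^{-1}V_U=\pr_2^{-1}V_U$ in $U\times_X U$ descends to an open substack of $X$. This is a standard consequence of the fact that smooth morphisms (or even fpqc morphisms) are of effective descent for open immersions, but it is the one place where the presentation structure is really used; everything else is a direct unwinding of the definition of $|X|$ recalled from \cite{L-M00}.
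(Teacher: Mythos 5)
Your argument is correct, but on the topological step you take a genuinely different route from the paper. The paper simply cites that $|U|\to|X|$ is an open map (smooth morphisms are open, via \cite[Corollary 14.34]{G-W10} and \cite[Proposition 5.6]{L-M00}); a surjective continuous open map is automatically a quotient map, so the whole topology step collapses to a one-line citation. You instead run a descent argument for open immersions along the smooth presentation $U\to X$, producing the open substack $V$ from the saturated open $V_U\subset U$ directly. Both work. Your approach is more self-contained (it does not need the openness of smooth maps as an input, only descent of monomorphisms/open immersions, together with the identification of open subspaces of $U$ and $U\times_X U$ with open subsets of $|U|$ and $|U\times_X U|$ from \cite[03BZ]{stacks-project}); the paper's is shorter given the available citations. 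One small gap to close: the lemma is stated with $|U|\times_{|X|}|U|$, but you prove the coequalizer property for $|U\times_X U|$. Your ``iff'' in the set-theoretic step actually shows that $|U\times_X U|\to |U|\times_{|X|}|U|$ is surjective, and since the topology on a coequalizer depends only on the underlying set of relations, the two coequalizers agree; you should say this explicitly. (This is exactly the role of the paper's citation of \cite[Proposition 5.4(iv)]{L-M00} at the end of its proof.) Also note that ``fpqc descent'' is a slight misnomer here, since a smooth surjection need not be quasi-compact; fppf (or just smooth) descent of open immersions is what you want, and it is what you are in fact using.
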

\begin{proof}By \cite[Remarque 5.3]{L-M00}, the map $|U| \to |X|$ is surjective. By \cite[Corollary 14.34]{G-W10} and \cite[Proposition 5.6]{L-M00}, this map is also open. %Furthermore, as $X$ is the stackification of the groupoid given by $\pr_1,\pr_2\colon U\times_X U \to U$
By the definition of the pullback, the images of two points 
\[y_1,y_2 \in U(\spec k)\]
 are isomorphic in $X(\spec k)$ if and only if there are isomorphic 
\[z_1, z_2 \in (U\times_X U)(\spec k)\]
 with $\pr_1(z_1) = y_1$ and $\pr_2(z_2) = y_2$. Thus, the images of $y_1$ and $y_2$ in $|X|$ are equal iff there is a point $z$ in $|U\times_X U|$ with $|\pr_1|(z) = y_1$ and $|\pr_2|(z) = y_2$. Thus, $|X|$ is the coequalizer of the two maps 
\[|\pr_1|,|\pr_2|\colon |U\times_{X} U| \to |U|.\] 
As the map 
\[|U\times_X U| \to |U|\times_{|X|} |U|\]
 is surjective by \cite[Proposition 5.4(iv)]{L-M00}, the result follows. \end{proof}

\begin{proposition}\label{bijection}The functor $U \mapsto |U|$ defines an order-preserving bijection between open substacks of $X$ and open subsets of $|X|$.\end{proposition}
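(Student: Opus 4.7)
The plan is as follows. Order-preservation is immediate from the functoriality of $|\cdot|$: any inclusion $U_1 \hookrightarrow U_2$ of open substacks of $X$ induces $|U_1| \subseteq |U_2|$. Surjectivity is tautological, since by definition the open subsets of $|X|$ are precisely those of the form $|U|$ for $U$ an open substack. The real content is therefore injectivity: if $U_1, U_2 \hookrightarrow X$ are open substacks with $|U_1| = |U_2|$, then $U_1 = U_2$.

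To prove injectivity, I would reduce to the already-known case for algebraic spaces by choosing a presentation $p\colon V \to X$ with $V$ an algebraic space. For any open substack $j\colon U \hookrightarrow X$, the pullback $U \times_X V$ is an open subspace of $V$, and I claim $|U \times_X V| = |p|^{-1}(|U|)$ inside $|V|$. Indeed, a field-valued point $\xi\colon \spec k \to V$ factors through $U \times_X V$ if and only if the composite $p\circ \xi \colon \spec k \to X$ factors through $U$, which by unwinding the definition of $|U|$ is equivalent to $|p|(\xi) \in |U|$. It follows that $|U_1| = |U_2|$ forces $|U_1 \times_X V| = |U_2 \times_X V|$; since the analogous proposition is known for algebraic spaces (reducing via an \'etale atlas to the classical statement for schemes), this equality of underlying subsets gives $U_1 \times_X V = U_2 \times_X V$ as open subspaces of $V$; and finally, since $p$ is a smooth surjection and open immersions descend along smooth (indeed fpqc) covers, we conclude $U_1 = U_2$ as open substacks of $X$.

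The main potential obstacle is the last descent step: one must verify that an open substack of $X$ is genuinely recoverable from its pullback along a presentation. This amounts to descent for the property ``a morphism factors through a specified open substack'', which for open immersions is classical. An alternative route, which makes surjectivity slightly less tautological and uses \Cref{PresLemma} directly, is to start with an arbitrary open subset $W \subseteq |X|$, form the open subset $|p|^{-1}(W) \subseteq |V|$, promote it to an open subspace $V_W \subseteq V$, observe that the two pullbacks of $V_W$ to $V \times_X V$ coincide (since both projections equalize over $W$ by \Cref{PresLemma}), and then descend $V_W$ to an open substack $X_W \subseteq X$ with $|X_W| = W$. Either way, the proof is essentially a formal unwinding of \Cref{PresLemma}, the definition of $|X|$, and smooth descent for open immersions.
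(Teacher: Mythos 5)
Your proposal is correct and follows essentially the same strategy as the paper's proof: reduce to the known statement for algebraic spaces (via the stacks-project tag 03BZ, which is exactly what the paper cites) by pulling back along a smooth presentation $V \to X$, then recover the open substack by descent. The only difference is organizational: you observe that surjectivity is tautological from the paper's definition of the topology on $|X|$ and isolate injectivity as the content, whereas the paper instead constructs the canonical open substack $U$ from an open set $W \subseteq |X|$ by stackifying the groupoid presentation, and then verifies that any open substack with image $W$ agrees with this one; your ``alternative route'' at the end is precisely the paper's argument. Both formulations rely on the same two facts: (i) for a field-valued point $\xi$ of the atlas $V$, $\xi$ factors through $U \times_X V$ iff $|p|(\xi) \in |U|$, and (ii) open immersions (and the factorization-through property) descend along smooth surjections. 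Your proof is sound.
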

\begin{proof}Let $W \subset |X|$ be an open subset. Choose a presentation $f\colon Y \to X$ with $Y$ an algebraic space and $f$ smooth and surjective. The preimage $|f|^{-1}(W)$ is open in $|Y|$. By \cite[03BZ]{stacks-project}, there is a unique open algebraic subspace $V$ of $|Y|$ with $|V| = |f|^{-1}W$. If 
\[\pr_1,\pr_2\colon Y\times_X Y \to Y\]
 denote the two projections, we get likewise an open algebraic subspace $V'$ of $Y\times_X Y$ corresponding to 
\[(|\pr_1|\circ |f|)^{-1}(W) = (|\pr_2|\circ |f|)^{-1}(W).\]
 By stackifying the groupoid defined by 
\[(\pr_1)|_{V'},(\pr_2)|_{V'}\colon V' \to V,\]
 we get an open substack $U$ of $X$. By the last lemma, we have $|U| = W$. 

On the other hand, if $U$ is an open substack of $X$, then the open substacks associated with $|f|^{-1}|U|$ and $|f\circ \pr_1|^{-1}|U|$ agree with $U\times_X Y$ and $U\times_X Y\times_X Y$ by \cite[03BZ]{stacks-project}. As $U$ equals the substack of $X$ associated with 
\[\pr_1,\pr_2\colon U\times_X Y\times_X Y \to U\times_X Y,\]
 the proposition follows.\end{proof}

\begin{corollary}\label{Zariski-Equivalence}The Zariski topology on an
algebraic stack $X$ is equivalent to the site of open subsets of $|X|$. In particular, $X$ is quasi-compact if and only if $|X|$ is. \end{corollary}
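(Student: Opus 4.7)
The plan is to deduce \thref{Zariski-Equivalence} almost entirely from \thref{bijection}, which already supplies the bijection on objects; what remains is essentially bookkeeping to check that morphisms and coverings match up, and then to read off quasi-compactness.

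First, I would verify that the order-preserving bijection $U \mapsto |U|$ of \thref{bijection} extends to an equivalence of sites. An open immersion $U \hookrightarrow V$ between open substacks of $X$ is, by definition, realized by a unique inclusion $|U| \subseteq |V|$ of subsets of $|X|$, and conversely an inclusion $W_1 \subseteq W_2$ of open subsets of $|X|$ gives (by the uniqueness part of \thref{bijection} applied to the open substack associated to $W_1$) a unique open immersion between the corresponding substacks. Since in both sites the morphism set between two objects is either empty or a singleton (an inclusion), the bijection on objects automatically upgrades to an equivalence of underlying categories.

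Next I would check that coverings correspond. A jointly surjective family of open immersions $\{U_i \hookrightarrow X\}_i$ translates, via $|{-}|$, to a family of open subsets $\{|U_i|\}_i$ of $|X|$; joint surjectivity of $\coprod U_i \to X$ amounts to saying that every point of $|X|$ (which is represented by some field-valued point of $X$) lifts to some $U_i$, i.e., $\bigcup_i |U_i| = |X|$, which is exactly the covering condition in the site of open subsets of $|X|$. The converse direction again uses \thref{bijection} to lift any open cover of $|X|$ back to a family of open substacks. This shows the two sites are equivalent.

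For the quasi-compactness statement, by definition $X$ is quasi-compact iff every Zariski cover admits a finite subcover; under the just-established equivalence of sites this translates verbatim to the statement that every open cover of $|X|$ admits a finite subcover, i.e., that $|X|$ is quasi-compact. The only mildly delicate point in the whole argument is the one already absorbed into \thref{bijection} and \thref{PresLemma}, namely the fact that open subsets of $|X|$ really do come from open substacks and that $|{-}|$ is well-behaved with respect to pullbacks along presentations; once those are in hand, the corollary is essentially a translation.
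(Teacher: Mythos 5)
Your proof is correct and follows essentially the same route as the paper: reduce to \thref{bijection} for the bijection on objects, observe that morphism sets in both sites are at most singletons so the bijection upgrades to an equivalence of categories, and then match coverings by noting that surjectivity can be tested on underlying spaces (the paper cites \cite[Proposition 5.4(ii)]{L-M00} at exactly the point where you argue it by hand). One small point the paper makes explicit that you elide: the Zariski site of $X$ consists of \emph{all} open immersions into $X$ (not only open substacks), so one needs the preliminary remark that any open immersion $U \to X$ is equivalent over $X$ to its image, an open substack, before \thref{bijection} directly applies. You implicitly restrict attention to open substacks throughout; adding that one-line reduction makes the argument watertight.
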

\begin{proof}Note first that every open immersion into $X$ is equivalent over $X$ to an open substack by considering its image. 
 Thus, we only have to show that $\{U_i \to X\}$ is a covering by open substacks if and only if $\{|U_i| \to |X|\}$ is an open covering by open subsets. This follows directly from the fact that one can test the surjectivity of a map between algebraic stacks on the underlying topological spaces by \cite[Proposition 5.4(ii)]{L-M00}.  
\end{proof}

As a last point, we want to discuss non-vanishing loci.

\begin{proposition}Let $X$ be an algebraic stack and $\LL$ a line bundle on
$X$. Let $f\in\Gamma(X, \LL)$. Then:
\begin{itemize}
\item[(a)]Let $x_1\colon \spec k_1 \to X$ and $x_2\colon \spec k_2 \to X$ be two morphisms for $k_1$ and $k_2$ fields. If $x_1$ and $x_2$ define the same point in $|X|$, then $(x_1)^*f = 0$ if and only if $(x_2)^*f = 0$.
\item[(b)]The locus of points in $|X|$ where $f$ does not vanish is open.
\end{itemize}
\end{proposition}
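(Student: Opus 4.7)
For part (a), the plan is to reduce everything to the common field extension and use that pullback of a line bundle along a morphism $\Spec k \to X$ gives a one-dimensional $k$-vector space in which a section is zero iff it remains zero after any field extension.

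More precisely, by the definition of $|X|$, if $x_1$ and $x_2$ define the same point, there is a common field extension $K$ of $k_1$ and $k_2$ and an isomorphism $\phi\colon (x_1)_K \xrightarrow{\sim} (x_2)_K$ in $X(\Spec K)$. Such an isomorphism induces an isomorphism of pullback line bundles $\phi^*\colon (x_2)_K^*\LL \xrightarrow{\sim} (x_1)_K^*\LL$ which identifies the two pullbacks of $f$. Hence $(x_1)_K^* f = 0$ iff $(x_2)_K^* f = 0$. To descend this to $k_i$, note that $x_i^*\LL$ is a one-dimensional $k_i$-vector space $V_i$, and $(x_i)_K^* f$ is the image of $x_i^* f$ under $V_i \hookrightarrow V_i \otimes_{k_i} K$, which is injective since $k_i \to K$ is faithfully flat. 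Thus $x_i^* f = 0$ iff $(x_i)_K^* f = 0$, finishing (a).

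For part (b), the plan is to use a smooth surjective presentation $p\colon U \to X$ from an algebraic space (or scheme) and transport the classical fact (that non-vanishing loci of sections of line bundles are open on schemes/algebraic spaces) along $|p|$. Let $W \subset |X|$ be the locus where $f$ does not vanish, which is well-defined by (a). The key claim is
\[
|p|^{-1}(W) = \{u \in |U| : u^*(p^*f) \neq 0\}.
\]
This equality is immediate from (a) applied to $X$ (since the pullback of $f$ to a residue field of $u$ agrees with the pullback along $p \circ u$). The right-hand side is open in $|U|$ because $p^*f$ is a section of a line bundle on an algebraic space, where the non-vanishing locus of a section is open by standard arguments.

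The main obstacle — and the final step — is to conclude that $W$ itself is open in $|X|$. For this I would invoke that $|p|\colon |U| \to |X|$ is continuous, surjective, and open (as recorded in the proof of \Cref{PresLemma} via \cite[Prop.~5.4, 5.6]{L-M00}); a continuous, open, surjective map is a topological quotient map, so a subset of $|X|$ is open iff its preimage in $|U|$ is. Hence openness of $|p|^{-1}(W)$ forces openness of $W$, which is (b).
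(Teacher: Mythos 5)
Your proposal is correct and follows essentially the same route as the paper: part (a) by reduction to a common field extension and faithful flatness, part (b) by passing to a smooth presentation and using that $|p|\colon|U|\to|X|$ is open and surjective. The only cosmetic difference is that the paper chooses a presentation $q\colon Y\to X$ with $Y$ a scheme on which $q^*\LL$ is trivialized (so it can immediately quote the classical scheme-theoretic fact), whereas you keep $U$ an algebraic space with a possibly nontrivial line bundle and invoke the corresponding statement for algebraic spaces; both work and both rest on the same openness-of-$|p|$ step.
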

\begin{proof}As field extension are always faithfully flat, part (a) follows easily. 

For part (b), consider a presentation $q\colon Y \to X$ such that $q^*\LL$ is trivial and $Y$ is a scheme. As discussed in the proof of Lemma \ref{PresLemma}, the map $|q|\colon |Y| \to |X|$ is open and surjective. Furthermore, $x^*(q^*f) = 0$ if and only if $(q\circ x)^*f= 0$ for $x\colon \spec k \to Y$ a point. Thus, we can assume that $\LL= \mathcal{O}_X$ and that $X$ is a scheme. The result is well-known in this case. 
\end{proof}

\begin{definition}\label{non-vanishing}Let $X$ be an algebraic stack and $\LL$
a line bundle on $X$. Let $f\in \Gamma(X, \LL)$. Then we define the
non-vanishing locus $D(f)$ of $f$ to be the open substack of $X$ corresponding
to the non-vanishing locus of $f$ on $|X|$ by \Cref{bijection}.\end{definition}

The following property will later be freely used: 

\begin{proposition}Let $q\colon Y \to X$ be a map of algebraic stacks. Let
furthermore, $\LL$ be a line bundle on $X$ and $f\in \Gamma(X, \LL)$. Then there is a
natural equivalence $D(q^*f) \simeq D(f) \times_X Y$ where $q^*f \in \Gamma(Y,
q^* \LL)$ is the pullback.\end{proposition}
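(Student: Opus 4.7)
The plan is to leverage the bijection between open substacks and open subsets of the underlying topological space (\Cref{bijection}) to reduce the statement to a pointwise topological claim. Since both $D(q^*f)$ and $D(f)\times_X Y$ are open substacks of $Y$, it suffices to show that they have the same underlying open subset of $|Y|$.

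First I would verify the general fact that for any open immersion $j\colon U\hookrightarrow X$ and any morphism $q\colon Y\to X$, the pulled-back open immersion $U\times_X Y\hookrightarrow Y$ has underlying open subset $|U\times_X Y| = |q|^{-1}|U|$. This follows from the fact that a field-valued point $x\colon \Spec k \to Y$ factors through $U\times_X Y$ if and only if $q\circ x\colon \Spec k \to X$ factors through $U$ (by the universal property of the pullback, using that $U\hookrightarrow X$ is a monomorphism so the factorization is uniquely determined), combined with the definition of $|Y|$ and $|X|$ as equivalence classes of such points. Applying this to $j\colon D(f)\hookrightarrow X$ shows $|D(f)\times_X Y| = |q|^{-1}|D(f)|$.

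Next I would check the pointwise identification $|D(q^*f)| = |q|^{-1}|D(f)|$. A point $y\in|Y|$ represented by $x\colon \Spec k\to Y$ lies in $|D(q^*f)|$ iff $x^*(q^*f)\neq 0$ in $\Gamma(\Spec k, x^*q^*\LL)$. But by functoriality of pullback of sections of line bundles, $x^*(q^*f) = (q\circ x)^*f$, so this happens iff $(q\circ x)^*f \neq 0$, i.e., iff $|q|(y)\in |D(f)|$. Note that part (a) of the previous proposition ensures this condition is well-defined on equivalence classes of points.

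Combining the two identifications gives $|D(q^*f)| = |D(f)\times_X Y|$ as open subsets of $|Y|$, and then \Cref{bijection} yields the equivalence $D(q^*f)\simeq D(f)\times_X Y$ of open substacks of $Y$. Naturality in $q$ is automatic since both constructions are defined via pullback in the category of algebraic stacks. The only real content here is the topological pullback identity, and there is no genuine obstacle — the statement is essentially a bookkeeping exercise granted the setup of the preceding proposition and \Cref{bijection}.
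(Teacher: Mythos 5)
Your proposal is correct and takes essentially the same route as the paper: both reduce the claim to comparing the underlying open subsets $|D(q^*f)|$ and $|D(f)\times_X Y|$ inside $|Y|$, identify each with $|q|^{-1}|D(f)|$ pointwise, and then invoke \Cref{bijection}. Your write-up just makes the topological pullback identity $|U\times_X Y| = |q|^{-1}|U|$ and the functoriality $x^*(q^*f) = (q\circ x)^*f$ a bit more explicit.
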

\begin{proof} The map $\pr_2\colon D(f) \times_X Y \to Y$ is an open immersion.
The image of $|\pr_2|$ in $|Y|$ agrees with those points $y$ in $|Y|$ such that
$|q|(y) \in |D(f)|$. This agrees with $|D(q^*f)|$. Thus, $D(q^*f)$ agrees with the image of $D(f) \times_X Y \to Y$ in $Y$. \end{proof}

\section{Abstract affineness results}\label{Section3}

The aim of this section is to give criteria when the global sections functor establishes an equivalence
between quasi-coherent sheaves and modules over the ring of global sections for
a derived stack. We call such derived stacks \textit{0-affine} (following
\cite{gaits}):

\begin{definition}A derived stack $\XX = (X, \otop)$ is called \textit{0-affine} if the global sections functor
\[\Gamma\colon \qcoh(\XX) \to \md(\Gamma( \mathfrak{X}, \otop))\]
is an equivalence of symmetric monoidal $\infty$-categories.\end{definition}
In particular, $\Gamma$ is a symmetric monoidal functor (and not only a lax
symmetric monoidal functor) if $(\XX,\otop)$ is $0$-affine. In the next subsection we will show that a derived stack $\XX$ is $0$-affine if and only if
\begin{enumerate}
\item the functor of taking global sections $\Gamma$ commutes with homotopy colimits in
$\qcoh(\mathfrak{X} )$ and 
\item the functor $\Gamma$ is conservative, i.e., whenever $\mathcal{F} \in
\qcoh(\mathfrak{X})$ is such that $\Gamma(\mathfrak{X},
\mathcal{F}) $ is contractible, then $\mathcal{F}$ is itself contractible. 
\end{enumerate} 

In the following two subsections we show that $0$-affineness descends under certain (topological) finiteness conditions fpqc-locally and under certain ampleness conditions Zariski-locally. In the last subsection, we will show that $0$-affineness ascends under affine morphisms and open immersions.

The theorems in this section are abstract in the sense that they are not special to chromatic homotopy theory. The main known techniques to guarantee the finiteness assumptions of these abstract theorems, though, will use powerful nilpotence technology in chromatic homotopy theory. This will be the topic of the following section. 

One of the main issues can already be illustrated by the following example: If $2$ is not inverted, the functor
\[ E \to E^{h \mathbb{Z}/2},  \quad \mathrm{Fun}(B \mathbb{Z}/2, \sp) \to \sp, \]
from spectra with a $\mathbb{Z}/2$-action to spectra, fails to commute with
homotopy 
colimits, or equivalently fails to send wedges to wedges
(\Cref{ex:failswedges} below). The homotopy groups of $E^{h \mathbb{Z}/2}$ are the abutment of a
homotopy fixed point spectral sequence each of whose terms (the group
cohomology of $\pi_* E$) sends wedges in $E$ to direct sums. However, the
potential infiniteness of the spectral sequence (in particular, the infinitude
of the filtration) does not allow us to conclude that the abutment sends wedges to wedges. In chromatic homotopy theory, however, it is possible to show that
the analogous filtrations are finite under certain conditions, as we will see in the next section.

\begin{example} 
\label{ex:failswedges}
We provide a simple illustration of the fact that the functor 
\[ E \to E^{h \mathbb{Z}/2},  \quad \mathrm{Fun}(B \mathbb{Z}/2, \sp) \to \sp\]
 fails
to commute with wedges. Consider the spectrum $X = \bigvee_{n \in \mathbb{Z}} H
\mathbb{Z}/2[n]$, and give it the trivial $\mathbb{Z}/2$-action. 
Then we claim that 
$$X^{h\mathbb{Z}/2} = \mathrm{F}(B\mathbb{Z}/2, X) \not\simeq \bigvee_{n \in
\mathbb{Z}} \mathrm{F}(B\mathbb{Z}/2, H \mathbb{Z}/2[n]).
$$
In fact, this follows from the fact that 
$\pi_* \mathrm{F}(B\mathbb{Z}/2, X)$ is an uncountable abelian group. 
We can write
\[ \mathrm{F}(B\mathbb{Z}/2, X) = \mathrm{F}(\mathbb{RP}^\infty, X) \simeq
\varprojlim_m \mathrm{F}(\mathbb{RP}^m, X) ,  \]
and $\pi_* \mathrm{F}(\mathbb{RP}^m, X) \simeq H^*(\mathbb{RP}^m;
\mathbb{Z}/2) \otimes_{\mathbb{Z}/2} \pi_*(X)$. As $m \to \infty$, the Milnor
exact sequence shows that 
$\pi_* \mathrm{F}(B\mathbb{Z}/2, X) \simeq \varprojlim_m H^*(\mathbb{RP}^m;
\mathbb{Z}/2) \otimes_{\mathbb{Z}/2} \pi_*(X)
$
is actually uncountable.
If we regard $X$ as a ring spectrum such that $\pi_*(X) \simeq
\mathbb{F}_2[u^{\pm 1}]$ with $|u| = 1$, then $\pi_* \left(X^{B\mathbb{Z}/2}\right) \simeq
\mathbb{F}_2[u^{\pm 1}]\llbracket v\rrbracket$ with $|v| = 0$, while 
$\pi_* \left( \bigvee_{n \in
\mathbb{Z}} \mathrm{F}(B\mathbb{Z}/2, H \mathbb{Z}/2[n])\right)
$ gives only the polynomial subring $\mathbb{F}_2[u^{\pm 1}][v]$. 
\end{example}

\subsection{Schwede--Shipley theory}

Let $\mathcal{A}$ be an abelian category with all colimits. 
If $\mathcal{A} = \md(R)$ is the category of (discrete) modules over a (not necessarily
commutative) ring $R$, then $\mathcal{A}$ has a \emph{compact, projective
generator}: that is, $R$ itself. 
More precisely, the functor $\hom_{\mathcal{A}}(R, \cdot)\colon \mathcal{A}
\to\mathbf{Ab}$ (which assigns to a
module its underlying abelian group) commutes with all colimits, and is conservative. 

It is a basic principle that 
module categories are characterized precisely by this property: that is, an
abelian category is equivalent to a category of modules precisely when it has a
compact, projective generator. 
This point of view explains the classical Morita theorem that describes
equivalences between categories of modules: they arise from compact, projective
generators. 

In the derived setting, the objects of study are not abelian categories, but
presentable, stable $\infty$-categories, and the question one asks is when such
an $\infty$-category is the $\infty$-category of modules over an $A_\infty$-ring. An answer in the
language of stable model categories was given by Schwede and Shipley in
\cite{schwedeshipley}; a reformulation of the statement in terms of $\infty$-categories is in
\cite[Theorem 8.1.2.1]{higheralg}. 

\begin{theorem}[Schwede-Shipley; Lurie]
A presentable, stable $\infty$-category $\mathcal{C}$ is equivalent to the
$\infty$-category of modules over an $A_\infty$-ring if and only if it has a compact 
generator $X \in \mathcal{C}$: that is, $X $ is such that
$\hom_{\mathcal{C}}(X, \cdot)\colon \mathcal{C} \to \sp$ commutes with
filtered homotopy colimits and sends nonzero objects to noncontractible spectra.  
\end{theorem}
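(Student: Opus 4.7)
The plan is to prove both implications directly. The \emph{only if} direction is immediate: if $\CC \simeq \md(R)$ for some $A_\infty$-ring $R$, then $R$ itself is a compact generator, since $\hom_{\md(R)}(R, M) \simeq M$ is the forgetful functor to spectra, which preserves all colimits and reflects zero objects.

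For the converse, suppose $X \in \CC$ is a compact generator. The first step is to produce the endomorphism $A_\infty$-ring $R = \mathrm{End}_\CC(X)$. Using the spectral enrichment of a presentable stable $\infty$-category (\cite[\S 1.4.2, \S 4.7]{higheralg}), every mapping spectrum $\hom_\CC(X, Y)$ acquires a natural right $R$-module structure, so the corepresentable functor refines to
\[ F = \hom_\CC(X, -)\colon \CC \longrightarrow \md(R). \]
I would then argue that $F$ is an equivalence. Being exact between stable $\infty$-categories, $F$ preserves finite colimits; compactness of $X$ gives that $F$ preserves filtered colimits; together these imply $F$ preserves all small colimits. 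By the adjoint functor theorem for presentable $\infty$-categories (\cite[Corollary 5.5.2.9]{HTT}), $F$ admits a left adjoint $L\colon \md(R) \to \CC$, and the comparison $\hom_{\md(R)}(R, F(Y)) \simeq F(Y) \simeq \hom_\CC(X, Y)$ forces $L(R) \simeq X$.

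Fully faithfulness of $L$ reduces, since both $L$ and $F$ commute with colimits and $R$ compactly generates $\md(R)$, to verifying that the unit $R \to FL(R)$ is an equivalence, which holds by construction of $R$. For essential surjectivity, fix $Y \in \CC$ and consider the cofiber sequence
\[ LF(Y) \longrightarrow Y \longrightarrow Z. \]
Applying $F$ and using the triangle identity together with the fact that the unit is an equivalence, one finds that $F(Y) \to FLF(Y)$ is an equivalence, whence $F(Z) \simeq 0$. Conservativity of $F$, i.e., the generation hypothesis on $X$, then forces $Z \simeq 0$, so $LF(Y) \simeq Y$ and $L$ is an equivalence.

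The main technical obstacle, handled carefully in \cite{schwedeshipley} and in \cite[\S 8.1]{higheralg} rather than anything I would reproduce by hand, is precisely the construction of $R$ as an $A_\infty$-ring and the promotion of the spectrum-valued corepresentable to an $\md(R)$-valued functor with all required coherences: in the model-categorical framework of Schwede--Shipley this requires a pointset symmetric monoidal model of $\sp$ so that endomorphism ring spectra and modules over them are strict, whereas in the $\infty$-categorical framework one invokes the operadic formalism for $A_\infty$-algebras and the general theory of module $\infty$-categories in order to make $F$ functorial at the level of quasi-categories.
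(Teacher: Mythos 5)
Your argument is correct and is essentially the standard proof of this theorem: construct the refined corepresentable $F = \hom_{\CC}(X,-)\colon \CC \to \md(R)$ with $R = \mathrm{End}_\CC(X)$, observe $F$ preserves all colimits (exactness plus compactness of $X$), obtain a left adjoint $L$, verify the unit on $R$ and propagate by colimit-preservation and generation, then use conservativity (i.e., that $X$ is a generator) to close the cofiber of the counit. Note that the paper does not actually prove this statement: it cites it directly as \cite[Theorem 8.1.2.1]{higheralg}, so there is no internal proof to compare against; your writeup reproduces the argument given there (and in \cite{schwedeshipley}). One cosmetic slip: in the essential-surjectivity step the map induced by applying $F$ to the counit goes $FLF(Y)\to F(Y)$ (it is $F(\epsilon_Y)$), not $F(Y)\to FLF(Y)$; since $\eta_{F(Y)}$ is its two-sided homotopy inverse this does not affect the conclusion.
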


\begin{example}[Be\u{i}linson \cite{Beilinson}] The derived category of quasi-coherent sheaves on
projective space $\mathbb{P}^n$ is equivalent to the derived category of modules over
the (discrete) ring $\mathrm{End}_{\qcoh(\mathbb{P}^n)}(
\mathcal{O}_{\mathbb{P}^n}\oplus \dots \oplus \mathcal{O}_{\mathbb{P}^n}(n))$.
Namely, Be\u{i}linson shows that 
$\mathcal{O}_{\mathbb{P}^n}\oplus \dots \oplus \mathcal{O}_{\mathbb{P}^n}(n)$
is a compact generator for the derived category of coherent sheaves on
$\mathbb{P}^n$. \end{example} 

We will also need a version in the symmetric monoidal case. When $R$ is an
$E_\infty$-ring, the $\infty$-category $\md(R)$ is symmetric monoidal, and it
has the property that the unit object (i.e., $R$ itself) is a compact generator. 
This is essentially the distinguishing feature of such module categories
according to the next result. 

\begin{theorem}[{\cite[Proposition 8.1.2.7]{higheralg}}] \label{monss} Let $(\mathcal{C}, \otimes,
\mathbf{1})$ be a presentable stable,
symmetric monoidal $\infty$-category where the tensor product preserves
homotopy 
colimits. The endomorphism ring $R = \mathrm{End}( \mathbf{1})$
has a canonical structure of an $E_\infty$-ring.   If the unit object $\mathbf{1} \in \mathcal{C}$ is a compact
generator, one has a symmetric monoidal equivalence
\[ \mathcal{C} \simeq \md(R), \quad X \mapsto \hom_{\mathcal{C}}( \mathbf{1},
X),  \]
between $\mathcal{C}$ and the category of $R$-modules. 
\end{theorem}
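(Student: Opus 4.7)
The plan is to produce the equivalence through a lax-symmetric-monoidal global sections functor and then verify it is an equivalence using the (non-symmetric-monoidal) Schwede--Shipley theorem together with compactness and generation of $\mathbf{1}$.

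First, I would construct the $E_\infty$-structure on $R = \mathrm{End}(\mathbf{1})$. Since the unit object $\mathbf{1}$ of any symmetric monoidal $\infty$-category is canonically an $E_\infty$-algebra (in fact the initial one), and since the functor $\mathrm{Map}_{\mathcal{C}}(\mathbf{1}, -) \colon \mathcal{C} \to \mathcal{S}$ (or its stable enhancement $\hom_{\mathcal{C}}(\mathbf{1}, -) \colon \mathcal{C} \to \sp$) is naturally lax symmetric monoidal --- with lax structure maps
\[
\hom_{\mathcal{C}}(\mathbf{1}, X) \otimes \hom_{\mathcal{C}}(\mathbf{1}, Y) \longrightarrow \hom_{\mathcal{C}}(\mathbf{1} \otimes \mathbf{1}, X \otimes Y) \simeq \hom_{\mathcal{C}}(\mathbf{1}, X \otimes Y)
\]
--- the object $R = \hom_{\mathcal{C}}(\mathbf{1}, \mathbf{1})$ inherits a canonical $E_\infty$-algebra structure in $\sp$. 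Applying this lax symmetric monoidal functor to the $E_\infty$-algebra $\mathbf{1}$ and its modules produces a symmetric monoidal functor
\[
F \colon \mathcal{C} \simeq \md_{\mathcal{C}}(\mathbf{1}) \longrightarrow \md_{\sp}(R) = \md(R), \quad X \longmapsto \hom_{\mathcal{C}}(\mathbf{1}, X).
\]

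Second, I would show that $F$ is an equivalence of underlying stable $\infty$-categories, which I would deduce from the Schwede--Shipley theorem recalled above. The functor $F$ preserves colimits because $\mathbf{1}$ is assumed compact and because in $\md(R)$ colimits are computed on underlying spectra; it is conservative because $\mathbf{1}$ is a generator; and $F$ sends $\mathbf{1}$ to $R$. By the adjoint functor theorem $F$ admits a left adjoint $L \colon \md(R) \to \mathcal{C}$ with $L(R) = \mathbf{1}$. The counit $L \circ F(\mathbf{1}) \to \mathbf{1}$ is an equivalence by construction, and since both $L F$ and the identity are colimit-preserving on $\mathcal{C}$ while $\mathbf{1}$ generates $\mathcal{C}$ under colimits (compact generation plus stability), the counit is an equivalence on all of $\mathcal{C}$. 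Dually, the unit $\id_{\md(R)} \to F \circ L$ is an equivalence on $R$, hence on all free $R$-modules, hence on all of $\md(R)$ by colimit-preservation and conservativity of $F$. Thus $F$ is an equivalence of $\infty$-categories.

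Third, I would promote this to a symmetric monoidal equivalence. The functor $F$ was already constructed as a symmetric monoidal functor, so it suffices to check that the underlying stable equivalence agrees with this. Equivalently, one has to verify that the lax structure maps of $F$ are equivalences. Since both $F$ and the tensor product on either side preserve colimits in each variable, it is enough to check on the compact generator $\mathbf{1}$, where the map $\hom(\mathbf{1},\mathbf{1}) \otimes_{R} \hom(\mathbf{1},\mathbf{1}) \to \hom(\mathbf{1}, \mathbf{1}\otimes \mathbf{1}) \simeq R$ is the identity $R \otimes_R R \simeq R$.

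The main obstacle is purely coherence-theoretic: producing the lax symmetric monoidal refinement of $\hom_{\mathcal{C}}(\mathbf{1}, -)$ and the resulting symmetric monoidal factorization through $\md(R)$ requires genuine $\infty$-operadic input (the fact that $\mathbf{1}$ is initial in $\mathrm{CAlg}(\mathcal{C})$ and that $\md_{\mathcal{C}}(\mathbf{1}) \simeq \mathcal{C}$ as symmetric monoidal $\infty$-categories), rather than any hard computation. Once that is in place, the proof that $F$ is an equivalence is entirely standard and reduces to the already-cited non-monoidal Schwede--Shipley theorem.
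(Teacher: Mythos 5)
The paper itself offers no proof of this statement---it is stated with a citation to Lurie's \emph{Higher Algebra} and nothing more---so there is no argument in the paper to compare against. Your reconstruction follows the standard line: endow $R = \hom_{\mathcal{C}}(\mathbf{1},\mathbf{1})$ with its $E_\infty$-structure via the lax symmetric monoidal structure on $\hom_{\mathcal{C}}(\mathbf{1},-)$, establish the underlying categorical equivalence through the unit/counit analysis of the adjunction with the tensoring-up functor (this is exactly Schwede--Shipley specialized to the unit object), and then promote to a symmetric monoidal equivalence by showing the lax comparison maps are equivalences via colimit-preservation and reduction to $\mathbf{1}$. This is correct, and it is the argument Lurie's cited result rests on. Two small points of care are worth flagging. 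In your second paragraph, applying a lax symmetric monoidal functor to module categories yields only a \emph{lax} symmetric monoidal functor on modules, so $F$ should not yet be called ``symmetric monoidal''; you implicitly acknowledge this in the third paragraph when you verify the lax structure maps, but the earlier phrasing gets ahead of itself. Also, the reduction to the generator is better run one variable at a time: fix $Y$, reduce $X$ to $\mathbf{1}$ by colimit-preservation in $X$, and then observe that $\mu_{\mathbf{1},Y}\colon R\otimes_R F(Y)\to F(\mathbf{1}\otimes Y)$ is the unitality equivalence. Jumping straight to $X=Y=\mathbf{1}$ does not by itself justify the general case without this two-step argument, even though the conclusion is the same.
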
 

\begin{example} 
We asserted in \thref{BGa} that the derived category of quasi-coherent sheaves
on the stack $B \mathbb{G}_a$ (over the base field $\mathbb{Q}$),
or equivalently the derived category of $\mathbb{G}_a$-representations, was
equivalent to $\md( \mathbb{Q}[x_{-1}])$ via an adjunction of symmetric
monoidal $\infty$-categories.  This is equivalent to the assertion that the structure sheaf itself,
which corresponds to the trivial one-dimensional representation of
$\mathbb{G}_a$, is  a compact generator, and this in turn is closely related
to the unipotence of $\mathbb{G}_a$. 
\end{example} 

Our strategy will be to apply the Schwede--Shipley theorem to the
$\infty$-category of
quasi-coherent sheaves on a derived stack. More precisely, we will use the following corollary:
\begin{corollary}\label{SSCor}A derived stack $\XX =(X,\otop_\XX)$ is $0$-affine if and only if the global sections functor \[\Gamma\colon \qcoh(\XX) \to \md(\gx) \]
commutes with homotopy colimits and is conservative. Here, conservative means that for $\FF \in \qcoh(\XX,\otop_\XX)$ a quasi-coherent $\otop_\XX$-module, $\Gamma(\FF) = 0$ already implies $\FF = 0$.\end{corollary}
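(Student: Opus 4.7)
The plan is to recognize this as a direct application of the monoidal Schwede--Shipley theorem (\Cref{monss}) to the symmetric monoidal $\infty$-category $\qcoh(\XX)$, with the unit being the structure sheaf $\otop_\XX$.

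For the easy direction, I would observe that if $\Gamma$ is an equivalence of symmetric monoidal $\infty$-categories $\qcoh(\XX) \simeq \md(\gx)$, then it automatically preserves all small colimits (as any equivalence does) and is conservative (again automatic for equivalences).

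For the nontrivial direction, I would first collect the structural properties of $\qcoh(\XX)$ needed as input to \Cref{monss}. From \Cref{def:qcoh}, $\qcoh(\XX)$ is a homotopy limit, in presentable stable $\infty$-categories along colimit-preserving functors, of the module categories $\md(\otop(\spec R))$; hence it is itself presentable and stable, and the tensor product (defined pointwise on affines) preserves colimits in each variable. The unit of this symmetric monoidal structure is the structure sheaf $\otop_\XX$, and by construction
\[ \Hom_{\qcoh(\XX)}(\otop_\XX, \FF) \simeq \Gamma(\XX, \FF) \]
in spectra; in particular the endomorphism $E_\infty$-ring of $\mathbf{1}$ is precisely $\gx$.

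The main (and really the only) content is then verifying that $\otop_\XX$ is a \emph{compact generator} under our two hypotheses. Compactness of $\otop_\XX$ means that $\Hom_{\qcoh(\XX)}(\otop_\XX,-) = \Gamma$ preserves filtered homotopy colimits, which is a consequence of the assumption that $\Gamma$ preserves \emph{all} homotopy colimits. The generator condition means that $\Gamma(\FF) \simeq 0$ implies $\FF \simeq 0$, which is exactly the conservativity hypothesis. With these two conditions in hand, \Cref{monss} applies and yields a symmetric monoidal equivalence $\qcoh(\XX) \simeq \md(\gx)$ given by $\FF \mapsto \Hom(\otop_\XX,\FF) = \Gamma(\XX,\FF)$, which is the statement of $0$-affineness.

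The only potentially subtle point is that \Cref{monss} requires us to know ahead of time that $\qcoh(\XX)$ is presentable, stable, and symmetric monoidal with colimit-preserving tensor; but, as indicated, these follow from the construction of $\qcoh(\XX)$ as a homotopy limit of module $\infty$-categories and the remarks after \Cref{def:qcoh}. There is no real obstacle beyond carefully matching the hypotheses to those of \Cref{monss}.
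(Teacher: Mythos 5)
Your proposal is correct and follows essentially the same route as the paper: both invoke the monoidal Schwede--Shipley theorem (\Cref{monss}), identify $\Gamma$ as corepresented by the unit $\otop_\XX$, translate compactness of the unit into preservation of (filtered) homotopy colimits and generation into conservativity, and note that the converse direction is automatic for an equivalence. Your write-up merely spells out the presentability/stability/monoidality checks a bit more explicitly than the paper does.
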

\begin{proof}The global sections functor $\Gamma$ is corepresented by $\otop_\XX$. Thus, $\Gamma$ commutes with filtered homotopy colimits and is conservative if and only if $\otop_\XX$ is a compact generator of $\qcoh(\XX,\otop_\XX)$. By Theorem \ref{monss}, the result follows. Note here that if $\Gamma$ is an equivalence, it commutes automatically with all homotopy colimits.
\end{proof}

\subsection{fpqc-descent for $0$-affineness}

In this section, we describe a basic technique for showing that certain
homotopy limits (given by global sections functors) commute with homotopy
colimits. The strategy is to first verify that this holds after smashing with
something that generates the original category as a thick tensor-ideal; then it
is possible to apply ``descent.'' 
We note that the idea of descent via thick tensor-ideals has been explored further in
\cite{balmersep, galoischromatic}.

Let us first recall the definition of a thick tensor-ideal.

\begin{definition}
Given an $E_\infty$-ring $R$, a \textit{thick tensor-ideal} of $\md(R)$ is a
full subcategory $\CC \subset \md(R)$, containing the zero object, such that:
\begin{itemize}
\item The fiber and cofiber of every morphism $M \to N$ in $\CC$ is in $\CC$ again.
\item If $X\oplus Y$ is in $\CC$, then $X \in \mathcal{C}$ and $Y \in
\mathcal{C}$.
\item If $X\in\CC$ and $Y\in\md(R)$ is arbitrary, then $X\otimes_R Y \in \CC$.
\end{itemize}
We say that an $R$-module $M$ \textit{generates} $\CC$ as a thick tensor-ideal if $\CC$ is the smallest thick tensor-ideal of $\md(R)$ containing $M$. 
\end{definition}

\begin{proposition}\label{AbstractCommHolim}
Let $\XX = (X, \otop)$ be a derived stack whose underlying stack $X$ is a
quasi-compact, separated Deligne-Mumford stack. Suppose: 
\begin{enumerate}
\item There is a flat, affine morphism $q\colon Y \to X$ from an algebraic stack of finite cohomological dimension.
\item There is a $\gx$-module $M$ that generates $\md(\gx)$ as a thick tensor-ideal such that we have an isomorphism 
\[\pi_*\left( \otop_\XX\otimes_{\gx} M\right) \cong q_*q^*\pi_*(\otop_\XX)\]
of $\pi_*\otop_\XX$-modules.
\end{enumerate}
Then the global sections functor $\Gamma$ from quasi-coherent $\otop_\XX$-modules to $\gx$-modules commutes with homotopy colimits. \end{proposition}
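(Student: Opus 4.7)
The plan is to invoke the Schwede--Shipley criterion \Cref{SSCor}, which reduces the statement to checking that $\Gamma\colon \qcoh(\XX) \to \md(\gx)$ commutes with filtered homotopy colimits (finite colimits being automatic by stability). The overall strategy is in two stages: first, verify the commutation after smashing with the auxiliary sheaf $\mathcal{G} := \otop_\XX \otimes_{\gx} M$, whose homotopy groups are controlled by the well-behaved stack $Y$; then bootstrap back to $\Gamma$ itself using the symmetric monoidality of base change together with the thick tensor-ideal generation of $\md(\gx)$ by $M$.

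For the first stage, I would use a bounded descent spectral sequence. Because $q$ is flat and affine, the projection formula identifies $q_*q^*\mathcal{N}$ with $\mathcal{N} \otimes_{\mathcal{O}_X} q_*\mathcal{O}_Y$ for every quasi-coherent $\mathcal{N}$, and the exactness of $q_*q^*$ forces $q_*\mathcal{O}_Y$ to be flat over $\mathcal{O}_X$; hence $q_*q^*\pi_*\otop_\XX$ is flat over $\pi_*\otop_\XX$. Combined with the hypothesis $\pi_*\mathcal{G} \cong q_*q^*\pi_*\otop_\XX$, the Tor spectral sequence for $\FF \otimes_{\otop_\XX}\mathcal{G}$ collapses to give $\pi_*(\FF \otimes_{\otop_\XX}\mathcal{G}) \cong q_*q^*\pi_*\FF$ for every $\FF \in \qcoh(\XX)$. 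The descent spectral sequence for $\Gamma(\XX, \FF \otimes_{\otop_\XX}\mathcal{G})$ therefore has
\[
E_2^{i,j} \;=\; H^i(X,\, q_*q^*\pi_j\FF) \;\cong\; H^i(Y,\, q^*\pi_j\FF),
\]
the last isomorphism being the Leray collapse for the affine map $q$. The finite cohomological dimension of $Y$ bounds the horizontal range of $E_2$ uniformly in $\FF$; the spectral sequence then converges strongly with uniformly bounded filtration. Since $Y$ is quasi-compact and quasi-separated (inherited from $X$ via the affine, separated morphism $q$), each $H^i(Y,-)$ commutes with filtered colimits of quasi-coherent sheaves; together with the bounded filtration, this shows that $\FF \mapsto \Gamma(\XX, \FF \otimes_{\otop_\XX} \mathcal{G})$ preserves filtered homotopy colimits.

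For the descent stage, write $L := \otop_\XX \otimes_{\gx} (-)\colon \md(\gx) \to \qcoh(\XX)$ for the left adjoint of $\Gamma$, which is symmetric monoidal as base change along an $E_\infty$-algebra map. Applying $L$ to the hypothesis that $\gx$ lies in the thick tensor-ideal of $\md(\gx)$ generated by $M$ shows that $\otop_\XX = L(\gx)$ lies in the thick subcategory of $\qcoh(\XX)$ generated by $\{\mathcal{G} \otimes_{\otop_\XX} L(N) : N \in \md(\gx)\}$. Consider the class
\[
\mathcal{A} \;:=\; \{\, P \in \qcoh(\XX) \;:\; \FF \mapsto \Gamma(\XX, \FF \otimes_{\otop_\XX} P) \text{ preserves filtered colimits} \,\},
\]
which is a thick subcategory of $\qcoh(\XX)$ because finite colimits in $\md(\gx)$ commute with filtered colimits. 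Applying the first stage to filtered diagrams of the form $\{\FF_\beta \otimes_{\otop_\XX} L(N)\}_\beta$---which remain filtered since $\otimes_{\otop_\XX} L(N)$ preserves filtered colimits---places each generator $\mathcal{G} \otimes_{\otop_\XX} L(N)$ in $\mathcal{A}$; hence $\otop_\XX \in \mathcal{A}$, which is exactly the statement that $\Gamma \simeq \Gamma(-\otimes_{\otop_\XX} \otop_\XX)$ preserves filtered homotopy colimits. The main obstacle is the first stage: setting up the flatness identification of $\pi_*\mathcal{G}$ and extracting uniformly bounded convergence of the descent spectral sequence from the assumption that $Y$ has finite cohomological dimension.
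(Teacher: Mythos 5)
Your argument is correct and follows essentially the same route as the paper's proof: establish boundedness of the descent spectral sequence after tensoring with $\otop_\XX \otimes_{\gx} M$ (via the flatness of $q_*\mathcal{O}_Y$, the projection formula, and the finite cohomological dimension of $Y$), then bootstrap using the thick tensor-ideal generation by $M$. The only deviations are cosmetic: you track a thick subcategory $\mathcal{A}$ of $\qcoh(\XX)$ whose generators $\mathcal{G} \otimes_{\otop_\XX} L(N)$ already carry the ideal structure, whereas the paper organizes the bootstrap as a thick tensor-ideal $\mathcal{C}$ inside $\md(\gx)$ — the two formulations are equivalent. One nitpick: the opening reference to the Schwede--Shipley criterion (\Cref{SSCor}) is a red herring for this proposition, which proves only cocontinuity of $\Gamma$; the reduction from arbitrary to filtered colimits is just exactness of $\Gamma$, not Schwede--Shipley.
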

\begin{proof}We start by showing that the functor
\[ \mathcal{F} \mapsto \Gamma( \mathfrak{X}, \mathcal{F} \otimes_{\gx} M ), \quad
\qcoh(\mathfrak{X}) \to \md( \Gamma(\mathfrak{X}, \otop)),\]
commutes with homotopy colimits in $\mathcal{F}$. Since the functor is an
exact functor between stable $\infty$-categories, it suffices to show that it
commutes with arbitrary direct sums (i.e., wedges). 

We will prove that the $E^2$-term of the descent spectral sequence 
\[  H^i( X, \pi_j( \FF \otimes_{\gx} M)) \implies \pi_{j-i} \Gamma(
\mathfrak{X}, \mathcal{F} \otimes_{\gx} M),  \]
is concentrated in finitely many rows. 
In fact, by the isomorphism 
\[\pi_*(\otop_\XX\otimes_{\gx} M) \cong q_*q^*\pi_*(\otop_\XX)\]
 we know that $\pi_*\left( \otop_\XX\otimes_{\gx} M\right)$ is flat as a
 $\pi_*\otop_\XX$-module, since $q$ is flat and affine. Note further that by the projection formula, the morphism 
\[\pi_k\otop_\XX \tensor_{\mathcal{O}_X} q_*q^*\mathcal{O}_X \to q_*q^*\pi_k\otop_\XX\]
 is an isomorphism as $q$ is affine. Thus, we have the following isomorphisms of $\mathcal{O}_X$-modules:
\begin{align*}\pi_j( \mathcal{F} \otimes_{\gx} M) &\cong (\pi_*\FF\tensor_{\pi_*\otop_\XX} \pi_*(\otop_\XX \otimes_{\gx} M))_j\\
%&\cong(\pi_*\FF\tensor_{\pi_*\otop_\XX} \pi_*\otop_\XX \tensor_{\mathcal{O}_X}\pi_0(\otop \otimes_{\gx} M))_j \\
%&\cong \pi_j (\mathcal{F}))_j \otimes_{\mathcal{O}_X} \pi_0( \otop \otimes_{\gx} M)\\
 &\cong (\pi_*\FF\tensor_{\pi_*\otop_\XX} q_*q^*\pi_*(\otop_\XX))_j\\
&\cong  (\pi_*\FF\tensor_{\pi_*\otop_\XX} \pi_*\otop_\XX \tensor_{\mathcal{O}_X} q_*q^*\mathcal{O}_X)_j \\
&\cong \pi_j\FF\tensor_{\mathcal{O}_X} q_*q^*\mathcal{O}_X.
\end{align*}

As $q$ is affine, the projection formula allows us to rewrite this as 
\[ q_*(q^*\pi_j\FF \tensor_{\mathcal{O}_Y} \mathcal{O}_Y) \cong q_*q^*\pi_j\FF.\]
Thus, we have a Leray spectral sequence
\[ H^l(Y, (R^mq_*)q^*\pi_j\FF) \Rightarrow H^{l+m}(X, \pi_j( \FF \otimes_{\gx} M)).  \]
By (1), the $E^2$-term of this spectral sequence is concentrated in finitely many columns (bounded by the cohomological dimension of $Y$) and in the $0$-row; hence, we see that $H^i( X, \pi_j( \FF \otimes_{\gx} M))$ is zero for large $i$. 

Since the $E^2$ page of the spectral sequence for $\pi_*\Gamma( \mathfrak{X}, \FF \tensor_{\gx} M)$ commutes with
direct sums (as $X$ is quasi-compact and separated; see
\Cref{lem:cohcommute} below), it follows thus that these homotopy groups themselves commute with
direct sums in $\FF$. Indeed, for a collection $(\FF_i)_{i\in I}$ of quasi-coherent $\otop_\XX$-modules, the natural map
\[ \bigoplus_{i\in I}\Gamma(\XX,\FF_i) \to \Gamma(\XX, \bigoplus_{i\in I}\FF_i)\]
induces an isomorphism on the $E^2$-terms of the corresponding descent spectral sequences, thus on the $E^\infty$-terms and because of the finiteness of the filtration also on the abutment.

Let us consider now the collection $\mathcal{C}$ of all $\gx$-modules $T$ such that the
functor
$$\mathcal{F} \mapsto \Gamma(\mathfrak{X}, \mathcal{F}\tensor_{\gx} T), \quad
\qcoh(\mathfrak{X}) \to \sp,$$
commutes with homotopy colimits. As we have just seen, $M \in \mathcal{C}$. Since the composition of homotopy 
colimit-preserving functors is homotopy colimit-preserving, it follows that
$\mathcal{C}$ is an \emph{ideal}: If $T \in \mathcal{C}$ and $T'$ is any
$\gx$-module, then $T \otimes_{\gx} T' \in \mathcal{C}$. Moreover, $\mathcal{C}$ is a
stable subcategory of $\sp$, and $\mathcal{C}$ is closed under retracts. (A
retract of a functor that preserves homotopy colimits preserves homotopy
colimits.) As $M$ generates $\gx$-modules as a thick tensor-ideal, we see that $\mathcal{C}$ consists of all of $\gx$-modules; in particular, $\gx\in \mathcal{C}$ and $\Gamma$ commutes with homotopy colimits. 
\end{proof}

In the last proof, we used the following algebraic lemma stating that cohomology commutes with filtered colimits. 
\begin{lemma}\label{lem:cohcommute}
Let $X$ be a quasi-compact, separated stack. Then the cohomology group functors
$H^i(X, \cdot)$ on the category of quasi-coherent sheaves on $X$ commute with filtered colimits. 
\end{lemma}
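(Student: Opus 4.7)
The plan is to compute cohomology via a Cech complex built from an affine atlas, and to use exactness of filtered colimits to commute past cohomology. First, using quasi-compactness of $X$, I would choose an \'etale surjection $p\colon U \to X$ with $U$ an affine scheme. Since $X$ is separated and Deligne--Mumford, the diagonal $X \to X \times X$ is finite (and hence affine), as noted just after Theorem~\ref{Coarse}; it follows that every iterated fiber product $U^{(n)} := U \times_X \cdots \times_X U$ (with $n+1$ factors) is itself an affine scheme.

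For any quasi-coherent sheaf $\FF$ on $X$, the descent spectral sequence along $p\colon U \to X$ has $E_1$-page $E_1^{p,q} = H^q(U^{(p)}, \FF|_{U^{(p)}})$ converging to $H^{p+q}(X, \FF)$. Since each $U^{(p)}$ is affine and $\FF|_{U^{(p)}}$ is quasi-coherent, all higher cohomologies vanish, so the spectral sequence degenerates at $E_1$. Hence $H^\ast(X, \FF)$ is the cohomology of the Cech cochain complex $C^\bullet(\FF)$ with $C^p(\FF) = \Gamma(U^{(p)}, \FF|_{U^{(p)}})$.

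Now let $(\FF_i)_{i \in I}$ be a filtered diagram in $\qcoh(X)$ with colimit $\FF$. Pullback along $U^{(n)} \to X$ commutes with colimits (being a left adjoint), and on the affine scheme $U^{(n)}$ the global-sections functor on quasi-coherent sheaves commutes with filtered colimits (it is the equivalence to modules over the coordinate ring of $U^{(n)}$). Therefore in each degree $C^p(\FF) \cong \colim_i C^p(\FF_i)$, i.e., the Cech complex of $\FF$ is the termwise filtered colimit of the Cech complexes of the $\FF_i$. Since filtered colimits are exact in abelian groups, taking cohomology commutes with them, yielding $H^p(X, \FF) \cong \colim_i H^p(X, \FF_i)$, as desired.

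The only real subtlety is the claim that a single affine \'etale atlas $U \to X$ suffices to compute cohomology via Cech, which hinges on all $U^{(n)}$ being affine and hence having no higher quasi-coherent cohomology. This is where separatedness and Deligne--Mumford-ness enter, through the finiteness (hence affineness) of the diagonal. If $X$ were only a separated quasi-compact Artin stack, one would replace $U$ by a smooth affine hypercover and reduce by descent to the analogous and classical statement for algebraic spaces.
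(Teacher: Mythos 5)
Your proof takes essentially the same approach as the paper's: reduce to the \v{C}ech complex for an affine cover, use affineness of the iterated fiber products (forced by the affine diagonal) to identify cohomology with the cohomology of the resulting cochain complex of global sections, and commute the filtered colimit through using exactness. The paper uses a flat affine cover rather than an \'etale one (marginally more general, since it also applies to Artin stacks with affine diagonal), and does not spell out the affineness of the fiber products as you do, but the argument is the same; your final paragraph about hypercovers is a correct observation but not needed here.
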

\begin{proof} 
Choose an affine, flat cover $\spec A \to X$. The iterated fiber products
$\spec A \times_X \spec A, \dots$ are all affine schemes, so the cohomology of
$\mathcal{F}$ is the cohomology of the cochain complex associated to the
cosimplicial abelian group
\[ \mathcal{F}( \spec A) \rightrightarrows \mathcal{F}( \spec A \times_{X}
\spec A) \triplearrows \dots,  \]
i.e., the Cech construction. But this clearly commutes with filtered colimits
in $\mathcal{F}$. 
\end{proof} 

\Cref{lem:cohcommute} is analogous to the following fact: the homotopy fixed
point functor $$E \mapsto E^{h \mathbb{Z}/2}, \quad \mathrm{F}( B \mathbb{Z}/2, \sp)
\to \sp$$ \emph{does} commute with filtered homotopy colimits if we restrict to the
subcategory of $\mathrm{F}(B \mathbb{Z}/2, \sp)$ whose underlying spectra have
\emph{bounded-above} (by some fixed value) homotopy groups.

The arguments for the conservativity of $\Gamma$ are related but different. We will present an algebraic and a topological analog of the last proposition for this purpose. The latter will turn out to be more powerful, yet is also more subtle regarding its input. But first we state a little lemma:

\begin{lemma}\label{AlreadySheaf}Let $\XX = (X,\otop_\XX)$ be a derived stack and assume that 
\[\Gamma\colon \qcoh(\XX) \to \md(\gx)\]
 commutes with homotopy colimits. Then the natural map 
\[\Gamma(\FF)\tensor_{\gx}N \to \Gamma(\FF \tensor_{\gx} N)\]
is an equivalence for every $\gx$-module $N$ and every quasi-coherent $\otop_\XX$-module $\FF$.\end{lemma}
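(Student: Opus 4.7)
The plan is to fix $\FF \in \qcoh(\XX)$ and view both sides of the asserted equivalence as functors of $N \in \md(\gx)$. Namely, set
\[
F(N) = \Gamma(\FF) \tensor_{\gx} N, \qquad G(N) = \Gamma(\FF \tensor_{\gx} N),
\]
so that the natural transformation $\eta\colon F \to G$ (coming from the lax symmetric monoidal structure of $\Gamma$, or equivalently from the unit of the adjunction between tensoring up and global sections) is what we need to show is an equivalence. The strategy is the familiar generator argument: verify $\eta_N$ is an equivalence when $N = \gx$, then bootstrap to all $N$ by colimit arguments.

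First I would check that both $F$ and $G$ preserve arbitrary (small) homotopy colimits in $N$. For $F$ this is immediate because the relative tensor product over $\gx$ preserves colimits in each variable. For $G$, one notes that the functor $N \mapsto \FF \tensor_{\gx} N$ from $\md(\gx)$ to $\qcoh(\XX)$ preserves colimits (again since tensoring preserves colimits in each variable, and $\qcoh(\XX)$ is tensored over $\md(\gx)$ via the ``tensoring up'' functor), and then composition with $\Gamma$ preserves colimits by the hypothesis of the lemma. So $G$ is also colimit-preserving in $N$.

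Next I would check the base case $N = \gx$: both $F(\gx)$ and $G(\gx)$ reduce to $\Gamma(\FF)$, and one verifies that $\eta_{\gx}$ is the identity map, so certainly an equivalence. Finally, let $\mathcal{D} \subset \md(\gx)$ be the full subcategory of those $N$ for which $\eta_N$ is an equivalence. Since $F$ and $G$ both preserve homotopy colimits, $\mathcal{D}$ is closed under arbitrary homotopy colimits in $\md(\gx)$. Since $\gx \in \mathcal{D}$ and $\gx$ generates $\md(\gx)$ under colimits (every module over an $E_\infty$-ring is a colimit of shifts of free rank-one modules), we conclude $\mathcal{D} = \md(\gx)$, proving the lemma.

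There is no real obstacle here: the argument is entirely formal, and the only nontrivial input is precisely the hypothesis that $\Gamma$ preserves homotopy colimits, which is needed to conclude that $G$ is colimit-preserving in $N$. Without this hypothesis, the map in question would fail to be an equivalence already for $N$ an infinite direct sum, which is exactly the kind of failure illustrated in \Cref{ex:failswedges}.
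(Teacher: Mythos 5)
Your proof is correct and follows essentially the same argument the paper gives (the paper's proof is a two-sentence version of exactly your generator-and-colimits bootstrap, with the same base case $N = \gx$ and the same closure-under-colimits step).
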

\begin{proof}This is by definition true for $N= \gx$. As the class of $\gx$-modules for which it is true is closed under homotopy colimits, it is true for every $\gx$-module $N$.
\end{proof}
In particular, we see that the left adjoint to $\Gamma$ (i.e., ``tensoring up'') is fully faithful if $\Gamma$ commutes with homotopy colimits. 

\begin{proposition}
Let $\XX = (X, \otop)$ be a derived stack whose underlying stack $X$ is a
quasi-compact, separated Deligne-Mumford stack. Suppose: 
\begin{enumerate}
\item There is a faithfully flat, affine morphism $q\colon Y \to X$ from a quasi-affine scheme of cohomological dimension $\leq 1$. 
\item There is a $\gx$-module $M$ such that we have an isomorphism
\[\pi_*\left(\otop_\XX\otimes_{\gx} M\right) \cong q_*q^*\pi_*(\otop_\XX).\]
\item The global sections functor $\Gamma\colon \qcoh(\XX,\otop_\XX) \to \md(\gx)$ commutes with homotopy colimits.
\end{enumerate}
Then the global sections functor $\Gamma$ is conservative. \end{proposition}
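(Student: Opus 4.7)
The plan is to show that if $\Gamma(\XX, \FF) \simeq 0$ for a quasi-coherent $\otop_\XX$-module $\FF$, then $\FF \simeq 0$. First, using hypothesis (3) together with \Cref{AlreadySheaf}, one has $\Gamma(\XX, \FF \tensor_{\gx} M) \simeq \Gamma(\XX, \FF) \tensor_{\gx} M \simeq 0$. Repeating the chain of isomorphisms from the proof of \Cref{AbstractCommHolim} (which uses hypothesis (2) and the projection formula for the affine morphism $q$), one identifies
\[\pi_j(\FF \tensor_{\gx} M) \cong q_* q^* \pi_j \FF\]
 as a quasi-coherent $\mathcal{O}_X$-module, and since $q$ is affine, the Leray spectral sequence reduces the descent spectral sequence to
\[ H^i(Y, \pi_j q^* \FF) \implies \pi_{j-i} \Gamma(\XX, \FF \tensor_{\gx} M) = 0. \]
Because $Y$ has cohomological dimension at most $1$, only the columns $i = 0, 1$ can contribute and the sequence degenerates at $E_2$ for dimensional reasons; the resulting two-step filtration on the vanishing abutment forces $H^0(Y, \pi_j q^* \FF) = 0$ and $H^1(Y, \pi_j q^* \FF) = 0$ for every $j$.

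The next step upgrades vanishing of $H^0$ to vanishing of the sheaves $\pi_j q^* \FF$ themselves. For this I would invoke the classical fact that on a quasi-affine scheme, global sections detect vanishing of discrete quasi-coherent sheaves: $Y$ embeds as a quasi-compact open in $Y' := \Spec \Gamma(Y, \mathcal{O}_Y)$ via an open immersion $j$, for any $G \in \qcoh(Y)$ the pushforward $j_*G$ is quasi-coherent on the affine $Y'$ with $\Gamma(Y', j_* G) = \Gamma(Y, G)$, and $j^* j_* G \cong G$, so $\Gamma(Y, G) = 0$ forces $G = 0$. Applied to $G = \pi_j q^* \FF \cong q^* \pi_j \FF$ (the last isomorphism using flatness of $q$), this yields $q^* \pi_j \FF = 0$ for every $j$. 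Faithful flatness of $q$ then makes $q^*$ conservative on ordinary $\mathcal{O}_X$-modules, so $\pi_j \FF = 0$ for every $j$, and hence $\FF \simeq 0$.

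The main obstacle, compared with \Cref{AbstractCommHolim}, is the upgrade from vanishing of $\Gamma(Y, \pi_j q^* \FF)$ to vanishing of $\pi_j q^* \FF$ itself, which is precisely what a quasi-affine target provides. This is also why the cohomological dimension bound must be tightened from ``finite'' to ``$\leq 1$'': the sharper bound is what guarantees that $E_2^{0, \ast}$ appears as a genuine quotient (rather than merely a subquotient) of the vanishing abutment, so that its vanishing can be read off separately for each $j$ without having to unravel a longer tower of extensions.
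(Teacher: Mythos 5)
Your proof is correct and takes essentially the same route as the paper: apply \Cref{AlreadySheaf} to see $\Gamma(\FF\tensor_{\gx} M)\simeq 0$, identify $\pi_j(\FF\tensor_{\gx}M)\cong q_*q^*\pi_j\FF$ as in \Cref{AbstractCommHolim}, pass via Leray (with $q$ affine) to a two-row descent spectral sequence, read off $H^0(Y,q^*\pi_j\FF)=0$ from the vanishing abutment, invoke the quasi-affineness of $Y$ to conclude $q^*\pi_j\FF=0$, and finish by faithful flatness. The only difference is that you spell out the quasi-affine ``global sections detect vanishing'' argument where the paper cites a reference, and you explicitly explain why the cohomological dimension bound must be $\leq 1$ rather than merely finite.
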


By \Cref{AbstractCommHolim}, the last condition is satisfied if $M$ generates
$\mod( \Gamma( \mathfrak{X}, \otop_{\mathfrak{X}})$ as a thick tensor-ideal.

\begin{proof}Let $\FF \in \qcoh(\XX,\otop_\XX)$ and assume $\Gamma(\FF) = 0$. We have to show that $\pi_j\FF = 0$ for every $j\in\Z$. 

By the last lemma, $\Gamma(\FF\tensor_{\gx} M) \simeq \Gamma(\FF) \tensor_{\gx} M = 0$. As in the last proof, we have
\[\pi_j( \mathcal{F} \otimes_{\gx} M) \cong q_*q^*\pi_j\FF.\]
Thus, the descent spectral sequence for $\FF\tensor_{\gx} M$ has $E^2$-term isomorphic to
\[H^i(X; q_*q^*\pi_j\FF) \cong H^i(Y; q^*\pi_j\FF).\]
As $Y$ has cohomological dimension $\leq 1$, this spectral sequence degenerates at $E^2$. Since it converges to $0$, it follows that $H^0(Y; q^*\pi_j\FF) = 0$. As $Y$ is quasi-affine, this implies $q^*\pi_j\FF = 0$ by \cite[Proposition 13.80]{G-W10} and thus $\pi_j\FF = 0$ as $q$ is faithfully flat.
\end{proof}

For the next proposition, we need the following definition:
\begin{definition}We call a morphism $f\colon \YY \to \XX$ of derived stacks \textit{quasi-compact} or \textit{separated} if the underlying map of classical algebraic stacks is. We call it \textit{(faithfully) flat} if the map $f_0\colon Y \to X$ of the underlying stacks is and the map $f^*\pi_k\otop_\XX \to \pi_k \otop_\YY$ 
is an isomorphism for every $k\in\Z$. \end{definition}

\begin{lemma}Let $f\colon\YY\to \XX$ be a flat map of derived stacks. Then \[\pi_* f^*\FF \cong f^*\pi_*\FF\] for every quasi-coherent $\otop_\XX$-module $\FF$.\end{lemma}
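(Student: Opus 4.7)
The plan is to exhibit $\otop_\YY$ as a flat module over $f^{-1}\otop_\XX$ at the level of homotopy groups, and then deduce the isomorphism from a Künneth spectral sequence that collapses onto a single row.

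First I would unwind definitions. By construction $f^*\FF = f^{-1}\FF \otimes_{f^{-1}\otop_\XX} \otop_\YY$, and the paper has already recorded that $f^{-1}$ on sheaves of spectra commutes with the formation of $\pi_*$; in particular, $\pi_*(f^{-1}\otop_\XX) \cong f^{-1}\pi_*\otop_\XX$ and $\pi_*(f^{-1}\FF) \cong f^{-1}\pi_*\FF$. Hence the task reduces to computing the homotopy groups of a relative tensor product of module spectra over the $E_\infty$-ring (sheaf) $f^{-1}\otop_\XX$.

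Next I would check flatness on homotopy. The flatness hypothesis gives, for each $k$, an isomorphism
\[ \pi_k \otop_\YY \cong f^*\pi_k\otop_\XX = f_0^{-1}\pi_k\otop_\XX \otimes_{f_0^{-1}\mathcal{O}_X} \mathcal{O}_Y. \]
Since $f_0\colon Y \to X$ is flat, $\mathcal{O}_Y$ is flat over $f_0^{-1}\mathcal{O}_X$, so the graded module $\pi_*\otop_\YY$ is flat over $\pi_*(f^{-1}\otop_\XX) \cong f^{-1}\pi_*\otop_\XX$. With this in hand, I would invoke the Künneth (Tor) spectral sequence
\[ E^2_{s,t} = \mathrm{Tor}^{\pi_*(f^{-1}\otop_\XX)}_{s,t}\bigl(\pi_*(f^{-1}\FF),\, \pi_*\otop_\YY\bigr) \Rightarrow \pi_{s+t}(f^*\FF). \]
By the flatness just established, the $E^2$-page is concentrated in the row $s=0$, and the spectral sequence collapses to
\[ \pi_*(f^*\FF) \cong f^{-1}\pi_*\FF \otimes_{f^{-1}\pi_*\otop_\XX} \pi_*\otop_\YY \cong f^{-1}\pi_*\FF \otimes_{f^{-1}\mathcal{O}_X} \mathcal{O}_Y = f^*\pi_*\FF. \]

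The main technical nuisance is that everything takes place in sheaves of spectra rather than modules over a fixed $E_\infty$-ring, so some care is needed regarding sheafification. I would handle this by checking the isomorphism on an affine \'etale cover $\spec R \to Y$ (pulled back along $f$ from an affine \'etale neighborhood in $X$), where the Künneth spectral sequence is the standard statement for module spectra over an $E_\infty$-ring. The identification with the claimed quasi-coherent sheaves on $Y$ then follows from the observation, already in the paper, that on the affine \'etale site of a derived stack the assignment $U \mapsto \pi_k \otop(U)$ is a quasi-coherent sheaf with no further sheafification necessary.
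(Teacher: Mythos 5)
Your proof is correct and follows essentially the same route as the paper's: unwind the definition of $f^*\FF$, identify $\pi_*(f^{-1}\otop_\XX)$ with $f^{-1}\pi_*\otop_\XX$, and then collapse the K\"unneth (Tor) spectral sequence for the relative tensor product using the flatness coming from $f_0$. The only difference is that you spell out the base-change isomorphism $\pi_*\otop_\YY \cong f^{-1}\pi_*\otop_\XX \otimes_{f^{-1}\mathcal{O}_X} \mathcal{O}_Y$ and deduce flatness of $\pi_*\otop_\YY$ as a graded $f^{-1}\pi_*\otop_\XX$-module, whereas the paper states only the $\pi_0$-level flatness and leaves this transfer implicit.
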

\begin{proof}Recall that $f^*\FF$ is defined to be $f^{-1}\FF\otimes_{f^{-1}\otop_\XX} \otop_\YY$. As $f$ is flat, 
\(\pi_0\otop_\YY \cong \mathcal{O}_Y\)
 is a flat module over $\pi_0 f^{-1}\otop_\XX \cong f^{-1}\mathcal{O}_X$. By the K\"unneth spectral sequence, it follows that \[\pi_*(f^{-1}\FF\otimes_{f^{-1}\otop_\XX} \otop_\YY) \cong f^{-1}(\pi_*\FF)\otimes_{f^{-1}\mathcal{O}_X} \mathcal{O}_Y = f^*\pi_*\FF.\]
\end{proof}

\begin{lemma}\label{DerivedFaithfulness}Let $f\colon\YY\to \XX$ be a faithfully flat map of derived
stacks. Then the functor $$f^*\colon \qcoh(\XX) \to
\qcoh(\YY)$$ is faithful.\end{lemma}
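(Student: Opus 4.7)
The plan is to reduce faithfulness of $f^*$ to a statement about vanishing, exploiting the hypothesis that $f$ is flat (so pullback interacts well with homotopy sheaves) and that the underlying map $f_0\colon Y \to X$ is faithfully flat (so we can invoke classical faithfully flat descent). Since $\qcoh(\XX)$ and $\qcoh(\YY)$ are stable $\infty$-categories, faithfulness of an exact functor is equivalent to conservativity, which in turn is equivalent to the statement: if $\FF \in \qcoh(\XX)$ satisfies $f^*\FF \simeq 0$, then $\FF \simeq 0$. I would take this as the goal.

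Suppose $f^*\FF \simeq 0$. By the immediately preceding lemma, the flatness of $f$ gives $\pi_k f^*\FF \cong f_0^*\pi_k\FF$ for every $k$. Thus $f_0^*\pi_k\FF = 0$ as an ordinary quasi-coherent sheaf on $Y$. Since $f_0\colon Y \to X$ is faithfully flat between Deligne--Mumford stacks, the pullback on abelian categories of quasi-coherent sheaves is faithful (classical faithfully flat descent), so $\pi_k\FF = 0$ on $X$ for all $k \in \Z$.

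It remains to conclude that a quasi-coherent sheaf with vanishing homotopy sheaves is zero. For any affine \'etale $\spec R \to X$, the $\otop(\spec R)$-module $\FF(\spec R)$ has homotopy groups $\pi_k \FF(\spec R) = (\pi_k \FF)(\spec R) = 0$ (no sheafification is required since we are evaluating on affines), and a module over an $E_\infty$-ring with all homotopy groups zero is contractible. Since $\qcoh(\XX)$ is the homotopy limit over $\affx$ of the $\infty$-categories $\md(\otop(\spec R))$, the vanishing of $\FF$ on each affine \'etale forces $\FF \simeq 0$ in $\qcoh(\XX)$.

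There is no real obstacle in this argument; all three steps are essentially bookkeeping once the preceding flatness lemma and classical faithfully flat descent are invoked. The only point that requires a moment of care is that the identification $\pi_k \FF(\spec R) = (\pi_k \FF)(\spec R)$ really holds on the nose without further sheafification, which is built into the definition of $\pi_k\otop_{\XX}$ and $\pi_k\FF$ on the affine \'etale site.
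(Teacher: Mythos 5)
Your proof is correct and follows essentially the same argument as the paper's: apply the preceding flatness lemma to get $f_0^*\pi_*\FF \cong \pi_*f^*\FF = 0$, invoke classical faithful flatness to conclude $\pi_*\FF = 0$, and then observe a quasi-coherent sheaf with vanishing homotopy sheaves is zero. One small imprecision: for exact functors between stable $\infty$-categories, categorical faithfulness (injectivity on hom-spaces) is \emph{not} in general equivalent to conservativity; but the lemma uses ``faithful'' in the Bousfield--Rognes sense of reflecting zero objects (as the paper's own proof makes clear), which for exact functors is equivalent to conservativity, so your reduction lands on the right target and the rest of the argument is fine.
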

\begin{proof}Let $\FF$ be a quasi-coherent $\otop_\XX$-module such that $f^*\FF$ is equivalent to the $0$-object. We need to show that $\pi_*\FF = 0$. By the last lemma, we know that $f^*\pi_*\FF \cong \pi_*f^*\FF = 0$. Since $f$ is faithfully flat, the result follows.\end{proof}

\begin{proposition}\label{AbstractConservative}Let $\XX = (X,\otop_\XX)$  be a
derived stack whose underlying stack $X$ is  quasi-compact, separated,
and Deligne-Mumford.
Suppose given a faithfully flat, quasi-compact and separated morphism $q\colon \mathfrak{Y} \to \mathfrak{X}$ from a derived stack $\YY = (Y,\otop_{\mathfrak{Y}})$.

Assume the following: 
\begin{enumerate}
\item The global sections functor $\Gamma\colon \qcoh(\YY) \to \md(\Gamma(\YY,\otop_\YY))$ is conservative. 
\item There is a $\gx$-module $M$ such that we have an equivalence
\[\otop_\XX \tensor_{\gx} M \to q_*\otop_\YY\] of $\otop_\XX$-modules.
\item The global sections functor $\Gamma\colon \qcoh(\XX,\otop_\XX) \to \md(\gx)$ for $\XX$ commutes with homotopy colimits.
\end{enumerate}
Then the global sections functor $\Gamma$ for $\XX$ is conservative. \end{proposition}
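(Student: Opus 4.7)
The plan is to exploit faithful flatness of $q$ to reduce the conservativity of $\Gamma$ on $\XX$ to conservativity on $\YY$. Suppose $\FF \in \qcoh(\XX)$ satisfies $\Gamma(\XX, \FF) \simeq 0$; the goal is to show $\FF \simeq 0$. By \Cref{DerivedFaithfulness}, $q^*$ is faithful, so it suffices to prove $q^*\FF \simeq 0$ in $\qcoh(\YY)$; hypothesis (1) further reduces this to establishing $\Gamma(\YY, q^*\FF) \simeq 0$. The $(q^* \dashv q_*)$-adjunction yields $\Gamma(\YY, q^*\FF) \simeq \Gamma(\XX, q_*q^*\FF)$, so the task becomes showing $\Gamma(\XX, q_*q^*\FF) \simeq 0$.

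The crux is to identify $q_*q^*\FF \simeq \FF \otimes_{\gx} M$ as $\otop_\XX$-modules. The projection-formula map $\FF \otimes_{\otop_\XX} q_*\otop_\YY \to q_*q^*\FF$, composed with the equivalence $\otop_\XX \otimes_{\gx} M \simeq q_*\otop_\YY$ supplied by hypothesis (2), yields a natural map
\[
\FF \otimes_{\gx} M \longrightarrow q_*q^*\FF.
\]
Assuming this map is an equivalence, \Cref{AlreadySheaf}---which applies precisely because of hypothesis (3)---computes
\[
\Gamma(\XX, \FF \otimes_{\gx} M) \simeq \Gamma(\XX, \FF) \otimes_{\gx} M \simeq 0,
\]
and chaining the three reductions above then yields $\FF \simeq 0$, as desired.

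The main technical point to justify is thus the projection formula for $q$, namely that $\FF \otimes_{\otop_\XX} q_*\otop_\YY \to q_*q^*\FF$ is an equivalence for all $\FF \in \qcoh(\XX)$. For $q$ quasi-compact and separated, this is standard in derived algebraic geometry: both sides are exact in $\FF$, the map is tautologically an equivalence when $\FF = \otop_\XX$, and the identification can be verified by passing to a flat affine cover $\spec R \to \XX$, where flat base change for $q_*$ (available because $q$ is quasi-compact and separated) reduces the claim to the projection formula for the affine-target morphism $\YY \times_\XX \spec R \to \spec R$. This verification is the only step of the proof that requires more than formal manipulation of the hypotheses together with the $(q^* \dashv q_*)$ adjunction.
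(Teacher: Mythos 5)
Your proof is correct and follows essentially the same route as the paper's: both reduce via faithfulness of $q^*$ and conservativity on $\YY$ to showing $\Gamma(\YY, q^*\FF)\simeq 0$, then use hypothesis (2), the projection formula, and \Cref{AlreadySheaf} (which needs hypothesis (3)) to identify $\Gamma(\YY, q^*\FF)\simeq\Gamma(\XX,\FF)\otimes_{\gx}M\simeq 0$. The only difference is that you sketch a justification of the projection formula for $q$, whereas the paper simply cites a reference.
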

\begin{proof}
Let $\FF \in \qcoh(\XX,\otop_\XX)$ and assume $\Gamma(\FF) = 0$. We have to show that $\FF = 0$ or, equivalently, $q^*\FF = 0$ by the last lemma. 

By assumption, we have
\[\FF\tensor_{\gx} M \simeq \FF\tensor_{\otop_\XX}\otop_\XX \tensor_{\gx} M \simeq \FF\tensor_{\otop_\XX} q_*\otop_\YY.\]

By the projection formula (see \cite[Remark 1.3.14]{DAGProp}), this is equivalent to
\[q_*(q^*\FF\tensor_{\otop_\YY} \otop_\YY) \simeq q_*q^*\FF.\]

Thus, we get
\begin{align*}\Gamma(q^*\FF)&\simeq \Gamma(q_*q^*\FF)\\ 
&\simeq \Gamma(\FF\tensor_{\gx} M)\\
&\simeq \Gamma(\FF)\tensor_{\gx} M \\
&\simeq 0\end{align*}

Since the global sections functor on $(\YY,\otop_\YY)$ is conservative, it follows that $q^*\FF = 0$. 
\end{proof}

To apply the last proposition, we need as input a good supply of derived stacks
with conservative global sections functor. In particular, this will turn out to
be the case when the underlying stack $X$ is a quasi-affine scheme. In the
next subsection, we will show that this criterion is \emph{Zariski-local} in
certain cases. 

\subsection{Zariski-descent for $0$-affineness}\label{ZarDescent}
This subsection is concerned with understanding to what extent $0$-affineness can be checked Zariski-locally. In this paper, only the quasi-affine case of Corollary \ref{Scheme0affineness} will be used, but the other criteria are still useful in other situations. 

Recall that for $0$-affineness it is sufficient that the global sections functor commutes with homotopy colimits and is conservative. The former property can (nearly) always be checked Zariski-locally:

\begin{proposition}\label{ZarColim}Let $\XX = (X, \otop_\XX)$ be a derived stack with underlying separated and quasi-compact Deligne--Mumford stack $X$. For $\{U_i \to X\}_{i\in I}$ a finite Zariski covering by open substacks, we get induced derived stacks $\mathfrak{U}_i=(U_i,\otop_{\mathfrak{U}_i})$. Assume that the global sections functors
\[\Gamma\colon \qcoh(\mathfrak{U}_i) \to \md(\Gamma(\mathfrak{U}_i, \otop_{\mathfrak{U}_i}))\]
 for all $\mathfrak{U}_i$ commute with homotopy colimits. Then the global sections functor 
\[\Gamma\colon \qcoh(\XX) \to \md(\gx) \]
for $\XX$ commutes with homotopy colimits as well.\end{proposition}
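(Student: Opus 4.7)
The plan is to combine Zariski descent for the presheaf $\mathfrak{U} \mapsto \qcoh(\mathfrak{U})$ (as established in \Cref{AppHLS}) with the observation that a \emph{finite} homotopy limit in a stable $\infty$-category commutes with arbitrary homotopy colimits. For $\emptyset \neq S \subset I$, write $\mathfrak{U}_S$ for the iterated intersection of the $\mathfrak{U}_i$ with $i \in S$; since $X$ is separated, this is again a separated, quasi-compact open derived substack of $\XX$. Zariski descent yields
$$\qcoh(\XX) \simeq \lim_{\emptyset \neq S \subset I} \qcoh(\mathfrak{U}_S),$$
and passing to global sections (a right adjoint, so it commutes with limits) gives
$$\Gamma(\XX, \FF) \simeq \lim_{\emptyset \neq S \subset I} \Gamma(\mathfrak{U}_S, \FF|_{\mathfrak{U}_S}).$$
Since the indexing poset is finite and the target is a stable $\infty$-category, this limit commutes with arbitrary homotopy colimits. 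Hence it suffices to prove that each $\Gamma(\mathfrak{U}_S, -)$ commutes with colimits.

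To bootstrap this from the hypothesis on $\mathfrak{U}_i$, pick any $i_0 \in S$ and let $j\colon \mathfrak{U}_S \hookrightarrow \mathfrak{U}_{i_0}$ denote the induced (quasi-compact) open immersion. Since $j$ is an open immersion we have $j^* j_* \simeq \id$, and $j^*$ preserves colimits as a left adjoint. Given a diagram $\{\mathcal{G}_\alpha\}$ in $\qcoh(\mathfrak{U}_S)$, set $\FF_\alpha := j_* \mathcal{G}_\alpha$ and $\FF := \colim \FF_\alpha \in \qcoh(\mathfrak{U}_{i_0})$; then $j^* \FF \simeq \colim j^* \FF_\alpha \simeq \colim \mathcal{G}_\alpha$. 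The projection formula identifies
$$\Gamma(\mathfrak{U}_S, j^* \mathcal{H}) \simeq \Gamma(\mathfrak{U}_{i_0},\, \mathcal{H} \otimes_{\otop_{\mathfrak{U}_{i_0}}} j_* \otop_{\mathfrak{U}_S})$$
for every $\mathcal{H} \in \qcoh(\mathfrak{U}_{i_0})$. The right-hand side is the composition of tensoring with the fixed sheaf $j_* \otop_{\mathfrak{U}_S}$ and the colimit-preserving functor $\Gamma(\mathfrak{U}_{i_0},-)$, so it preserves colimits in $\mathcal{H}$. Evaluating at $\mathcal{H} = \FF$ and at each $\FF_\alpha$ yields the desired equivalence $\Gamma(\mathfrak{U}_S, \colim \mathcal{G}_\alpha) \simeq \colim \Gamma(\mathfrak{U}_S, \mathcal{G}_\alpha)$.

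The main obstacle is the Zariski-descent step at the start: one must know that $\qcoh$ of a derived stack genuinely satisfies descent along finite Zariski covers, and that this descent produces a \emph{finite} limit rather than a potentially infinite Čech totalization. This is exactly the content of the appendix and is the point of assuming the cover is finite. Once this descent statement is in hand, the remainder is formal, using only the stability of the target, the projection formula, and the identity $j^* j_* \simeq \id$ for open immersions.
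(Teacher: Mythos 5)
Your proof follows essentially the same route as the paper's: both use the appendix results (\Cref{CubeLimit}, \Cref{AlgStackZar}) to express global sections as a \emph{finite} limit over the non-empty subsets $S\subset I$, exploit that finite limits commute with colimits in a stable $\infty$-category, and reduce the cocontinuity of $\Gamma(\mathfrak{U}_S,-)$ to that of $\Gamma(\mathfrak{U}_{i_0},-)$ for a chosen $i_0\in S$ via a quasi-compact open immersion $j$. The only minor cosmetic difference is that where you invoke the projection formula $j_*j^*\mathcal{H}\simeq \mathcal{H}\otimes j_*\otop_{\mathfrak{U}_S}$ to transport the colimit across $j$, the paper directly cites cocontinuity of $j_*$ from \cite{DAGQC} (Example 2.5.6 and Proposition 2.5.12) and then composes $\Gamma(\mathfrak{U}_{i_0},-)\circ j_*$; both rest on the same nontrivial input from Lurie. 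Your phrase ``passing to global sections (a right adjoint, so it commutes with limits)'' is a little loose --- what is actually used is that mapping spectra out of the unit in a limit of stable $\infty$-categories compute as the limit of the mapping spectra, or equivalently (as the paper phrases it) that $\FF$ itself, being a sheaf, satisfies $\Gamma(\XX,\FF)\simeq\holim_{\mathcal{P}_I}\FF(\mathfrak{C}^{U,c})$ --- but this is a wording issue, not a gap.
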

The idea is that the global sections over $\mathfrak{X}$ are obtained as a
\emph{finite} homotopy limit of the sections over the $U_i$ and their
intersections. It is important
here that a Zariski cover is used. 

\begin{proof}
Observe that pushforward along an open immersion of derived stacks commutes with homotopy colimits by Example 2.5.6 and Proposition 2.5.12 of \cite{DAGQC}. 
It follows easily that the global sections functor for every open substack of
each $\mathfrak{U}_i$ also commutes with homotopy colimits. Observe furthermore, that restriction of a quasi-coherent sheaf to an open substack commutes with arbitrary homotopy colimits. Thus, the functor \[\qcoh(\XX) \to \sp,\qquad \FF \mapsto \FF(U)\]
commutes with arbitrary homotopy colimits for any substack $U$ of some $U_i$. 

Let $\FF$ be an $\otop_\XX$-module. By \Cref{CubeLimit} and \Cref{AlgStackZar}, the canonical map
\[ \Gamma( \mathfrak{X}, \mathcal{F}) \to \holim_{\mathcal{P}_I^{op}} \FF(\mathfrak{C}^{U,c})\]
is an equivalence. Here, $\mathcal{P}_I$ denotes the (finite) poset of non-empty subsets of $I$ and $\mathfrak{C}^{U,c}(S) = \bigcap_{i\in S}U_i$ for a subset $S\subset I$. 

As before, it is enough to show that the global sections functor 
\[\Gamma\colon \qcoh(\XX) \to \md(\gx)\]
 commutes with direct sums. So let $(\FF_j)_{j\in J}$ be a family of quasi-coherent $\otop_\XX$-modules. Consider the commutative diagram
\[\xymatrix{\bigoplus_{j\in J} \Gamma(\FF_j) \ar[r] \ar[d] & \Gamma(\bigoplus_{j\in J}\FF_j) \ar[d] \\
\bigoplus_{j\in J} \holim_{\varnothing \neq S\subset I}\FF_j(\mathfrak{C}^{U,c}(S))  \ar[r] &\holim_{\varnothing \neq S\subset I}\bigoplus_{j\in J}  \FF_j( \mathfrak{C}^{U,c}(S) ) }\]
As just discussed, the vertical arrows are equivalences. Moreover, the lower
horizontal arrow is an equivalence since finite homotopy limits  commute
with arbitrary homotopy colimits (in a stable $\infty$-category). Thus, the upper horizontal arrow is an equivalence as well.
\end{proof}

Now we turn to the conservativeness of $\Gamma$. This will will depend on the notion of an ample line bundle, which in turn, depends on the notion of non-vanishing loci as in Definition \ref{non-vanishing}. 

\begin{definition}Let $X$ be a quasi-compact and separated Deligne--Mumford stack with coarse
moduli space $f\colon X \to Y$. We call then a line bundle $\LL$ on $X$
\textit{ample} if $Y$ is a scheme and the non-vanishing loci $D(x)$ of sections $x \in
\Gamma(X, \LL^{\tensor k})$ form a basis of the \textit{Zariski} topology of $X$. This agrees with the usual definition if $X$ is a scheme by \cite[Proposition 13.47]{G-W10}. 

Let $\XX = (X, \otop_\XX)$ be a derived stack. Let $\LL$ be a
locally free $\otop_\XX$-module of rank $1$. We say that $\LL$ is
\textit{ample} if the non-vanishing loci $D(\overline{x})$ of the reductions
$\overline{x} \in \Gamma(\pi_k(\LL^{\tensor l}))$ of elements $x\in
\pi_k\Gamma(\mathfrak{X}, \LL^{\tensor l})$ form a basis of the Zariski topology of $X$.\end{definition}

\begin{proposition}\label{ampleequivalence}
Let $X$ be a quasi-compact and separated Deligne--Mumford stack and $\LL$ be a line bundle on $X$. Then the following are equivalent:
\begin{enumerate}
\item $\LL$ is ample.
\item There are finitely many sections $x_i \in \Gamma(X,\LL^{\tensor k_i})$ with $k_i\geq 1$ such that the $D(x_i)$ have affine coarse moduli space and cover $X$.
\item The \emph{affine} non-vanishing loci $D(x)$ of sections $x \in
\Gamma(X, \LL^{\tensor k})$ form a basis of the Zariski topology. 
\end{enumerate}
\end{proposition}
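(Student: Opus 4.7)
The plan is to prove $(3) \Rightarrow (1)$, $(3) \Rightarrow (2)$, $(1) \Rightarrow (3)$, and $(2) \Rightarrow (3)$. The first two implications are essentially formal, while the last two are the real content and rely on a common extension argument.

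For $(3) \Rightarrow (1)$, a basis of affine $D(x)$'s is in particular a basis of $D(x)$'s; moreover, their affine coarse moduli spaces cover $Y$ (using that the formation of coarse moduli commutes with the open immersions $D(x) \hookrightarrow X$ by \Cref{Coarse}), so $Y$ is a scheme. For $(3) \Rightarrow (2)$, the affine $D(x)$'s cover $X$, which is quasi-compact by hypothesis, so a finite subcover exists.

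The key ingredient for the converses is an extension lemma analogous to the classical one for schemes: given $x \in \Gamma(X, \LL^{\tensor k})$ and $s \in \Gamma(D(x), \LL^{\tensor l})$, there is an $n \geq 0$ such that $s \cdot x^n$ extends to a global section of $\LL^{\tensor(l+nk)}$ on $X$. I would prove this by choosing a finite \'etale atlas $\{\Spec A_\alpha \to X\}$ of affines on which $\LL$ is trivializable, representing $x$ by $x_\alpha \in A_\alpha$, and invoking the classical ring-theoretic statement $(A_\alpha)_{x_\alpha} = \varinjlim_n A_\alpha$ (with transition maps multiplication by $x_\alpha$); the finiteness of the cover together with quasi-compactness of the overlaps $\Spec A_\alpha \times_X \Spec A_\beta$ (ensured by separatedness of $X$) furnishes a uniform power of $x$ compatible with the descent data.

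For $(1) \Rightarrow (3)$, given $y \in X$ and an open neighborhood $U$, use $(1)$ to pick $x \in \Gamma(X, \LL^{\tensor k})$ with $y \in D(x) \subset U$. The coarse moduli of $D(x)$ is an open $W = f(|D(x)|) \subset Y$; since $Y$ is a scheme, choose an affine open $V = \Spec A \subset W$ containing $f(y)$, and a principal open $D(g) \subset V$ with $g \in A$ and $f(y) \in D(g)$. Then $f^{-1}(V) \subset D(x)$ has affine coarse moduli $V$, and the function $f^* g$ together with a trivializing power of $x$ defines a section of $\LL^{\tensor mk}$ on $f^{-1}(V)$. Globalizing via the extension lemma (iterated along a chain of distinguished opens to pass first from $f^{-1}(V)$ to $D(x)$, then from $D(x)$ to $X$) produces $\tilde{x} \in \Gamma(X, \LL^{\tensor N})$ whose restriction to $D(x)$ is $f^* g \cdot x^{N/k}$. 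Multiplying once more by $x$ kills the section outside $D(x)$ (since $x$ does), so $D(\tilde{x} \cdot x) = f^{-1}(D(g))$, which has affine coarse moduli $D(g)$ and contains $y$. The implication $(2) \Rightarrow (3)$ is the same argument with $x_i$ (for whichever $i$ has $y \in D(x_i)$) in place of $x$; the choice of $V$ is obviated by the hypothesis that $D(x_i)$ already has affine coarse moduli.

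The main obstacle is the stack-theoretic extension lemma, and more subtly the intermediate step of globalizing sections from an open that is not of the form $D(x')$; this can be handled either by iterating the lemma along a chain of principal opens refining $V \subset W$, or by appealing to properness of $f\colon X \to Y$ (from \Cref{Coarse}) to obtain Serre-vanishing-style statements for the coherent pushforwards $f_* \LL^{\tensor n}$ on $Y$.
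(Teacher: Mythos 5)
Your proof takes a genuinely different route from the paper's. The paper's proof is a three-line reduction to the scheme case: since the Zariski topology of $X$ agrees with that of its coarse moduli space $Y$ (via the homeomorphism $|X| \cong |Y|$ established earlier), the equivalence of the three conditions is inherited from the classical scheme statement \cite[Propositions 13.47, 13.49]{G-W10}, and the only thing left to verify in cases (2) and (3) is that $Y$ is a scheme, which is \cite[Theorem 3.1]{Con05}. You instead try to reprove the whole circle of implications from scratch via a stack-theoretic analogue of the classical extension lemma. The paper's approach buys brevity and avoids the technicalities you run into; yours, if completed, would be self-contained but at considerably greater cost.

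Two concrete issues with the $(1)\Rightarrow(3)$ leg of your argument. First, your extension lemma only lets you promote a section of $\LL^{\otimes l}$ over $D(x)$ to one over $X$ after multiplying by a power of $x$. But $f^*g$ is only defined over $f^{-1}(V)$, which is strictly smaller than $D(x)$, and $V$ is an arbitrary affine open of $W=f(|D(x)|)$, not a principal open of $W$ (indeed $W$ need not be affine, so ``principal open'' has no meaning). Your proposed fix of iterating along ``a chain of distinguished opens refining $V\subset W$'' does not have obvious content in this generality, and the alternative appeal to ``Serre-vanishing-style statements'' is circular, since Serre vanishing is a consequence of ampleness. Second, the classical proof for schemes sidesteps this entirely: there, once you find $y \in D(x) \subset V$ with $V$ affine, $D(x)$ is automatically a principal open of $V$ and hence affine, with no extension argument at all. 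For a stack this fails---$D(x)\subset V$ being ``small'' does not make its coarse moduli space affine, because $f_*\LL|_V$ need not be invertible over $f(V)$. This is precisely the obstruction the paper's passage through $Y$ is designed to handle, and it is the reason the detour is genuinely useful rather than a cosmetic shortcut.
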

\begin{proof}
This is true if $X$ is a scheme by \cite[Propositions 13.47 and 13.49]{G-W10}. As the Zariski topologies of $X$ and its coarse moduli space $Y$ agree, we have only to show in the cases (2) and (3) that the coarse moduli space $Y$ is a scheme. This follows from \cite[Theorem 3.1]{Con05}.
\end{proof}

\begin{example}The line bundle $\omega \cong \pi_2\otop$ on $\mellc$ is ample. Indeed, the non-vanishing loci of $c_4\in \Gamma(\omega^{\tensor 4})$ and $\Delta \in \Gamma(\omega^{\tensor 12})$ have affine coarse moduli space and cover $\mellc$. More details about $(\mellc,\otop)$ will be given in \Cref{AppTMF}.\end{example}

Let $\XX = (X,\otop_\XX)$ be an even periodic derived stack, $f\in\pi_k\gx$ and $\overline{f} \in \Gamma(X,\pi_k\otop_\XX)$ its reduction. Let $\FF$ be a quasi-coherent $\otop_\XX$-module. By the theory of \cite[Section 8.2.4]{higheralg}, we can consider the localization \[\Gamma(\XX,\FF) \to \Gamma(\XX,\FF)[1/f].\] This has the following universal property: let $M$ be a $\gx$-module such that $f$ operates invertible on $\pi_*M$. Then the induced map
\[\Map(\Gamma(\XX,\FF)[1/f], M) \to \Map(\Gamma(\XX,\FF),M) \]
is an equivalence. 

Now assume that the global sections functor $\Gamma\colon \qcoh(\XX) \to \md(\gx)$ commutes with homotopy colimits. Then the presheaf 
\[\FF[1/f]\colon U\mapsto \FF(U) \tensor_{\gx} \gx[1/f]\simeq \FF(U)[1/f]\] 
is already a sheaf by Lemma \ref{AlreadySheaf}. As $\FF \to \FF[1/f]$ is an
equivalence \'etale locally on $D(\overline{f})$, we can conclude thus that
$\FF(D(\overline{f})) \simeq \FF(D(\overline{f}))[1/f]$. In particular, there is thus a canonical map $\Gamma(\FF)[1/f] \to \FF(D(\overline{f}))$. 

\begin{lemma}Let $\XX = (X,\otop_\XX)$ be an even periodic, quasi-compact
and separated derived stack. Assume that the global sections functor commutes
with homotopy colimits. Let $f\in \pi_k\gx$ and $\FF$ be a quasi-coherent
$\otop_\XX$-module. Then the canonical map \[\Gamma(\XX, \FF)[1/f] \to \FF(D(\overline{f}))\] is an equivalence, where $\overline{f} \in \Gamma(\pi_k\otop_\XX)$ is the reduction of $f$.
\end{lemma}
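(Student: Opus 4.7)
My plan has two main steps: first, use Lemma \ref{AlreadySheaf} to move the localization inside the global sections functor; second, identify the resulting sheaf $\FF[1/f]$ with the pushforward of $\FF|_{D(\overline{f})}$ along the open immersion $j\colon D(\overline{f}) \hookrightarrow X$.

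For the first step, applying Lemma \ref{AlreadySheaf} to the $\gx$-module $N = \gx[1/f]$ yields
\[ \Gamma(\XX, \FF)[1/f] \simeq \Gamma(\XX, \FF \tensor_{\gx} \gx[1/f]) \simeq \Gamma(\XX, \FF[1/f]), \]
where the second equivalence uses that $\FF[1/f]$ is already a sheaf, as observed in the paragraph preceding the statement.

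For the second step, since $f$ acts invertibly on $j^*\FF$, the unit $\FF \to j_* j^* \FF$ factors canonically through the localization $\FF[1/f]$, producing a comparison map $\FF[1/f] \to j_* j^* \FF$. I would then verify that this comparison is an equivalence by checking it on an affine \'etale cover $\{U = \spec R \to X\}$. On such a $U$, the element $f$ restricts to $f_R \in \pi_k \otop(U)$, a section of the line bundle $(\pi_k\otop_\XX)|_U$ over $\spec \pi_0 R$ (by even periodicity), and $U \times_X D(\overline{f})$ is precisely the non-vanishing locus $D(\overline{f_R})$. Working Zariski-locally on $U$ where this line bundle is trivial reduces the required identification
\[ \FF(U)[1/f_R] \simeq \FF(U \times_X D(\overline{f})) \]
to the classical affine statement that $M[1/g] \simeq \widetilde{M}(D(g))$ for $g \in A$ and an $A$-module $M$; the local identifications on the trivializing Zariski opens then glue.

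Combining the two steps gives
\[ \Gamma(\XX, \FF)[1/f] \simeq \Gamma(\XX, j_* j^* \FF) \simeq \Gamma(D(\overline{f}), j^*\FF) = \FF(D(\overline{f})), \]
as desired. I expect the main obstacle to be the Zariski-local identification above: it is here that the even periodic hypothesis is essential, since trivializing $\pi_k\otop_\XX$ Zariski-locally on $U$ is what reduces inverting $f_R$ on a module to the standard localization at an element of $\pi_0\otop(U)$.
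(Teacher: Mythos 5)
Your proof is correct and follows essentially the same route as the paper's. The paper proves the affine case (with $\pi_2\otop_\XX$ trivial) directly from the definition of a derived scheme plus quasi-coherence, and then handles the general case by choosing an affine \'etale cover $p\colon \mathfrak{U}\to\XX$ with $\pi_2\otop$ trivial on $\mathfrak{U}$ and comparing the two sides of the desired equivalence along the Cech resolution; you package the same content as the sheaf-level identification $\FF[1/f]\simeq j_*j^*\FF$ verified \'etale-locally (with the line bundle trivialized Zariski-locally afterward rather than built into the choice of cover), and then apply $\Gamma$. Both rely on the same two ingredients --- that $\FF[1/f]$ is already a sheaf because $\Gamma$ commutes with homotopy colimits (via \Cref{AlreadySheaf}), and the affine identification $M[1/g]\simeq \widetilde{M}(D(g))$ for derived schemes after trivializing $\omega$ --- so the differences are presentational rather than substantive.
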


In other words, restricting to a basic open affine gives the corresponding
localization at the level of sections. 

\begin{proof}First assume that $\XX$ is an affine derived scheme with $\pi_2\otop_\XX$
trivial. In particular, we can assume $f$ to be in $\pi_0\Gamma(\XX, \otop)$. By the definition of a derived scheme, we know that $\Gamma(\otop_\XX)[1/f] \simeq \otop_\XX(D(\overline{f}))$. Now the result follows by the quasi-coherence of $\FF$.

Now consider the general case. Let $p\colon \mathfrak{U} \to \XX$ be an affine etale
cover with $p$ affine and such that $\pi_2\otop_\XX$ is trivial on
$\mathfrak{U}$. Define $p_n\colon \mathfrak{U}_n = \mathfrak{U}^{\times_\XX n} \to \XX$. We have a commutative diagram
\[\xymatrix{ \Gamma(\FF)[1/f] \ar[d] \ar[r] & \FF(D(\overline{f})) \ar[d] \\
 \holim \left(  \FF(\mathfrak{U}_n)[1/p_n^*f] \right) \ar[r]& \holim\, \FF(D(p_n^*\overline{f}))
  \simeq \holim\, \FF(\mathfrak{U}_n\times_\XX D(\overline{f})) 
 } \]
The vertical maps are equivalences since $\FF[1/f]$ and $\FF$ are sheaves. The lower horizontal map is an equivalence by the affine case. Thus, the result follows.\end{proof}

\begin{proposition}\label{Zariski-locally-conservative}Let $\XX = (X,\otop_\XX)$ be
an even periodic, quasi-compact, separated derived stack.
Assume that the global sections functor commutes with homotopy colimits and
that $\otop_\XX$ is ample. Let $\{U_i \to X\}_{i\in I} \to X$ be a
Zariski covering and $\mathfrak{U}_i = (U_i,\otop)$ the induced open derived
substacks. Assume that the global sections functor for every $\mathfrak{U}_i$
is conservative. Then the global sections functor 
\[\Gamma: \qcoh(\XX) \to \md(\gx)\]
for $\XX$ is conservative. 

In other words: under the assumptions, the conservativity of the global
sections functor is a Zariski-local property.\end{proposition}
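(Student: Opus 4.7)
The plan is to refine the given Zariski cover to a finite basic-open cover subordinate to $\{U_i\}$, and then to apply the preceding localization lemma. Let $\FF \in \qcoh(\XX)$ satisfy $\Gamma(\XX, \FF) \simeq 0$; the goal is $\FF \simeq 0$. Ampleness of $\otop_\XX$ provides, through each point of each $U_i$, a basic open $D(\overline{f}) \subseteq U_i$ with $f \in \pi_*\gx$; since $X$ is quasi-compact one extracts a finite subcover $\{D(\overline{f}_j)\}_{j=1}^{m}$ of $X$ with each $D(\overline{f}_j) \subseteq U_{i(j)}$ for some index $i(j)$. Write $\mathfrak{D}_j = (D(\overline{f}_j), \otop)$ for the induced open sub-derived-stacks.

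The key intermediate observation is that conservativity of $\Gamma$ is inherited by open sub-derived-stacks. For an open immersion $j\colon \mathfrak{V} \hookrightarrow \mathfrak{U}$ with $\Gamma$ conservative on $\mathfrak{U}$, and $\mathcal{G} \in \qcoh(\mathfrak{V})$ with $\Gamma(\mathfrak{V}, \mathcal{G}) \simeq 0$, the adjunction identification $\Gamma(\mathfrak{U}, j_*\mathcal{G}) \simeq \Gamma(\mathfrak{V}, \mathcal{G}) \simeq 0$ forces $j_*\mathcal{G} \simeq 0$ by the hypothesis on $\mathfrak{U}$, hence $\mathcal{G} \simeq j^*j_*\mathcal{G} \simeq 0$ using $j^*j_* \simeq \id$ for open immersions in the derived setting. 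Applied to each $\mathfrak{D}_j \hookrightarrow \mathfrak{U}_{i(j)}$, this propagates the assumed conservativity to every $\mathfrak{D}_j$.

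Now $\coprod_j \mathfrak{D}_j \to \XX$ is faithfully flat, so \Cref{DerivedFaithfulness} reduces the problem to showing $\FF|_{\mathfrak{D}_j} \simeq 0$ for each $j$. By the inherited conservativity this in turn reduces to $\Gamma(\mathfrak{D}_j, \FF|_{D_j}) = \FF(D(\overline{f}_j)) \simeq 0$, which the preceding localization lemma---applicable since $\XX$ is quasi-compact, separated, even periodic, and $\Gamma$ commutes with homotopy colimits by hypothesis---delivers as $\FF(D(\overline{f}_j)) \simeq \Gamma(\XX, \FF)[1/f_j] \simeq 0$. The main obstacle is precisely the inheritance of conservativity to sub-derived-stacks, which rests on the standard identity $j^*j_* \simeq \id$ for open immersions; once this is in place, ampleness supplies the finite basic-open cover and the preceding lemma evaluates $\FF$ on each piece.
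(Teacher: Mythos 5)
Your proposal is correct and follows essentially the same route as the paper: shrink to a basic-open cover subordinate to $\{U_i\}$ (using ampleness and quasi-compactness), propagate conservativity to the basic opens, apply the localization lemma $\FF(D(\overline{f})) \simeq \Gamma(\XX,\FF)[1/f]$, and conclude by the sheaf/faithfully-flat-descent property. If anything, your explicit proof that conservativity is inherited by open sub-derived-stacks via $j^*j_* \simeq \id$ is a touch more precise than the paper, which cites \Cref{open0affine} (stated for $0$-affineness) when only the conservativity half of that proposition's argument is actually needed here.
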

\begin{proof}By shrinking the $U_i$, we can assume that $U_i =
D(\overline{x_i}) \subset X$, where $\overline{x_i} \in
\Gamma(\pi_{k_i}\otop_\XX)$ is the reduction of an element $x_i \in \pi_{k_i}
\gx$. By \Cref{open0affine}, this preserves the property that the global sections functor is conservative.

Now let $\FF$ be a quasi-coherent $\otop_\XX$-module. By the last lemma, we know that 
\[\FF(D(\overline{x_i})) \simeq \Gamma(\FF)[1/x_i] = 0.\]
As the global sections functor is conservative on each $D(\overline{x_i})$, we have $\FF|_{D(\overline{x_i})} = 0$ for every $i\in I$. Since $\FF$ is a sheaf, it follows that $\FF = 0$. 
\end{proof}

Next, we want to prove an algebraic criterion for ampleness of the structure sheaf $\otop_\XX$. We need first a simple lemma. 

\begin{lemma}Let $X$ be a quasi-compact and separated Deligne--Mumford stack, $\LL$ a line bundle on $X$ and $x\in H^0(X;\LL)$. Then $H^i(D(x);\LL^{\tensor *}) \cong H^i(X; \LL^{\tensor *})[1/x]$ for  $\LL^{\tensor *} \cong \bigoplus_{n\in\Z} \LL^{\tensor n}$.
\end{lemma}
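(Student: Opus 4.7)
The plan is to compute both sides via Čech cohomology with respect to a finite affine étale cover of $X$, observe that the Čech complex for $D(x)$ is the termwise graded localization at $x$ of the one for $X$, and conclude by exactness of localization.

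First, I would choose a finite affine étale cover $p\colon U = \coprod_\alpha V_\alpha \to X$, which exists because $X$ is quasi-compact. Since $X$ is a separated Deligne--Mumford stack, its diagonal is unramified (by \cite[Lemme 4.2]{L-M00}) and proper (by separatedness), hence finite; in particular, it is affine. Therefore all iterated fiber products $U^{(n)} = U \times_X \cdots \times_X U$ ($n$ factors) are affine schemes. By the same argument as in \Cref{lem:cohcommute}, the cohomology $H^i(X; \LL^{\tensor *})$ is the $i$-th cohomology of the Čech cochain complex $C^\bullet$ with $C^n = \LL^{\tensor *}(U^{(n+1)})$.

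Next, I would pull the cover back along the open immersion $D(x) \hookrightarrow X$ to obtain an affine étale cover of $D(x)$ whose $(n+1)$-fold fiber product is $U^{(n+1)} \times_X D(x) = D(p^{(n+1)*} x)$, the basic open non-vanishing locus of $p^{(n+1)*} x$ inside the affine scheme $U^{(n+1)}$. The standard affine Proj-type computation shows that for a line bundle $M$ with a section $s$ on an affine scheme $V$, the locus $D(s) \subset V$ is affine and
\[ \bigl(\textstyle\bigoplus_n M^{\tensor n}\bigr)(D(s)) \;\cong\; \bigl(\textstyle\bigoplus_n M^{\tensor n}(V)\bigr)[1/s], \]
where the right-hand side is the localization of the graded ring at the homogeneous degree-$1$ element $s$. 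Applied termwise in the Čech nerve, this yields $C^\bullet(D(x)) \cong C^\bullet[1/x]$, the termwise localization.

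Since localization is an exact functor, it commutes with taking cohomology of a cochain complex, giving
\[ H^i(D(x); \LL^{\tensor *}) \;\cong\; H^i(C^\bullet[1/x]) \;\cong\; H^i(C^\bullet)[1/x] \;\cong\; H^i(X; \LL^{\tensor *})[1/x] . \]
The main subtle point is the first step: that the Čech complex with respect to an affine étale cover computes quasi-coherent cohomology. This is what forces the separatedness hypothesis, since it is precisely this that guarantees affineness of the iterated fiber products $U^{(n)}$. Once this foundational point is secured, the rest is the standard scheme-theoretic localization argument applied termwise.
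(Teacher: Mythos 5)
Your proof is correct, and it takes a somewhat different route from the paper's. The paper works at the sheaf level: it defines the quasi-coherent sheaf $\LL^{\tensor *}[1/x]$ as the filtered colimit $\LL^{\tensor *} \xrightarrow{r_x} \LL^{\tensor *} \xrightarrow{r_x} \cdots$, identifies it with $j_*j^*\LL^{\tensor *}$ by checking locally on an affine étale cover where $\LL$ trivializes, invokes affineness of $j\colon D(x) \hookrightarrow X$ so that $H^*(X; j_*j^*\LL^{\tensor *}) \cong H^*(D(x); j^*\LL^{\tensor *})$, and then appeals to \Cref{lem:cohcommute} (cohomology on a quasi-compact separated stack commutes with filtered colimits) to conclude $H^*(X; \LL^{\tensor *}[1/x]) \cong H^*(X; \LL^{\tensor *})[1/x]$. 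You instead fix a finite affine étale cover once and for all and argue directly with the Čech complex: the cover of $D(x)$ has affine Čech intersections which are basic opens of the affine $U^{(n+1)}$, the sections of $\LL^{\tensor *}$ over these are the graded localizations at $x$ of the original Čech terms, and exactness of localization finishes the job. The two arguments ultimately rest on the same ingredients (affineness of the diagonal/Čech intersections, exactness of localization/filtered colimits), since \Cref{lem:cohcommute} is itself proved via the Čech complex, but you flatten the paper's sheaf-theoretic packaging into a single explicit termwise computation. What you lose is the reusable sheaf isomorphism $\LL^{\tensor *}[1/x] \cong j_*j^*\LL^{\tensor *}$; what you gain is that the argument is fully self-contained and does not need to invoke the Leray spectral sequence for $j$ nor introduce the auxiliary sheaf. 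One small discrepancy: you cite \cite[Lemme 4.2]{L-M00} for the diagonal of a DM stack being \emph{unramified}; the paper cites it for the diagonal being \emph{quasi-finite}. Both properties hold for DM stacks and each, combined with properness, gives finiteness of the diagonal, so this does not affect the argument.
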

\begin{proof}
Let $j\colon D(x) \to X$ be the inclusion of the non-vanishing locus. Define the quasi-coherent graded $\mathcal{O}_X$-module  $\LL^{\tensor *}[1/x]$ as the colimit over 
\[\LL^{\tensor *} \xrightarrow{r_x} \LL^{\tensor *} \xrightarrow{r_x} \cdots\]
where $r_x$ denotes  multiplication by $x$. Then the map $\LL^{\tensor *} \to j_*j^*\LL^{\tensor *}$ factors over $\LL^{\tensor *}[1/x]$. The map $\LL^{\tensor *}[1/x] \to j_*j^*\LL^{\tensor *}$ is an isomorphism as it is on affine schemes with $\LL$ trivial. As $j$ is affine, we have an isomorphism
\[H^*(X;j_*j^*\LL^{\tensor *}) \cong H^*(D(x); j^*\LL^{\tensor *}).\]

It remains to show that cohomology commutes with localization at $x$ on $X$. This follows from 
the fact that cohomology commutes with filtered colimits, by
\Cref{lem:cohcommute}. 
\end{proof}

\begin{proposition}Let $\XX = (X,\otop_\XX)$ be an even periodic,
quasi-compact and separated derived Deligne-Mumford stack. Assume that $\pi_k\otop_\XX$ is ample for some $k\in\Z$ and that the length of the differentials in the descent spectral sequence for $\otop_\XX$ is bounded. Then $\otop_\XX$ itself is ample.\end{proposition}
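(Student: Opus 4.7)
Since $\pi_k\otop_{\XX}\cong \omega^{k/2}$ (for $k$ even) under even periodicity, the hypothesis that $\pi_k\otop_{\XX}$ is ample implies that the ordinary line bundle $\omega := \pi_2\otop_{\XX}$ is itself ample on $X$. By \Cref{ampleequivalence}, the non-vanishing loci $D(y)$ for sections $y\in H^0(X,\omega^l)$ with $D(y)$ having affine coarse moduli space already form a basis of the Zariski topology on $X$. To prove ampleness of $\otop_{\XX}$ itself, it therefore suffices to show that for each such $y$ some power $y^N$ lies in the image of the edge map $\pi_{2lN}\Gamma(\XX,\otop_{\XX})\to H^0(X,\omega^{lN})$, i.e., is a permanent cycle in the descent spectral sequence
\[ E_2^{p,q} = H^p(X,\pi_q\otop_{\XX}) \Longrightarrow \pi_{q-p}\Gamma(\XX,\otop_{\XX}); \]
for then any lift $x\in \pi_{2lN}\Gamma(\XX,\otop_{\XX})$ satisfies $\bar x = y^N$ and hence $D(\bar x) = D(y^N) = D(y)$.

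To produce $N$ I would exploit multiplicativity of the DSS together with the bounded-length hypothesis. Even periodicity forces $d_r(y)=0$ whenever $r$ is even, since the target $E_r^{r,2l+r-1}$ is a subquotient of $H^r(X,\pi_{2l+r-1}\otop_{\XX}) = 0$; the bounded-length hypothesis gives $d_r=0$ for $r>R$. This leaves only the finitely many odd $r\in\{3,5,\ldots,R'\}$ as potential obstructions. For each such $r$, the preceding lemma identifies $H^r(D(y),\omega^*)\cong H^r(X,\omega^*)[1/y]$; combining this with the observation that the DSS on $D(y)$ inherits bounded differential length and that $y\vert_{D(y)}$ is invertible, I would argue that $d_r(y)\vert_{D(y)}=0$. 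Translating back, $d_r(y)$ is $y$-power torsion in $H^r(X,\omega^{l+(r-1)/2})$, so there exists $M_r$ with $y^{M_r}d_r(y)=0$. By Leibniz,
\[ d_r\bigl(y^{M_r+1}\bigr) = (M_r+1)\, y^{M_r}\, d_r(y) = 0 \]
on $E_r$, so $y^{M_r+1}$ survives $d_r$. Iterating through the finite list of relevant differentials, and choosing the exponents compatibly across successive pages (by replacing $y$ at each stage by the previously produced surviving power and taking common multiples), produces a single $N$ with $y^N\in E_\infty^{0,2lN}$, as desired.

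The main obstacle will be rigorously justifying that $d_r(y)\vert_{D(y)}=0$ for each relevant odd $r$. The point is that the bounded-length hypothesis passes to the DSS on any open substack, and on $D(y)$ the invertibility of $y$ together with the finite descent filtration on $\pi_*\otop_{\XX}(D(y))$ should force the differentials on $y\vert_{D(y)}$ to vanish; the localization lemma then globalizes this to the required $y$-torsion statement in $H^r(X,\omega^*)$. Verifying this local vanishing carefully, and tracking that the inductive killing of differentials remains multiplicatively coherent as we move from $E_r$ to $E_{r+1}$, is the real technical step — everything else is a formal consequence of the Leibniz rule and ampleness of $\omega$.
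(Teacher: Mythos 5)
Your high-level strategy matches the paper's: show that for each section $y$ with $D(y)$ having affine coarse moduli space, some power $y^N$ survives the descent spectral sequence, using multiplicativity (Leibniz) and a torsion argument driven by the localization isomorphism $H^r(D(y),\omega^*)\cong H^r(X,\omega^*)[1/y]$, iterated finitely many times thanks to the bounded-length hypothesis. You also correctly observe that even-page differentials vanish for degree reasons. The problem is the step you yourself flag as ``the real technical step'': the claim that $d_r(y)|_{D(y)}=0$. Your heuristic justification --- ``invertibility of $y$ together with the finite descent filtration should force the differentials on $y|_{D(y)}$ to vanish'' --- is not a valid general principle. A multiplicative spectral sequence can have an invertible class on $E_2$ supporting a nontrivial differential: e.g.\ $E_2=\mathbb{Z}[u^{\pm1},t]/(t^2)$ with $d_r(u)=t$ is a perfectly consistent differential bigraded algebra degenerating at $E_{r+1}$, and nothing about finiteness of the filtration rescues $u$. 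Invertibility does tell you that $d_r(y)=0$ \emph{iff} $d_r(y^{-1})=0$, but that is all. So the local vanishing is genuinely unproved, and everything downstream (the $y$-torsion conclusion, the Leibniz step, the iteration) relies on it.

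The paper closes this gap by a different mechanism: pass to the rationalization $\XX_\Q$. Over $\Q$ every Deligne--Mumford stack is tame (\Cref{Char0Tame}), so $D(y_\Q)$ --- having affine coarse moduli space --- has \emph{vanishing higher quasi-coherent cohomology}, not merely bounded cohomology. Thus the rationalized DSS on $D(y_\Q)$ degenerates at $E_2$ for a reason entirely independent of the invertibility of $y$, and one concludes $d_{i_1}(y_\Q)$ is annihilated by a power of $y_\Q$ via the localization isomorphism. Since the kernel of $H^{i_1}(X,\pi_*\otop)\to H^{i_1}(X,\pi_*\otop)\otimes\Q$ is the torsion subgroup, $d_{i_1}(y^m)$ is $l$-torsion for suitable $m$, and then the Leibniz computation $d_{i_1}(y^{lm})=l\,y^{m(l-1)}d_{i_1}(y^m)=0$ kills the differential; the bounded-length hypothesis terminates the induction. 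This rationalization-then-torsion argument is the key idea missing from your proposal; you should substitute it for the unsupported local vanishing claim, after which the rest of your outline (Leibniz, iteration over the finitely many odd pages, and the reduction via \Cref{ampleequivalence}) goes through as you have it.
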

\begin{proof}Consider some $x \in H^0(X, \pi_k(\otop_\XX))$ such that the non-vanishing locus $D(x)$ has affine coarse moduli space. We want to show that some power of $x$ is a permanent cycle in the descent spectral sequence for $\otop_\XX$. Note that $k$ has to be even. 

Let $d_{i_1}(x)$ be the first non-zero differential of $x$. Consider the morphism $\XX \to \XX_\Q = (X_\Q, (\otop_\XX)_\Q)$ and the image $x_\Q$ of $x$ in $H^0(X_\Q,\pi_k(\otop_\XX)_\Q)$. We show first that $d_{i_1}x_\Q$ is annihilated by a power of $x_\Q$. Denote by $j\colon D(x_\Q) \to X_\Q$ the inclusion. Then we have a map of spectral sequences $DSS((\otop_\XX)_\Q) \to DSS(j_*j^*(\otop_\XX)_\Q)$. As $j_*$ is affine, the $E^2$-term of the latter is isomorphic to $H^*(D(x_\Q); j^*\pi_*(\otop_\XX)_\Q)$. As shown in the last lemma, this is isomorphic to $H^*(X; \pi_*(\otop_\XX)_\Q)[1/x_\Q]$ since $\pi_{2k}(\otop_\XX)_\Q \cong (\pi_2\otop_\XX)_\Q^{\tensor k}$. On the other hand, $H^i(D(x_\Q); j^*\pi_*(\otop_\XX)_\Q) = 0$ for $i>0$ as the coarse moduli space of $D(x_\Q)$ is affine and $D(x_\Q)$ is tame by \Cref{Char0Tame} as it is rational. Thus, indeed, $d_{i_1}(x_\Q)$ has to be annihilated by a power of $x_\Q$. 

For $m\in\mathbb{N}$, we have $d_{i_1}x^m = mx^{m-1}d_{i_1}(x)$. Hence, $d_{i_1}x^m_\Q = 0$ for some $m$ and thus $d_{i_1}x^m$ is $l$-torsion for some $l$. It follows that $d_{i_1}(x^{lm}) = lx^{m(l-1)}d_{i_1}(x^m) = 0$. The argument can be repeated for the first non-trivial differential of $x^{lm}$ etc. and comes to an end somewhere as the length of the differentials is bounded. Thus, there is some power $x^K$ of $x$ that is a permanent cycle and $D(x^K)=D(x)$.

As the $D(x)$ with affine coarse moduli space form a basis of the
Zariski topology by Proposition \ref{ampleequivalence}, $\otop_\XX$ is ample. 
\end{proof}

The existence of an upper bound for the length of differentials in the descent spectral
sequence is actually closely related to the cocontinuity of $\Gamma$. One situation
where these conditions are trivially fulfilled is that of bounded cohomological
dimension, which is satisfied for a derived \emph{scheme}:

\begin{corollary}\label{Scheme0affineness}Let $\XX = (X,\otop_\XX)$ be an even
periodic, quasi-compact and separated derived \emph{scheme} $X$. Assume that $\pi_k\otop_\XX$ is an ample $\mathcal{O}_X$-module for some $k\in \Z$. Then the global sections functor 
\[\Gamma \colon \qcoh(\XX) \to \md(\gx)\]
is an equivalence. In particular, this is true for $X$ quasi-affine as every line bundle is ample on a quasi-affine scheme. \end{corollary}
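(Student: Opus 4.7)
The plan is to combine the two main results of the previous subsection (Zariski descent for cocontinuity and for conservativity of $\Gamma$) after first upgrading the hypothesis on $\pi_k\otop_\XX$ to ampleness of $\otop_\XX$ itself.

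First I would invoke the fact that a quasi-compact, separated scheme has finite cohomological dimension for quasi-coherent sheaves. Consequently, the descent spectral sequence $H^i(X,\pi_j\otop_\XX)\Rightarrow \pi_{j-i}\gx$ (and similarly for any $\FF\in\qcoh(\XX)$) is concentrated in a bounded range of columns, so in particular the lengths of its differentials are bounded by this cohomological dimension. Combined with ampleness of $\pi_k\otop_\XX$, the preceding proposition gives that $\otop_\XX$ itself is ample. By \Cref{ampleequivalence} I can then choose finitely many elements $x_i\in\pi_{k_i}\gx$ with $k_i\geq 1$ whose reductions $\overline{x_i}\in\Gamma(X,\pi_{k_i}\otop_\XX)$ have the property that the non-vanishing loci $U_i = D(\overline{x_i})$ are affine and cover $X$.

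Next I would verify that each open derived subscheme $\mathfrak{U}_i = (U_i, \otop_\XX|_{U_i})$ is $0$-affine. Since $U_i\simeq\Spec R_i$ is affine, any \'etale affine $\Spec R' \to U_i$ lifts uniquely, by \Cref{topinv} and \Cref{rem:etale}, to an \'etale $E_\infty$-algebra $\otop_\XX(\Spec R')$ over $A_i := \otop_\XX(U_i)$. Hence the homotopy limit defining $\qcoh(\mathfrak{U}_i)$ in \Cref{def:qcoh} is the same as the homotopy limit defining $\md(A_i)$ via flat descent for modules over $E_\infty$-rings; equivalently, as in \Cref{affinecase}, global sections give an equivalence $\qcoh(\mathfrak{U}_i)\simeq\md(A_i)$. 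In particular, $\Gamma_{\mathfrak{U}_i}$ commutes with homotopy colimits and is conservative.

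With these inputs, \Cref{ZarColim} applied to the finite Zariski cover $\{U_i\to X\}$ shows that the global sections functor $\Gamma\colon \qcoh(\XX)\to\md(\gx)$ commutes with homotopy colimits. Having already established ampleness of $\otop_\XX$, the hypotheses of \Cref{Zariski-locally-conservative} are met, so $\Gamma$ is also conservative. Now \Cref{SSCor} (the Schwede--Shipley criterion) concludes that $\XX$ is $0$-affine. For the final assertion, if $X$ is quasi-affine then $\mathcal{O}_X\cong \pi_0\otop_\XX$ is ample, so the hypothesis is satisfied with $k=0$.

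I expect the main obstacle to be Step~3, namely giving a clean argument that an open derived subscheme with affine underlying scheme is automatically $0$-affine; the core issue is reducing $\qcoh$ of the derived enhancement to modules over the global sections, for which one leans on \Cref{topinv} together with flat descent for module categories. Everything else is a bookkeeping application of the abstract Zariski-descent machinery already developed.
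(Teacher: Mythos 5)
Your proof is correct and uses the same key ingredients as the paper (the proposition upgrading ampleness of $\pi_k\otop_\XX$ to ampleness of $\otop_\XX$, Proposition~\ref{Zariski-locally-conservative}, and Corollary~\ref{SSCor}), but the route through Proposition~\ref{ZarColim} for the cocontinuity of $\Gamma$ is a detour relative to the paper. The paper notes directly that finite cohomological dimension of the quasi-compact separated scheme $X$ forces the descent spectral sequence for \emph{every} $\FF\in\qcoh(\XX)$ into finitely many rows, which together with \Cref{lem:cohcommute} immediately gives that $\Gamma$ commutes with arbitrary direct sums --- no affine cover or appeal to \Cref{ZarColim} is needed. (Your own first sentence already contains this observation; you just do not cash it in.) Two smaller wrinkles: first, your appeal to \Cref{ampleequivalence} to produce affine basic opens $D(\overline{x_i})$ with \emph{liftable} $x_i$ is not quite what that proposition says --- it is a statement about ordinary line bundles, and the liftability of sections over affine basic opens is really what the proof of the preceding ampleness proposition establishes; in any case you do not need this, since both \Cref{ZarColim} and \Cref{Zariski-locally-conservative} only require an arbitrary finite affine Zariski cover (the latter performs its own shrinking to $D(\overline{x_i})$ internally). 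Second, your discussion of 0-affineness of an affine derived open $\mathfrak{U}_i$ is more elaborate than needed: since $U_i$ is terminal in its own affine \'etale site and $A\mapsto\md(A)$ satisfies flat descent, $\qcoh(\mathfrak{U}_i)\simeq\md(\otop(U_i))$ is immediate --- which is exactly what the paper silently invokes when it says conservativity is ``certainly'' true on every affine scheme. You correctly flag this as the step deserving the most care, and your argument for it is sound.
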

\begin{proof}There is an $n\geq 0$ such that $H^i(X; \pi_l\otop_\XX) = 0$ for $i>n$ and all $l\in\Z$. The global sections functor $\Gamma$ commutes with homotopy colimits as the descent spectral sequence for $\otop_\XX$ is concentrated in finitely many rows. Likewise it follows that the length of differentials in the descent spectral sequence for $\otop_\XX$ is bounded. By the last proposition it follows that $\otop_\XX$ is ample. Furthermore, it is certainly Zariski-locally true that the global sections functor is conservative (as it is true on every affine scheme). Thus, we can apply Proposition \ref{Zariski-locally-conservative} to see that $\Gamma$ is conservative.  This implies $0$-affineness by Corollary \ref{SSCor}.\end{proof}
\begin{remark}In the quasi-affine case, Corollary \ref{Scheme0affineness} was
already proven in Propositions 2.4.4 and 2.4.8 of \cite{DAGQC} for  connective
spectral Deligne--Mumford stacks. From this, the even periodic case can be
easily recovered by taking the connective cover. 
Any (possibly nonconnective) spectral Deligne-Mumford stack 
  has a connective cover it maps to (just as any $E_\infty$-ring $R$ receives
  a map from its connective cover $\tau_{\geq 0} R$), and any
 quasi-coherent sheaf  can be pushed forward to the connective cover. The
 global sections are the same in either case. In this way, one can always
 reduce to the case in which the derived stack is connective.  

Our original formulation of our main results was restricted to the case of
cohomological dimension one. We are indebted to Jacob Lurie for explaining to
us his (only slightly different) argument for \Cref{Scheme0affineness}, which consequently yielded a stronger
formulation of Theorem~\ref{main}. 
\end{remark}

\begin{example}Consider the compactified moduli stack of elliptic curves
$\mellc(n)$ with level $n$ structure. By work of Goerss--Hopkins and
Hill--Lawson (\cite{HillLawson}), this can be refined to an even periodic
derived stack $(\mellc(n), \otop)$ with $\pi_2\otop$ ample. Thus it follows
directly from \Cref{Scheme0affineness} that for $n\geq 3$, when $\mellc(n)$ is a scheme, $(\mellc(n), \otop)$ is $0$-affine. As explained in Section \ref{AppTMF}, this is actually true for all $n$.
\end{example}

\subsection{Ascent for $0$-affineness}
In this subsection, we note a couple of additional easy criteria for
0-affineness.

\begin{proposition} \label{open0affine}
Suppose $\XX = (X, \otop)$ is a 0-affine derived stack. Let $U \subset X$ be an open
substack and let $\mathfrak{U} = (U, \otop_{\mathfrak{U}})$ be the induced derived
stack. Then $\mathfrak{U}$ is also 0-affine. 
\end{proposition}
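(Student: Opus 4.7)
The plan is to verify the two criteria of \Cref{SSCor}: that the global sections functor $\Gamma_{\mathfrak{U}}\colon \qcoh(\mathfrak{U}) \to \md(\Gamma(\mathfrak{U}, \otop_{\mathfrak{U}}))$ commutes with homotopy colimits and is conservative. The key observation is that if $j\colon \mathfrak{U} \to \mathfrak{X}$ denotes the open immersion (as a map of derived stacks), then one has a factorization
\[ \Gamma_{\mathfrak{U}} \simeq \Gamma_{\mathfrak{X}} \circ j_* ,\]
so it suffices to establish both properties for each of the two functors in this composition.

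For the cocontinuity: $\Gamma_{\mathfrak{X}}$ preserves homotopy colimits since $\mathfrak{X}$ is assumed 0-affine, and $j_*$ preserves homotopy colimits because $j$ is an open immersion, by \cite[Example 2.5.6, Proposition 2.5.12]{DAGQC} (exactly as invoked in the proof of \Cref{ZarColim}). Thus $\Gamma_{\mathfrak{U}}$ preserves homotopy colimits.

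For conservativity: $\Gamma_{\mathfrak{X}}$ is conservative by 0-affineness, and $j_*$ is fully faithful because $j$ is an open immersion (the counit $j^* j_* \to \id$ is an equivalence), hence in particular conservative. The composition of conservative functors is conservative, so $\Gamma_{\mathfrak{U}}$ is conservative as well. Applying \Cref{SSCor}, we conclude that $\mathfrak{U}$ is 0-affine.

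There is no real obstacle in this argument; the only subtlety is that one is implicitly using that open immersions behave as expected at the derived level (that $j^*\dashv j_*$ is a reflective localization, and that $j_*$ is cocontinuous), which is standard and has already been used elsewhere in the paper.
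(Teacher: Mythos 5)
Your proof is correct and matches the paper's argument essentially step for step: both verify the two criteria of \Cref{SSCor} by factoring $\Gamma_{\mathfrak{U}} \simeq \Gamma_{\mathfrak{X}} \circ j_*$, using the same reference for cocontinuity of $j_*$, and using $j^* j_* \simeq \id$ together with conservativity of $\Gamma_{\mathfrak{X}}$ for conservativity. The only difference is cosmetic: you phrase the conservativity step abstractly (fully faithful implies conservative, composition of conservative functors is conservative), while the paper unwinds the same implications explicitly.
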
 
\begin{proof} 
Given a quasi-coherent sheaf $\mathcal{F}$ on $\mathfrak{U}$, if its global
sections are zero, then we show $\mathcal{F} = 0$ as follows: form the
pushforward $j_* \mathcal{F}$ along the inclusion $j\colon U \hookrightarrow X$. 
By assumption, $\Gamma( j_*\mathcal{F}) \simeq \Gamma( \mathcal{F}) \simeq 0$,
so that $j_* \mathcal{F} \simeq 0$ by the 0-affineness of $\mathfrak{X}$. Since
$j^*j_* \mathcal{F} \simeq \mathcal{F}$, it follows that $\mathcal{F} \simeq 0$. 

Furthermore, the pushforward functor $j_*$ commutes with homotopy colimits by
Example 2.5.6 and Proposition 2.5.12 of \cite{DAGQC}. Thus,
$\Gamma(\mathfrak{U},-) \simeq \Gamma(\XX,-) \circ j_*$ also commutes with homotopy 
colimits. 
\end{proof} 

\begin{proposition} \label{affineover0affine}
Let $f: \YY \to \XX$ be a morphism of derived stacks for  $\XX = (X, \otop_\XX)$ and $\YY = (Y,
\otop_\YY)$ such that the underlying morphism $Y \to X$ is affine. If
$\mathfrak{X}$ is 0-affine, then
$\mathfrak{Y}$ is 0-affine. 
\end{proposition}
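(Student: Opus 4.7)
My plan is to verify the Schwede--Shipley criterion of \Cref{SSCor} directly: it suffices to check that the global sections functor $\Gamma_\YY\colon \qcoh(\YY) \to \md(\Gamma(\YY, \otop_\YY))$ commutes with homotopy colimits and is conservative. The strategy is to factor $\Gamma_\YY$ through $\Gamma_\XX$ and then show that $f_*$ carries both good properties, after which the corresponding properties of $\Gamma_\YY$ follow from $0$-affineness of $\XX$.

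First I would establish the factorization $\Gamma_\YY \simeq \Gamma_\XX \circ f_*$. This is an instance of the standard identity $\Gamma(\YY, \FF) \simeq \Gamma(\XX, f_*\FF)$: one can express $\Gamma(\XX, f_*\FF)$ as a homotopy limit over \'etale $\spec R \to X$ of the sections $(f_*\FF)(\spec R) = \FF(\YY \times_\XX \spec R)$. Since $f$ is affine on the underlying classical stacks, the pullback $\YY \times_\XX \spec R$ is an affine derived scheme $\spec A_R$ with $A_R = (f_*\otop_\YY)(\spec R)$ an $E_\infty$-$\otop(\spec R)$-algebra, and the corresponding \'etale affines cover $Y$, so the two homotopy limits agree.

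Next I would check that $f_*\colon \qcoh(\YY) \to \qcoh(\XX)$ commutes with homotopy colimits and is conservative. Locally, passing to \'etale $\spec R \to X$ with pullback $\spec A_R \to \spec R$, the pushforward corresponds under the equivalence $\qcoh(\spec A_R) \simeq \md(A_R)$ to the forgetful functor $\md(A_R) \to \md(\otop(\spec R))$ along the ring map $\otop(\spec R) \to A_R$. Restriction of scalars along a morphism of $E_\infty$-rings manifestly preserves all homotopy colimits and is conservative. Because quasi-coherent sheaves on $\XX$ and $\YY$ are assembled from these local module categories via \'etale descent, and $f_*$ is computed in each section by this local forgetful operation, both properties are inherited globally.

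Combining the two observations: $f_*$ preserves homotopy colimits and is conservative; and by $0$-affineness of $\XX$, the functor $\Gamma_\XX$ is an equivalence, hence preserves homotopy colimits and is conservative. Therefore $\Gamma_\YY \simeq \Gamma_\XX \circ f_*$ enjoys both properties as well, and \Cref{SSCor} yields the $0$-affineness of $\YY$. There is no serious obstacle here; the only delicate point is to confirm the local description of $f_*$ for an affine morphism in the derived setting, which is built into the formalism of (nonconnective) spectral Deligne--Mumford stacks and their affine morphisms as used throughout \Cref{Section2}.
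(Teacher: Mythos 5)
Your proposal is correct and follows essentially the same route as the paper: both factor $\Gamma_\YY \simeq \Gamma_\XX \circ f_*$ and then reduce to showing that $f_*$ is conservative and commutes with homotopy colimits, after which 0-affineness of $\XX$ closes the argument via \Cref{SSCor}. The only cosmetic difference is in how the properties of $f_*$ are justified: you identify $f_*$ section-wise with restriction of scalars along $\otop(\spec R) \to A_R$ and inherit the properties from there, while the paper verifies conservativity by evaluating $f_*\FF$ on an \'etale affine cover $\{U_i\}$ of $X$ (noting $\{U_i \times_X Y\}$ covers $Y$) and cites \cite{DAGQC} for colimit preservation; these are two phrasings of the same underlying fact.
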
 
\begin{proof} 
The conservativeness of $\Gamma(\YY,-)$ follows again, because pushforward 
along $\mathfrak{Y} \to \mathfrak{X}$ is a conservative functor. Indeed, we can
choose an \'etale cover $\{U_i \to X\}_{i\in I}$ by affine schemes. Thus, $\{U_i\times_X Y \to Y\}_{i\in I}$ is an \'etale cover of $Y$. If $f_*\FF = 0$ for a quasi-coherent sheaf $\FF$ on $\YY$, then $f_*\FF(U_i) = \FF(U_i\times_X Y) = 0$ for every $i\in I$. Thus, $\FF = 0$. 

Furthermore, the pushforward functor $f_*$ commutes with homotopy colimits by
Example 2.5.6 and Proposition 2.5.12 of \cite{DAGQC}. Thus, $\Gamma(\YY,-)
\simeq \Gamma(\XX,-) \circ f_*$ also commutes with homotopy colimits. 
\end{proof}

\section{Affineness results in chromatic homotopy theory}\label{Section4}
The main result of this section is the following:
\begin{theorem}  \label{main}
Let $X$ be a noetherian and separated Deligne-Mumford stack, equipped with a flat map $X \to M_{FG}$ that is quasi-affine. Let $\mathfrak{X}$ be an
even periodic refinement of $X$. Then the derived stack $\XX = (X, \otop)$ is $0$-affine.
\end{theorem}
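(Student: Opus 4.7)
The plan is to verify the two hypotheses of the Schwede--Shipley criterion \Cref{SSCor} --- that $\Gamma_\XX$ commutes with homotopy colimits and is conservative --- by applying \Cref{AbstractCommHolim} and \Cref{AbstractConservative} to a flat affine cover built by base changing the tautological Lazard cover. Concretely, set $q_0 \colon Y := X \times_{M_{FG}} \Spec L \to X$. Because $M_{FG}$ has affine diagonal, $\Spec L \to M_{FG}$ is flat and affine, so $q_0$ is a faithfully flat, affine morphism; and since $X \to M_{FG}$ is quasi-affine, $Y$ is a quasi-affine scheme. The inherited even periodic refinement $\YY = (Y, \otop_\YY)$ is then $0$-affine by \Cref{Scheme0affineness}, because every line bundle on a quasi-affine scheme (in particular $\pi_2\otop_\YY$) is ample. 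This simultaneously supplies the conservativity of $\Gamma_\YY$ demanded by \Cref{AbstractConservative} and the finite cohomological dimension of $Y$ demanded by \Cref{AbstractCommHolim}.

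Next I will take as the auxiliary $\gx$-module needed by those two propositions $M := \gx \otimes MUP$, so that $\otop_\XX \otimes_\gx M \simeq \otop_\XX \otimes MUP$ as $\otop_\XX$-modules. Evaluating on an \'etale $\Spec R \to X$, the fiber product $\Spec R \times_X Y$ is affine (as a base change of the affine morphism $q_0$), and \Cref{LandweberSmash} computes
\[ \pi_*\bigl(\otop_\XX(\Spec R) \otimes MUP\bigr) \cong \omega^*\bigl(\Spec R \times_{M_{FG}} \Spec L\bigr) \cong \pi_*\bigl(\otop_\YY(\Spec R \times_X Y)\bigr). \]
The natural multiplication map $\otop_\XX(\Spec R) \otimes MUP \to \otop_\YY(\Spec R \times_X Y)$ built from the $E_\infty$-algebra structures is therefore an equivalence, and globalizing over $\affx$ yields an equivalence $\otop_\XX \otimes_\gx M \simeq q_*\otop_\YY$ of $\otop_\XX$-modules. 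In particular $\pi_*(\otop_\XX \otimes_\gx M) \cong q_*q^*\pi_*\otop_\XX$, which is the homotopy-level hypothesis of both abstract criteria.

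The main obstacle will be showing that $M$ generates $\md(\gx)$ as a thick tensor-ideal, equivalently that $\gx$ itself lies in the thick subcategory of $\md(\gx)$ generated by $M$. The key input is chromatic: since $X \to M_{FG}$ is quasi-compact its image is contained in the finite-height substack $M_{FG}^{\leq n}$ for some $n$, so $\gx$ is $E(n)$-local. Invoking the Hopkins--Ravenel smash product theorem, which asserts that $L_n$ is smashing and hence commutes with arbitrary homotopy colimits, I will show that the $MUP$-based Amitsur cobar spectral sequence
\[ \gx \longrightarrow \gx \otimes MUP \rightrightarrows \gx \otimes MUP^{\otimes 2} \triplearrows \cdots \]
admits a horizontal vanishing line at a finite filtration within the $E(n)$-local range. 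Consequently $\gx$ is recovered as a \emph{finite} totalization of this cosimplicial diagram; in the stable setting a finite totalization is a finite iterated extension, placing $\gx$ in the thick subcategory --- and a fortiori the thick tensor-ideal --- generated by $M$.

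With all ingredients assembled, \Cref{AbstractCommHolim} will give that $\Gamma_\XX$ commutes with homotopy colimits, and then \Cref{AbstractConservative} --- applied to the faithfully flat, quasi-compact, separated morphism $q \colon \YY \to \XX$ of derived stacks, together with the module $M$ and the conservativity of $\Gamma_\YY$ --- will yield the conservativity of $\Gamma_\XX$. \Cref{SSCor} then completes the argument.
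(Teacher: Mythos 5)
Your overall architecture --- verify the two conditions of \Cref{SSCor} by combining \Cref{AbstractCommHolim} and \Cref{AbstractConservative} with a quasi-affine auxiliary cover --- is exactly the paper's strategy. But you substitute the Lazard cover $\Spec L \to M_{FG}$ and the module $M = \gx \otimes MUP$ where the paper uses $\Spec \pi_0 E_n \to M_{FG}$ and $M = \gx \otimes E_n$, and this substitution is not free. There are two genuine gaps.

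\emph{First}, you assert ``$\gx$ is $E(n)$-local'' as an integral statement, which it is not: the stratification of $M_{FG}$ by height, the Johnson--Wilson theories $E(n)$, and the Hopkins--Ravenel smash product theorem are all $p$-local, with the bound $n$ a priori varying with $p$. The paper handles this by first proving the $p$-local analogue (\Cref{p-main}, via \Cref{plocalfinite}) and then gluing the primes together with a chromatic fracture/arithmetic-square argument (\Cref{general:colimits}) before deducing conservativity prime-by-prime. Your proposal skips this gluing step entirely, so as written it never produces the integral statement of \Cref{main}.

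\emph{Second}, and more seriously, you claim the $MUP$-based cobar for $\gx$ admits a horizontal vanishing line ``within the $E(n)$-local range,'' so that $\gx$ lies in the thick tensor-ideal generated by $\gx \otimes MUP$. This is the crucial step and it is not justified by anything in the paper. \Cref{horizvanishingline} and \Cref{HRsmashconst} (and hence \Cref{usefulcor}) concern the $E_n$-based Adams--Novikov spectral sequence and the $E_n$-based cobar; they say nothing about the $MUP$-based cobar. In fact \Cref{MUnotdescendable} explicitly warns that the $MU$-based cobar construction behaves quite differently from the $E_n$-based one --- the $MU$-cobar on the sphere is nonconstant because of $MU$-acyclic spectra. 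One can in fact rescue the thick tensor-ideal claim $p$-locally, but only by routing through $E_n$: since $E_n$ is an $L_n MUP$-algebra, $E_n$ is a retract of $E_n \otimes L_n MUP$, so the thick tensor-ideal of $\gx \otimes MUP$ in $\md(\gx)$ contains $\gx \otimes E_n$, which generates by \Cref{usefulcor}. But this is exactly the nilpotence input the paper invokes, with $E_n$ as the irreducible ingredient; your $MUP$-only vanishing-line argument, as stated, is a gap rather than an alternative route. Once you make the $E_n$-comparison explicit, you are doing the paper's argument with an extra layer, and you still need the $p$-local-to-integral descent.
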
 
Recall here that a map $X\to M_{FG}$ is \textit{quasi-affine} if for every map
$\Spec A \to M_{FG}$ the pullback $\Spec A\times_{M_{FG}} X$ is quasi-affine,
i.e., a quasi-compact open subscheme of an affine scheme. 

\begin{remark}
 The condition that $X$ is separated is not very restrictive. Recall that $X$
 is separated if its diagonal is universally closed. This implies for (locally
 noetherian) Deligne--Mumford stacks only that the diagonal is finite, but not that it is a closed immersion. The Deligne--Mumford stacks most commonly considered by algebraic geometers, like the (compactified) moduli stack of elliptic curves or PEL-Shimura stacks, are separated. 
\end{remark}

We will first show Theorem \ref{main} locally at every prime $p$ using
Propositions \ref{AbstractCommHolim} and \ref{AbstractConservative}. This
relies crucially on the fact that Morava $E$-theory $E_n$ generates the $E_n$-local stable homotopy category as a thick tensor-ideal, which in turn follows from the nilpotence technology to be reviewed in the next subsection. We will then glue the $p$-local results together to deduce an integral statement. 

\subsection{Nilpotence technology}
The power of the use of formal groups in stable homotopy theory is especially
illustrated by the nilpotence and periodicity theorems of \cite{DHS} and
\cite{HS}, and their cousin, the Hopkins-Ravenel smash product theorem. 

\begin{theorem}[Nilpotence theorem; \cite{DHS}]
\begin{enumerate}
\item 
Let $R$ be a (not necessarily structured) ring spectrum and let $\alpha \in
\pi_* R$. Suppose $\alpha$ maps to zero in $MU_*(R)$. Then $\alpha$ is
nilpotent. 
\item  Let $f\colon \Sigma^k X \to X$ be a self-map of a finite spectrum. Then $f$
is nilpotent if and only if $MU_*(f)$ is nilpotent. 
\end{enumerate}
\end{theorem}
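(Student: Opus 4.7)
The plan is to follow the strategy of Devinatz--Hopkins--Smith in \cite{DHS}, proving both parts by reduction to a chain of increasingly accessible cases interpolating between the sphere spectrum and $MU$.

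First I would reduce part (2) to part (1). Given a finite spectrum $X$ and a self-map $f\colon \Sigma^k X \to X$, consider the (not necessarily structured) endomorphism ring spectrum $R = F(X,X)$. The map $f$ determines an element $\tilde f \in \pi_k R$, and $f$ is nilpotent if and only if $\tilde f$ is. Spanier--Whitehead duality for finite $X$ gives an identification $MU_* R \cong MU^*(X)\otimes_{MU_*} MU_*(X)$, under which $\tilde f$ corresponds to $MU_*(f)$ viewed as an element of the endomorphism ring of the finitely generated $MU_*$-module $MU_*(X)$. Nilpotence of $MU_*(f)$ thus translates into some power $\tilde f^N$ mapping to zero in $MU_*R$, and part (1) applied to $R$ yields nilpotence of $\tilde f^N$, hence of $f$.

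For part (1), the idea is to interpolate between the sphere spectrum and $MU$ via the Thom spectra $X(n)$ of $\Omega SU(n)\to \Omega SU \simeq BU$. These form a multiplicative filtration $S^0 = X(1)\to X(2)\to\cdots$ with $\hocolim X(n)\simeq MU$. It suffices to prove by descending induction that if $X(n+1)$ detects nilpotence for ring spectra---meaning that any $\alpha\in\pi_* R$ with vanishing $X(n+1)$-Hurewicz image is nilpotent---then $X(n)$ does too. A direct colimit argument reduces the initial case at $MU$ itself to detection at some finite stage $X(N)$, which is the base case for the induction down to $X(1) = S^0$.

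The inductive step is the geometric heart. Using the cell structure of $SU(n+1)/SU(n)\simeq S^{2n+1}$, one obtains a James-type filtration of $\Omega SU(n+1)$ whose subquotients involve stunted projective spaces, and after Thomification an explicit decomposition of $X(n+1)$ as an $X(n)$-module spectrum. After $p$-localization and inverting a suitable generator $b_n\in\pi_*X(n)$, one obtains the telescopic Bousfield localization $T(n) = X(n)[b_n^{-1}]$. One then compares $X(n)$ and $X(n+1)$ via a vanishing-line theorem in the $X(n+1)$-based Adams spectral sequence converging to $\pi_*X(n)$, to show that a putative non-nilpotent class not detected by $X(n+1)$ would produce nonzero permanent cycles above a controlled slope, contradicting the vanishing line.

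The main obstacle will be precisely this vanishing-line analysis together with a Mittag--Leffler argument controlling the telescope $T(n)$ uniformly across the filtration: the crucial ingredient is showing that the localization functor $L_{T(n)}$ commutes with the relevant homotopy colimits, which is the Hopkins--Ravenel smash product theorem. Assembling Snaith's stable splittings, Quillen's computation of $H_* X(n)$, the Thom-isomorphism based identification of $X(n+1)$ as a bundle of $X(n)$-modules, and this commutation statement constitutes the substantive work; the formal reductions above are comparatively routine.
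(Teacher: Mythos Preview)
The paper does not prove this theorem; it is quoted as background from \cite{DHS} in the subsection on nilpotence technology, with no argument given. So there is no ``paper's own proof'' to compare against---your task was already complete once you recognized the citation.

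That said, since you chose to sketch the Devinatz--Hopkins--Smith argument, let me flag one substantive error. You write that ``the crucial ingredient is showing that the localization functor $L_{T(n)}$ commutes with the relevant homotopy colimits, which is the Hopkins--Ravenel smash product theorem.'' This is backwards: the smash product theorem (\Cref{smashproduct1} in the present paper, from \cite{ravenelorange}) is a \emph{consequence} of the nilpotence technology, not an input to it. Invoking it inside the proof of the nilpotence theorem is circular. The actual DHS inductive step from $X(n+1)$ to $X(n)$ proceeds via the James-type filtration you mention together with a direct nilpotence argument for certain self-maps of the filtration pieces (ultimately a Nishida-style extended-power computation), not via any appeal to chromatic localization functors being smashing. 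The rest of your outline---the reduction of (2) to (1) via the endomorphism ring spectrum, the filtration $S^0 = X(1) \to X(2) \to \cdots \to MU$, and the use of vanishing lines---is in the right spirit, though the identification $MU_* F(X,X) \cong MU^*(X)\otimes_{MU_*} MU_*(X)$ requires a flatness hypothesis you have not justified (the reduction still works, but more care is needed).
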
 

In particular, the association $X \mapsto \mathcal{F}_*(X)$, from spectra to
quasi-coherent sheaves on $M_{FG}$, is sufficient to detect all maps except up
to nilpotence, at least for finite spectra. Conversely, many of the
``periodicities'' visible in the geometry of $M_{FG}$ can be realized
topologically, via the \emph{periodicity theorem} of \cite{HS}. 

Next, recall that a subcategory of the $\infty$-category $\sp_{(p)}^{\omega}$ of
finite $p$-local spectra is
called \emph{thick} if it is stable and closed under 
retracts. 

\begin{theorem}[Thick subcategory theorem; \cite{HS}] 
The thick subcategories of $\sp_{(p)}^{\omega}$ are in natural bijection with
the reduced, closed substacks of $(M_{FG})_{(p)}$ of finite presentation. In
particular, if a thick subcategory $\mathcal{C}$ contains a spectrum with
nontrivial rational homology, then $\mathcal{C} = \sp_{(p)}^{\omega}$. 
\end{theorem}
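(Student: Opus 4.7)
The plan is to identify an explicit family of thick subcategories, match them up with reduced closed substacks of $(M_{FG})_{(p)}$ of finite presentation, and then show that every thick subcategory occurs in this family. The final step is where all the work lives.

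First, I would classify the reduced closed substacks. The $p$-local moduli stack $(M_{FG})_{(p)}$ is stratified by height: over the Lazard ring $L_{(p)}$, a choice of coordinate determines parameters $v_0 = p, v_1, v_2, \dots$ whose vanishing defines invariant ideals $I_n = (p, v_1, \dots, v_{n-1})$. A classical theorem of Landweber says the only invariant radical ideals of $L_{(p)}$ are the $I_n$ and the zero ideal, so the reduced closed substacks of $(M_{FG})_{(p)}$ of finite presentation are precisely the tower $(M_{FG}^{\geq n})_{(p)}$ cut out by $v_0, \dots, v_{n-1}$, for $n = 0, 1, 2, \dots, \infty$ (with $n = \infty$ giving the empty substack).

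Next, I would exhibit matching thick subcategories. For $n \in \{0, 1, \dots, \infty\}$ define
\[ \mathcal{C}_n = \{ X \in \sp_{(p)}^{\omega} \,:\, K(m)_* X = 0 \text{ for all } m < n \}, \]
with the convention $K(0) = H\mathbb{Q}_{(p)}$. Each $\mathcal{C}_n$ is closed under cofibers, retracts and suspension since each $K(m)_*(\cdot)$ is a homology theory, so these are thick. Moreover the containment $\mathcal{C}_{n+1} \subset \mathcal{C}_n$ is strict because the generalized Moore spectra $S/(p^{i_0}, v_1^{i_1}, \dots, v_{n-1}^{i_{n-1}})$ (which exist in the $p$-local finite stable category by the periodicity theorem \cite{HS} for suitable exponents) are of type exactly $n$. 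Define the \emph{type} of a nonzero finite $p$-local $X$ to be the smallest $n$ with $K(n)_* X \neq 0$; the nilpotence theorem, in its $MU$-form, guarantees that such an $n$ is finite, so the $\mathcal{C}_n$'s already stratify all of $\sp_{(p)}^{\omega}$ by type.

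The hard part is to show: every nonzero thick subcategory $\mathcal{C}$ equals some $\mathcal{C}_n$. Let $n$ be the minimum type attained in $\mathcal{C}$; then $\mathcal{C} \subset \mathcal{C}_n$ by definition. For the reverse inclusion, pick an $X \in \mathcal{C}$ of type $n$ and an arbitrary $Y \in \mathcal{C}_n$; we must show $Y \in \mathcal{C}$, and since $\mathcal{C}$ is thick it suffices to produce $Y$ as a retract of a finite cell object built from $X$. The usual route is to consider the finite spectrum $W = X \otimes DY$ (where $DY$ denotes Spanier--Whitehead dual), note that $W$ is of type $n$ since $K(n)$ is a field-like theory and neither $K(n)_* X$ nor $K(n)_* DY$ vanishes, apply the periodicity theorem to equip $W$ with a $v_n$-self map $v\colon \Sigma^d W \to W$, and analyze the telescope $v^{-1} W$. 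The nilpotence theorem forces every class in $\pi_*(W \otimes MU)$ that is $v_n$-periodic to live in height $\geq n$, which translates after an adjunction into the statement that some iterate of $v$ adjoints to a map $S \to W \otimes Y^{-1}\otimes Y = W \otimes \mathrm{Hom}(Y,Y)$ realizing $\mathrm{id}_Y$ up to suspension through $X$-cells; thus $Y$ is a retract of a finite $X$-cell object, hence $Y \in \mathcal{C}$. This yields $\mathcal{C} = \mathcal{C}_n$.

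The main obstacle is the third paragraph. Establishing the existence of $v_n$-self maps and controlling their telescopes, and then converting ``nonvanishing of $K(n)$-homology of $X \otimes DY$'' into ``$Y$ lies in the thick subcategory generated by $X$,'' is where the deep nilpotence and periodicity machinery of \cite{DHS, HS} is essential; the first two paragraphs of the argument are essentially bookkeeping.
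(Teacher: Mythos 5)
The paper states this theorem as a black box from \cite{HS} and gives no proof, so there is no internal argument to compare yours against; I will evaluate the sketch on its own terms. The first two paragraphs follow the standard Hopkins--Smith outline correctly: the $\mathcal{C}_n$ defined by vanishing of the low Morava $K$-theories are the candidate thick subcategories, generalized Moore spectra (via the periodicity theorem) witness that the inclusions $\mathcal{C}_{n+1}\subsetneq\mathcal{C}_n$ are strict, and nilpotence guarantees that every nonzero finite $p$-local spectrum has finite type.

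The third paragraph has a genuine error. The claim that $W = X \otimes DY$ has type $n$ ``since neither $K(n)_*X$ nor $K(n)_*DY$ vanishes'' is false whenever $Y\in\mathcal{C}_n$ has type strictly greater than $n$, which is allowed and must be handled. In that case $K(n)_*Y = 0$, hence $K(n)_*DY = 0$, hence by the K\"unneth isomorphism for Morava $K$-theory $K(n)_*W = 0$; the $v_n$-self map that the periodicity theorem hands you is then nilpotent, the telescope $v^{-1}W$ is contractible, and the argument as written produces nothing. The expression ``$W \otimes Y^{-1} \otimes Y$'' is also not well-formed ($Y$ is a finite spectrum, not an invertible object; presumably $DY$ is meant), and the asserted conclusion that some iterate of $v$ ``adjoints to a map realizing $\mathrm{id}_Y$'' does not follow from what precedes it. The actual Hopkins--Smith argument is organized differently: one works with the ring spectrum $R = X \otimes DX$ (not $X \otimes DY$), takes the fiber $I$ of the unit $S^0 \to R$, and uses the smash-nilpotence form of the nilpotence theorem to show that the composite $I^{\otimes k} \otimes Y \to Y$ is nullhomotopic for $k \gg 0$. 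That nullhomotopy exhibits $Y$ as a retract of an object built by finitely many cofiber sequences from copies of $R\otimes Y$, hence $Y \in \mathrm{thick}(X)$, and this treats all $Y$ of type $\geq n$ uniformly rather than splitting $\mathrm{id}_Y$ through a single telescope. You should also note, as a small point, that the empty substack is the one of finite presentation corresponding to $\{0\}$; the height-$\infty$ locus $M_{FG}^{\geq\infty}$ is nonempty and not finitely presented.
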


The next result is the strongest finiteness theorem that we need. 
Let $E_n$ be the $n$th \emph{Morava $E$-theory}; then $E_n$ is a Landweber-exact, even periodic $E_\infty$-ring with
$\pi_0 E_n \simeq {W}( \mathbb{F}_{p^n})[[v_1, \dots, v_{n-1}]]$. 
Given a
spectrum, the functor of $E_n$-localization can be thought of as (after
localizing at $p$) restriction to the open substack of $(M_{FG})_{(p)}$ 
parametrizing formal groups of height $\leq n$. 

\begin{theorem}[Smash product theorem; \cite{ravenelorange}]
\label{smashproduct1}
The $E_n$-localization functor $L_n\colon \sp \to \sp$ commutes with homotopy
colimits, so that for any spectrum $X$, the natural map
\[ X \otimes L_n S^0 \to L_n X,  \]
is an equivalence. 
\end{theorem}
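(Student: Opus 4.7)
The plan is to reduce the statement to showing that $L_n$ is \emph{smashing}, that is, that the full subcategory of $E_n$-local spectra is closed under arbitrary homotopy colimits. Because $L_n$ is an exact functor between stable $\infty$-categories, commuting with arbitrary homotopy colimits is equivalent to commuting with arbitrary coproducts (wedges), and by general Bousfield-localization formalism this in turn is equivalent to the smashing property: once $X \otimes L_n S^0$ is known to be $L_n$-local for every spectrum $X$, the canonical map $X \otimes L_n S^0 \to L_n X$ is both an $E_n$-equivalence (clear, since $S^0 \to L_n S^0$ is one and $E_n$-homology commutes with $\otimes$) and lands in an $L_n$-local target, hence is an equivalence. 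So the first step is to formalize this equivalence of conditions and reduce to showing that $L_n S^0$ is built from the sphere by operations that commute with coproducts.

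After localizing at a prime $p$ (which is harmless since $E_n$ is $p$-local), the main tool is the Adams--Novikov spectral sequence converging to $\pi_* L_n S^0$. The crucial technical input, which is Ravenel's theorem proved in the Orange Book, is a \emph{uniform horizontal vanishing line}: an integer $N = N(n,p)$ such that the $E_2$-page of the $E_n$-localized Adams--Novikov spectral sequence vanishes in filtration $s > N$. Granted this vanishing line, the spectral sequence is strongly convergent in a finite number of stages, so $L_n S^0$ admits a filtration of length at most $N$ whose successive quotients are wedges of suspensions of $BP$- and $E_n$-related pieces. Each stage of this finite filtration smashes correctly with arbitrary coproducts, and therefore so does $L_n S^0$, yielding the smashing property.

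The main obstacle—by a large margin—is the vanishing line itself. Ravenel's strategy is to construct a tower of $E_\infty$-ring spectra refining $BP$, realized as Thom spectra over iterated loop spaces modeling $\Omega^2 S^{2p^{k}+1}$, and to use the associated cobar-style resolution of $L_n S^0$ to reduce the statement to a cohomological finiteness statement about certain $BP_*BP$-comodule algebras after localization. One then proves the requisite nilpotence of elements on the $E_2$-page and controls the resulting spectral sequence via the action of the Morava stabilizer group. The heart of the argument is thus algebraic: a uniform bound on the cohomological dimension of the relevant comodule algebras. Once that bound is established, the passage from the vanishing line to the smashing property, and then to the full statement that $L_n$ commutes with \emph{all} homotopy colimits, is essentially formal via the reductions in the first paragraph.
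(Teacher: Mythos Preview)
The paper does not give its own proof of this theorem; it is quoted from \cite{ravenelorange}, and the paper instead uses the surrounding discussion in \S4.1 to extract the corollary it actually needs (\Cref{usefulcor}). So there is no ``paper's proof'' to compare against in the strict sense. That said, the paper does sketch the logical architecture of the argument (around \Cref{horizvanishingline} and \Cref{HRsmashconst}), and your outline diverges from it in a way that matters.

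Your reduction in the first paragraph is fine: showing that $X \otimes L_n S^0$ is $E_n$-local for every $X$ is exactly the smashing condition, and it is equivalent to $L_n$ commuting with coproducts. The problem is in your second paragraph. You assert that the crucial input is a horizontal vanishing line \emph{on the $E_2$-page} of the $E_n$-local ANSS, and then deduce that $L_n S^0$ has a finite filtration by wedges of $E_n$-module-like pieces. Neither claim is correct as stated. The vanishing line appears only at some finite page $E_N$, not at $E_2$ (this is exactly the content of \Cref{horizvanishingline}, cited from \cite[Prop.~6.5]{HoveyS}); the $E_2$-term generally has no such bound. And a vanishing line at a later page does \emph{not} by itself say that $L_n S^0$ is a retract of a finite $\mathrm{Tot}$, nor that it admits a finite filtration of the kind you describe. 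What one actually proves (this is \Cref{HRsmashconst}) is that the $\mathrm{Tot}$-tower of the $E_n$-cobar construction is \emph{constant as a pro-spectrum}; the passage from the $E_N$-vanishing line to pro-constancy goes through a criterion of Bousfield, as the paper notes, and is not a formal consequence of strong convergence. Once pro-constancy is known, $L_n S^0$ is a retract of some $\mathrm{Tot}^m$, hence lies in the thick $\otimes$-ideal generated by $E_n$, and the smashing conclusion follows.

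So the gap is concrete: replace ``$E_2$'' by ``some finite page $E_N$,'' and replace the hand-wave about a finite filtration with the pro-constancy argument (Bousfield's criterion, as in \cite[Lecture~30]{chromatic} or \cite[Ch.~8]{ravenelorange}). Your third paragraph about Ravenel's Thom spectra is in the right spirit for establishing the nilpotence input that underlies the vanishing line, but it is the step from the vanishing line to smashing that needs to be repaired.
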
 

Note here that $E_n$ has the same Bousfield class as the Johnson--Wilson theory $E(n)$ by \cite[Proposition 5.3]{HoveyS}.

Theorem~\ref{smashproduct1} is essentially a statement about certain homotopy colimits and limits
commuting with each other, and is crucial to describing the structure of the
$\infty$-category $L_n \sp$ of $E_n$-local spectra. 
A general reference for this category is \cite{HoveyS}. 
The smash product theorem implies that $L_n \sp$ 
is a full subcategory of $\sp$ which is closed under homotopy limits \emph{and
colimits}. The $\infty$-category $L_n \sp$ has much better finiteness
properties than the category of spectra (or even the category of $p$-local
spectra), as we will explain next. 

We will need various slightly stronger versions of \Cref{smashproduct1}
(which are ultimately the ingredients used to prove it) for our purposes.  
We can recover the $E_n$-local sphere
$L_n S^0$
via the totalization of the  classical \emph{cobar construction}
\[ E_n \rightrightarrows E_n \otimes E_n  \triplearrows \dots,  \]
whose associated $\mathrm{Tot}$ spectral sequence is the $E_n$-local ANSS. 
A strong form of the smash product theorem implies that this spectral sequence (drawn
with the Adams indexing $(s, t-s)$) degenerates at a
finite stage with a \emph{horizontal} vanishing line. More generally, this is
true for any $E_n$-local spectrum replacing $L_n S^0$ by the following result.

\begin{theorem}[{\cite[Prop. 6.5]{HoveyS}}] 
\label{horizvanishingline}
There is a uniform bound $N = N(n)$ such that 
given any $E_n$-local spectrum $X$, the ANSS for $X$ satisfies $E^{s,t}_{N} =
0$ for $s > N$.
\end{theorem}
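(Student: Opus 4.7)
The overall strategy is to establish the vanishing line first for the $E_n$-local sphere $L_n S^0$ and then propagate it to arbitrary $E_n$-local spectra via the smash product theorem. This approach mirrors the usual chromatic pattern: finiteness for the sphere plus smashing gives finiteness everywhere in the $E_n$-local category.

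\textbf{Step 1: Reduction to $L_n S^0$.} By the smash product theorem (\Cref{smashproduct1}), for any $E_n$-local spectrum $X$ the natural map $X \otimes L_n S^0 \to X$ is an equivalence. The Adams-Novikov tower for $X$ is obtained by smashing with $X$ the Adams-Novikov tower for $S^0$, i.e.\ the filtration by powers of the fiber $\overline{MU} = \mathrm{fib}(S^0 \to MU)$. After $E_n$-localization this tower equals the Adams-Novikov tower for $L_n S^0$ smashed with $X$. Since the ANSS is multiplicative over the ring ANSS of the sphere, any horizontal vanishing line $E_N^{s,*} = 0$ for $s > N$ in the ANSS of $L_n S^0$ pulls back to the same vanishing line in the ANSS of $X$, possibly after enlarging $N$ by a fixed constant to accommodate the multiplicative structure maps.

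\textbf{Step 2: Vanishing line for $L_n S^0$.} This is the quantitative heart of the Hopkins--Ravenel machinery. The plan is to pass to the $E_n$-based (equivalently $E(n)$-based, by the Bousfield class equality cited after \Cref{smashproduct1}) Adams spectral sequence converging to $\pi_* L_n S^0$. Its $E_2$-page is the continuous cohomology of the Morava stabilizer group $\mathbb{G}_n$ with coefficients in $(E_n)_*$, and $\mathbb{G}_n$ has finite virtual cohomological dimension (roughly $n^2$) at each prime. So the $E_2$-page already has a horizontal vanishing line. Combined with Ravenel's argument that differentials in the $E_n$-local ANSS have uniformly bounded length (which is in turn a consequence of the nilpotence theorem applied inside the thick subcategory generated by $E_n$), this yields a horizontal vanishing line at some finite page $N_0 = N_0(n)$. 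Finally, one transports this vanishing line back to the $MU$-based ANSS via the ring map $MU \to E_n$ and the comparison of filtrations.

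\textbf{Main obstacle.} The essential difficulty is Step 2: extracting a uniform, effective bound $N(n)$, rather than merely eventual vanishing, requires the full force of Hopkins--Ravenel. One has to control both the horizontal extent of the $E_2$-page (via cohomological dimension of $\mathbb{G}_n$) and the length of nontrivial differentials (via nilpotence), and combine these into a single bound independent of $X$. Step 1 is essentially formal once the smash product theorem and the quantitative vanishing line for $L_n S^0$ are in hand.
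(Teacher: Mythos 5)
The paper does not actually prove this statement; it is quoted verbatim from Hovey--Strickland \cite[Prop.\ 6.5]{HoveyS}, and the surrounding text treats it as a known ingredient that is (together with the nilpotence theorem) part of the Hopkins--Ravenel package rather than a consequence of Theorem \ref{smashproduct1} as stated. So there is no internal proof for you to match; what remains is to assess whether your sketch is correct, and it is not.

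The decisive error is in Step 2. The $E_n$-based (equivalently $E(n)$-based) Adams--Novikov $E_2$-page for $L_n S^0$ is $\operatorname{Ext}^{s,*}_{E(n)_*E(n)}(E(n)_*,E(n)_*)$, i.e.\ the cohomology of the open substack $M_{FG}^{\leq n}$ of formal groups of height at most $n$. It is \emph{not} the continuous cohomology of the Morava stabilizer group $\mathbb{G}_n$ with coefficients in $(E_n)_*$ --- that is the $E_2$-page of the \emph{$K(n)$-local} descent spectral sequence for $E_n^{h\mathbb{G}_n}\simeq L_{K(n)}S^0$. You have conflated the $E_n$-local and $K(n)$-local situations. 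Moreover, even in the $K(n)$-local case the argument would not go through: at small primes $\mathbb{G}_n$ has $p$-torsion, its cohomology is genuinely periodic in $s$, and finite \emph{virtual} cohomological dimension does \emph{not} give a horizontal vanishing line at $E_2$ --- only at a later (finite) page, and establishing that later-page line is exactly the nontrivial content you are trying to prove. In the $E_n$-local case the $E_2$-page has no horizontal vanishing line at all: the Greek-letter families $\alpha,\beta,\gamma,\dots$ supply classes in arbitrarily high cohomological degree on $M_{FG}^{\leq n}$.

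Step 1 is also not correct as written. ``Multiplicativity over the ring ANSS of the sphere'' does not allow you to transport a horizontal vanishing line from the $E_\infty$-page for $L_nS^0$ to that of a general $E_n$-local spectrum $X$; module spectral sequences over a ring spectral sequence with a vanishing line need not inherit that line (the image of the filtration can be shifted arbitrarily). The actual mechanism in Hovey--Strickland and Ravenel is more concrete: the smash product theorem is proved by showing that for some finite $m=m(n)$ the composite $\overline{E(n)}^{\wedge(m+1)}\to L_nS^0$ is nullhomotopic (i.e.\ $L_nS^0$ is $E(n)$-nilpotent of bounded degree, cf.\ Corollary \ref{usefulcor}). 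Smashing this nullhomotopy with any $E_n$-local $X$ shows $\overline{E(n)}^{\wedge(m+1)}\wedge X\to X$ is null, which directly truncates the Adams--Novikov filtration on $\pi_*X$ after $m$ steps and hence forces $E_N^{s,t}=0$ for $s>N$ with $N$ depending only on $m(n)$. That is the uniform bound; no multiplicativity of the spectral sequence is needed or used. What is genuinely hard is establishing the nullhomotopy $\overline{E(n)}^{\wedge(m+1)}\to L_nS^0$ itself, which is where the nilpotence theorem enters, and your sketch does not supply that input.

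So both steps need to be replaced. If you want to indicate a real proof outline: (i) prove, via the nilpotence theorem and the thick subcategory theorem, that $L_nS^0$ lies in the thick $\otimes$-ideal generated by $E_n$ with a \emph{bounded} number of cells/extensions, yielding the nullhomotopy of a bounded smash power of $\overline{E(n)}$ into $L_nS^0$; (ii) smash that nullhomotopy with $X$ to truncate the canonical Adams--Novikov filtration of $\pi_*X$, giving the horizontal vanishing line at a fixed page $N(n)$. Both (i) and (ii) are in \cite[Chapter 8]{ravenelorange} and \cite[\S 6]{HoveyS}.
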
 

We can formulate a closely related statement in the language of \emph{pro-spectra.}
The $\infty$-category $\mathrm{Pro}( \sp)$ is the $\infty$-category of
pro-objects in $\sp$ in the sense, e.g., of Chapter 5 in \cite{HTT}: a
pro-spectrum is  a formal filtered homotopy inverse limit of spectra. 

Given a cosimplicial diagram $F\colon \Delta \to \sp$, we can form the homotopy inverse 
limit $\mathrm{Tot} F$ in spectra, but we can also do it in pro-spectra. This
amounts to considering the $\mathrm{Tot}$ tower
\[ \dots \to  \mathrm{Tot}^2 F \to \mathrm{Tot}^1 F \to \mathrm{Tot}^0 F,  \]
as a pro-object. 
There is a fully faithful inclusion of $\infty$-categories $\sp \to
\mathrm{Pro}( \sp)$. 
Pro-spectra in the image of $\sp$ are called \emph{constant}. 
Another reformulation of the smash product theorem is the following. 

\begin{theorem}[Hopkins-Ravenel]\label{HRsmashconst} The pro-spectrum associated to the cobar
construction on $E_n$ is constant with value $L_n S^0$. 
\end{theorem}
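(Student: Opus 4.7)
The plan is to combine two ingredients: the identification of the inverse limit of the Tot tower with $L_n S^0$, and the uniform horizontal vanishing line from Theorem~\ref{horizvanishingline}, in order to force pro-constancy of the tower (not just strong convergence of the inverse limit).

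First I would verify that the honest inverse limit $\varprojlim_k \Tot^k(E_n^{\otimes \bullet+1})$ is equivalent to $L_n S^0$. This is standard: the cobar construction on the $E_\infty$-ring $E_n$ totalizes to the $E_n$-nilpotent completion of $S^0$, and since $\langle E_n\rangle = \langle E(n)\rangle$ (as noted after Theorem~\ref{smashproduct1}) and the sphere is $E_n$-convergent, this nilpotent completion coincides with $L_n S^0$.

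Next, writing $F_k$ for the fiber of the canonical map $L_n S^0 \to \Tot^k(E_n^{\otimes \bullet+1})$, the content of pro-constancy is exactly that the pro-spectrum $\{F_k\}$ is pro-null. The Bousfield--Kan spectral sequence of the Tot tower is the $E_n$-based Adams--Novikov spectral sequence, and the fibers $F_k$ carry filtrations whose associated graded pieces are the columns $s \geq k$ of its $E_\infty$-page, for the abutment $\pi_\ast L_n S^0$. By Theorem~\ref{horizvanishingline}, there is a uniform integer $N$ such that this spectral sequence, applied to the $E_n$-local spectrum $L_n S^0$, has $E_N^{s,t} = 0$ for $s > N$. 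Because the vanishing is attained on a \emph{finite} page (and not merely at $E_\infty$), the fibers $F_k$ have only boundedly many nontrivial layers, and moreover each such layer is annihilated after advancing a bounded number of steps in the tower.

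Concretely, the final step is a filtration argument: the cofiber of $\Tot^{k+M} \to \Tot^k$ carries a finite filtration whose associated graded pieces are the columns $k \leq s < k+M$ of the spectral sequence; when $k \geq N$, these columns vanish at $E_N$, so the transition maps in $\{F_k\}$ can be shown to be null after finitely many steps, yielding pro-nullness. The main obstacle will be executing this bookkeeping coherently: a vanishing line on $E_\infty$ alone suffices for strong convergence but \textbf{not} for pro-constancy, so it is essential to use the fact that Theorem~\ref{horizvanishingline} provides vanishing on a fixed finite page $N$, uniformly across $E_n$-local spectra. With that input in hand, the argument is the standard one underlying Hopkins--Ravenel's smash product theorem~\cite{ravenelorange}, which I would follow; the tensor-ideal perspective (realizing $L_n S^0$ as lying in the thick tensor-ideal generated by $E_n$ in finitely many steps) is a convenient reformulation that streamlines the coherence.
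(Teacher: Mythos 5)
Your proposal follows essentially the same route as the paper, which itself offers no self-contained argument but defers to Lurie's chromatic homotopy course and to \cite{ravenelorange}, emphasizing that the key input is the horizontal vanishing line of \thref{horizvanishingline} together with a pro-constancy criterion due to Bousfield. You correctly single out the same vanishing line and correctly stress that it must hold at a \emph{finite} page rather than merely at $E_\infty$.

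One caveat on the filtration step as you wrote it. The associated graded of $\mathrm{fib}(\mathrm{Tot}^{k+M} \to \mathrm{Tot}^k)$ consists of looped normalized cobar terms, i.e.\ pieces of the \emph{$E_1$}-page in columns $k+1,\dots,k+M$, and these certainly do not vanish when $k \geq N$; what vanishes is the $E_N$-page in those columns, which is only a subquotient. So the cofiber of $F_{k+M} \to F_k$ is genuinely nonzero, and the desired conclusion is that the \emph{transition map} $F_{k+M} \to F_k$ is null for $M$ large, which requires a real diagram chase through the spectral sequence rather than a graded-pieces-vanish argument. This is precisely what Bousfield's pro-constancy criterion packages, and it is why the paper flags the criterion as ``delicate.'' Since you explicitly defer to Hopkins--Ravenel for the coherent bookkeeping and correctly identify the place where the work happens, this is a presentational imprecision rather than a wrong approach, but the sentence ``these columns vanish at $E_N$, so the transition maps $\dots$ can be shown to be null'' elides exactly the hard part of the proof.
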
 
The proof of this is explained in Lectures 30 and 31 of Lurie's course on chromatic homotopy theory \cite{chromatic} and \cite[Chapter 8]{ravenelorange}. 
As explained in 
Lecture 30 of \cite{chromatic}, this is closely related to (and in fact follows
from) the 
horizontal vanishing line in the $L_n$-local Adams-Novikov spectral sequence
(\thref{horizvanishingline}), using a delicate criterion for constancy of pro-spectra
due to Bousfield. In particular, \Cref{horizvanishingline} is actually used to
prove \Cref{HRsmashconst}.  

\Cref{HRsmashconst} states that there is an equivalence of pro-objects between the
$\mathrm{Tot}$-tower for the cobar construction and the constant pro-object
with value $L_n S^0$. In particular, the natural maps
\[ L_n S^0 \to \mathrm{Tot}^m( ( E_n^{\otimes (\bullet+1)}))  \]
have, for large enough $m$, sections up to homotopy. 
For further discussion, see also Lecture 30 of \cite{chromatic} and, for
connections between these ideas and descent theory, 
\cite[\S 3, 4]{galoischromatic}. 

In particular, we get the following corollary, which is the piece of the
nilpotence technology which we shall use.

\begin{corollary} \label{usefulcor}
$E_n$ generates $L_n \sp$ as a thick tensor-ideal. 
\end{corollary}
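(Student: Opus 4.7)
The plan is to deduce \Cref{usefulcor} directly from the Hopkins--Ravenel constancy statement (\Cref{HRsmashconst}) together with the observation that a thick tensor-ideal containing the unit object is the entire category.

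First I would reduce to showing that the $E_n$-local sphere $L_n S^0$ itself lies in the thick tensor-ideal $\langle E_n\rangle$ generated by $E_n$ in $L_n\sp$. Indeed, since $L_n S^0$ is the unit of the symmetric monoidal structure on $L_n \sp$, for any $X \in L_n \sp$ we have $X \simeq X \otimes L_n S^0$, so once $L_n S^0 \in \langle E_n\rangle$, the tensor-ideal property gives $X \in \langle E_n\rangle$.

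Next I would exploit \Cref{HRsmashconst}. The equivalence of pro-spectra between the constant pro-object $L_n S^0$ and the $\mathrm{Tot}$-tower of the cobar construction $E_n^{\otimes(\bullet+1)}$ implies that, for some sufficiently large $m$, the canonical map
\[ L_n S^0 \to \mathrm{Tot}^m\bigl(E_n^{\otimes(\bullet+1)}\bigr) \]
admits a section up to homotopy. In particular, $L_n S^0$ is a retract of $\mathrm{Tot}^m\bigl(E_n^{\otimes(\bullet+1)}\bigr)$ in $L_n\sp$. It then suffices to show that this partial totalization lies in $\langle E_n\rangle$.

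The key observation here is that $\mathrm{Tot}^m$ of a cosimplicial diagram is a \emph{finite} homotopy limit, built by iteratively forming fibers. In a stable $\infty$-category, finite homotopy limits are finite homotopy colimits (up to shift), so any finite-totalization of a cosimplicial object in spectra lies in the thick subcategory generated by its terms in degrees $\leq m$. Each such term $E_n^{\otimes(k+1)} = E_n \otimes E_n^{\otimes k}$ is manifestly in $\langle E_n\rangle$ by the tensor-ideal condition. Hence $\mathrm{Tot}^m\bigl(E_n^{\otimes(\bullet+1)}\bigr) \in \langle E_n\rangle$, and by retracts so does $L_n S^0$, completing the argument.

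There is essentially no obstacle here once \Cref{HRsmashconst} is in hand: the entire content has been absorbed into the constancy of the cobar pro-spectrum. The only subtlety to check carefully is the passage from constancy of the pro-spectrum to the existence of a homotopy section at a finite stage, but this is immediate from the definition of pro-morphisms (the identity of $L_n S^0$ factors through some $\mathrm{Tot}^m$).
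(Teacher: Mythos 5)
Your proof is correct and follows the same route as the paper: use \Cref{HRsmashconst} to extract a retract of $L_n S^0$ off a finite partial totalization $\mathrm{Tot}^m$ of the cobar construction, observe that each $\mathrm{Tot}^m$ lies in the thick tensor-ideal generated by $E_n$ (since it is a finite limit built from terms $E_n^{\otimes(k+1)}$), and conclude by closure under retracts. The paper records the existence of the section just before stating the corollary, so your explicit handling of that point matches the intended argument.
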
 

\begin{proof} 
Let $\mathcal{C} \subset L_n \sp$ be a thick tensor-ideal containing $E_n$. 
This implies that the partial totalizations $\mathrm{Tot}^m$ of the
cobar construction on $E_n$ belong to $\mathcal{C}$. Since $\mathcal{C}$ is
closed under retracts, it follows that $L_n S^0 \in \mathcal{C}$ and thus
$\mathcal{C} = L_n \sp$. 
\end{proof} 

A similar result is discussed in Theorem 5.3 of \cite{hoveysadofsky}, where it is shown that every spectrum in
$L_n \sp$ is ``$E(n)$-nilpotent,''  or equivalently belongs to the thick
tensor-ideal generated  
by $E(n)$. 
\begin{remark} 
\label{MUnotdescendable}
Similarly, the nilpotence theorem is closely related to the statement that for any
connective spectrum $X$, the ($MU$-based) Adams-Novikov spectral sequence for
$X$ has a \emph{vanishing curve} of slope tending to zero as $t-s \to \infty$
at $E^\infty$.
See \cite{hoptalk} for further discussion of this. 
However, the $MU$-based cobar construction (whose homotopy inverse limit is $S^0$) is
definitely nonconstant, because of
the existence of nontrivial $MU$-acyclic spectra (for instance, the
Brown-Comenentz dual 
$I$ of the sphere; see Appendix B of \cite{HS}). 
\end{remark} 

\subsection{The $p$-local case}
Let $X$ be a noetherian separated Deligne--Mumford stack over $\mathbb{Z}_{(p)}$ with a flat map to $M_{FG}$. In this section, everything is implicitly localized at $p$: for instance,
$M_{FG}$ really means $M_{FG} \times_{\spec \mathbb{Z}} \spec \mathbb{Z}_{(p)}$. 

We want to find in this case instances of the abstract theorems of the last section. We begin by choosing an $n \in \mathbb{N}$ such that 
the flat map $X \to M_{FG}$ factors through the open substack $M_{FG}^{\leq n}
\subset M_{FG}$ of formal groups of height at most $ n$. 
We can do this because we have a descending sequence of closed substacks
\[ M_{FG} \supset M_{FG}^{\geq 1} \supset M_{FG}^{\geq 2} \supset \dots, \]
where each $M_{FG}^{\geq n+1}$ is cut out by the vanishing of a regular element
on $M_{FG}^{\geq n}$. Since $X$ is noetherian, $X \times_{M_{FG}} M_{FG}^{\geq m}$ must
be empty for $m \gg 0$. 

Observe the following (well-known) lemma:
\begin{lemma}Let $f\colon \Spec R \to M_{FG}$ be a flat map, which factors over $M_{FG}^{\leq n}$, and $E_R$ be the corresponding Landweber exact spectrum. Then $E_R$ is $E_n$-local.\end{lemma}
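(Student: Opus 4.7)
I would prove this by showing the stronger statement that $E_R \otimes K(m) \simeq 0$ for every $m > n$, and then deducing $E_n$-locality. The key computational input is that Landweber exactness of $E_R$, together with the method of proof of \Cref{LandweberSmash}, extends to give for any spectrum $Y$ a natural identification
\[ \pi_{2k}(E_R \otimes Y) \cong (\phi_R^* \FF_0(Y) \otimes \omega^{\otimes k})(\Spec R), \]
and analogously on odd homotopy, where $\phi_R\colon \Spec R \to M_{FG}$ is the classifying map of $R$'s formal group and $\FF_*(Y)$ is the $\Z/2$-graded quasi-coherent sheaf on $M_{FG}$ attached to $Y$. The argument is the same as the proof of \Cref{LandweberSmash}, replacing $E_{R'}$ with an arbitrary $Y$ and identifying $(\phi_R)_* \mathcal{O}_{\Spec R}$ with $\FF_0(E_R)$.

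Applied to $Y = K(m)$: the sheaf $\FF_*(K(m))$ is scheme-theoretically supported on the closed substack $M_{FG}^{\geq m}$. Indeed, after pulling back along the flat cover $\Spec L \to M_{FG}$, the module $\FF_*(K(m))$ corresponds to $MUP_*(K(m))$, which is a $K(m)_*$-module and is therefore killed by the regular sequence $p, v_1, \dots, v_{m-1}$ that cuts out $M_{FG}^{\geq m}$. By hypothesis $\phi_R$ factors through the open complement $M_{FG}^{\leq n} = M_{FG} \setminus M_{FG}^{\geq n+1}$, so for $m > n$ the image of $\phi_R$ is disjoint from the support of $\FF_*(K(m))$. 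Hence $\phi_R^* \FF_*(K(m)) = 0$, giving $E_R \otimes K(m) \simeq 0$.

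To deduce $E_n$-locality, I would use the smash product theorem \Cref{smashproduct1}, which gives $L_n E_R \simeq E_R \otimes L_n S^0$; it suffices to show that $E_R \otimes F \simeq 0$ for $F = \mathrm{fib}(S^0 \to L_n S^0)$. Since $F$ is $E(n)$-acyclic and $\langle E(n) \rangle = \bigvee_{m \le n} \langle K(m) \rangle$, the associated sheaf $\FF_*(F)$ on $M_{FG}$ has support contained in $M_{FG}^{\geq n+1}$; pulling back along $\phi_R$, whose image lies in $M_{FG}^{\leq n}$, kills this sheaf, and the Landweber formula above then yields $\pi_*(E_R \otimes F) = 0$.

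The main obstacle I anticipate is the last step, namely the claim that $\FF_*(F)$ is supported on $M_{FG}^{\geq n+1}$; this is the geometric manifestation of the smash product theorem, that $L_n$ corresponds to restriction from $M_{FG}$ to $M_{FG}^{\leq n}$. If one wishes to bypass this, an alternative is to appeal to the Hovey--Strickland computation of Bousfield classes of Landweber exact theories: $\langle E_R \rangle = \bigvee_{m\,:\, E_R \otimes K(m) \neq 0} \langle K(m) \rangle$, which by the first step is at most $\langle E(n) \rangle = \langle E_n \rangle$, yielding $E_n$-locality immediately.
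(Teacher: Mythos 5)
Your proof is correct, but it takes a noticeably different (and longer) path than the paper's.

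The paper argues in one step: if $T$ is $E_n$-acyclic, then $\pi_*(E_n\otimes T)=0$, which by the Landweber formula says that the sheaf $\FF_*(T)$ pulls back to zero along the faithfully flat map $\Spec\pi_0 E_n \to M_{FG}^{\leq n}$; hence $\FF_*(T)|_{M_{FG}^{\leq n}}=0$, and since $f$ factors through $M_{FG}^{\leq n}$, already $\pi_*(E_R\otimes T)=f^*\FF_*(T)=0$. One then concludes with the standard fact that a ring spectrum is local with respect to itself. That is the whole proof; no smash product theorem, no Bousfield class classification, and no Morava $K$-theories.

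You instead test against each $K(m)$ for $m>n$ (which needs the regular sequence $p,v_1,\dots,v_{m-1}$ killing $MUP_*(K(m))$ to identify the support of $\FF_*(K(m))$), and then bridge to locality either via the smash product theorem plus a second support argument for $F=\mathrm{fib}(S^0\to L_nS^0)$, or via Hovey--Strickland's Bousfield class formula for Landweber exact spectra. Both bridges are valid, so there is no gap. However, the obstacle you flag in route (a) --- establishing that $\FF_*(F)$ is supported on $M_{FG}^{\geq n+1}$ --- is not really an obstacle: it is proved by exactly the faithful-flatness argument above (since $E(n)\otimes F=0$ and $\Spec E(n)_*\to M_{FG}^{\leq n}$ is faithfully flat), and once you see that, you realize the $K(m)$ step was an unnecessary detour and you can apply the same argument to any $E_n$-acyclic $T$ directly, which is precisely what the paper does. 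Route (b) is clean but imports the Hovey--Strickland classification of Bousfield classes, which is a heavier input than the problem requires. In short: your geometric picture is right and both routes work, but the paper's single use of the Landweber formula plus faithful flatness of $\Spec\pi_0 E_n \to M_{FG}^{\leq n}$ is sharper and needs strictly less machinery.
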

\begin{proof}Let $T$ be a spectrum. The $MU_*$-comodule $(MU_*T, (MU\otimes
MU)_*(T))$
defines a $\Z/2$-graded quasi-coherent sheaf $\FF_*$ on $M_{FG}$ (see
Subsection \ref{Even-FG} for a discussion). Then $\pi_*(E_R \tensor T)\cong
f^*\FF_*(\Spec R)$. Denote by $q$ the canonical map $\Spec \pi_0E_n \to
M_{FG}^{\leq n} \subset M_{FG}$. So, likewise, we have  an isomorphism $\pi_*(E_n\tensor T) \cong q^*\FF_*(\Spec R)$. Now assume that $T$ is $E_n$-acyclic. Thus, $q^*\FF_* = 0$ and hence, since $\Spec \pi_0E_n \to M_{FG}^{\leq n}$ is faithfully flat, $\FF_*|_{M_{FG}^{\leq n}} = 0$. This implies $f^*\FF_* = 0$ and thus $E_R\tensor T = 0$. Thus 
\[ [T, E_R]  = 0  \]
for every $E_n$-acyclic spectrum $T$, since $E_R$ (as a ring spectrum) is
local with respect to itself. \end{proof}

Now let $\XX = (X,\otop_\XX)$ be an even periodic refinement of $X\to M_{FG}$. By the lemma, for every \'etale map $\spec R \to X$, the $E_\infty$-ring $\otop(
\spec R)$ is $E_n$-local. Since $E_n$-local spectra are closed under homotopy limits, thus for every \'etale map $Y \to X$, the $E_\infty$-ring $\otop(Y)$ is $E_n$-local. We see that the whole argument takes place in the $E_n$-local category. 

\begin{proposition}  \label{plocalfinite}
Let $X$ be a noetherian and separated Deligne-Mumford stack over a $p$-local
ring, equipped with a flat and \textit{tame} map $X \to M_{FG}$. Let $\mathfrak{X} = (X,\otop_\XX)$ be an
even periodic refinement of $X$. Then the functor
of taking global sections
\[ \Gamma\colon \qcoh(\mathfrak{X}) \to \md( \Gamma(\mathfrak{X}, \otop ))  \]
commutes with homotopy colimits. \end{proposition}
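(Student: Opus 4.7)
The plan is to apply Proposition~\ref{AbstractCommHolim} to $\mathfrak{X}$, using $M=\gx\otimes E_n$ for Morava $E$-theory $E_n$ at a suitable finite height. First I will use noetherianness of $X$ to produce an $n$ so that the flat map $X\to M_{FG}$ factors through the open substack $M_{FG}^{\leq n}$: the descending chain of closed substacks $X\times_{M_{FG}} M_{FG}^{\geq k}$ of $X$ must eventually be empty. The preceding lemma then shows that each section $\otop_\XX(U)$ is $E_n$-local, hence so is $\gx$ as a homotopy limit of $E_n$-local spectra, and therefore so is every object of $\md(\gx)$ regarded as a spectrum.

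For condition~(1) of Proposition~\ref{AbstractCommHolim}, I take
\[ q\colon Y = X \times_{M_{FG}} \spec \pi_0 E_n \longrightarrow X.\]
This is flat by base change along the flat map $\spec\pi_0 E_n\to M_{FG}^{\leq n}$, and affine because $M_{FG}$ has affine diagonal, so any fiber product of two affine schemes over $M_{FG}$ is affine. Finite cohomological dimension of $Y$ follows from Proposition~\ref{tamebcd} applied to the tame morphism $X\to M_{FG}$, whose quasi-compactness and separatedness are inherited from those of $X$ together with the affineness of the diagonal of $M_{FG}$.

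For condition~(2) I take $M=\gx\otimes E_n$. That $M$ generates $\md(\gx)$ as a thick tensor-ideal is a consequence of Corollary~\ref{usefulcor}: the symmetric monoidal base-change functor $T\mapsto \gx\otimes T$ from $L_n\sp$ to $\md(\gx)$ sends the unit $L_n S^0$ to $\gx$ (since $\gx$ is already $E_n$-local) and $E_n$ to $M$, so $\gx$ lies in the thick tensor-ideal generated by $M$ in $\md(\gx)$, and therefore so does every $\gx$-module. To identify the homotopy sheaves, observe that $\otop_\XX\otimes_\gx M\simeq \otop_\XX\otimes E_n$; evaluating on an \'etale $\spec R\to X$ with $\otop_\XX(\spec R)=E_R$, Proposition~\ref{LandweberSmash} gives
\[\pi_{2k}(E_R\otimes E_n)\cong \omega^{\otimes k}(\spec R \times_{M_{FG}} \spec\pi_0 E_n),\]
with odd degrees vanishing. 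Since $\pi_{2k}\otop_\XX$ is the pullback of $\omega^{\otimes k}$ from $M_{FG}$ to $X$, the sheaf $q_*q^*\pi_{2k}\otop_\XX$ evaluated on $\spec R$ is the value of this same pullback on $\spec R\times_X Y = \spec R\times_{M_{FG}}\spec\pi_0 E_n$, which matches. Naturality of these identifications in $\spec R$ upgrades them to an isomorphism of $\pi_*\otop_\XX$-modules.

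Both conditions of Proposition~\ref{AbstractCommHolim} being verified, we conclude that $\Gamma$ commutes with homotopy colimits. The main obstacle, beyond the Landweber-exact computation, is the algebro-geometric check that the pullback stack $Y$ is algebraic and of finite cohomological dimension even though $X$ itself need not be; this is precisely where tameness is used, funneled through Proposition~\ref{tamebcd}. The chromatic input (essentially the Hopkins--Ravenel smash product theorem) appears packaged in Corollary~\ref{usefulcor}.
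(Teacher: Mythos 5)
Your proposal is correct and follows essentially the same route as the paper's own proof: form $Y = X\times_{M_{FG}}\Spec\pi_0 E_n$, invoke Proposition~\ref{tamebcd} for the cohomological dimension of $Y$ (with quasi-compactness and separatedness of $X\to M_{FG}$ supplied by those of $X$ and the affine diagonal of $M_{FG}$), take $M=\gx\otimes E_n$, deduce generation from Corollary~\ref{usefulcor}, and match the homotopy sheaves via Proposition~\ref{LandweberSmash}. Your spelled-out argument for why $M$ generates $\md(\gx)$ as a thick tensor-ideal — pushing Corollary~\ref{usefulcor} through the symmetric monoidal, colimit-preserving base-change functor $T\mapsto\gx\otimes T$ from $L_n\sp$, using that $\gx$ is $E_n$-local so that $\gx\otimes L_nS^0\simeq\gx$ — fills in a step the paper asserts more tersely, and is the right way to make it precise.
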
 
\begin{proof}Let $Y = X \times_{M_{FG}} \spec \pi_0 E_n$. This has finite
cohomological dimension by \Cref{tamebcd} as the map $X \to M_{FG}$ is tame.
It is also quasi-compact and separated as $X$, $\spec \pi_0E_n$ and the diagonal of $M_{FG}$ are. We want to apply Proposition \ref{AbstractCommHolim} with $M = \gx
\tensor E_n$. It follows from Corollary \ref{usefulcor} that $M$ generates
$\md(\gx)$ as a thick tensor-ideal. We have to show that 
\[\pi_*\left(\otop_\XX\otimes_{\gx} M\right) \cong \pi_*(\otop_\XX\tensor E_n) \cong q_*q^*\pi_*(\otop_\XX)\]
 for $q\colon Y \to X$ the projection map. This follows from
 \Cref{LandweberSmash}
 for flat maps $\spec R\to M_{FG}$ and $\spec R' \to M_{FG}$ the smash product
 of the two Landweber exact spectra $E_R$ and $E_{R'}$ can be computed as 
\[ \pi_{2k} \left(E_R \tensor E_{R'} \right) \cong \omega^{\tensor k}(\spec R\times_{M_{FG}} \spec R').\]
Specialized to our situation, we get that for every flat map $\spec R \to X$ we have the following natural isomorphisms
\[ \pi_{2k} \left(\otop_\XX(\spec R)\tensor E_n\right) \cong \pi_{2k}(\otop_\XX)(\spec R\times_{M_{FG}} \spec \pi_0 E_n) \cong q_*q^*\pi_{2k}(\otop_\XX).\]\end{proof}

Set again $Y = X \times_{M_{FG}} \spec \pi_0 E_n$. We want to define an even periodic refinement of this. There are two equivalent ways of doing this:
\begin{enumerate}
\item 
Let $U \to X$ be an affine \'etale cover and $U_\bullet$ the corresponding Cech simplicial object. We can define even periodic refinements on $U_k\times_{M_{FG}} \pi_0 E_n$ by considering $\spec \otop_\XX(U_k)\tensor E_n$. Note here that $E_n$ has the structure of an $E_\infty$-ring spectrum by the Goerss--Hopkins--Miller theorem. Then we can define $\YY := \hocolim\, \spec (\otop_\XX(U_k)\tensor E_n)$. 

In other words, one notes that to realize $Y$ as a derived stack, one needs to
construct an appropriate \emph{diagram} of even periodic $E_\infty$-rings,  
corresponding to any given presentation of $Y$ as an ordinary stack. Given any
\'etale map $\spec R \to X$, we can realize $\spec R \times_{M_{FG}} \spec
\pi_0 E_n$ via the $E_\infty$-ring $\otop( \spec R) \otimes E_n$. This
constructs a diagram of $E_\infty$-rings which is enough (by a descent
procedure)  to produce the sheaf of $E_\infty$-rings on the \'etale site of $Y$ (in an analogous way to \Cref{gpquotient}). 
\item We can define $\YY$ as the ``relative $\spec$'' of the sheaf of algebras
$\otop \tensor E_n$, using essentially the previous construction. 
\end{enumerate}
Thus, we get an even periodic stack $\YY = (Y,\otop_\YY)$ with a faithfully flat, separated and quasi-compact map $q:\YY \to \XX$ such that \[q_* \otop_\YY \simeq \otop_\XX \tensor E_n.\] 
We now get:

\begin{theorem}\label{p-main}We use the same notation and assumptions from
Proposition \ref{plocalfinite}. Assume furthermore that $Y$ is quasi-affine.
Then $\Gamma\colon \qcoh(\XX) \to \md(\gx)$ is conservative and thus an equivalence.\end{theorem}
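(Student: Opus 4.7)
The plan is to deduce the theorem by combining the cocontinuity of $\Gamma$ established in Proposition~\ref{plocalfinite} with the abstract conservativity criterion of Proposition~\ref{AbstractConservative}, applied to the faithfully flat cover $q\colon \YY \to \XX$. The conservativity of $\Gamma$ on $\YY$ itself should come for free from Corollary~\ref{Scheme0affineness}, which is the quasi-affine case already in hand. The final equivalence statement then follows from the Schwede--Shipley reformulation in Corollary~\ref{SSCor}.

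More concretely, I would proceed as follows. First, by Proposition~\ref{plocalfinite}, $\Gamma\colon \qcoh(\XX)\to\md(\gx)$ commutes with homotopy colimits, giving hypothesis (3) of Proposition~\ref{AbstractConservative}. Next, take $M = \gx\otimes E_n$ as a $\gx$-module; the identification $q_*\otop_\YY \simeq \otop_\XX\otimes E_n$ recorded in the paragraph preceding the theorem gives an equivalence
\[ \otop_\XX\otimes_{\gx} M \simeq \otop_\XX\otimes E_n \simeq q_*\otop_\YY,\]
which is hypothesis (2). For hypothesis (1), I would observe that $\YY = (Y,\otop_\YY)$ is an even periodic, quasi-compact, separated derived scheme (the underlying space $Y = X\times_{M_{FG}}\spec\pi_0 E_n$ is by assumption quasi-affine, hence in particular a scheme, and the even periodicity of $\otop_\YY$ is inherited from the construction via the flat affine base change by $\spec\pi_0 E_n \to M_{FG}$). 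Since any line bundle on a quasi-affine scheme is ample, Corollary~\ref{Scheme0affineness} applies and shows that $\YY$ is $0$-affine, so in particular its global sections functor is conservative.

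With all three hypotheses in place, Proposition~\ref{AbstractConservative} yields that $\Gamma\colon \qcoh(\XX)\to\md(\gx)$ is conservative. Combined with the fact that $\Gamma$ already commutes with homotopy colimits, Corollary~\ref{SSCor} (i.e.\ the Schwede--Shipley criterion in symmetric monoidal form) then shows that $\otop_\XX$ is a compact generator of $\qcoh(\XX)$ and that $\Gamma$ is an equivalence of symmetric monoidal $\infty$-categories onto $\md(\gx)$.

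The main obstacle is really conceptual rather than technical: it is the recognition that the hypothesis ``$Y$ is quasi-affine'' is precisely what lets one replace $X$ by a geometrically simpler cover on which the earlier, affine-geometric input (Corollary~\ref{Scheme0affineness}) already suffices, so that the heavy chromatic input from nilpotence technology (which was needed to build $M$ and to verify cocontinuity) only has to be used once. Verifying the second hypothesis of Proposition~\ref{AbstractConservative} is then essentially a restatement of Proposition~\ref{LandweberSmash} applied sheaf-wise, and requires no further work.
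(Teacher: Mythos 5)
Your proof is correct and follows essentially the same route as the paper's own (very terse) proof: verify the three hypotheses of Proposition~\ref{AbstractConservative} using Proposition~\ref{plocalfinite}, the identification $q_*\otop_\YY \simeq \otop_\XX\otimes E_n$ with $M=\gx\otimes E_n$, and Corollary~\ref{Scheme0affineness} for the quasi-affine $Y$, then conclude via Corollary~\ref{SSCor}. The one small imprecision is your closing remark that hypothesis~(2) is ``essentially a restatement of Proposition~\ref{LandweberSmash} applied sheaf-wise'': that proposition only gives the identification on homotopy-group sheaves, which is what feeds into Proposition~\ref{plocalfinite}; the actual module-level equivalence $\otop_\XX\otimes_{\gx} M\simeq q_*\otop_\YY$ comes from the construction of $\YY$ itself, as you in fact invoke earlier in the argument.
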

\begin{proof}In light of \Cref{plocalfinite}, this is a direct application of Proposition \ref{AbstractConservative} and Corollary \ref{Scheme0affineness} as the underlying stack $Y$ of $\YY$ is quasi-affine. 
\end{proof}

\begin{remark}Note that the condition that $\pi_k\otop_\YY$ is ample is not more general as $\pi_k\otop_\YY \cong \pi_0\otop_\YY$ for $k$ even and $0$ else (since $E_n$ is strongly even periodic).\end{remark}

\subsection{The integral version}

In this section, we complete the proof of Theorem~\ref{main}. To start with, we
extend the proof of the first step 
when localized at $p$, as done in the previous section, to an integral statement. 
Once again, we have a slightly stronger statement. 
\begin{theorem} \label{general:colimits}
Let $X$ be a noetherian and separated Deligne-Mumford stack, equipped with a flat map $X \to M_{FG}$. 
Let $\mathfrak{X}$ be an
even periodic refinement of $X$. 
Then, if $X \to M_{FG}$ is tame, the 
global sections functor
\[  \Gamma\colon \qcoh(\mathfrak{X}) \to \md( \Gamma(\mathfrak{X}, \otop ))  \]
commutes with homotopy colimits.
\end{theorem}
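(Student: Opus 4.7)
The plan is to reduce the integral statement to the $p$-local case (\Cref{plocalfinite}) together with the rational case, using the fact that a map of $\gx$-modules is an equivalence if and only if it is so after smashing with $H\mathbb{Q}$ and with $S^0_{(p)}$ for every prime $p$. Since $\Gamma$ is an exact functor of stable $\infty$-categories, it commutes with finite homotopy colimits, so it suffices to verify it commutes with arbitrary direct sums.

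The rational case is essentially free. The stack $X_{\mathbb{Q}}$ lies over $\spec\mathbb{Q}$, so it is tame in the absolute sense by \Cref{Char0Tame}, and therefore has finite cohomological dimension by \Cref{tbcd}. Consequently the descent spectral sequence for $\Gamma(\XX_{\mathbb{Q}}, \FF_{\mathbb{Q}})$ is concentrated in uniformly finitely many rows (independent of $\FF$), so $\Gamma(\XX_{\mathbb{Q}}, -)$ commutes with direct sums by the standard argument (\Cref{lem:cohcommute} plus exchanging a direct sum with a finite $E_\infty$-filtration). The $p$-local case is exactly \Cref{plocalfinite}, whose proof established this commutation by producing a uniform horizontal vanishing line in the descent spectral sequence after applying the thick tensor-ideal argument of \Cref{AbstractCommHolim} with $M = \gx\otimes E_{n}$, using that $Y_p = X\times_{M_{FG}}\spec \pi_0 E_{n}$ has finite cohomological dimension by tameness (\Cref{tamebcd}).

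To assemble these into an integral statement, for a family $(\FF_i)_{i\in I}$ of quasi-coherent $\otop_\XX$-modules we need to verify that
\[\bigoplus_i \Gamma(\XX,\FF_i) \longrightarrow \Gamma\bigl(\XX,\bigoplus_i \FF_i\bigr)\]
is an equivalence after smashing with $S^0_{(p)}$ (resp.\ $H\mathbb{Q}$). Since $-\otimes S^0_{(p)}$ preserves direct sums, both sides reduce to the $p$-local analogues once we know that the natural comparison map $\Gamma(\XX,\FF)\otimes S^0_{(p)} \to \Gamma(\XX,\FF\otimes S^0_{(p)})$ is an equivalence. This reduces to a clean application of the $p$-local statement on the ``induced'' derived stack obtained by base change along $\spec \mathbb{Z}_{(p)}\to \spec\mathbb{Z}$, exactly as in \Cref{plocalfinite}, because the whole construction of the generating module $\gx\otimes E_n$ and the finite-cohomological-dimension cover $Y_p\to X_{(p)}$ goes through unchanged after $p$-localization.

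The main obstacle will be the interchange $\Gamma(\XX,\FF)\otimes S^0_{(p)} \simeq \Gamma(\XX,\FF\otimes S^0_{(p)})$, since $p$-localization is a filtered colimit and does not generically commute with the homotopy inverse limit computing $\Gamma$. The strategy is to directly reapply the argument of \Cref{AbstractCommHolim} with the $\gx$-module $M_p = \gx\otimes E_{n_p} \otimes S^0_{(p)}$, where $n_p$ is the height to which $X_{(p)}\to M_{FG,(p)}$ factors: this module generates $\md(\gx_{(p)})$ as a thick tensor-ideal (by \Cref{usefulcor}), and the hypothesis on its homotopy groups still follows from \Cref{LandweberSmash}. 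The DSS for $\Gamma(\XX,\FF)\otimes S^0_{(p)}$ then has a uniform horizontal vanishing line, which lets one exchange the $p$-local filtered colimit with the descent homotopy limit via \Cref{lem:cohcommute}, completing the gluing.
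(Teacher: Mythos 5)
Your overall plan — fracture the statement into $p$-local pieces and glue — is the same strategy the paper uses, and your identification of the crucial obstacle is correct: one cannot freely interchange the homotopy limit $\Gamma$ with the filtered colimit $(-)\otimes S^0_{(p)}$. But I don't think your proposed resolution actually closes that gap, and this is exactly where the paper's argument is more carefully structured.

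The core issue: you want to check that $\bigoplus_i\Gamma(\XX,\FF_i)\to\Gamma(\XX,\bigoplus_i\FF_i)$ is an equivalence after smashing with $S^0_{(p)}$. For this you need the comparison maps $\Gamma(\XX,\mathcal{G})\otimes S^0_{(p)}\to\Gamma(\XX,\mathcal{G}\otimes S^0_{(p)})$ to be equivalences (both for single $\FF_i$ and for the direct sum). But observe that this interchange for arbitrary $\mathcal{G}$ is \emph{equivalent} to the statement you are trying to prove: $\Gamma$ commutes with homotopy colimits if and only if it commutes with filtered colimits, of which $p$-localization is an instance. So any argument for the interchange that uses it as an input is circular. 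Your fix — running the argument of \Cref{AbstractCommHolim} with $M_p=\gx\otimes E_{n_p}\otimes S^0_{(p)}$ — has two problems. First, that proposition requires $M_p$ to generate $\md(\gx)$ as a thick tensor-ideal, but a $p$-local module generates at most the $p$-local part, so the hypothesis fails. Second, even if one instead argues that the collection of $T$ with $\FF\mapsto\Gamma(\XX,\FF\otimes_\gx T)$ cocontinuous is a thick tensor-ideal containing $M_p$, the output is a statement about $\Gamma(\XX,\FF\otimes S^0_{(p)})$, \emph{not} about $\Gamma(\XX,\FF)\otimes S^0_{(p)}$; the DSS you invoke computes the former, not the latter, so it cannot deliver the interchange. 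Moreover your appeal to \Cref{usefulcor} to say $M_p$ generates $\md(\gx_{(p)})$ presumes $\gx_{(p)}=\gx\otimes S^0_{(p)}$ is $E_{n_p}$-local, which is itself an instance of the interchange (for $\FF=\otop$) and so cannot be assumed.

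The paper sidesteps this circularity by never forming $\Gamma(\XX,\FF)\otimes T$ at all: the fracture lemma it proves is phrased in terms of the functors $F(\cdot\otimes T)$, and the set $\mathcal I$ of $T\in\sp$ for which $\FF\mapsto\Gamma(\XX,\FF\otimes T)$ is cocontinuous is a thick tensor-ideal of $\sp$. One verifies $S^0_{(p)}\in\mathcal I$ directly from \Cref{plocalfinite} applied to the $p$-localized derived stack (this requires only the identity $\Gamma(\XX,\mathcal G)\simeq\Gamma(\XX_{(p)},j^*\mathcal G)$ for a $p$-local sheaf $\mathcal G$, which follows from $j_*j^*\mathcal G\simeq\mathcal G$ by the projection formula, \emph{not} the problematic interchange). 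There is a second ingredient you are missing: since there are infinitely many primes, $\{S^0_{(p)}\}_p$ together with $S^0_{\mathbb Q}$ do not generate $\sp$ as a thick tensor-ideal, so simply knowing the $p$-local and rational cases is not enough to conclude $S^0\in\mathcal I$. The paper proves a lemma that for $X$ quasi-compact and separated there exists a finite integer $N$ with $X[1/N]$ of bounded cohomological dimension (using that the automorphism groups have bounded order), giving $S^0[1/N]\in\mathcal I$, and then runs a descending arithmetic-square induction on $N$ to deduce $S^0\in\mathcal I$. Your use of the rational case is redundant (it follows once $S^0_{(p)}\in\mathcal I$ for any single $p$) and it cannot substitute for the finite-$N$ condition, since the induction needs a \emph{finite} starting point.
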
 
\begin{proof}
In order to prove this, we will use the $p$-local version proved earlier, for
each prime, together with an arithmetic square to fit everything together
integrally. There is an obstacle in that the ``arithmetic square'' is infinite in
nature. To deal with this, we use:

\begin{lemma}Let $X$ be a quasi-compact and separated Deligne--Mumford stack
over some quasi-compact scheme $S$. Then there is an $N$ such that $X[\frac 1N]$ has bounded cohomological dimension.\end{lemma}
\begin{proof}We want first to show that the order of automorphism groups of
points in $X$ is bounded. As $X$ is separated, the inertia stack
$X\times_{X\times_S X} X = \II_X$ is finite over $X$. Since $X$ is
quasi-compact, there is an \'etale covering $q\colon\spec A \to X$ for some
ring $A$. The pullback $q^*\II_X \to \spec A$ corresponds to an $A$-module
generated by $m$ elements for some $m$. If $x\colon \Spec k \to X$ is a
geometric point, the pullback $\spec k \times_X \spec A$ is equivalent to a
disjoint union of $\Spec k$. Thus, $x^*\II_X \to \spec k$ has also rank at most
$m$, i.e., the stabilizer of $x$ has at most $m$ elements. 

Let $N = m!$. Then all stabilizers have invertible order on $X[\frac1N]$. Thus, $X[\frac1N]$ is tame, 
which implies the result by \Cref{tbcd}. 
\end{proof}

It follows from this that there exists an integer $N \in \mathbb{N}$ such that,
after tensoring with the localization $S^0[N^{-1}]$, the functor 
\[ \mathcal{F } \mapsto \Gamma(\mathfrak{X}, \mathcal{F} \otimes S^0[N^{-1}]), 
\quad \qcoh(\mathfrak{X}) \to \md( \Gamma(\mathfrak{X}, \otop)),
\]
commutes with homotopy colimits. In fact, the spectral sequence to compute the
homotopy groups of 
$\Gamma(\mathfrak{X}, \mathcal{F} \otimes S^0[N^{-1}])$ starts from the cohomology
of $\pi_* \mathcal{F}$ on the open substack $\mathfrak{X}[N^{-1}]$, since
cohomology commutes with localization, and is consequently concentrated in
finitely many rows at $E^2$. 

In view of \Cref{plocalfinite} and \Cref{tamebcd}, we can thus apply the following lemma to
conclude the proof of the theorem. 

\begin{lemma} 
Let $F\colon \mathcal{C} \to \mathcal{D}$ be an exact functor between cocomplete stable
$\infty$-categories. Suppose that: 
\begin{enumerate}
\item 
$F( \cdot \otimes S^0_{(p)} )$ commutes with homotopy 
colimits for every prime number $p$. 
\item 
There exists an integer $N$ such that $F( \cdot \otimes S^0[1/N])$ commutes
with homotopy colimits. 
\end{enumerate}
Then $F$ commutes with homotopy colimits. 
\end{lemma} 
\begin{proof} 
Consider the collection $\mathcal{I}$ of spectra $T$ such that $F( \cdot \otimes T)$ commutes
with homotopy colimits. It is an \emph{ideal} in spectra: that is, if $X $ is
a spectrum and $Y \in \mathcal{I}$, then $X \otimes Y \in \mathcal{I}$. 
By hypothesis, this ideal contains $S^0[1/N]$ for some $N$ and each $S^0_{(p)}$
for each prime number $p$. We want to show that it contains $S^0$.

To do this, we use an inductive argument. Let $N \in \mathbb{Z}_{>0}$ be chosen \emph{minimal}
such that $S^0[1/N] \in \mathcal{I}$. We want to show that $N = 1$. 
Observe that if $(m, p) = 1$, then 
there is an \emph{arithmetic square}, i.e., a homotopy pullback diagram
\[ \xymatrix{
S^0[1/m] \ar[d] \ar[r] &  S^0[1/mp] \ar[d] \\
S^0_{(p)}[1/m] \ar[r] &  S^0_{\mathbb{Q}}
}.\]
It follows that if $N  > 1$, then $N = p m $ for $(p, m) = 1$ ($N$ is
squarefree by minimality), and then the above arithmetic square implies that
$S^0[1/m] \in \mathcal{I}$, a contradiction. Thus $N = 1$ and we are done. 
\end{proof}

This completes the proof of \Cref{general:colimits}.
\end{proof}

\begin{proof}[Proof of Theorem~\ref{main}]
Let us now complete the proof of the main theorem. By Corollary \ref{SSCor}, it suffices now to show that if $\Gamma(\XX,\FF) = 0$ for some quasi-coherent sheaf $\FF$ on $\XX$, then $\FF = 0$. 

So assume $\Gamma(\XX,\FF) = 0$. Then $\Gamma(\XX_{(p)}, \FF_{(p)}) \simeq
\Gamma(\XX, \FF)_{(p)} = 0$ for every prime $p$. Indeed, since $\Gamma$ commutes with homotopy colimits, it commutes with localization at $p$. By \Cref{p-main}, it follows that $\FF_{(p)} = 0$ for every prime $p$. Thus, $\FF = 0$. 
\end{proof}

\section{Applications to Galois theory}\label{Section5}

Let $R$ be an $E_\infty$-ring. Recall that an $E_\infty$-$R$-algebra $R'$ is said to be
\emph{\'etale} if $\pi_0 R \to \pi_0 R'$ is an \'etale morphism of commutative
rings, and the natural map $\pi_0 R' \otimes_{\pi_0 R} \pi_* R \to \pi_* R'$ is
an isomorphism. 
The theory of \'etale extensions in this sense is entirely algebraic: the
$\infty$-category of \'etale $R$-algebras is equivalent to the (ordinary)
category of \'etale $\pi_0 R$-algebras. 

This definition excludes useful examples such as the map $KO \to KU$, 
which behaves in many respects as an \'etale morphism  in commutative
algebra, albeit not on the level of homotopy groups. Since $\pi_0 KO \simeq \mathbb{Z}$, there are no finite \'etale
extensions of $KO$.  
Nonetheless, $KO \simeq KU^{h \mathbb{Z}/2}$, and, as we have shown
(e.g., in view of \Cref{main}), there is a good theory
of $\mathbb{Z}/2$-``descent'' from $KU$ to $KO$. Rognes's notion of a faithful Galois extension 
(\Cref{def:rognes}) is a generalization of the above notion of \'etaleness
(or at least the Galois version) that
encompasses examples such as $KO \to KU$. 

In this section, we analyze the Galois theory--- in this sense---for
$E_\infty$-rings 
such as $KO$ and $\TMF$ which arise as ``rings of functions''
$\Gamma(\mathfrak{X}, \otop)$ of $0$-affine derived stacks. Our main
result (\Cref{stackgivesgalois}) is that
a Galois cover (in the algebraic sense) of the underlying stack yields a
faithful Galois
extension of $\Gamma( \mathfrak{X}, \otop)$. This provides examples of 
Galois extensions of (localizations of) $\TMF$ via level structures, for instance.

\subsection{Galois extensions}

\label{subsec:gal}
Let $B$ be an $E_\infty$-ring with the action of a finite group $G$, and let
$A = B^{h G}$ be the homotopy fixed points. 
We recall the following definition of Rognes: 

\begin{definition}[\cite{rognes}]
\label{def:rognes}
$A \to B$ is said to be a \emph{$G$-Galois extension} if the map
of $E_\infty$-$A$-algebras
\[ B \otimes_A B \to \prod_{g \in G} B,  \]
which informally is given by $b_1 \otimes b_2 \mapsto \{b_1 \cdot g(b_2)\}_{g \in
G}$, is an equivalence. A Galois extension is said to be \emph{faithful} if the Bousfield classes of
$A$ and $B$ (for $A$-modules) are equivalent: that is, if an $A$-module smashes to zero with $B$,
then it itself is zero. 
\end{definition}

This is inspired by the notion of a Galois extension of (discrete) commutative rings, which can be defined in the same way, but where faithfulness is automatic. Equivalently, a map $R\to S$ of commutative rings is $G$-Galois if $\spec S \to \spec R$ is an etale $G$-torsor in the sense of algebraic geometry. 

Faithful Galois extensions (which are the only type of Galois extensions we
shall consider) are very well-behaved. 
The map $A \to B$ exhibits $B$ as a \emph{perfect} (i.e., compact or dualizable) $A$-module, and for any
$E_\infty$ $A$-algebra $A'$, the map of rings $A' \to B \otimes_A A'$ is
again faithful and $G$-Galois. Moreover, one can develop \cite{galoischromatic}
a version of Grothendieck's \'etale fundamental group formalism in this setting.

We start by noting a few examples and properties of faithful Galois extensions. 
\begin{example}\label{etalegal}
Suppose $A$ is an $E_\infty$-ring, and suppose $B_0$ is a $G$-Galois extension
of the ring $\pi_0 A$. Then there exists a unique $E_\infty$-ring $B$
\'etale over $A$ with $\pi_0 B \simeq B_0$, and a $G$-action on $B$ in the
$\infty$-category of $A$-algebras such that
the natural map $A \to B^{hG}$ is an equivalence (by \Cref{topinv}). 
\end{example}

\begin{example}[{\cite[Proposition 3.6]{BR2} and \cite[Prop.
6.28]{galoischromatic}}]
\label{fieldgal} Suppose $A$ is an even periodic $E_\infty$-ring such that
$\pi_0 A$ is a field. Then $G$-Galois extensions of $A$ are equivalent to
$G$-Galois extensions of $\pi_0 A$: that is, they are \'etale. The main ingredient is the K\"unneth isomorphism for $A$-modules. 
\end{example} 

\begin{example} 
A simple example of a Galois extension that is not \'etale is as follows: let
$A$ be an $E_\infty$-ring with $\pi_*(A) \simeq \mathbb{Z}[1/2, t^{\pm 1}]$,
where $|t| = 2$. Consider a $\mathbb{Z}/2$-action on $A$ that sends $t \mapsto
-t$. In this case, the map $A^{h\mathbb{Z}/2} \to A$ is a $\mathbb{Z}/2$-Galois
extension realizing on homotopy the map $\mathbb{Z}[1/2, u^{\pm 1}] \to
\mathbb{Z}[1/2, t^{\pm 1}], \ u \mapsto t^2$, as we will now show.

 First observe that the map
\begin{align*}\Phi\colon \Z[1/2, t^{\pm1 }] \tensor \Z[1/2, t^{\pm1 }] &\to \Z[1/2, t^{\pm1 }]\times \Z[1/2, t^{\pm1 }] \\
x\tensor y &\mapsto (x\cdot y, x\cdot g(y)) \end{align*}
is surjective, where $g$ generates $\Z/2$. As this map is $\Z[1/2, t^{\pm1 }]$-linear, this follows from the fact that $\Phi(\frac12\tensor 1 + \frac12t^{-1}\tensor t)=(1,0)$ and  $\Phi(\frac12\tensor 1 - \frac12t^{-1}\tensor t)=(0,1)$. By a graded version of \cite[Theorem 1.6]{Greither}, we see that $\mathbb{Z}[1/2, u^{\pm 1}] \to
\mathbb{Z}[1/2, t^{\pm 1}], \ u \mapsto t^2$ is a $\mathbb{Z}/2$-Galois
extension in the graded sense. As $\mathbb{Z}[1/2, t^{\pm 1}]$ is free over $\mathbb{Z}[1/2, u^{\pm 1}]$, this implies that $A^{h\mathbb{Z}/2} \to A$ is a $\mathbb{Z}/2$-Galois.
\end{example}

\begin{example} 
\label{faithfulgalconn}
While the notion of a faithful Galois extension generalizes that of an \'etale Galois
extension (see \cite{BakerRichteralgebraic}), the notions coincide on \emph{connective} $E_\infty$-rings $A$. We
prove this here if $\pi_0(A)$ is \emph{noetherian.} In fact, 
let $A$ be as in the previous sentence, and let $B$ be a faithful $G$-Galois
extension. For any morphism $\pi_0 A \to k$, for $k$ a field, we get a map
of $E_\infty$-rings
\[ A \to \tau_{\leq 0} A \simeq H \pi_0 A \to H k,  \]
and the base-change $B \otimes_A H k$ is therefore a faithful $G$-Galois extension of
$Hk$, which, thanks to the K\"unneth isomorphism, is necessarily discrete (and the Eilenberg-MacLane spectrum
associated to a product of copies of finite
separable extensions of $k$).  

It follows that $B$, which is a perfect $A$-module, is actually connective, and
indeed \emph{flat} in the sense of \cite{higheralg}, \S8.2.2: 
In \cite{higheralg}, it is shown that an $A$-module $M$ is flat if and only
if, for every discrete $A$-module (i.e., $\pi_0 A$-module) $N$, the $A$-module $M \otimes_A N$ is discrete. 
However, it suffices to show that $M \otimes_A H \pi_0 A$ is a discrete, flat
$H \pi_0 A$-module. 
Now we can appeal to a classical fact from commutative algebra (see the
discussion in \cite[Tag 0656]{stacks-project} for the local case to which one
reduces) given a
commutative noetherian ring $R$, and a perfect complex $P^\bullet$ of $R$-modules, then $P^\bullet$ is
quasi-isomorphic to a projective $R$-module concentrated in dimension zero
 if and
only if the same holds (over $k$) for $P^\bullet \otimes_R k$ for every residue field $k$
of $R$.

It follows that
$\pi_0(B)$ is flat over $\pi_0(A)$ and is unramified in the sense of classical
commutative algebra: therefore, $\pi_0(A) \to \pi_0(B)$ is \'etale. Since $A
\to B$ is flat, we are done. See also \cite[Theorem 6.16]{galoischromatic}. 
\end{example} 
Our goal is to show that even periodic refinements provide a rich source of 
Galois extensions which are not \'etale. 
\begin{example}
$KO \to KU$ is a $\mathbb{Z}/2$-Galois extension, as shown in
Chapter 5 of \cite{rognes} using the following result, a proof of which
appears in \cite{htmf}.

\begin{theorem}[Wood] There is an equivalence of spectra $KO \otimes \Sigma^{-2}
\mathbb{CP}^2 \simeq KU$. 
\end{theorem}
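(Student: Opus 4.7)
The plan is to reinterpret $\Sigma^{-2}\mathbb{CP}^2$ as the cofiber of the Hopf map $\eta$, reducing Wood's theorem to the identification $KO/\eta \simeq KU$ of $KO$-modules; construct such an identifying map using complexification; and verify it is an equivalence by a homotopy group computation.

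From the cell structure $\mathbb{CP}^2 = S^2 \cup_\eta e^4$ one obtains the cofiber sequence $S^3 \xrightarrow{\eta} S^2 \to \mathbb{CP}^2$; desuspending twice gives $S^1 \xrightarrow{\eta} S^0 \to \Sigma^{-2}\mathbb{CP}^2$. Smashing with $KO$ identifies $KO \otimes \Sigma^{-2}\mathbb{CP}^2$ with the cofiber $KO/\eta$ of multiplication by $\eta$ on $KO$. The complexification map $c\colon KO \to KU$ kills $\eta \in \pi_1(KO)$ since $\pi_1(KU) = 0$, so $c$ factors through a $KO$-module map $\varphi\colon KO/\eta \to KU$ with $\varphi(1) = 1$.

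To verify $\varphi$ is an equivalence, apply $\pi_*$ to the defining cofiber sequence $\Sigma KO \xrightarrow{\eta\cdot} KO \to KO/\eta$, and combine the resulting long exact sequence with the well-known ring structure $\pi_*(KO) = \mathbb{Z}[\eta, \alpha, \beta^{\pm 1}]/(2\eta, \eta^3, \eta\alpha, \alpha^2 - 4\beta)$ in degrees $|\eta|=1$, $|\alpha|=4$, $|\beta|=8$. A degree-by-degree check shows $\pi_n(KO/\eta) = 0$ for $n$ odd, and in even degrees gives the short exact sequence $0 \to \mathrm{coker}(\eta\cdot) \to \pi_n(KO/\eta) \to \ker(\eta\cdot) \to 0$ whose associated graded is a single $\mathbb{Z}$ in each even degree, matching the pattern of $\pi_*(KU) = \mathbb{Z}[u^{\pm 1}]$.

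The main obstacle is the extension problem in degrees $n \equiv 4 \pmod 8$, where $0 \to \mathbb{Z}\{\alpha\} \to \pi_4(KO/\eta) \to \mathbb{Z}/2\{\eta^2\} \to 0$ is a priori ambiguous between $\mathbb{Z}$ (non-split) and $\mathbb{Z}\oplus\mathbb{Z}/2$ (split). Resolving this requires more than just the $\pi_*(KO)$-module structure on $\pi_*(KO/\eta)$, since $\eta$ acts trivially on the target; one standard approach is to invoke Adams operations acting on the $KO$-Atiyah--Hirzebruch spectral sequence for $KO_*(\Sigma^{-2}\mathbb{CP}^2)$, which together with the naturality of the Bockstein detects and rules out the split case. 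Once $\pi_4(KO/\eta) \cong \mathbb{Z}$ is established, the identity $\alpha^2 = 4\beta$ together with $c_*(\beta) = u^4$ forces $c_*(\alpha) = \pm 2u^2$, and $\varphi_*$ is then necessarily an isomorphism on $\pi_4$; Bott periodicity on both sides (via $\beta$ on the source and $u^4$ on the target) propagates the conclusion to every degree, so $\varphi$ is the desired equivalence of $KO$-modules.
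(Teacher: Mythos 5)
The paper does not actually prove Wood's theorem: it states it and cites the first author's \emph{The homology of $\tmf$} (\cite{htmf}) for a proof, so there is no in-text argument to compare against. That said, your framework --- identify $KO \otimes \Sigma^{-2}\mathbb{CP}^2$ with $KO/\eta$ via the cell structure $\mathbb{CP}^2 = S^2\cup_\eta e^4$, observe that complexification $c\colon KO\to KU$ kills $\eta\in\pi_1$, factor $c$ through a $KO$-module map $\varphi\colon KO/\eta\to KU$, and verify on homotopy groups --- is the standard route, and the long-exact-sequence bookkeeping ($\pi_{\mathrm{odd}} = 0$, a single $\mathbb{Z}$-associated-graded in every even degree) is correct.

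There are, however, two genuine gaps. First, you correctly flag the extension problem in degrees $\equiv 4\pmod 8$, but ``invoke Adams operations\ldots together with the naturality of the Bockstein'' is not an argument; it is unclear what Bockstein is meant, and the Adams operations alone do not obviously rule out the split case. One clean resolution: on $\widetilde{KU}^0(\mathbb{CP}^2)$ with $x = H-1$ one has $\psi^{-1}(x) = -x + x^2$, hence $cr(x) = x^2$ and $cr(x^2) = 2x^2$; since $c(\alpha) = \pm 2x^2$ in $\widetilde{KU}^0(\mathbb{CP}^2/\mathbb{CP}^1)\hookrightarrow \widetilde{KU}^0(\mathbb{CP}^2)$, one gets $2\,r(x) = \pm\alpha$, so $\widetilde{KO}^0(\mathbb{CP}^2)\cong\mathbb{Z}$ and the extension is nonsplit. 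Second, and more seriously, the final sentence is wrong as stated: $\beta$ gives $8$-periodicity, so knowing $\varphi_*$ is an isomorphism on $\pi_0$ and $\pi_4$ does \emph{not} propagate to degrees $\equiv 2,6\pmod 8$, which you never check. Worse, the factorization $\varphi$ of $c$ through $KO/\eta$ is not unique --- the nullhomotopies of $c\circ\eta$ form a torsor over $\pi_2(KU)\cong\mathbb{Z}$ --- and on the top-cell generator $v\in\pi_2(KO/\eta)$ (characterized by $\partial v = 2\in\pi_0 KO$) two choices of $\varphi$ differ by an even multiple of $u$, so the parity of $\varphi_*(v)$ has to be computed, not assumed. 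The classical way around both issues is to build the map from the Bott class $H-1\in\widetilde{KU}^0(\mathbb{CP}^2)$ directly, i.e.\ as the composite $KO\otimes\Sigma^{-2}\mathbb{CP}^2 \to KU\otimes\Sigma^{-2}\mathbb{CP}^2\to KU$, which controls the top cell as well as the bottom and reduces the whole verification to the explicit computation of $\widetilde{KO}^0(\mathbb{CP}^2)$ outlined above.
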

\end{example}

Our next result is a generalization of this, which states that Galois 
coverings of an associated {stack} can be used to manufacture Galois extensions of ring spectra. For
example, the $\mathbb{Z}/2$-Galois extension $KO \to KU$ arises in this way
from the $\mathbb{Z}/2$-torsor
\( \spec \mathbb{Z} \to B \mathbb{Z}/2.  \)

\begin{theorem} \label{stackgivesgalois}
Let $G$ be a finite group acting on a Deligne-Mumford stack $X$, with $Y =
X/G$ the stack quotient. Consider a flat map $Y \to M_{FG}$.
Let $\mathfrak{Y}$ be a 0-affine even periodic refinement of $Y$ and $\mathfrak{X}$ the
induced refinement of $X$. Then 
$\Gamma(\mathfrak{X}, \otop_{\XX})$ is a faithful $G$-Galois extension of
$\Gamma(\mathfrak{Y}, \otop_{\YY}) = \Gamma(\mathfrak{X}, \otop_{\XX})^{h G}$. In
particular, the Tate spectrum of $G$ acting on 
$\Gamma(\mathfrak{X}, \otop_{\XX})$ is contractible. 
\end{theorem}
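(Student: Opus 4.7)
The plan is to use the 0-affineness of $\mathfrak{Y}$ to translate the Galois assertion for the ring spectra $A := \Gamma(\mathfrak{Y}, \otop_\YY)$ and $B := \Gamma(\mathfrak{X}, \otop_\XX)$ into a geometric statement about the $G$-torsor $\pi\colon \XX \to \YY$. First I observe that $X \to Y$ is finite \'etale (being a $G$-torsor for a finite group acting on a Deligne--Mumford stack), hence affine, so by \Cref{affineover0affine} the derived stack $\mathfrak{X}$ is also 0-affine, yielding a symmetric monoidal equivalence $\qcoh(\mathfrak{X}) \simeq \md(B)$. Applying the descent argument of \Cref{GaloisDesc} to $\YY = \XX/G$ (whose proof works for any 0-affine base with a $G$-torsor cover) produces $\qcoh(\YY) \simeq \qcoh(\XX)^{hG}$; taking endomorphisms of the unit object then gives $A \simeq B^{hG}$ as $E_\infty$-rings equipped with a $G$-action on $B$.

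For the Galois condition, the plan is to identify $B \otimes_A B$ with $\Gamma(\XX \times_\YY \XX, \otop)$ and then use the $G$-torsor splitting $\XX \times_\YY \XX \simeq \coprod_{g \in G} \XX$, induced from the classical splitting $X \times_Y X \simeq G \times X$ together with the flat pullback of $\otop_\YY$. Concretely, base change for the affine morphism $\pi$ provides a natural equivalence of $\otop_\XX$-modules
\[
\pi^* \pi_* \otop_\XX \simeq p_{1*} \otop_{\XX \times_\YY \XX},
\]
where $p_1\colon \XX \times_\YY \XX \to \XX$ is the first projection. Under the equivalences $\qcoh(\YY) \simeq \md(A)$ and $\qcoh(\XX) \simeq \md(B)$, the left-hand side corresponds to $B \otimes_A B$ viewed as a $B$-module via the second factor, while taking global sections on $\XX$ of the right-hand side returns $\Gamma(\XX \times_\YY \XX, \otop) \simeq \prod_G B$. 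Tracking through these identifications (and the fact that the decomposition $\XX \times_\YY \XX \simeq \coprod_G \XX$ is induced by the maps $(\mathrm{id}, g)\colon \XX \to \XX \times_\YY \XX$) shows the resulting equivalence $B \otimes_A B \simeq \prod_G B$ is precisely the Rognes Galois map of \Cref{def:rognes}.

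For faithfulness, suppose $M \in \md(A)$ satisfies $M \otimes_A B \simeq 0$. Under $\qcoh(\YY) \simeq \md(A)$, $M$ corresponds to some $\FF \in \qcoh(\YY)$, and $M \otimes_A B$ corresponds to $\pi^* \FF$; thus the hypothesis is that $\pi^* \FF \simeq 0$ in $\qcoh(\XX)$. Since the underlying morphism $X \to Y$ is a faithfully flat $G$-torsor and $\otop_\XX$ is the flat pullback of $\otop_\YY$, the morphism $\pi$ is faithfully flat in the sense of the preceding section, so \Cref{DerivedFaithfulness} forces $\FF \simeq 0$, hence $M \simeq 0$. The final clause, vanishing of the Tate spectrum $\Gamma(\XX, \otop_\XX)^{tG}$, is then automatic for any faithful $G$-Galois extension (see \cite[Proposition 6.3.3]{rognes}): the norm map $B_{hG} \to B^{hG} \simeq A$ is an equivalence.

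The main obstacle is the base change identification $B \otimes_A B \simeq \Gamma(\XX \times_\YY \XX, \otop)$: one must make precise the projection/base change formula for affine morphisms of derived stacks in the symmetric monoidal setting, and verify that the equivalence it produces is compatible with the $G$-equivariant decomposition $\XX \times_\YY \XX \simeq \coprod_G \XX$ so that the resulting splitting is exactly the Galois map $b_1 \otimes b_2 \mapsto (b_1 \cdot g(b_2))_{g \in G}$. Once this compatibility is established the rest of the argument is formal.
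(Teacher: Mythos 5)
Your proposal is correct and reaches the same conclusions, but the route to the Galois condition differs from the paper's in a way worth noting. You form the derived fiber product $\XX \times_\YY \XX$, invoke a projection/base-change formula to identify $\pi^*\pi_*\otop_\XX$ with the pushforward of the structure sheaf of $\XX \times_\YY \XX$, and then take global sections on $\XX$, using 0-affineness of $\XX$ as well. You correctly flag the main burden of this route: one must establish the base-change identification carefully and check its compatibility with the equivariant decomposition $\XX \times_\YY \XX \simeq \coprod_G \XX$.

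The paper sidesteps this entirely by never leaving $\qcoh(\YY)$. It observes that the sheaf-level map
\[ f_*\otop_{\XX} \otimes_{\otop_{\YY}} f_*\otop_{\XX} \longrightarrow \prod_{g \in G} f_*\otop_{\XX} \]
is an equivalence because it is so after restriction to any \'etale $\spec R \to Y$, where it reduces to classical Galois theory on homotopy groups (\Cref{etalegal}); applying $\Gamma(\YY,-)$, which is symmetric monoidal by the 0-affineness of $\YY$, immediately gives the Rognes map as an equivalence of $A$-algebras. Thus the only nontrivial tool the paper uses is symmetric monoidality of $\Gamma$ on $\YY$, and no derived fiber product or base-change formula for $\XX \times_\YY \XX$ enters at all; in particular, the paper does not even need 0-affineness of $\XX$ for the Galois map (though your observation via \Cref{affineover0affine} that $\XX$ is also 0-affine is correct and harmless). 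Your arguments for faithfulness (via the commutative square of $\infty$-categories and \Cref{DerivedFaithfulness}) and for Tate vanishing (Rognes's Proposition 6.3.3) are essentially identical to the paper's.
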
 

\begin{proof} 
Choose an \'etale map $\spec R \to Y$. Then the map
\begin{equation} \label{Ggalois1} \otop_{\mathfrak{Y}}( \spec R) \to \otop_{\mathfrak{X}}( \spec R \times_Y X)
,\end{equation}
is a $G$-Galois extension: in fact, it is so even on homotopy groups, in the
sense of \Cref{etalegal}. 
In particular, the map
$$\otop_{\mathfrak{X}}( \spec R \times_Y X) \otimes_{\otop_{\mathfrak{Y}}( \spec R)  
} \otop_{\mathfrak{X}}( \spec R \times_Y X) \to 
\prod_{g \in G}\otop_{\mathfrak{X}}( \spec R \times_Y X)
,$$
is an equivalence. 
In other words, if $f\colon \XX \to \YY$ is the projection, then the map
\[ f_* \otop_{\XX} \otimes_{\otop_{\YY}} f_* \otop_{\XX} \to \prod_G f_*
\otop_{\XX},  \]
is an equivalence. 
Now, using the fact that 
$\Gamma(\mathfrak{Y}, \cdot)$ is a symmetric monoidal functor
(by 0-affineness), we find that the
map
$$\Gamma( \mathfrak{X}, \otop_{\XX}) \otimes_{\Gamma(\mathfrak{Y} , \otop_{\YY})
} 
\Gamma(\mathfrak{X}, \otop_{\XX}) \to 
\prod_{g \in G} \Gamma(\mathfrak{X}, \otop_{\XX})
,$$
is an equivalence, as desired. 

The claim about faithfulness 
follows from the following commutative square of $\infty$-categories
\[ \xymatrix{
\md( \Gamma( \mathfrak{Y}, \otop_{\YY})) \ar[d]^{\simeq} \ar[r] & \md( \Gamma( \mathfrak{X},
\otop_{\XX})) \ar[d]^{\simeq}  \\
\qcoh( \mathfrak{Y}) \ar[r]^{f^*} &  \qcoh( \mathfrak{X})
},\]
where the lower horizontal functor (pull-back) has trivial kernel, since $Y \to X$ is
faithfully flat (see \Cref{DerivedFaithfulness}). 
This shows that the Galois extension is faithful, and implies that
the Tate spectrum vanishes (\cite{rognes}, Proposition 6.3.3). 
\end{proof} 

\subsection{Tate spectra}

In this section, we give a strengthening of the earlier result on vanishing of
Tate spectra, which will apply in certain non-Galois cases as well. 

We begin by reviewing the Tate spectrum in more detail. 
Let $X$ be a spectrum with the action of a finite group $G$. Recall that there
is a \emph{norm map}
\[ X_{hG}\to X^{hG}, \]
from homotopy coinvariants to homotopy invariants, 
whose cofiber is defined to be the \emph{Tate spectrum} $X_{tG}$. 
If $X$ has a ``free $G$-action'' in that it is freely induced from an ordinary
spectrum $Y$, then the Tate spectrum is contractible. 
The Tate spectrum commutes with \emph{finite} homotopy colimits and limits in
the $\infty$-category $\mathrm{Fun}(BG, \sp)$ of spectra with a $G$-action, so
it vanishes identically on the thick subcategory of $\mathrm{Fun}(BG, \sp)$
generated by the spectra with free $G$-action. 

\begin{example} \label{tatevanishex}
Suppose $X \in \mathrm{Fun}(BG, \sp)$ has the property that the functor
\[ Y \mapsto (Y \otimes X)^{hG}, \quad \mathrm{Fun}(BG, \sp) \to \sp,  \]
commutes with homotopy colimits. 
(Equivalently, $X$ has the property that the functor $Y \mapsto (Y \otimes
X)_{tG}$ commutes with homotopy colimits.) 
Here $Y$ is a spectrum with a $G$-action, 
and $Y \otimes X$ is given the ``diagonal $G$-action:'' that is, at the level
of functors, the smash product is computed pointwise. 
Then, the Tate construction $X_{tG}$ is contractible. 

To see this, observe first that if $Y = \bigsqcup_G Z$ is free, then 
\[ Y \otimes X \simeq \bigsqcup_G Z \otimes X,  \]
so that the Tate construction $(Y \otimes X)_{tG} $
is contractible. Since we can write the sphere $S^0$ with trivial $G$-action as
a geometric realization (via the bar construction) of objects in
$\mathrm{Fun}(BG, \sp)$ with free $G$-action, it follows that $(S^0 \otimes
X)_{tG} \simeq (X)_{tG}$ is contractible too. 
\end{example}

We can now prove our main result on the vanishing of Tate spectra. 
\begin{theorem}  \label{tatevanish}
Let $X$ be a noetherian and separated Deligne-Mumford stack equipped with a flat map $X \to M_{FG}$ which is tame. 
Let $Y \to X$ be a $G$-torsor for a finite group $G$. 

Let $\mathfrak{X} = ({X}, \otop_{\mathfrak{X}})$ be an even periodic refinement, and let
$\mathfrak{Y} = ({Y}, \otop_{\mathfrak{Y}})$ be the induced  even periodic
refinement of $Y \to X \to M_{FG}$, which acquires a $G$-action. Let $q:
\mathfrak{Y} \to \mathfrak{X}$ be the induced morphism. 
Then, for any $\mathcal{F} \in \qcoh( \mathfrak{X})$, we have
\[ \left( \Gamma( \mathfrak{Y}, q^* \mathcal{F}) \right)_{tG} \simeq 0.   \]
\end{theorem}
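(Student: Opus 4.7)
The plan is to reduce the desired Tate vanishing to the vanishing of a Tate sheaf on $\mathfrak{X}$, and then verify the latter using local triviality of the torsor $q$. First, since $q\colon \mathfrak{Y} \to \mathfrak{X}$ is a $G$-torsor, it is affine, so
\[\Gamma(\mathfrak{Y}, q^*\mathcal{F}) \simeq \Gamma(\mathfrak{X}, q_*q^*\mathcal{F}),\]
and the $G$-action on the left-hand side corresponds to the natural $G$-action on $q_*q^*\mathcal{F}$ induced by the deck transformations on $\mathfrak{Y}$. In particular, this identification holds $G$-equivariantly.

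Next, since $X \to M_{FG}$ is tame, Theorem~\ref{general:colimits} ensures that $\Gamma(\mathfrak{X}, -)$ commutes with homotopy colimits, and it automatically commutes with homotopy limits. Consequently, $\Gamma(\mathfrak{X}, -)$ commutes with the Tate construction (the cofiber of the norm map between homotopy orbits and fixed points of a $G$-equivariant object). It therefore suffices to prove that $(q_*q^*\mathcal{F})_{tG}$ vanishes as an object of $\qcoh(\mathfrak{X})$.

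To prove this vanishing, I would work locally. Since the \'etale $G$-torsor $q$ trivializes on an \'etale cover $\{U_i \to \mathfrak{X}\}$, restricting $q_*q^*\mathcal{F}$ to each $U_i$ gives the sheaf $\prod_G (\mathcal{F}|_{U_i})$ with the regular $G$-action permuting the factors. This is a coinduced $G$-equivariant sheaf; equivalently (since $G$ is finite), it is a free $G$-object, so the norm map is an equivalence and its Tate construction vanishes. Restriction along an open immersion is an exact functor preserving all homotopy colimits and limits, hence commutes with the Tate construction, so $(q_*q^*\mathcal{F})_{tG}|_{U_i} \simeq 0$ for each $i$. Since a quasi-coherent sheaf on $\mathfrak{X}$ that vanishes on an \'etale cover vanishes globally, this completes the argument.

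The main subtlety is the second step: verifying that $\Gamma(\mathfrak{X}, -)$ commutes with the Tate construction applied to a $G$-equivariant sheaf. This step is precisely where the tameness hypothesis (rather than $0$-affineness) enters, via Theorem~\ref{general:colimits}, and it is what allows this theorem to extend the Tate vanishing of Theorem~\ref{stackgivesgalois} beyond the Galois (i.e., $0$-affine) setting. The remaining ingredients---vanishing of the Tate construction on coinduced $G$-objects, and the local-to-global principle for quasi-coherent sheaves---are standard.
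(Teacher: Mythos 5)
Your strategy---reduce to the vanishing of the Tate sheaf $(q_*q^*\mathcal{F})_{tG}$ in $\qcoh(\mathfrak{X})$ using that $\Gamma(\mathfrak{X},-)$ commutes with the norm cofiber sequence (correct: \Cref{general:colimits} gives colimit-preservation under tameness, and $\Gamma$ is a right adjoint so preserves limits), then kill the Tate sheaf by a local argument---is genuinely different from the paper's, which never forms a Tate sheaf in $\qcoh(\mathfrak{X})$ at all but instead builds the twisted tensor $\mathcal{F}\otimes' T$ via Galois descent $\qcoh(\mathfrak{X})\simeq\qcoh(\mathfrak{Y})^{hG}$ and feeds the resulting colimit-preserving functor into \Cref{tatevanishex}. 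Unfortunately your local step has a gap. You assert that ``restriction along an open immersion is an exact functor preserving all homotopy colimits and limits, hence commutes with the Tate construction.'' This is not even the relevant class of maps (a $G$-torsor trivializes \'etale-locally, not Zariski-locally), and the assertion is in any case false: for an open immersion or a non-finite \'etale map, $j^*$ is a left adjoint and a localization, and it does not preserve infinite homotopy limits (for instance $(-)\otimes_{\mathbb{Z}}\mathbb{Z}[1/p]$ does not preserve countable products). Since $(-)^{hG}$ for $G$ finite is an infinite limit, you cannot conclude that $j^*$ commutes with the Tate construction, so the conclusion $(q_*q^*\mathcal{F})_{tG}|_{U_i}\simeq 0$ does not follow from the local computation.

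The argument is repairable by taking the cover to consist of the single \emph{finite} \'etale map $q:\mathfrak{Y}\to\mathfrak{X}$ itself. For a finite \'etale morphism, $q^*$ admits $q_*$ as a left adjoint as well as a right adjoint (via the trace $q_*\otop_{\mathfrak{Y}}\to\otop_{\mathfrak{X}}$), so $q^*$ preserves all homotopy limits and colimits and hence commutes with the Tate construction; it is also conservative by \Cref{DerivedFaithfulness}. Flat base change along the torsor identifies $q^*q_*q^*\mathcal{F}$, with its induced $G$-action, with the coinduction $\prod_G q^*\mathcal{F}$, whose Tate construction vanishes, and conservativity then gives $(q_*q^*\mathcal{F})_{tG}=0$. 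With this repair your argument is a valid alternative to the paper's, but the two-sided adjunction for finite \'etale $q$ is a genuine extra ingredient that would need justification; the paper's abstract route via \Cref{tatevanishex} avoids it entirely.
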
 
\begin{proof} 
By Galois descent, we obtain an equvalence of $\infty$-categories
\[ \qcoh( \mathfrak{X}) \simeq \qcoh( \mathfrak{Y})^{hG},  \]
where the $G$-action on $\mathfrak{Y}$ induces a $G$-action on the
$\infty$-category of quasi-coherent sheaves. This is true locally in view
of \'etale descent as in \cite{DAGdesc, DAGQC}, and then follows globally by sheafification. 
Moreover, for any $\mathcal{F} \in \qcoh( \mathfrak{X})$, we get $\Gamma(
\mathfrak{X}, \mathcal{F}) \simeq \Gamma( \mathfrak{Y}, q^* \mathcal{F})^{hG}$. 

As a result, given a spectrum $T$ with a $G$-action and given any
quasi-coherent sheaf $\mathcal{F} \in \qcoh( \mathfrak{X})$, we can form a
twisted pull-back $\mathcal{F} \otimes' T \in \qcoh( \mathfrak{Y})^{hG} \simeq
\qcoh( \mathfrak{X})$, which intertwines the $G$-action on $T$. 
At the level of global sections, we have
\[ \Gamma( \mathfrak{Y}, q^*( \mathcal{F} \otimes' T)) \simeq \Gamma( \mathfrak{Y}, q^*
\mathcal{F}) \otimes T \in  \mathrm{Fun}(BG, \sp),   \]
i.e., using the diagonal $G$-action on each tensor factor. 
We note that $\Gamma\colon \qcoh( \mathfrak{Y}) \to \md( \Gamma( \mathfrak{Y},
\otop))$  and 
$\Gamma\colon \qcoh( \mathfrak{X}) \to \md( \Gamma( \mathfrak{X},
\otop))$
commute with homotopy colimits by \Cref{general:colimits}. 

Now, it follows from Galois descent again
that we have natural equivalences
\[ \Gamma( \mathfrak{X} , \mathcal{F} \otimes' T) \simeq ( \Gamma(
\mathfrak{Y}, q^* \mathcal{F}) \otimes T)^{hG}, \quad T \in \mathrm{Fun}(BG,
\sp). \]
Since $\Gamma$ commutes with homotopy colimits on $\qcoh( \mathfrak{X})$, and
since the construction $\otimes'$ preserves homotopy colimits, it follows by
\Cref{tatevanishex} that the Tate construction $\left(\Gamma( \mathfrak{Y}, q^*
\mathcal{F})\right)_{tG}$ is contractible. 
\end{proof}

\section{Some examples}\label{Section6}
In this section, we discuss a few basic examples of even periodic refinements
and discuss applications of the results of this paper. The main example that
motivated us, that of topological modular forms, will be discussed in more
detail in the next section. 
\subsection{Non-examples}

\label{nonexamples}
As a non-example, consider the $\mathbb{Z}/2$-action on $KU$-theory where
$\mathbb{Z}/2$ acts trivially. The induced map $B \mathbb{Z}/2 \to M_{FG}$ is
the 
``constant'' map $B \mathbb{Z}/2 \to \spec \mathbb{Z} \to M_{FG}$, where $\spec
\mathbb{Z} \to M_{FG}$ classifies the multiplicative formal group. In
particular, it is a flat morphism. 
Since $KU$-theory is an $E_\infty$-ring, and it is possible to make
$\mathbb{Z}/2$ act trivially on $KU$, this gives a derived version of $B
\mathbb{Z}/2$, whose global sections are given by $K^{h \mathbb{Z}/2}  $. 

In this case, 
\[ K^{h \mathbb{Z}/2} \simeq \mathrm{F}( B \mathbb{Z}/2, K),  \]
whose homotopy groups are computed, by the classical {Atiyah-Segal
completion theorem} \cite{atiyahsegal}, to be the completion of the representation ring of
$\mathbb{Z}/2$ in even dimensions and zero in odd dimensions. 

The homotopy fixed point spectral sequence (equivalently, the
Atiyah-Hirzebruch spectral sequence for $K^*(\mathbb{RP}^\infty)$) has no room for differentials and
degenerates at $E^2$, with an infinite ``checkerboard''  of nonzero terms,
and thus without a horizontal vanishing line. It follows that \Cref{main} and
many of the 
results in this paper definitely fail for derived stack arising from a flat morphism $X \to M_{FG}$
which is not representable. For example, the associated pro-object is not
constant, as there are elements in $E_\infty$ of arbitrarily high filtration. 

Even if $X \to M_{FG}$ is representable, \Cref{main} may fail for more mundane
reasons. For instance, let us work over $\mathbb{Q}$, so that $M_{FG} \simeq B
\mathbb{G}_m$ and any map to $M_{FG}$ is flat: to give a formal group over a $\mathbb{Q}$-algebra is equivalent
to giving its cotangent space $\omega$, a line bundle. 
Given a scheme $X$ over $\mathbb{Q}$ and a line bundle $\omega$ on $X$, we can
produce a sheaf $\otop$ of $E_\infty$-rings on $X$ via 
\[ \otop \stackrel{\mathrm{def}}{=} \mathrm{Sym}^*( \Sigma^2 \omega )
[\Sigma^2 \omega^{-1}],  \]
where the notation means that over an open affine $U \simeq \spec R \subset X$
over which $\omega$ is trivial, $\otop(\spec R) \simeq R[x, x^{-1}]$ is the
free $E_\infty$ $R$-algebra on a generator $x$ in degree two, with $x$ inverted. The gluing 
data comes from the gluing data on $\omega$. 
In particular, the choice of $(X, \omega)$ determines a canonical  choice (not
necessarily unique) of even periodic refinement $ \mathfrak{X} = ({X}, \otop)$. 

In this case, $\otop$ is a sheaf of $\mathcal{O}_X$-algebras, so given any
coherent sheaf $\mathcal{F}_0$ on $X$, we can produce 
a quasi-coherent sheaf $\mathcal{F} = \otop \otimes_{\mathcal{O}_X} \mathcal{F}_0$ 
on $ \mathfrak{X}$. If $\mathcal{F}_0$ is such that $H^i( X,
\mathcal{F}_0 \otimes \omega^j) = 0$ for all $i, j$, then $\mathcal{F} \in
\qcoh( \mathfrak{X})$ has no global sections. To be concrete, we can take $X = \P^1_\Q$, $\omega = \mathcal{O}_X$ and $\FF_0 = \mathcal{O}[-1]$. 

\subsection{Finite group actions: $KO$-theory and $EO_n$ spectra}

Let $R$ be a Landweber-exact, $E_\infty$-ring with the action
of a finite group $G$. This induces an action of $G$ on the formal group of $R$
 compatible with the action on $\spec \pi_0 R$: as we have seen, we get a map
 \[ ( \spec \pi_0 R )/G \to M_{FG}.  \]
 This map is affine (equivalently, representable) precisely when, for every field-valued point $x\colon \spec k \to
 \spec \pi_0 R$, the stabilizer $G_x \subset G$ of $x$ acts faithfully on the
pull-back of the  formal group to $\spec k$. 
Under these hypotheses, it follows that 
\[ R^{hG} \to R  \]
is a faithful $G$-Galois extension, and Galois descent goes into effect. 

We discuss two basic examples of this. 

\begin{example}[$KO$-theory, again]
As discussed in \Cref{KO:example}, we have a map 
\[ B \mathbb{Z}/2 \to M_{FG},  \]
sending a one-dimensional torus (equivalently, $\mathbb{Z}/2$-torsor) to its
formal completion. It  is flat and affine.  The $\mathbb{Z}/2$-action on $KU$-theory
by complex conjugation enables the construction of a derived stack
$\mathfrak{B} \mathbb{Z}/2 = (B \mathbb{Z}/2, 
\otop)$, 
which is an even periodic refinement of the above map, such that $\Gamma(
\mathfrak{B} \mathbb{Z}/2, \otop) \simeq KO$. 

As a result, we recover the equivalence of $\infty$-categories
\[ \md(KO) \simeq \qcoh( \mathfrak{B}\mathbb{Z}/2) \simeq \md(KU)^{h
\mathbb{Z}/2},  \]
which we could have also seen by Galois descent. 
\end{example}

\begin{example}[$EO_n$]
Let $E_n$ be the Morava $E$-theory with coefficient ring $W(
\mathbb{F}_{p^n})[[v_1, \dots, v_{n-1}]]$. By the Goerss-Hopkins-Miller theorem
\cite{goersshopkins}, $E_n$ is an $E_\infty$-ring with an action of the \emph{extended
Morava stabilizer group} $\mathbb{G}$: that is, the semidirect product of the automorphism 
group of the Honda formal group with $\mathrm{Gal}(
\mathbb{F}_{p^n}/\mathbb{F}_p)$. 
For a discussion, see \S 5.4.1 of \cite{rognes}. 

Given a finite subgroup $H \subset \mathbb{G}$, it follows from the above discussion
that we can construct a derived stack $(\spec \pi_0E_n/H, \otop)$, and 
that $(E_n)^{hH} \to E_n$ is a faithful $H$-Galois extension. This is proved $K(n)$-locally in \cite{rognes}. Especially interesting is the case where $H$ is a \emph{maximal} finite subgroup, where $(E_n)^{hH}$ is denoted by $EO_n$ (with implicit dependence on $H$).
\end{example}

\subsection{Open subsets}
Let $R$ be a Landweber-exact, even periodic $E_\infty$-ring. Then any open
subset of $\spec \pi_0 R$ yields a derived stack, which by \Cref{open0affine} is
0-affine. 
This includes the case where we are localized at a prime $p$, so that $R$ is
$E_n$-local for some $n$. For $m < n$, the conclusion is that 
$$ \md( L_m R) \simeq \qcoh( \mathfrak{X}).$$ for $\mathfrak{X}$ an even periodic
refinement of $\spec \pi_0 R \times_{M_{FG}} M_{FG}^{\leq m}$.  

Although elementary, this construction has some uses because the associated rings of functions are definitely far
from being even periodic. 
For instance, in \cite[Theorem C]{Picard}, it is shown that the Picard groups of $L_m R$ can
be unexpectedly 
large, even when $R = E_n$ (although the algebraic Picard group is trivial). 

\subsection{The affine line}
In this subsection, we note an important example.
Let $\mathbb{Z}_{\geq 0}$ be the (discrete) topological, commutative monoid of
nonnegative integers. Since 
\[ \Sigma^\infty_+ \colon \mathcal{S} \to \sp,   \]
is a symmetric monoidal functor, it carries $E_\infty$-monoids in spaces to
$E_\infty$-ring spectra. In particular, we get an $E_\infty$-ring
$\Sigma^\infty_+ \mathbb{Z}_{\geq 0}$, which we can think of as the ``group
algebra'' on $\mathbb{Z}_{\geq 0}$. 
Given an even periodic $E_\infty$-ring $R$, the smash product
$R[\mathbb{Z}_{\geq 0}] \stackrel{\mathrm{def}}{=} R \otimes \Sigma^\infty_+
\mathbb{Z}_{\geq 0}$ is 
still even periodic, with $\pi_0 R[\mathbb{Z}_{\geq 0}] = (\pi_0 R)
\otimes_{\mathbb{Z}} \mathbb{Z}[x]$, and the map
\[ \spec (\pi_0 R)[x] \to M_{FG},  \]
associated to $R[\mathbb{Z}_{\geq 0}]$ is the one obtained from $\spec \pi_0 R
\to M_{FG}$ obtained by taking the product with the constant map $\spec
\mathbb{Z}[x] \to \spec \mathbb{Z}$. If $R$ is Landweber-exact, so is
$R[\mathbb{Z}_{\geq 0}]$. 

It follows from this that if $\mathfrak{X} = ({X}, \otop)$ is an even 
periodic refinement of a flat map $X \to M_{FG}$, we get a natural choice of
even periodic refinement $\mathbb{A}^1_{\mathfrak{X}}$ of $\mathbb{A}^1_X \to M_{FG}$ (together with a map  $\mathbb{A}^1_{\mathfrak{X}} \to \mathfrak{X}$). 
By \Cref{affineover0affine}, if $\mathfrak{X}$ is 0-affine, so is
$\mathbb{A}^1_{\mathfrak{X}}$.

\section{Applications to topological modular forms}\label{AppTMF}

In this section, we discuss the primary example that motivated this work,
and apply our results in this case. 

Let $\mellc$ be the moduli stack of stable, 1-pointed genus one curves (that
is, the Deligne-Mumford compactification of the moduli stack $\mell$ of elliptic
curves). 
A map $\spec R \to \mellc$ is equivalent to a flat family of proper curves $p\colon C
\to \spec R$ together with a section (or marked point) $e\colon \spec R \to C$ contained in the smooth
locus of $p$, such that each geometric fiber is irreducible of arithmetic genus
one with at worst nodal singularities. Then $\mellc$ is a Deligne-Mumford stack
of finite type over $\mathbb{Z}$. See \cite{DR} for more details. 

Given
such a  curve $ C \to \spec R$, one has a canonical group scheme structure on
the smooth locus $C^{\circ}$, with the
marked point as the identity, and taking the formal completion gives a morphism 
of stacks
\[ \mellc \to M_{FG},  \]
which one can check is flat using the Landweber exact functor theorem (see \cite[Chapter 4.4]{TMF}). 
In this case, one has the fundamental: 

\begin{theorem}[Goerss-Hopkins-Miller, Lurie]
The stack $\mellc$ (together with the map $\mellc \to M_{FG}$) admits an even
periodic refinement $\mellbc$. 
\end{theorem}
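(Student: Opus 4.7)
The strategy I would pursue is the obstruction-theoretic approach of Goerss--Hopkins--Miller: construct the sheaf $\otop$ of $E_\infty$-ring spectra on $\aff^{et}_{/\mellc}$ one affine at a time, with obstructions controlled by André--Quillen cohomology, and then assemble. The underlying data is fixed already: for each étale $\spec R \to \mellc$, the composite with $\mellc \to M_{FG}$ is flat, so the Landweber exact functor theorem supplies an even periodic homology theory $E_R$ with $(E_R)_0 = R$ whose formal group is classified by this map. The work is to functorially lift these homology theories to $E_\infty$-rings.

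The heart of the argument is the computation of the relevant obstruction groups. Goerss--Hopkins theory provides a Postnikov-style tower for the moduli space of $E_\infty$-structures refining a given homotopy commutative ring spectrum, in which the successive obstructions live in the André--Quillen cohomology of $(E_R)_*$ as a $\Gamma$-algebra over $(MUP)_*$, with coefficients in a shift of the cotangent complex. I would reduce the vanishing of these groups to a statement about the moduli problem itself: using the smoothness of $\mellc \to M_{FG}$ on formal cotangent data and the fact that $\mellc$ admits étale covers by affines on which the universal elliptic curve can be presented in Weierstrass form, these André--Quillen groups can be identified with coherent cohomology of $\mellc$ with values in tensor powers of the line bundle $\omega$. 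Vanishing in the appropriate range then follows because étale affines have no higher coherent cohomology, which yields uniqueness and functoriality of the $E_\infty$-refinement on each étale affine, hence a presheaf of $E_\infty$-rings on $\aff^{et}_{/\mellc}$.

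Next I would upgrade this presheaf to a sheaf. On homotopy groups, $\pi_{2k}\otop$ should be $\omega^{\otimes k}$ pulled back from $M_{FG}$, which is already a quasi-coherent sheaf on $\mellc$, so descent for the homotopy groups is automatic. Descent for the spectra themselves then follows from the descent spectral sequence combined with the finite cohomological dimension of affine étale opens. Functoriality of the obstruction theory in the étale site ensures that the resulting sheaf is well-defined up to contractible choice, and that $\pi_0 \otop \cong \mathcal{O}_{\mellc}$, so the pair $(\mellc, \otop)$ is a derived stack in the sense of Definition~\ref{DS}, even periodic because $\omega$ is a line bundle.

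The main obstacle is genuinely the obstruction-theoretic computation, in particular the treatment of the cuspidal locus of $\mellc$, where the generalized elliptic curves are nodal cubics with multiplicative formal group. Here the Landweber criterion still applies (the map to $M_{FG}$ remains flat), but the comparison of the local André--Quillen groups with the coherent cohomology is more delicate because the formal group jumps in height. I would handle this by working étale-locally near the cusp, comparing with the corresponding construction for $K$-theory and its Tate curve deformation, where the $E_\infty$-structure is already known; this produces the required lift near the boundary and glues with the interior construction on $\mellb$. An alternative route, avoiding obstruction theory, is Lurie's representability theorem: identify $\mellbc$ as the derived moduli stack of oriented (possibly degenerating) elliptic curves in $E_\infty$-rings and verify the hypotheses of the Artin--Lurie representability criterion, but the cuspidal extension still requires a separate analysis of orientations on the Tate curve.
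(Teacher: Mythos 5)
The paper does not prove this theorem; it is stated as a black-box input, with the construction deferred to Behrens \cite{Beh} and Lurie \cite{survey}. There is thus no internal proof to compare against, so what follows is an assessment of your sketch against those sources.

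Your sketch correctly identifies the two roads to the result (Goerss--Hopkins obstruction theory and Lurie's derived representability), and the overall shape of the obstruction-theoretic argument --- Landweber exactness supplies the homology theories, obstruction theory with obstructions in Andr\'e--Quillen cohomology lifts them to $E_\infty$-rings --- is the right one. The genuine gap is in your reduction of the obstruction groups. You claim the Andr\'e--Quillen groups can be identified with coherent cohomology of $\mellc$ valued in powers of $\omega$ and that vanishing then follows because \'etale affines have no higher coherent cohomology. This conflates two steps. The local statement --- contractibility of the moduli space of $E_\infty$-lifts of $\otop(\spec R)$ on an affine chart --- rests on the deformation-theoretic rigidity of the formal group of an elliptic curve (Serre--Tate theory and, at the supersingular locus, the Goerss--Hopkins--Miller theorem for Lubin--Tate theory), not on the cohomological triviality of the affine. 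The global step --- coherent assembly of the diagram into a sheaf --- has to contend with the genuine higher cohomology of $\mellc$ at $p=2$ and $p=3$, and the cited constructions do not run obstruction theory directly over the integral Landweber theories; they decompose the problem along chromatic and arithmetic fracture squares, building $K(1)$- and $K(2)$-local sheaves via $\theta$-algebra and Lubin--Tate obstruction theory respectively and then gluing. Your treatment of the cusps correctly flags the Tate curve, but producing the $E_\infty$-lift near the boundary is itself a nontrivial part of the $K(1)$-local analysis rather than a quick comparison with $K$-theory.
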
 

A construction of $\mellbc$ is detailed in \cite{Beh}, and another is
sketched in \cite{survey}. In other words, the Goerss-Hopkins-Miller-Lurie
theorem states that given a stable 1-pointed genus one curve $C \to \spec R$,
such that the classifying map $\spec R \to \mellc$ is
\'etale,\footnote{This requires $
\spec R$ to  be regular, and that 
the Kodaira-Spencer maps at each point of the base be isomorphisms.} one can build an
$E_\infty$-ring spectrum from the associated formal group; moreover, one can do
this functorially in the elliptic curve. 

Using this derived stack, one defines the spectra of \emph{topological modular forms}:
\[ \Tmf = \Gamma(\mellbc, \otop), \quad \TMF = \Gamma(\mellb, \otop),   \]
where $\mellb \subset \mellbc$ is the open derived substack corresponding to
smooth elliptic curves. 
These will provide examples of the results in this paper. 

When $6$ is inverted, the moduli stack $\mellc$ is the weighted projective stack
$\mathbb{P}(4, 6)$, and the homotopy limits necessary to describe $\Tmf$ take
a simple form.  However, the stack $\mellc$ is quite complicated at the primes
2 and 3 (that is, there are elliptic curves with relatively large automorphism
groups), which contributes to significant torsion at those primes in $\pi_*
\Tmf$; moreover, it makes working with $\Tmf$-modules trickier, and it is not
a priori clear how well the homotopy limit  that builds $\Tmf$ behaves. The
results of this paper show that the homotopy limit behaves well. 

In fact, the idea of this paper arose, in part,  from the analysis of the homology of
connective $\tmf \stackrel{\mathrm{def}}{=} \tau_{\geq 0} \Tmf$  by the first
author in
\cite{htmf}. There, working over $\mathbb{Z}_{(2)}$ rather than $\mathbb{Z}$,
it was shown that there is a 2-local eight cell complex
$DA(1)$ such that 
the homotopy group sheaf $\pi_0$ of $\otop \otimes DA(1) \in \qcoh(\mellbc)$ is given by the
pushforward of the structure sheaf via an eight-fold cover
\[ p\colon  \mathbb{P}(1, 3) \to \mellc,  \]
where the weighted projective stack $\mathbb{P}(1, 3)$ is the quotient of a scheme by a $\mathbb{G}_m$-action, and in
particular is much simpler cohomologically than $\mellc$. Using this, it
followed that after smashing with $DA(1)$, the category of quasi-coherent
sheaves on $\mellbc$ becomes much better behaved. For example, it was
possible to conclude that
\[ \Gamma( \mellbc, \otop \otimes DA(1) \otimes T) 
\simeq \Gamma( \mellbc, \otop \otimes DA(1) )  \otimes T,
\]
for any spectrum $T$, because the spectral sequence to compute the homotopy
groups of $\Gamma( \mellbc, \mathcal{F} \otimes DA(1))$ is concentrated in the bottom two rows
(in dramatic contrast to the spectral sequence for $\Gamma(\mellbc,
\otop)$). 
In general, the global sections functor $\Gamma$ is exact, so it commutes with
\emph{finite} homotopy colimits and limits, but we cannot a priori expect it to commute with
arbitrary homotopy colimits. 

Applying the thick subcategory theorem of \cite{HS}, one may replace
$DA(1)$ with the sphere spectrum, and 
thus show that
\[ \Gamma( \mellbc, \otop \otimes T  ) 
\simeq \Gamma( \mellbc, \otop )  \otimes T,
\]
for all $T \in \sp$. 
As an application, it is possible to compute the $\Tmf$-homology of infinite
spectra such as $MU$ using the
descent spectral sequence.  
In this paper, we did not have such finite complexes available to work with,
but we used the $E_n$-spectra themselves to prove analogous results in more
generality.

We apply our results to the case of $\TMF$ (resp. $\Tmf$) and the 
derived stacks $\mellb$ (resp. $\mellbc$) that give rise to them; recall
that these are even periodic refinements of the moduli stacks of elliptic
curves (resp. possibly nodal elliptic curves). 
We will study both the $\infty$-categories of modules and the Galois theory.

\subsection{Modules over topological modular forms}
Our first main result is the following: 

\begin{theorem}\label{TMFmod}
\begin{enumerate}
\item 
The $\infty$-category of $\TMF$-modules is
equivalent (via $\Gamma$) to the $\infty$-category of quasi-coherent sheaves on $\mellb$. 
\item 
The $\infty$-category of $\Tmf$-modules is equivalent (via $\Gamma$) to
the $\infty$-category of quasi-coherent sheaves on the compactified derived stack
$\mellbc$. 
\end{enumerate}
\end{theorem}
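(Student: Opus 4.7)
The plan is to derive both statements directly from the Main Theorem (\Cref{main}). Both $\mell$ and $\mellc$ are well-known to be noetherian, separated Deligne--Mumford stacks (for the compactified version, see \cite{DR}), and each map to $M_{FG}$ is flat -- this is precisely the flatness used in the Landweber exact functor theorem to construct the sheaf $\otop$ on these stacks in the first place. So the only real input required is that the two maps to $M_{FG}$ are quasi-affine. Since $\mell \hookrightarrow \mellc$ is an open immersion, it suffices to treat the compactified case.

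To check that $\mellc \to M_{FG}$ is quasi-affine, let $g\colon \spec A \to M_{FG}$ classify a formal group $\hat{G}$ over $A$, and form the pullback $Y := \mellc \times_{M_{FG}} \spec A$ with projection $p\colon Y \to \mellc$. First, $Y$ is an algebraic space: the map $\mellc \to M_{FG}$ is representable because the automorphism group of a (generalized) elliptic curve acts faithfully on its associated formal group, so no stackiness survives the pullback. Second, I would exhibit $Y$ as quasi-affine by using the modular forms $c_4 \in \Gamma(\mellc, \omega^{\tensor 4})$ and $\Delta \in \Gamma(\mellc, \omega^{\tensor 12})$, whose non-vanishing loci $D(c_4)$ and $D(\Delta) = \mell$ cover $\mellc$ and have affine coarse moduli spaces (the complements of $j = 0$ and $j = \infty$ in the $j$-line $\mathbb{P}^1_j$). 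In fact these open substacks are themselves affine over $M_{FG}$: the explicit Weierstrass presentation realizes $\mell$ (respectively $D(c_4)$) as the quotient of an affine scheme by an affine group scheme whose action can be rigidified by the choice of a formal coordinate, so the pullback to $M_{FG}$ eliminates the quotient. It follows that $Y$ admits an open cover by the two affine schemes $D(c_4) \times_{M_{FG}} \spec A$ and $\mell \times_{M_{FG}} \spec A$, and hence is quasi-affine (the line bundle $p^*\omega$ is ample on $Y$ and is pulled back from $\omega_{M_{FG}}|_{\spec A}$).

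The principal obstacle is thus the geometric quasi-affineness verification, which rests on the classical fact that the modular forms $c_4$ and $\Delta$ produce an affine cover of $\mellc$ compatibly with the map to $M_{FG}$; all the homotopy-theoretic work has already been absorbed into \Cref{main}. Once quasi-affineness is in hand, \Cref{main} immediately yields both symmetric monoidal equivalences $\qcoh(\mellb) \simeq \md(\TMF)$ and $\qcoh(\mellbc) \simeq \md(\Tmf)$ in one stroke.
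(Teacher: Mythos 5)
Your proposal is essentially correct and arrives at the same conclusion via the Main Theorem, but your verification of quasi-affineness takes a noticeably different route from the paper's, and one step is stated in a way that doesn't quite follow as written.

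The paper verifies quasi-affineness of $\mellc \to M_{FG}$ directly: it observes that the pullback of $\mellc$ along the affine fpqc cover $M_{FG}^{\leq 4} \to M_{FG}$ (formal groups with a coordinate to order four) is the Weierstrass locus $\spec \mathbb{Z}[a_1,\dots,a_6] \setminus V((c_4,\Delta))$, which is \emph{manifestly} a quasi-compact open subscheme of affine space, hence quasi-affine; since quasi-affineness of a representable morphism is fpqc-local on the base, $\mellc \to M_{FG}$ is quasi-affine. (For $\mell$ the paper gets the sharper statement that the map is affine, via $\spec \mathbb{Z}[a_1,\dots,a_6][\Delta^{-1}]$.) Your shortcut of deducing the open case from the compactified one via \Cref{open0affine} is perfectly valid and saves a few lines.

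Where your argument is weaker: the sentence ``$Y$ admits an open cover by the two affine schemes $D(c_4)\times_{M_{FG}}\spec A$ and $\mell\times_{M_{FG}}\spec A$, and hence is quasi-affine'' does not follow -- an open cover by two affines only shows $Y$ is a scheme (think of $\mathbb{P}^1$). You patch this with the parenthetical about $p^*\omega$ being ample and pulled back from $\spec A$, and that parenthetical does contain a correct argument: since $p\colon Y\to\mellc$ is affine (because $M_{FG}$ has affine diagonal), the non-vanishing loci $D(p^*c_4)$, $D(p^*\Delta)$ are affine and cover $Y$, so $p^*\omega$ is ample by \Cref{ampleequivalence}; and since $p^*\omega$ is isomorphic to $q^*(g^*\omega_{M_{FG}})$ for the projection $q\colon Y\to\spec A$, one can trivialize it over a Zariski cover of $\spec A$ to conclude $\mathcal{O}_Y$ is ample Zariski-locally over the affine base, whence $Y$ is quasi-affine. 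But this chain of observations is left implicit in your write-up, and each link ($p$ affine, pullback of ample is ample, ``pulled-back-ample over an affine base implies quasi-affine'') needs a sentence of justification. The paper's route -- just stare at $\spec \mathbb{Z}[a_1,\dots,a_6]\setminus V((c_4,\Delta))$ -- avoids all of this bookkeeping and is the argument I would recommend spelling out instead.
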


Away from the prime 2, the first part of \thref{TMFmod} was originally proved
in \cite{meier}. The result was also known to Lurie.
\begin{proof}
Indeed, for the first claim, it suffices by \Cref{main} to show that the map 
\[ \mell \to M_{FG},  \]
is affine. 
To see this, observe that the moduli stack of elliptic curves together with a
coordinate to order four on the formal group is precisely $\spec \mathbb{Z}[a_1, a_2, a_3, a_4,
a_6][\Delta^{-1}]$: that is, a choice of coordinate to order four is precisely
the data needed to put an elliptic curve in a canonical \emph{Weierstrass
form.} See  \cite[Proposition~12.2]{rezk512}. 
The universal elliptic curve with such a coordinate is given by the equation
\[ y^2 + a_1xy + a_3 y =
x^3 + a_2x^2 + a_4x + a_6\]
and the coordinate on the formal group is given by $-x/y$. 

Since the moduli stack of formal groups with a
coordinate to order four $M_{FG}^{\leq 4}$ is affine over $M_{FG}$, it follows easily that $\mell
\to M_{FG}$ is affine. Indeed, $\spec L\times_{M_{FG}}M_{FG}^{\leq 4} \times_{M_{FG}}\mell$ is affine as the diagonal of $M_{FG}$ is affine and $\spec L\times_{M_{FG}}M_{FG}^{\leq 4}\to M_{FG}$ is an affine fpqc cover, for $L$ the Lazard ring.

The map $\mellc \to M_{FG}$ is not affine, but it is quasi-affine (and even of
cohomological dimension one). The moduli stack of generalized elliptic curves together with a coordinate to
order four is precisely $\spec \mathbb{Z}[a_1, a_2, a_3, a_4,
a_6] \setminus V(( c_4, \Delta))$, where $c_4, \Delta$ are the standard
modular forms evaluated on the cubic curve given by $y^2 + a_1xy + a_3 y =
x^3 + a_2x^2 + a_4x + a_6$.
Therefore, we can still apply \Cref{main} to the derived stack $\mellbc$ and
conclude that it is 0-affine, as desired. 
\end{proof}

\subsection{Galois theory}
Next, we study the Galois theory of $\TMF$ (resp. $\Tmf$). 

The use of level structures provides various covers of the moduli stack of
elliptic curves that rigidify the ``stackiness.'' These can be realized
topologically.

Fix a positive integer $n$. 
\begin{definition} 
Let $\mell(n)$ be the moduli stack (over $\mathbb{Z}[1/n]$) of elliptic curves
with a \emph{level $n$ structure:} that is, if $S$ is a scheme where $n$ is
invertible, then maps 
\[ S \to \mell(n),  \]
are given by (smooth) elliptic curves $p\colon C \to S, 0\colon S \to C$ together with
sections $\phi_1, \phi_2\colon S \to C$ contained in the $n$-torsion subgroup $C[n]
\subset C$, such that over each geometric fiber $C_{\overline{s}}$, for $s \in S$,
the sections $\phi_1, \phi_2$ form a basis
for the $n$-torsion $C_{\overline{s}}[n] \simeq (\mathbb{Z}/n \mathbb{Z})^2$. 
\end{definition} 

Then $\mell(n)$ is \'etale over $\mell[1/n]$, and in fact the natural forgetful
map
\[ \mell(n) \to \mell[1/n],  \]
is a $GL_2(\mathbb{Z}/n\mathbb{Z})$-torsor, where the
$GL_2(\mathbb{Z}/n\mathbb{Z})$ acts on
$\mell(n)$ by matrix multiplication on the level structure. It follows that the
composite map
\[ \mell(n) \to \mell[1/n] \to M_{FG},  \]
is flat, and $\mell(n)$ is realizable by a derived stack $\mellb(n)$ over $\mellb$. 

\begin{definition} 
The global sections $\Gamma( \mellb(n)[1/n], \otop)$ are called \emph{topological
modular forms of level $n$} and are denoted $\TMF(n)$. 
\end{definition} 

\newcommand{\mellon}{M_{ell, 1}(n)}
\newcommand{\mellocn}{M_{\overline{ell}, 1}(n)}
\newcommand{\mellocnz}{M_{\overline{ell}, 0}(n)}
\newcommand{\mello}{M_{ell, 1}}
It follows in particular that $\TMF(n)$ has a $GL_2(\mathbb{Z}/n\mathbb{Z})$-action, and
that $$\TMF[1/n] \simeq \TMF(n)^{h GL_2(\mathbb{Z}/n\mathbb{Z})}.$$
For $n \geq 3$, $\mell(n)$ is actually an affine scheme, and the resulting
spectra $\TMF(n)$ are therefore Landweber-exact, even periodic $E_\infty$-rings. 

\begin{example} 
The moduli stack (over $\mathbb{Z}[1/2]$) of elliptic curves together with a
full level $2$ structure is given by $\spec \mathbb{Z}[1/2,
\lambda][\lambda^{-1}, (\lambda - 1)^{-1}] \times B \mathbb{Z}/2$, given by
putting the elliptic curve in ``Legendre form''
\[ y^2 = x(x-1)(x - \lambda), \quad \lambda \neq 0, 1,  \]
together with the 2-torsion points $(0, 0), (0, 1)$. The $B \mathbb{Z}/2$
factor is necessary to account for the automorphism $-1$. 

Since $\mell(2)[1/2]$ is not affine (in fact, not even a scheme), the spectrum $\TMF(2)$ is not even periodic, but
only 4-periodic, with homotopy groups given by 
\[ \pi_* \TMF(2)[1/2] \simeq \mathbb{Z}[1/2, \lambda, t][\lambda^{-1},
(\lambda-1)^{-1}, t^{-1}], \quad |\lambda| = 0, |t| = 4.  \]
The $S_3 \simeq GL_2( \mathbb{Z}/2\mathbb{Z})$-Galois descent from
$\TMF(2)[1/2]$ to
$\TMF[1/2]$ is studied in detail in \cite{St12}. 

\end{example}

By taking various partial quotients of $\mell(n)$ over $\mell$, one can realize other
variants of ``moduli of elliptic curves with level structure.'' For instance,
let $\mellon$ be the moduli stack of elliptic curves with a
$\Gamma_1(n)$-structure: that is, a choice of an $n$-torsion point that
generates a $\mathbb{Z}/n\mathbb{Z}$-summand in the $n$-torsion on each fiber. 
Then $\mellon \simeq \mell(n)/H$ where $H \subset GL_2( \mathbb{Z}/n\mathbb{Z})$ consists
of matrices of the form $\begin{bmatrix}
1  & b \\ 
0 & c 
\end{bmatrix}$. 
The stack $\mellon$ is \'etale (though no longer Galois) over $\mell[1/n]$
and can consequently be realized by a derived stack, whose $E_\infty$-ring of
global sections is denoted $\TMF_1(n)$. 
Similarly, one defines $\TMF_0(n)$ from the moduli stack of elliptic curves
together with a cyclic degree $n$ subgroup.

Using the 0-affineness of $\mellb$, \Cref{affineover0affine} and \Cref{stackgivesgalois}, we find: 
\begin{theorem} 
The map $\TMF[1/n] \to \TMF(n)$ is a faithful $GL_2( \mathbb{Z}/n \mathbb{Z})$-Galois
extension. Similarly, the map $\TMF_0(n) \to \TMF_1(n)$ is a faithful
$(\mathbb{Z}/n\mathbb{Z})^{\times}$-Galois extension. In particular, the Tate
spectra of these group actions vanish. 
\end{theorem}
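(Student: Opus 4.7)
The plan is to deduce both statements as direct applications of \Cref{stackgivesgalois}, once 0-affineness of the appropriate base derived stacks is verified in each case.

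For the first assertion, I will take $G = GL_2(\mathbb{Z}/n\mathbb{Z})$ acting on $X = \mell(n)$ with stack quotient $Y = \mell[1/n]$. I first need to know that $\mellb[1/n]$ is 0-affine; this follows either from \Cref{open0affine} applied to the open inclusion $\mell[1/n] \hookrightarrow \mell$ together with \Cref{TMFmod}, or by reapplying \Cref{main} directly, since the Weierstrass form argument from the proof of \Cref{TMFmod} still exhibits $\mell[1/n] \to M_{FG}$ as an affine map after inverting $n$. Since $\mell(n) \to \mell[1/n]$ is a $G$-torsor in the \'etale topology---a classical fact from the theory of modular curves---\Cref{stackgivesgalois} immediately yields that $\TMF(n) = \Gamma(\mellb(n),\otop)$ is a faithful $G$-Galois extension of $\Gamma(\mellb[1/n],\otop) \simeq \TMF[1/n]$.

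For the second assertion, I will take $G = (\mathbb{Z}/n\mathbb{Z})^\times$ acting on $X = \mellon$ by scalar multiplication on the chosen point of exact order $n$; the quotient is $Y = M_{ell,0}(n)$, since a cyclic subgroup of order $n$ is precisely a generator up to the $(\mathbb{Z}/n\mathbb{Z})^\times$-action. To apply \Cref{stackgivesgalois} I need the derived refinement of $M_{ell,0}(n)$ to be 0-affine; this follows from \Cref{affineover0affine} applied to the affine (finite \'etale) morphism $M_{ell,0}(n) \to \mell[1/n]$, combined with the 0-affineness of $\mellb[1/n]$ established above. The claim about $\TMF_0(n) \to \TMF_1(n)$ then follows from \Cref{stackgivesgalois}.

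The vanishing of the Tate spectra is automatic from the faithfulness clauses, as recorded directly in the conclusion of \Cref{stackgivesgalois}. There is no serious obstacle here; the only content beyond unwinding definitions is (i) verifying 0-affineness for the two base stacks above, which is immediate from the machinery developed, and (ii) identifying the forgetful maps $\mell(n) \to \mell[1/n]$ and $\mellon \to M_{ell,0}(n)$ as torsors for the indicated groups, which is standard in the theory of modular curves.
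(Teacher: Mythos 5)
Your proposal is correct and follows essentially the same route as the paper: 0-affineness of $\mellb$ (from \Cref{TMFmod}), ascent of 0-affineness to $\mellb[1/n]$ and to the derived refinement of the $\Gamma_0(n)$-moduli stack via \Cref{affineover0affine} (equivalently \Cref{open0affine} for the base-change to $\mathbb{Z}[1/n]$, since that inclusion is both open and affine), and then \Cref{stackgivesgalois} applied to the torsors $\mell(n) \to \mell[1/n]$ and $\mellon \to M_{\mathrm{ell},0}(n)$. The paper gives no further detail, and your fleshing-out of the torsor identifications and affineness hypotheses is exactly what is being implicitly invoked.
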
 

The vanishing of Tate spectra in the latter case is proved for $n=5$ via different means in \cite{BO}. 

\begin{remark} 
One can show in fact that \emph{all} Galois covers of $\TMF$ and its
localizations arise from Galois extensions of the associated stack; in
particular, $\TMF$ over $\mathbb{Z}$ is ``separably closed,'' i.e., has no
nontrivial Galois extensions. This is carried out in \cite[\S 10]{galoischromatic}. 
\end{remark} 

\subsection{Tate spectra and compactified moduli}
Our earlier results showed that $\TMF[1/n] \to \TMF(n)$ is a faithful
$GL_2(\mathbb{Z}/n\mathbb{Z})$-Galois extension; in particular, the Tate
spectrum for the action of $GL_2( \mathbb{Z}/n\mathbb{Z})$ on the latter is
contractible.
In this subsection, we show the analogous Tate spectra for the non-periodic versions of
$\TMF(n)$ also vanish. The associated extensions are no longer Galois, as the
associated covers of stacks are now ramified. 
However, we will still be able to apply \Cref{tatevanish}. 

Recall that it is useful to compactify the moduli stack $\mell(n)$ by allowing the
elliptic curve to degenerate, although we will need to drop irreducibility
and allow slightly more complicated degenerations: instead of $\mathbb{P}^1$
with two points glued together (a nodal cubic), we need to allow \emph{N\'eron
$n$-gons}, which are obtained by gluing $n$ copies of $\mathbb{P}^1$, where
$0$ in the $i$th $\mathbb{P}^1$ (for $i \in \mathbb{Z}/n\mathbb{Z}$) is
attached to $\infty$ in the $(i+1)$st. This theory was developed in \cite{DR};
another helpful reference (which extends the theory to the cusps in 
characteristics dividing $n$, which we do not need) is \cite{conrad}.

\newcommand{\mnc}{\mellc^{(n)}}

\begin{definition} 
Let $\mnc$ be the moduli stack that assigns to a $\mathbb{Z}[1/n]$-scheme $S$
the groupoid of \emph{generalized elliptic curves} (\cite{DR}, Chapter II) $p\colon C \to S$, such that each 
geometric fiber of $p$ is either smooth or an $n$-gon. 
\end{definition} 

We do not review the definition of a generalized elliptic curve, except to note
that it requires more than  a curve over the base $S$ together with a section:
the group structure (on the smooth locus $C^{\circ} \subset C$) must be part of
the data, rather than a consequence of the definition. 
In Theorem 2.5, Chapter III of \cite{DR}, it is shown that $\mnc$ is a smooth
Deligne-Mumford stack of finite type over $\spec \mathbb{Z}[1/n]$. 
Moreover, there is a morphism
\[ \mnc \to \mellc[1/n],   \]
which sends a generalized elliptic curve $C \to S$ to the stable elliptic curve
$\overline{C} \to S$ obtained by fiberwise \emph{contracting} all irreducible
components not containing the identity section. (This process is discussed in
Section IV.1 of \cite{DR}.) 

In particular, $\mnc \to \mellc[1/n]$ is an equivalence of stacks away from the
``cusps.'' Near the cusps, it fails to be representable:
the automorphism group scheme of a N\'eron $n$-gon is a semidirect product
$\mathbb{Z}/2\mathbb{Z} \rtimes \mu_n$ (\S 2 of \cite{DR}, Proposition 1.10),
while the automorphism group scheme of a nodal elliptic curve (i.e., a N\'eron
$1$-gon) is simply $\mathbb{Z}/2\mathbb{Z}$. However, 
using $\mnc$, one can construct the compactification of $\mell(n)$. 

\begin{definition}[\cite{DR}, \cite{conrad}]
The stack $\mellc(n)$ classifies \emph{generalized elliptic curves} $p\colon C \to
S$ over a base $S$ with $n$ invertible, such that each  geometric fiber of $p$ is either smooth
or a N{\'e}ron $n$-gon,   together with 
an isomorphism of group schemes $\phi\colon ( \mathbb{Z}/n\mathbb{Z})^2 \simeq
C^{\circ}[n]$ (that is, a trivialization of the $n$-torsion points on the
smooth locus). 

Similarly, one defines $\mellocn$, a compactification of the moduli stack of
elliptic curves with $\Gamma_1(n)$-structure, to classify generalized elliptic curves over 
a base $S$  with an injection of group schemes $\mathbb{Z}/n \mathbb{Z} \to
C^{\circ}[n]$ such that the divisor cut out by the image of $\mathbb{Z}/n\mathbb{Z}$
is ample (i.e., intersects each irreducible component in every geometric fiber). For $n$ squarefree, one also defines $\mellocnz$, a
compactification of the moduli stack of elliptic curves with
$\Gamma_0(n)$-structure, to classify generalized elliptic curves over a base
$S$ with a (finite flat) subgroup $G \subset C^{\circ}[n]$ which intersects each irreducible component in every geometric fiber and
which is \'etale locally isomorphic to $\mathbb{Z}/n\mathbb{Z}$. 
\end{definition} 
Here, the moduli interpretation of $\mellocn$ and $\mellocnz$ can be found in \cite{conrad} or in \cite[IV.4]{DR}. Note that while Conrad only requires an fppf-local generator, in our situation we have actually an \'etale-local generator as we assume that $n$ is invertible so that $G$ is \'etale and \'etale locally free (see e.g. \cite[Theorem 34 and discussion below Theorem 33]{Stix}). 

\begin{remark}We will only use $\mellocnz$ in the case of $n$ being squarefree because the definitions in \cite{conrad} and \cite{DR} do not agree when $n$ is not squarefree (see \cite{Ces15}), but agree when $n$ is squarefree \cite[Remark 4.1.5]{conrad}. The problem is that with our definition of $\mellocnz$, the stack $\mellocnz$ is no longer representable over $\mellc$ when $n$ is not squarefree. \end{remark}

As $\mell(n)$ is to $\mell[1/n]$, the stack $\mellc(n)$ lives as a $GL_2(
\mathbb{Z}/n\mathbb{Z})$-torsor over $\mnc$. 
The moduli stack $\mellc(n)$ is a smooth Deligne-Mumford stack over
$\mathbb{Z}[1/n]$. There is a morphism of stacks
\[ \mellc(n) \to \mellc[1/n],  \]
which sends a pair $p\colon C \to S, \phi\colon (\mathbb{Z}/n\mathbb{Z})^{2} \simeq
C^{\circ}[n]$ as above (over some base $S$) to the stable elliptic curve over 
$S$ obtained from $C$
by fiberwise \emph{contracting} all non-identity irreducible components.
This map is naturally equivariant for the natural $GL_2( \mathbb{Z}/n\mathbb{Z})$ action
on the source and the trivial $GL_2(\mathbb{Z}/n\mathbb{Z})$-action on the target. 
This comes from the map $\mnc \to \mellc[1/n]$. 

In
\cite{DR}, it is shown that $\mellc(n) \to \mellc$ is finite and flat. It fails to be
\'etale over the cusps, and the existence of a topological realization is not
a direct consequence of the existence of $\Tmf$. 
However, one has: 

\begin{theorem}[Goerss-Hopkins; Hill--Lawson \cite{HillLawson}]\label{GHHL}
The moduli stack $\mellc(n)$ has an even periodic refinement $\mellbc(n)$, in
such a way that 
\[ \mellbc(n) \to \mellbc,  \]
is a $GL_2( \mathbb{Z}/n\mathbb{Z})$-equivariant morphism of derived stacks. 
Equivalently, $\mnc \to \mellc \to M_{FG}$ has an even periodic refinement. 
\end{theorem}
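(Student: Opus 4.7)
The plan is to build the sheaf $\otop$ on the affine étale site of $\mellc(n)$ by assembling two pieces, the open (smooth) locus and formal neighborhoods of the cusps, and then gluing them in a $GL_2(\Z/n\Z)$-equivariant manner. The key inputs are the existing Goerss--Hopkins--Miller refinement on $\mellc$, derived étale invariance (Theorem \ref{topinv} and Corollary \ref{rem:etale}), and a direct construction at the cusps.

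On the open substack $\mell(n) \subset \mellc(n)$, the forgetful morphism $\mell(n) \to \mell[1/n]$ is a finite étale $GL_2(\Z/n\Z)$-torsor. Pulling back the restriction of $\otop$ from $\mellb$, the derived topological invariance of the étale site (Theorem \ref{topinv}) produces, uniquely up to contractible ambiguity, an even periodic sheaf of $E_\infty$-rings $\otop$ on $\affx$ for $X=\mell(n)$, with homotopy groups given by the Landweber formula for the composite $\mell(n)\to M_{FG}$. The $GL_2(\Z/n\Z)$-action on $\mell(n)$ lifts automatically to the sheaf by the functoriality statement of Corollary \ref{rem:etale} (cf.\ the argument for \Cref{gpquotient}).

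Near the cuspidal divisor, $\mellc(n) \to \mellc[1/n]$ is ramified, and Theorem \ref{topinv} no longer applies. However, the formal completion along the cusps is modeled by Tate curves equipped with a $\Gamma(n)$-level structure, and the formal group over a Tate curve is the multiplicative formal group $\widehat{\mathbb{G}_m}$. Thus the associated Landweber-exact cohomology theories in a formal neighborhood of each cusp are variants of $K$-theory, where Goerss--Hopkins obstruction theory is well-understood. Following Hill--Lawson, one constructs a $GL_2(\Z/n\Z)$-equivariant diagram of $K(1)$-local $E_\infty$-rings realizing the given diagram of cohomology theories, working prime-by-prime for each $p\nmid n$; the obstruction groups are André--Quillen cohomology groups that vanish in this height-one setting for degree reasons.

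Finally, on the overlap of the two charts---namely, the punctured formal neighborhood of the cusps inside $\mell(n)$---both constructions restrict to an $E_\infty$-ring sheaf that is étale over the smooth part of $\mellc[1/n]$, and therefore coincide canonically by Theorem \ref{topinv}. Étale descent (as used in \Cref{def:qcoh}) then glues the two pieces into a sheaf $\otop$ on $\affx$ for $X=\mellc(n)$, and the $GL_2(\Z/n\Z)$-equivariance is preserved by the gluing because both local constructions are equivariant. The main obstacle is the cuspidal construction: one must verify that Goerss--Hopkins obstructions vanish \emph{compatibly} with the $GL_2(\Z/n\Z)$-action that permutes cusps and twists the level structure on their Tate-curve neighborhoods. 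The open locus and the gluing are formal, but this equivariant obstruction-theory input is the technical core that Hill--Lawson supply.
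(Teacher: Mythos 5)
The paper does not prove this result; it is stated with attribution to Goerss--Hopkins and Hill--Lawson and invoked as a black box (nothing follows the statement except the definition of $\Tmf(n)$), so there is no in-paper argument to compare your sketch against.

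Taken on its own terms, your two-chart outline (derived \'etale invariance over $\mell(n)$, Tate-curve and $K(1)$-local obstruction theory near the cusps, then glue) captures the broad strategy, and you correctly single out the compatible vanishing of Goerss--Hopkins obstructions at the cusps as the technical core. But the gluing step as stated has a genuine gap. A formal neighborhood of the cuspidal divisor and its ``punctured'' version are not objects of the affine \'etale site $\affx$ of $\mellc(n)$, so the descent built into \Cref{def:qcoh} cannot be used to glue data over $\mell(n)$ with data over a formal completion, nor does \Cref{topinv} (which governs \'etale maps of $E_\infty$-rings) apply to match the two constructions on the ``overlap''. The mechanism actually required is a completion-versus-complement formal gluing together with arithmetic fracture to separate primes and the rationalization, and setting this up for sheaves of $E_\infty$-rings on derived Deligne--Mumford stacks is a substantial part of what \cite{HillLawson} supply; you cannot simply route around it by appealing to the paper's \'etale-descent definition of $\qcoh$. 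You would also streamline the equivariance bookkeeping by following the ``equivalently'' clause of the statement: since $\mellc(n)\to\mnc$ is an \'etale $GL_2(\Z/n\Z)$-torsor, \Cref{topinv} and \Cref{rem:etale} make the $GL_2(\Z/n\Z)$-equivariant refinement of $\mellc(n)$ automatic once $\mnc\to M_{FG}$ has been refined, so the whole problem reduces to the single ramified map $\mnc\to\mellc$, with no group action to carry through the obstruction theory.
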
 

In particular, it is possible to construct $E_\infty$-algebras
\[ \Tmf(n) \stackrel{\mathrm{def}}{=}  \Gamma( \mellbc(n), \otop) \]
 over $\Tmf$, which acquire $GL_2( \mathbb{Z}/n\mathbb{Z})$-actions. Using \cite{HillLawson}, one can similarly define even periodic refinements  of $\mellocn, \mellocnz$, and 
obtains $E_\infty$-rings $\Tmf_0(n), \Tmf_1(n)$, where $\Tmf_1(n)$ has a
$(\mathbb{Z}/n\mathbb{Z})^{\times}$-action with homotopy fixed points given
by $\Tmf_0(n)$. 

Our first main result in this section is: 
\begin{theorem}\label{TmfTateVanishing} 
The Tate spectrum of $GL_2( \mathbb{Z}/n\mathbb{Z})$ on $\Tmf(n)$ is
contractible.  %Similarly, the Tate spectrum for $(\mathbb{Z}/n\mathbb{Z})^{\times}$ acting on $\Tmf_1(n)$ is contractible. 
\end{theorem}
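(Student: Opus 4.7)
The plan is to apply \Cref{tatevanish} to the $GL_2(\mathbb{Z}/n\mathbb{Z})$-torsor $\mellc(n) \to \mnc$. I would take $X = \mnc$ as the base stack, equipped with the natural map to $M_{FG}$ obtained by composing the contraction morphism $\mnc \to \mellc[1/n]$ with the Landweber-flat morphism $\mellc[1/n] \to M_{FG}$. The source $Y = \mellc(n)$ is a $GL_2(\mathbb{Z}/n\mathbb{Z})$-torsor over $\mnc$, as recalled in the paper. By Galois descent along this torsor (using \Cref{GaloisDesc} locally on $\mnc$), the Hill--Lawson refinement $\mellbc(n)$ provided by \Cref{GHHL} descends to an even periodic refinement $\mathfrak{X}$ of $\mnc$, with an induced torsor $q \colon \mellbc(n) \to \mathfrak{X}$. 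Applying \Cref{tatevanish} with $\mathcal{F} = \otop_{\mathfrak{X}}$ would then give $\bigl(\Gamma(\mellbc(n), \otop)\bigr)_{tGL_2(\mathbb{Z}/n\mathbb{Z})} \simeq 0$, and this global sections spectrum is $\Tmf(n)$ by definition.

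To invoke \Cref{tatevanish}, I would need to verify that $\mnc$ is a noetherian, separated, Deligne--Mumford stack, and that $\mnc \to M_{FG}$ is flat and tame. The first three properties follow from \cite{DR}: $\mnc$ is constructed as a smooth, proper, Deligne--Mumford stack of finite type over $\Spec \mathbb{Z}[1/n]$ (the Deligne--Mumford property uses that $\mu_n$ is \'etale away from the residue characteristics dividing $n$). Flatness of $\mnc \to M_{FG}$ reduces to flatness of the contraction $\mnc \to \mellc[1/n]$ (established in \cite{DR}) together with the Landweber-flatness of $\mellc[1/n] \to M_{FG}$.

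The main obstacle is verifying tameness of $\mnc \to M_{FG}$, which amounts to analyzing the kernels of the maps on automorphism group schemes at each geometric point. At a smooth geometric point corresponding to an elliptic curve $E$ over an algebraically closed field $k$, the map $\underline{\Aut}_k(E) \to \Aut(\widehat{E})$ is injective, since an automorphism fixing the formal group at the identity to all orders must be the identity. At a cuspidal point corresponding to a N\'eron $n$-gon, the automorphism group is $\mathbb{Z}/2 \rtimes \mu_n$ (see \cite{DR}, II.1.10), and the natural map to $\Aut(\widehat{\mathbb{G}_m}) = \{\pm 1\}$ has kernel $\mu_n$. Since $n$ is invertible on the base, $\mu_n$ is a diagonalizable, hence linearly reductive, finite group scheme, completing the tameness verification and thus the proof.
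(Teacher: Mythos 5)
Your proof follows the same route as the paper: apply \Cref{tatevanish} to the $GL_2(\mathbb{Z}/n\mathbb{Z})$-torsor $\mellc(n) \to \mnc$, after checking that $\mnc \to M_{FG}$ satisfies the hypotheses (noetherian, separated, Deligne--Mumford, flat, tame). Two small points worth flagging.

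First, the Galois-descent step you use to produce the even periodic refinement $\mathfrak{X}$ of $\mnc$ is unnecessary: the final sentence of \Cref{GHHL} already asserts directly that $\mnc \to \mellc \to M_{FG}$ admits an even periodic refinement, which is exactly what you need. The paper simply cites that.

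Second, the claim ``$\Aut(\widehat{\mathbb{G}_m}) = \{\pm 1\}$'' is incorrect: the automorphism group of the multiplicative formal group over an algebraically closed field is large (e.g.\ $\mathbb{Z}_p^\times$ in characteristic $p$, or the units of the base in characteristic zero). You are probably thinking of the automorphism group of the algebraic group $\mathbb{G}_m$. Fortunately the tameness argument does not need this identification; it only needs (a) the inversion $\mathbb{Z}/2$ maps to a nontrivial automorphism of $\widehat{\mathbb{G}_m}$ (true in every characteristic, since $[-1](x) = -x + x^2 - \cdots \neq x$), and (b) $\mu_n$ acts trivially on the formal group. This forces the kernel of $\mathbb{Z}/2 \rtimes \mu_n \to \Aut(\widehat{\mathbb{G}_m})$ to lie inside $\mu_n$, which is linearly reductive since $n$ is inverted — and any closed subgroup of $\mu_n$ is again linearly reductive, so one does not even need the kernel to be all of $\mu_n$. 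That is precisely the argument given in the paper. With this correction your proof matches the published one.
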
 

For $n = 2$, this result appears in \cite{St12}. 
\begin{proof} 
This is a consequence of \Cref{tatevanish}, once we show that the map
\[ \mnc \to M_{FG},   \]
is quasi-compact, separated, and tame. 

To see this, it suffices  to check at the level of stabilizers. 
Away from the cusps, there is no issue: the stabilizers of $\mnc$
(equivalently, of $\mell$) inject into those of $M_{FG}$. At the cusps, we
recall  that the automorphism group of a N\'eron $n$-gon is a
semidirect product $(\mathbb{Z}/2) \rtimes \mu_n$, where the $\mathbb{Z}/2$
piece (which acts by inversion) injects into the associated stabilizer for
$M_{FG}$. Since we have inverted $n$, it follows that the kernels of the maps
of stabilizers are invertible and the map is tame. Thus by
\Cref{tatevanish}, we are done. 
\end{proof} 

For $\Tmf_1(n)$, the situation is even better. 
\begin{theorem}
 For $n$ squarefree, the map $\Tmf_0(n) \to \Tmf_1(n)$ is a faithful $(\Z/n\Z)^\times$-Galois extension. 
\end{theorem}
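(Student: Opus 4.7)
The plan is to recognize $\Tmf_0(n) \to \Tmf_1(n)$ as coming from an étale $(\Z/n\Z)^\times$-torsor of derived stacks, and then apply \Cref{stackgivesgalois}. This reduces the theorem to a quasi-affineness verification plus a classical identification.

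First I would verify that the derived refinement of $\mellocnz$ is $0$-affine, so that the hypotheses of \Cref{stackgivesgalois} hold for the base. For this I apply \Cref{main}: the structure map $\mellocnz \to \mellc[1/n]$ is finite (in particular affine) by the work of Deligne--Rapoport and Conrad \cite{DR,conrad}, and $\mellc[1/n] \to M_{FG}$ is quasi-affine by the argument used to establish \Cref{TMFmod}. Composition of an affine morphism with a quasi-affine morphism is quasi-affine: if $X \to U$ is affine with $U \hookrightarrow \spec B$ a quasi-compact open, then writing $X = \underline{\spec}_U \mathcal{A}$ and extending $\mathcal{A}$ to a quasi-coherent sheaf $\widetilde{\mathcal{A}}$ on $\spec B$, the scheme $X$ embeds as a quasi-compact open subscheme of $\spec \Gamma(\spec B, \widetilde{\mathcal{A}})$. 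Hence $\mellocnz \to M_{FG}$ is a quasi-affine flat morphism from a noetherian separated Deligne--Mumford stack, and \Cref{main} delivers $0$-affineness.

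Next I would identify $\mellocn \to \mellocnz$ as an étale $(\Z/n\Z)^\times$-torsor. An $S$-point of $\mellocnz$ is a generalized elliptic curve $C/S$ with an ample cyclic order-$n$ subgroup $G \subset C^{\circ}[n]$ that is étale-locally isomorphic to $\Z/n\Z$; an $S$-point of $\mellocn$ enhances this with the choice of a generator $\phi\colon \Z/n\Z \xrightarrow{\sim} G$, and the $(\Z/n\Z)^\times$-action is by precomposition on $\phi$. Since $n$ is invertible, $G$ is étale and étale-locally trivializable; the sheaf of such trivializations is a torsor for $\Aut(\Z/n\Z) \cong (\Z/n\Z)^\times$. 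By the topological invariance of the étale site (\Cref{rem:etale}), applied affine-étale locally over $\mellocnz$, the derived refinement of $\mellocn$ is the unique $E_\infty$-lift of this étale torsor, so the classical torsor structure promotes to a torsor of derived stacks with $(\Z/n\Z)^\times$-action whose quotient is the derived refinement of $\mellocnz$.

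Finally, I apply \Cref{stackgivesgalois} with group $(\Z/n\Z)^\times$, $X = \mellocn$, and $Y = \mellocnz$, concluding that $\Tmf_0(n) \to \Tmf_1(n)$ is a faithful $(\Z/n\Z)^\times$-Galois extension. The principal technical point is checking the quasi-affineness of $\mellocnz \to M_{FG}$ at the cusps: away from the cusps $\mellocnz \to \mell[1/n]$ is étale, so the claim is immediate from \Cref{TMFmod}, while over the cusps it is the finite-flatness (rather than étaleness) of the compactified map together with the affine-over-quasi-affine composition lemma that does the work. All the topological data then flows formally from \Cref{stackgivesgalois}.
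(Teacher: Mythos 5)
Your argument is correct and runs along the same lines as the paper's: both reduce the claim to \Cref{stackgivesgalois} applied to the torsor $\mellocn \to \mellocnz$, and both rely on the finiteness of $\mellocnz \to \mellc$ (from Conrad) as the key algebro-geometric input. Where you diverge is the $0$-affineness step. You establish it by checking that $\mellocnz \to M_{FG}$ is quasi-affine (affine-over-quasi-affine is quasi-affine) and invoking \Cref{main}; the paper instead invokes \Cref{affineover0affine} directly: $\mellocnz \to \mellc$ is affine, $\mellc$ is already known to be $0$-affine by \Cref{TMFmod}, and $0$-affineness ascends along affine morphisms. The paper's route is more economical --- it bypasses re-verifying quasi-affineness over $M_{FG}$ and the extension argument in your composition lemma (which, incidentally, has a slicker proof: affine morphisms pull back ample line bundles to ample line bundles, and quasi-affineness is equivalent to ampleness of the structure sheaf) --- but your route is equally valid and proves the same intermediate fact. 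The other, smaller variation is how the $(\Z/n\Z)^\times$-torsor structure on $\mellocn \to \mellocnz$ is justified: the paper writes down a cartesian square exhibiting $\mellocn$ as the pullback of $\spec\Z[\tfrac1n] \to \spec\Z[\tfrac1n]/(\Z/n\Z)^\times$ along the classifying map $\mellocnz \to \spec\Z[\tfrac1n]/(\Z/n\Z)^\times$ that sends $(C,G)$ to the \'etale group scheme $G$, while you give the informal version (``the sheaf of trivializations of $G$ is a $(\Z/n\Z)^\times$-torsor''); these are the same observation, but the paper's formulation makes the identification of the fiber product completely explicit, which is worth doing since it is the crux of seeing the \'etale torsor structure at the cusps.
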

\begin{proof}
 By \cite[4.1.1]{conrad}, $\mellocnz$ is finite and representable over $\mellc$. By \Cref{affineover0affine}, \Cref{stackgivesgalois} and the 0-affineness of $\mellc$, it is enough to show that $\mellocn \to \mellocnz$ is a $(\Z/n\Z)^\times$-Galois cover. This means that we need to show the existence of an \'etale cover of $\mellocnz$ by maps $f\colon U\to \mellocnz$ such that we have a $(\Z/n\Z)^\times$-equivariant isomorphism 
 $$\mellocn\times_{\mellocnz}  U\cong (\Z/n\Z)^\times \times  U$$
 over $U$. By possibly refining a given \'etale cover, we can assume that $f$ classifies a generalized elliptic curve whose given subgroup is actually isomorphic to $\Z/n\Z$; in this case, the statement is clear. 
\end{proof}

\appendix
\section{Homotopy Limits and Sheaves}\label{AppHLS}
Let $U\cup V = X$ be an open covering of a topological space. Let $\FF$ be a sheaf on $X$ with values in an $\infty$-category. Is then the square
\[\xymatrix{ \FF(X) \ar[d]\ar[r]& \FF(U) \ar[d] \\
\FF(V) \ar[r]& \FF(U\cap V) } \]
(homotopy) cartesian? A priori, we can compute $\FF(X)$ only as the homotopy
limit over the (infinite) Cech cosimplicial object associated to the covering.
Nevertheless, in this appendix, we will give a positive answer to the question
for arbitrary finite covers. This is used in Proposition \ref{ZarColim}, where
we need to compute $\FF(X)$ as a finite homotopy limit. Our strategy is first
to compare an ordered and an unordered version of the Cech cosimplicial object for a sheaf on a space. 
At least in outline, this material is surely known to the experts. \\

Let $X$ be a topological space and let $(U_i)_{i\in I}$ be open subsets
covering $X$. Let $I$ be finite of cardinality $n$ and totally ordered. For a
tuple $\underline{i} = (i_1,\dots, i_k) \in I^k$, denote by $U_{\underline{i}}$
the intersection of $U_{i_1},\dots, U_{i_k}$. Let $I^k_{\leq}$ be the set of
weakly increasing $k$-tuples (i.e., $i_1\leq i_2 \leq \cdots \leq i_k$). 

To this data, we can associate (at least) two simplicial objects:
\[\mathfrak{C}^U_\bullet\colon k \mapsto \coprod_{\underline{i}\in I^{k+1}} U_{\underline{i}} \cong (\coprod_{i\in I} U_i)^{\times_X k+1}\] 
\[\mathfrak{C}^{U,\leq}_\bullet \colon k \mapsto \coprod_{\underline{i}\in I^{k+1}_{\leq}} U_{\underline{i}}\] 
The face maps are given by leaving out elements and the degeneracies by repeating elements. There is an obvious simplicial map $e \colon\mathfrak{C}^{U,\leq}_\bullet \to \mathfrak{C}^U_\bullet$. 

Note that we are given a presheaf $\FF$ on $X$, we can evaluate $\FF$ on a
disjoint union $\coprod_{\underline{i}\in I^k}U_{\underline{i}}$ by setting
$\FF(\coprod_{\underline{i}\in I^k}U_{\underline{i}}) = \prod_{\underline{i}\in
I^k} \FF(U_{\underline{i}})$. We view the disjoint unions here as formal and never identify the disjoint union of more than one open subsets of $X$ with an open subset of $X$.

The following proof is inspired by Proposition 2.7 of \cite{D-I04}. 

\begin{proposition}\label{TechnicalTot}Let $\mathrm{Top}$ be the category of topological spaces. Let $\FF$ be a $\mathrm{Top}$-valued presheaf on $X$. Then the canonical map 
 \[e^* \colon\Tot \FF(\mathfrak{C}^U_\bullet) \to  \Tot \FF(\mathfrak{C}^{U,\leq}_\bullet)\]
is a homotopy equivalence. \end{proposition}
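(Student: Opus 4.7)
The strategy is to exhibit $e^*$ as one half of a cosimplicial homotopy equivalence, following the template of \cite[Proposition 2.7]{D-I04}. The key combinatorial observation is that both $\mathfrak{C}^U_\bullet$ and $\mathfrak{C}^{U,\leq}_\bullet$ are built from the same collection of opens $U_{\underline{j}}$ (indexed by weakly ordered tuples $\underline{j}$), with the unordered version simply repeating each $U_{\underline{j}}$ once for every $\underline{i} \in I^{k+1}$ satisfying $\mathrm{sort}(\underline{i}) = \underline{j}$. The map $e^*$ is projection onto the coordinates indexed by already-ordered tuples.

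The plan is as follows. First, I would construct a set-level ``sorting'' map $s \colon \mathfrak{C}^U_\bullet \to \mathfrak{C}^{U,\leq}_\bullet$ sending $\underline{i}$ to $\mathrm{sort}(\underline{i})$. Since $U_{\underline{i}} = U_{\mathrm{sort}(\underline{i})}$ as subsets of $X$, applying $\FF$ yields a map $s^* \colon \FF(\mathfrak{C}^{U,\leq}_\bullet) \to \FF(\mathfrak{C}^U_\bullet)$ of graded spaces, and by construction $e^* \circ s^* = \mathrm{id}$. The point is that while $s$ itself is not simplicial (sorting does not commute with face maps in general), the failure is controlled: any two tuples $\underline{i}, \underline{i}'$ differing by an adjacent transposition satisfy $U_{\underline{i}} = U_{\underline{i}'}$, so the discrepancy is visible only combinatorially, not at the level of $\FF$-values.

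Second, I would show that $s^* \circ e^* \simeq \mathrm{id}_{\FF(\mathfrak{C}^U_\bullet)}$ as cosimplicial maps. The idea is to filter $\mathfrak{C}^U_\bullet$ by the number of inversions in each tuple: set $F^m \mathfrak{C}^U_\bullet$ to be the simplicial subset generated by tuples with at most $m$ inversions, so that $F^0 = \mathfrak{C}^{U,\leq}_\bullet$ and $F^{\binom{k+1}{2}}$ exhausts $\mathfrak{C}^U_k$. For each inclusion $F^m \hookrightarrow F^{m+1}$, the extra tuples differ from tuples already present by a single adjacent transposition $(\ldots, a, b, \ldots) \leftrightarrow (\ldots, b, a, \ldots)$ with $a > b$, and this transposition extends to a simplicial interval $\Delta^1$-worth of tuples whose $\FF$-values all lie in $\FF(U_{\underline{i}})$. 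Assembling these intervals produces the required cosimplicial homotopy $\Delta^1 \times \FF(\mathfrak{C}^U_\bullet) \to \FF(\mathfrak{C}^U_\bullet)$ between $\mathrm{id}$ and $s^* \circ e^*$.

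The main obstacle is the bookkeeping in the last step: the homotopies produced by each adjacent transposition must be stitched together compatibly with faces and degeneracies across all cosimplicial degrees, which is what forces one into the inversion-count filtration rather than a single ``global'' formula. Once this is carried out, both $e^* \circ s^* = \mathrm{id}$ and $s^* \circ e^* \simeq \mathrm{id}$ pass through $\Tot$ (which is a right Quillen functor preserving cosimplicial homotopies), and the proposition follows.
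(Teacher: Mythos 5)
Your starting observation is right — a candidate inverse to $e^*$ should come from the sorting assignment $\underline{i} \mapsto \mathrm{sort}(\underline{i})$ — but there is a real gap that the inversion filtration does not repair, and the paper's proof closes it in a way you have not recovered.

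The central problem is that $s^*$, as you define it, is only a level-wise map of graded spaces and therefore induces no map on $\Tot$ at all. You concede $s$ is not simplicial, but your claim that the failure is ``visible only combinatorially, not at the level of $\FF$-values'' is false. Take $\underline{i} = (2,1)$ and the face $d_0$: then $\underline{i}d^0 = (1)$ while $\mathrm{sort}(\underline{i})d^0 = (2)$, and the two would-be commutativity composites pick out the components $f_{(1)}$ and $f_{(2)}$ — genuinely distinct elements of $\FF(U_1)$ and $\FF(U_2)$, respectively — restricted to $\FF(U_1\cap U_2)$. These are not the same. In particular ``$s^* \circ e^*$'' is not a cosimplicial map, so there is no meaningful notion of a cosimplicial homotopy from $\mathrm{id}$ to it; the object at the endpoint of the proposed homotopy does not live in the category where cosimplicial homotopies make sense. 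The filtration $F^m\mathfrak{C}^U_\bullet$ by inversion count is a legitimate filtration by simplicial subobjects, but it produces neither a cosimplicial map out of $\Tot\FF(\mathfrak{C}^{U,\leq}_\bullet)$ nor a homotopy with a well-defined cosimplicial target.

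The missing ingredient is the correction by the sorting permutation $\sigma_{\underline{i}}$, acting on the \emph{topological} simplex. The paper defines the inverse $g$ on $f \in \Tot\FF(\mathfrak{C}^{U,\leq}_\bullet)$ by setting the $\underline{i}$-component of $g(f)$ to be the composite
\[
\Delta^m \xrightarrow{\,\sigma_{\underline{i}}\,} \Delta^m \xrightarrow{\,f_{\underline{i}\sigma_{\underline{i}}}\,} \FF(U_{\underline{i}\sigma_{\underline{i}}}) = \FF(U_{\underline{i}}),
\]
i.e.\ precomposing the sorted component with the permutation of barycentric coordinates. The identity $d^{\sigma^{-1}(j)}\tau^{-1} = \sigma^{-1}d^j$ (where $\sigma = \sigma_{\underline{i}}$, $\tau = \sigma_{\underline{i}d^j}$) is exactly what makes this $g$ commute with the cofaces, so that it lands in $\Tot$. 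The homotopy between $\mathrm{id}$ and $g\circ e^*$ is likewise built from the topological prism maps $u_{\sigma,s}\colon \Delta^m \to \Delta^{2m+1}$. Both constructions use that $S_{m+1}$ acts on the topological $m$-simplex; the paper's closing remark that this has no simplicial analogue is precisely the obstruction your simplicial filtration approach runs into. To salvage your plan, the $\sigma_{\underline{i}}$-twist on $\Delta^m$ is the step you would need to add before any statement about $\Tot$ can be made.
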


\begin{proof}
For each multi-index $\underline{i} = (i_0,\dots, i_m)$, there is a unique permutation $\sigma_{\underline{i}} \in S_{m+1}$ with 
\begin{enumerate}
\item $i_{\sigma_{\underline{i}}(0)} \leq i_{\sigma_{\underline{i}}(1)} \leq \cdots \leq i_{\sigma_{\underline{i}}(m)}$, and
\item $\sigma_{\underline{i}}^{-1}(k)<\sigma_{\underline{i}}^{-1}(l)$ if $i_k = i_l$ for some $k<l$.
\end{enumerate}
This will allow us to define an inverse map $g\colon \Tot \FF(\mathfrak{C}^{U,\leq}_\bullet) \to \Tot \FF(\mathfrak{C}^U_\bullet)$ to $e^*$:

Recall that $\Tot \FF(\mathfrak{C}^U_\bullet) \subset \prod_{[m]\in \Delta} \FF(\mathfrak{C}^U_m)^{\Delta^m}$ consists of all maps of cosimplicial topological spaces $\Delta^\bullet \to \FF(\mathfrak{C}^U_\bullet)$. Thus, to give a map into $\Tot \FF(\mathfrak{C}^U_\bullet)$ is equivalent to giving maps into all $\FF(U_{\underline{i}})^{\Delta^m}$, $\underline{i}: [m] \to I$ a multi-index, compatible with coface and codegeneracy maps. Given a multi-index $\underline{i}: [m] \to I$, we define the map $g_{\underline{i}}: \Tot \FF(\mathfrak{C}^{U,\leq}_\bullet) \to \FF(U_{\underline{i}})^{\Delta^m}$ as the composition 
\[\Tot \FF(\mathfrak{C}^{U,\leq}_\bullet) \xrightarrow{\pr_{\underline{i}\sigma_{\underline{i}}}} \FF(U_{\underline{i}\sigma_{\underline{i}}})^{\Delta^m} \xrightarrow{\sigma_{\underline{i}}^*} \FF(U_{\underline{i}\sigma_{\underline{i}}})^{\Delta^m} \xrightarrow{=} \\
\FF(U_{\underline{i}})^{\Delta^m}.
\]
Here, $\sigma_{\underline{i}}$ sends a point $(t_0,\dots, t_m) \in \Delta^m$ to $(t_{\sigma_{\underline{i}}(0)}, t_{\sigma_{\underline{i}}(1)}, \dots, t_{\sigma_{\underline{i}}(m)})$. We will only check compatibility with coface maps. Let $f\in\Tot \FF(\mathfrak{C}^{U,\leq}_\bullet)$. We want to show that the diagram
\[\xymatrix{\Delta^{m-1} \ar[d]^{d^j} \ar[r]^{g_{\underline{i}d^j}(f)} & \FF(U_{\underline{i}d^j}) \ar[d]^{d^j_{\underline{i}}} \\
\Delta^m \ar[r]^{g_{\underline{i}}(f)} & \FF(U_{\underline{i}}) }\]
commutes for $d^j: [m-1]\to [m]$. Here, $d^j_{\underline{i}}$ denotes $\FF(\mathfrak{C}^U_{m-1})\xrightarrow{d^j} \FF(\mathfrak{C}^U_m) \xrightarrow{\pr_{\underline{i}}} \FF(U_{\underline{i}})$, factoring through $\FF(U_{\underline{i}d^j})$.

By definition, this is the outer part of the rectangle
\[\xymatrix{\Delta^{m-1} \ar[d]^{d^j} \ar[r]^{\tau} & \Delta^{m-1}\ar[rr]^-{f_{\underline{i}d^j\tau}} \ar[d]^{d^{\sigma^{-1}(j)}} && \FF(U_{\underline{i}d^j\tau}) = \FF(U_{\underline{i}\sigma d^{\sigma^{-1}(j)}})\ar[d]^{d^{\sigma^{-1}(j)}_{\underline{i}\sigma}} \ar[r]^-=&\FF(U_{\underline{i}d^j}) \ar[d]^{d^j_{\underline{i}}} \\
\Delta^m \ar[r]^{\sigma} & \Delta^m \ar[rr]^{f_{\underline{i}\sigma}} &&\FF(U_{\underline{i}\sigma}) \ar[r]^=& \FF(U_{\underline{i}}) }\]

Here, $\sigma = \sigma_{\underline{i}}$ and $\tau = \sigma_{\underline{i}d^j}$ for short. We claim that all the small squares commute (and make sense). 

One can check that $d^{\sigma^{-1}(j)}\tau^{-1} = \sigma^{-1}d^j$. This gives the commutativity of the first square (note how the permutations become inverted). The equality in the upper right corner of the next square follows from $d^j\tau = \sigma d^{\sigma^{-1}(j)}$. The commutativity of this square follows since $f\in \Tot \FF(\mathfrak{C}^{U,\leq}_\bullet)$. In the last square, the vertical morphisms are induced by inclusions between the same open subsets and thus have to be equal. The proof for the codegeneracies is similar. Thus, we get a well-defined map
\[g: \Tot \FF(\mathfrak{C}^{U, \leq}_\bullet) \to \Tot \FF(\mathfrak{C}^{U}_\bullet) \]

The composition $e^*g$ equals the identity. We want to show that $ge^*$ is homotopic to the identity. For a permutation $\sigma \in S_{m+1}$ and an element $s\in [0,1]$, define a map $u_{\sigma,s}: \Delta^m \to \Delta^{2m+1}$ by \[(t_0,\dots, t_m) \mapsto (st_0,\dots, st_m, (1-s)t_{\sigma(0)}, \dots, (1-s)t_{\sigma(m)}).\]
Then for $\sigma = \sigma_{\underline{i}}$, we define maps
\[\Tot \FF(\mathfrak{C}^U_\bullet) \times [0,1] \to \FF(U_{i_0i_1\dots i_m})^{\Delta^m} \]
as the composition
\[
\xymatrix{
\Tot \FF(\mathfrak{C}^U_\bullet) \times [0,1] \ar[d]^{\pr_{i_0\dots i_m i_{\sigma(0)}\dots i_{\sigma(m)}} \times u_{(\sigma,\bullet)}}\\
 \FF(U_{i_0\dots i_m i_{\sigma(0)}\dots i_{\sigma(m)}})^{\Delta^{2m+1}} \times \Map(\Delta^m, \Delta^{2m+1}) \ar[d]\\
\FF(U_{i_0\dots i_m i_{\sigma(0)}\dots i_{\sigma(m)}})^{\Delta^m} \ar[d]^= \\
 \FF(U_{i_0i_1\dots i_m})^{\Delta^m}
}
\]

This defines the required homotopy $H: \Tot \FF(\mathfrak{C}^U_\bullet) \times [0,1] \to \Tot \FF(\mathfrak{C}^U_\bullet)$ between $\id$ and $ge^*$, once we have checked the compatibility with coface and codegeneracies. We will only treat the coface maps again. We set $\tau = \sigma_{\underline{i}d^j}$ again and choose $(f,s)\in\Tot \FF(\mathfrak{C}^U_\bullet) \times [0,1]$. Furthermore, for functions $a,b: [m] \to I$ we use the notation $a|b: [2m+1] = [m]\sqcup [m] \to I$ for the sum of $a,b$. For example, $\underline{i}|\underline{i}\sigma = (i_0,\dots, i_m, i_{\sigma(0)}, \dots, i_{\sigma(m)})$. The compatibility follows from the commutative diagram
\[\xymatrix{\Delta^{m-1} \ar[d]^{d^j} \ar[r]^{u_{\tau, s}} & \Delta^{2m-1} \ar[d]^{d^j\sqcup d^{\sigma^{-1}(j)}}\ar[rr]^{f_{\underline{i}d^j|\underline{i}d^j\tau}} && \FF(U_{\underline{i}d^j|\underline{i}d^j\tau})\ar[d]^{d^j_{\underline{i}} \sqcup d^{\sigma^{-1}(j)}_{\underline{i}\sigma}} \ar[r]^=& \FF(U_{\underline{i}d^j})\ar[d]^{d^j_{\underline{i}}} \\
\Delta^m\ar[r]^{u_{\sigma, s}} & \Delta^{2m+1} \ar[rr]^{f_{\underline{i}|\underline{i}\sigma}} && \FF(U_{\underline{i}|\underline{i}\sigma}) \ar[r]^= & \FF(U_{\underline{i}}) 
}\]
The commutativity is shown as above. 
\end{proof}

Note that we did this proof in a topological and not in a simplicial setting since the symmetric group $S_{m+1}$ acts on the topological $m$-simplex, but not on the simplicial $m$-simplex. 

As a preparation for the following proof, we note that $\mathfrak{C}^U_\bullet$ and $\mathfrak{C}^{U,\leq}_\bullet$ have free degeneracies in the sense of the following definition:
\begin{definition}Let $\CC$ be a category with coproducts. A $\CC$-valued simplicial object $X_\bullet$ is said to have \textit{free degeneracies} if there exist maps $N_k\to X_k$ from $N_k\in\CC$ such that the canonical map
\[\coprod_{\sigma: [k] \twoheadrightarrow [m]} N_m \to X_k\]
is an isomorphism for every $k$.\end{definition}
Equivalently, the restriction of $X_\bullet$ to $(\Delta_{\mathrm{epi}})^{op}$ is isomorphic to the left Kan extension of $N\colon \mathbb{N}_0 \to \CC$ along $\mathbb{N}_0\to (\Delta_{\mathrm{epi}})^{op}$. Here, $\Delta_{\mathrm{epi}}$ is the subcategory of $\Delta$ consisting of order-preserving epimorphism. 

Both $\mathfrak{C}^{U,\leq}_\bullet$ and $\mathfrak{C}^U_\bullet$ have free degeneracies: In the case of $\mathfrak{C}^{U,\leq}_\bullet$, we choose $N_k = \coprod_{i_0<i_1<\cdots< i_k} U_{i_0i_1\dots i_k}$. In the case of $\mathfrak{C}^U_\bullet$, we choose $N_k = \coprod_{i_0\neq i_1\neq i_2 \neq \cdots \neq i_k} U_{i_0i_1\dots i_k}$. Here, we really do not mean pairwise inequality, but just $i_l \neq i_{l+1}$. This can be refined to the following statement, which we will use for Corollary \ref{OrderSheafCor}: 

\begin{lemma}\label{LKan}Define a functor $\mathfrak{C}^{U,<}_\bullet: \Delta_{\mathrm{mono}, \leq n-1}^{op} \to \mathrm{Top}$ by $\mathfrak{C}^{U,<}_k = \coprod_{i_0<i_1<\cdots< i_k} U_{i_0i_1\dots i_k}$. Then the canonical map $\mathrm{LKan}_F \mathfrak{C}^{U,<}_\bullet \to \mathfrak{C}^{U,\leq}_\bullet$ along the functor $F: \Delta_{\mathrm{mono}, \leq n-1}^{op} \to \Delta^{op}$ is an isomorphism.\end{lemma}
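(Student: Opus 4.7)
The plan is to compute the left Kan extension pointwise via the comma category formula, then reduce to a discrete indexing set using the epi--mono factorization in $\Delta$.

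First, I would unravel the formula. For each $[k] \in \Delta^{op}$,
\[ (\mathrm{LKan}_F \mathfrak{C}^{U,<})_k \;=\; \colim_{(F \downarrow [k])} \mathfrak{C}^{U,<}, \]
where the comma category $(F \downarrow [k])$ has as objects the pairs $([m], g \colon [k] \to [m])$ in $\Delta$ with $m \leq n-1$, and as morphisms $([m], g) \to ([m'], g')$ the order-preserving monomorphisms $\alpha \colon [m'] \hookrightarrow [m]$ satisfying $\alpha \circ g' = g$ (the direction of $\alpha$ is flipped because $F$ lands in $\Delta^{op}$).

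Next I would invoke the epi--mono factorization in $\Delta$: every $g \colon [k] \to [m]$ factors uniquely as an epi $e_g \colon [k] \twoheadrightarrow [m_g]$ followed by a mono $\iota_g \colon [m_g] \hookrightarrow [m]$. This yields a canonical morphism $([m], g) \to ([m_g], e_g)$ in the comma category, and by uniqueness of the factorization, \emph{any} morphism from $([m], g)$ into an object whose second coordinate is an epi must agree with this canonical one. Consequently, the full subcategory $\mathcal{E} \subset (F \downarrow [k])$ spanned by pairs $([m], e)$ with $e$ epi is cofinal, so the colimit may be computed over $\mathcal{E}$. Moreover, a morphism $([m], e) \to ([m'], e')$ in $\mathcal{E}$ is a mono $\alpha \colon [m'] \hookrightarrow [m]$ with $\alpha \circ e' = e$; since $e$ is epi, $\alpha$ must also be epi, hence the identity. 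Thus $\mathcal{E}$ is discrete, and the colimit reduces to the disjoint union
\[ \coprod_{e \colon [k] \twoheadrightarrow [m], \, m \leq n-1} \mathfrak{C}^{U,<}_m \;=\; \coprod_{e \colon [k] \twoheadrightarrow [m]} \; \coprod_{i_0 < \cdots < i_m} U_{i_0 i_1 \cdots i_m}. \]

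Finally I would identify this with $\mathfrak{C}^{U,\leq}_k$: every weakly increasing tuple $\underline{i} \colon [k] \to I$ factors uniquely through its image as an epi $[k] \twoheadrightarrow [m]$ followed by a strictly increasing inclusion $[m] \hookrightarrow I$, with $m \leq n-1$ automatic since $|I| = n$. I expect the main obstacle to be checking that this bijection is natural in $[k] \in \Delta^{op}$, i.e., that for $\beta \colon [k'] \to [k]$ in $\Delta$, precomposing the parameter $g$ by $\beta$ and re-factoring corresponds exactly to the reindexing of weakly increasing tuples by $\beta$; but this is a direct translation of the definitions, so the argument is essentially formal.
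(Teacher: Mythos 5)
Your proof is correct and follows essentially the same route as the paper: write the left Kan extension pointwise as a colimit over the comma category, use the unique epi--mono factorization in $\Delta$ to show that the full subcategory spanned by objects with surjective structure map is final (and discrete), and identify the resulting disjoint union with $\mathfrak{C}^{U,\leq}_k$. The paper leaves the final identification and naturality in $[k]$ implicit, whereas you spell them out; otherwise the arguments coincide.
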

Here, $\Delta_{\mathrm{mono}, \leq n-1}$ denotes the subcategory of $\Delta$ consisting of order-preserving monomorphisms between $[0],\dots, [n-1]$. Note furthermore that a similar statement with $\Delta_{\mathrm{mono}}$ instead of $\Delta_{\mathrm{mono}, \leq n-1}$ is also true for for $\mathfrak{C}^U_\bullet$.
\begin{proof}By definition 
\[(\mathrm{LKan}_F \mathfrak{C}^{U,<}_\bullet)([k]) = \colim_{[k]\to [l], l\leq n-1} \mathfrak{C}^{U,<}_l\]
where a morphism in the index category betweeen $f: [k]\to [l]$ and $g: [k]\to [l']$ consists of an injection $i: [l'] \to [l]$ such that $f = i\circ g$. As every morphism in $\Delta$ factors uniquely as an epimorphism followed by a monomorphism, the full subcategory on all $[k]\twoheadrightarrow [l], l\leq n-1$ is final. As this category is discrete, the result follows.
\end{proof}

Let $X_\bullet$ be again a simplicial object in a category $\CC$ (with coproducts) and $\FF$ be a product-preserving functor $\CC^{op} \to \mathrm{Top}$. Assume that $X_\bullet$ has free degeneracies with maps $N_k\to X_k$ as above.  Then the $m$-th matching object of $\FF(X_\bullet)$ is isomorphic to 
\[\lim_{[m]\twoheadrightarrow [k], k<m} \prod_{[k]\twoheadrightarrow [l]}\FF(N_l) \cong \prod_{[m]\twoheadrightarrow [l], l<m}\FF(N_l).\]
Thus, the matching map \[\FF(X_m) \cong \prod_{[m]\twoheadrightarrow [l]}\FF(N_l) \to \prod_{[m]\twoheadrightarrow [l], l<m}\FF(N_l)\] is a projection and thus a fibration. 

\begin{corollary}Let $\FF$ be a $\mathrm{Top}$-valued presheaf on $X$. Then the canonical map 
 \[e^*: \holim_\Delta \FF(\mathfrak{C}^U_\bullet) \to  \holim_\Delta \FF(\mathfrak{C}^{U,\leq}_\bullet)\]
is a homotopy equivalence. \end{corollary}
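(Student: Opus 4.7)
The plan is to reduce the statement to Proposition \ref{TechnicalTot} by showing that, for both cosimplicial spaces in question, the total complex $\Tot$ already computes the homotopy limit $\holim_\Delta$. Once this reduction is in hand, the corollary follows immediately from the $\Tot$-level equivalence just proved.

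More precisely, the Bousfield--Kan style total complex $\Tot X^\bullet$ of a cosimplicial space $X^\bullet$ agrees with $\holim_\Delta X^\bullet$ as soon as $X^\bullet$ is Reedy fibrant, i.e., as soon as each matching map $X^m \to M_m X^\bullet$ is a Serre fibration. So the key observation to extract from the preceding discussion is that both $\FF(\mathfrak{C}^U_\bullet)$ and $\FF(\mathfrak{C}^{U,\leq}_\bullet)$ are Reedy fibrant in the standard model structure on topological spaces.

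This is exactly what the paragraph just before the corollary achieves. Both simplicial objects $\mathfrak{C}^U_\bullet$ and $\mathfrak{C}^{U,\leq}_\bullet$ have free degeneracies, given respectively by $N_k = \coprod_{i_0 \neq i_1 \neq \cdots \neq i_k} U_{i_0 \cdots i_k}$ and $N_k = \coprod_{i_0 < i_1 < \cdots < i_k} U_{i_0 \cdots i_k}$. Applying the product-preserving functor $\FF$ and using the identification of the $m$-th matching object exhibited there, the matching map
\[
\FF(X_m) \;\cong\; \prod_{[m] \twoheadrightarrow [l]} \FF(N_l) \;\longrightarrow\; \prod_{[m] \twoheadrightarrow [l],\, l<m} \FF(N_l)
\]
is the projection that forgets the $l=m$ factor, hence is a (trivial) fibration in $\mathrm{Top}$. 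Thus both $\FF(\mathfrak{C}^U_\bullet)$ and $\FF(\mathfrak{C}^{U,\leq}_\bullet)$ are Reedy fibrant, so
\[
\Tot \FF(\mathfrak{C}^U_\bullet) \;\simeq\; \holim_\Delta \FF(\mathfrak{C}^U_\bullet),
\qquad
\Tot \FF(\mathfrak{C}^{U,\leq}_\bullet) \;\simeq\; \holim_\Delta \FF(\mathfrak{C}^{U,\leq}_\bullet),
\]
naturally in the map induced by $e$. Combining these equivalences with Proposition \ref{TechnicalTot}, which says that $e^* \colon \Tot \FF(\mathfrak{C}^U_\bullet) \to \Tot \FF(\mathfrak{C}^{U,\leq}_\bullet)$ is a homotopy equivalence, yields the desired conclusion.

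There is essentially no obstacle here; the only thing to verify carefully is the Reedy fibrancy, and this is immediate from the free-degeneracy description since the matching map becomes a projection of products. All the real content is in Proposition \ref{TechnicalTot}, which builds the explicit homotopy inverse using the permutations $\sigma_{\underline i}$ and the deformations $u_{\sigma,s}$.
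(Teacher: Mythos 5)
Your proof is correct and follows the paper's argument exactly: both cosimplicial spaces are Reedy fibrant by the free-degeneracy discussion preceding the corollary, so $\Tot$ computes $\holim_\Delta$, and the conclusion then follows from Proposition \ref{TechnicalTot}.
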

\begin{proof}
The relevant cosimplicial objects are Reedy fibrant by the discussion preceeding this corollary. Thus, the statement follows from the Proposition \ref{TechnicalTot}.
\end{proof}

\begin{corollary}\label{OrderSheafCor}Let $\FF$ be a sheaf on $X$ with values in a complete $\infty$-category $\CC$. Then the maps
\[\FF(X) \xrightarrow{\simeq} \holim_{\Delta} \FF(\mathfrak{C}^U_\bullet) \to  \holim_{\Delta} \FF(\mathfrak{C}^{U,\leq}_\bullet),\]
induced by the inclusion $\mathfrak{C}^{U,\leq}_\bullet \to \mathfrak{C}^{U}_\bullet$, and 
\[ \holim_{\Delta} \FF(\mathfrak{C}^{U,\leq}_\bullet) \to \holim_{\Delta_{\mathrm{mono}, \leq n-1}} \FF(\mathfrak{C}^{U,<}_\bullet),\]
induced by the inclusions $\Delta_{\mathrm{mono}, \leq n-1}\to \Delta$ and $\mathfrak{C}^{U,<}_\bullet \to \mathfrak{C}^{U, \leq}_\bullet$, 
 are equivalences. \end{corollary}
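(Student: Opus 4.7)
The plan is to treat the three arrows in sequence, reducing the second to the preceding Corollary and the third to Lemma \ref{LKan}.

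For the first equivalence $\FF(X) \xrightarrow{\simeq} \holim_\Delta \FF(\mathfrak{C}^U_\bullet)$, I would appeal to the $\infty$-categorical sheaf condition for the finite open cover $\{U_i\}_{i\in I}$. Since equivalences in the complete $\infty$-category $\CC$ are detected by applying $\Map_\CC(c,-)$ for all $c \in \CC$, and since this corepresentable functor preserves all homotopy limits, it suffices to verify the claim for the $\mathrm{Top}$-valued presheaf $V \mapsto \Map_\CC(c,\FF(V))$ (which is itself a sheaf of spaces since $\FF$ is a sheaf). In this case the identification is the classical Cech formula for a sheaf evaluated on a finite open cover.

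For the second arrow $\holim_\Delta \FF(\mathfrak{C}^U_\bullet) \to \holim_\Delta \FF(\mathfrak{C}^{U,\leq}_\bullet)$, the same corepresentability reduction applies. After applying $\Map_\CC(c,-)$ for an arbitrary $c \in \CC$ (and using that it commutes with the cosimplicial homotopy limits in question), this map becomes precisely the equivalence established in the preceding Corollary, which was in turn proved via the explicit cosimplicial homotopy constructed in Proposition \ref{TechnicalTot}. Since equivalences in $\CC$ are detected by all mapping spaces, the original map is an equivalence.

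For the third arrow, Lemma \ref{LKan} exhibits $\mathfrak{C}^{U,\leq}_\bullet \colon \Delta^{op} \to \mathrm{Top}$ as the left Kan extension of $\mathfrak{C}^{U,<}_\bullet$ along $F^{op}\colon \Delta_{\mathrm{mono},\leq n-1}^{op} \to \Delta^{op}$. Applying $\FF$---which by its defining convention sends formal disjoint unions of opens to products---converts this into the dual statement that $\FF(\mathfrak{C}^{U,\leq}_\bullet)$ is the \emph{right} Kan extension along $F\colon \Delta_{\mathrm{mono},\leq n-1} \to \Delta$ of $\FF(\mathfrak{C}^{U,<}_\bullet)$. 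A right Kan extension always has the same global limit as its original diagram; formally, both $\holim$'s compute $\Map(\mathbf{1},-)$ from the terminal constant diagram, and $\mathbf{1}_\Delta$ pulls back along $F$ to $\mathbf{1}_{\Delta_{\mathrm{mono},\leq n-1}}$. This identifies the two homotopy limits.

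The step I expect to require the most care is the second one: one needs $\Map_\CC(c,-)$ to interact correctly with the infinite cosimplicial homotopy limit $\holim_\Delta \FF(\mathfrak{C}^U_\bullet)$ so that the translation to the $\mathrm{Top}$-valued Corollary is clean. This holds because $\Map_\CC(c,-)$, being corepresentable out of a complete $\infty$-category, is a right adjoint and therefore commutes with arbitrary homotopy limits; but it is the only nonformal input beyond the previous Corollary and Lemma \ref{LKan}.
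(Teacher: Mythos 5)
Your plan tracks the paper's own proof closely: both reduce the general complete $\infty$-category $\CC$ to the case of spaces via corepresentability (the paper phrases this as the Yoneda embedding $\CC\hookrightarrow\mathcal{P}(\CC)$ being limit-preserving, you phrase it as checking on $\Map_\CC(c,-)$ for all $c$; these are the same argument), and both handle the third map by observing that applying $\FF$ turns the left Kan extension of Lemma \ref{LKan} into a right Kan extension, which leaves the homotopy limit unchanged. The one step you gloss over is the bridge between the preceding Corollary---which is a statement about strict $\mathrm{Top}$-valued presheaves and a model-categorical $\holim$---and the $\infty$-categorical homotopy limit of the $\mathcal{S}$-valued presheaf $V\mapsto\Map_\CC(c,\FF(V))$; the paper devotes an explicit strictification argument to this (rectifying the $\mathcal{S}$-valued sheaf to a simplicial presheaf via \cite[Thm.~4.2.4.4]{HTT}, passing to geometric realization, and invoking \cite[Thm.~4.2.4.1]{HTT} to compare the two notions of $\holim$). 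Your proof would be complete with a sentence acknowledging this translation, but the route and the key inputs (Proposition \ref{TechnicalTot} and Lemma \ref{LKan}) are the same.
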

\begin{proof}By the last corollary, the first map is an equivalence if $\CC$ is the $\infty$-category $\mathcal{S}$ of spaces. Indeed, $\mathcal{S}$ is equivalent to the coherent nerve of the simplicial category of Kan simplicial sets. Given an $\mathcal{S}$-valued sheaf $\FF$, this can be thus strictified to a presheaf of simplicial sets on $X$ by \cite[Theorem 4.2.4.4]{HTT}. Its geometric realization is a presheaf of topological spaces. As geometric realization commutes with homotopy limits and homotopy limits in $\mathcal{S}$ can be computed as homotopy limits in simplicial sets by \cite[Theorem 4.2.4.1]{HTT}, we can apply the last corollary. 

Thus, the first part of this corollary follows also for the $\infty$-category of presheaves $\mathcal{P}(\CC) = \mathrm{Fun}(\CC, \mathcal{S})$ for an arbitrary $\infty$-category $\CC$. As the canonical map $\CC \to \mathcal{P}(\CC)$ is a (homotopy) limit preserving embedding (\cite[Propositions 5.1.3.1 and 5.1.3.2]{HTT}), the corollary follows for an arbitrary complete $\infty$-category $\CC$.

The second part follows as we can see from (the proof of) Lemma \ref{LKan} that $\FF(\mathfrak{C}^{U,\leq}_\bullet)$ is (homotopy) right Kan extended from $\FF(\mathfrak{C}^{U,<}_\bullet)$, first for the $\infty$-category of spaces and then for all complete $\infty$-categories as above, so the homotopy limits agree. 
\end{proof}

For example, if $X= U\cup V$, this implies that $\FF(X)$ is the homotopy equalizer of 
\[\xymatrix{\FF(U)\times \FF(V) \ar@<0.5ex>[r] \ar@<-0.5ex>[r] &\FF(U\cap V).}\]
This formulation is all we need for this article, but to answer the question posed at the beginning of this appendix, we introduce one further reformulation of this homotopy limit. 

Let $\mathcal{P}_I$ be the poset of non-empty subsets of $I$. Each subset of $I$ with $k$ elements has a unique order-preserving bijection to $[k-1]$. This defines a functor $G: \mathcal{P}_I \to \Delta_{\mathrm{mono}, \leq n-1}$, where $n$ is still the cardinality of $I$. Furthermore, there is a functor 
\[\mathfrak{C}^{U,c}: \mathcal{P}_I^{op} \to \mathrm{Top},\qquad S \mapsto \bigcap_{i\in S}U_i.\]

\begin{proposition}\label{CubeLimit}The map $\mathrm{LKan}_{G^{op}} \mathfrak{C}^{U,c} \to \mathfrak{C}^{U,<}$ is an isomorphism. Thus, for $\FF$ a sheaf with values in a complete $\infty$-category, the map
\[\FF(X) \to \holim_{\mathcal{P}_I} \FF(\mathfrak{C}^{U,c})\]
is an equivalence.
\end{proposition}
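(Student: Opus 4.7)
The plan is to verify the isomorphism $\mathrm{LKan}_{G^{op}} \mathfrak{C}^{U,c} \to \mathfrak{C}^{U,<}$ by directly analyzing the comma category governing the left Kan extension, and then deduce the homotopy limit identification by combining the sheaf property of $\FF$ (turning coproducts into products) with \Cref{OrderSheafCor}.

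For the isomorphism of functors $\Delta_{\mathrm{mono}, \leq n-1}^{op} \to \mathrm{Top}$, I would evaluate at an object $[k-1]$ using the pointwise formula: the value of $\mathrm{LKan}_{G^{op}} \mathfrak{C}^{U,c}$ at $[k-1]$ is the colimit of $\mathfrak{C}^{U,c}$ over the comma category $(G^{op} \downarrow [k-1])$. I would then describe this category explicitly: its objects are pairs $(S, \alpha)$ where $S \in \mathcal{P}_I$ and $\alpha\colon [k-1] \hookrightarrow G(S) = [|S|-1]$ is an injection in $\Delta$, which picks out a $k$-element ordered tuple $T_\alpha \subseteq S \subseteq I$; morphisms $(S, \alpha) \to (S', \alpha')$ correspond to inclusions $S' \subseteq S$ through which the injection $\alpha$ factors as $\alpha'$ followed by the canonical inclusion $[|S'|-1] \to [|S|-1]$. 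The key observation is that morphisms preserve the selected tuple, so the comma category decomposes as a disjoint union of full subcategories indexed by the $k$-element ordered tuples $T$ in $I$. Within each component, the pair $(T, \mathrm{id}_{[k-1]})$ is a terminal object: any $(S, \alpha)$ with $\alpha$ selecting $T$ satisfies $T \subseteq S$, which furnishes the unique arrow to $(T, \mathrm{id})$ in the comma category. Consequently the colimit over each component reduces to $\mathfrak{C}^{U,c}(T) = U_T$, and summing produces exactly $\coprod_T U_T = \mathfrak{C}^{U,<}([k-1])$; naturality in $[k-1]$ is automatic from the construction.

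For the second claim, the composite $\FF \circ \mathfrak{C}^{U,c}$ defines a functor $\mathcal{P}_I \to \CC$. A dual analysis of the comma category (now for the right Kan extension along $G$) together with the sheaf property of $\FF$, which converts the coproducts appearing in $\mathfrak{C}^{U,<}$ into products in $\CC$, identifies $\FF(\mathfrak{C}^{U,<})$ with $\mathrm{RKan}_G \FF(\mathfrak{C}^{U,c})$ as functors $\Delta_{\mathrm{mono}, \leq n-1} \to \CC$. Since right Kan extensions preserve limits in the sense that $\holim_{\Delta_{\mathrm{mono}, \leq n-1}} \mathrm{RKan}_G F \simeq \holim_{\mathcal{P}_I} F$, combining with \Cref{OrderSheafCor}, which identifies $\FF(X) \simeq \holim_{\Delta_{\mathrm{mono}, \leq n-1}} \FF(\mathfrak{C}^{U,<})$, yields the desired equivalence $\FF(X) \simeq \holim_{\mathcal{P}_I} \FF(\mathfrak{C}^{U,c})$.

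The main obstacle is bookkeeping with the variance conventions: $\mathcal{P}_I^{op}$ versus $\mathcal{P}_I$, the opposite of $\Delta_{\mathrm{mono}, \leq n-1}$, and the direction of $\FF$ (which sends open inclusions to maps in $\CC$ in the reverse direction) all need to be tracked consistently so that the passage between the left Kan extension computation for the spaces and the right Kan extension for $\FF$-values is correct. Once this is set up, the decomposition of the comma category into connected components indexed by ordered tuples $T$, each containing a terminal (respectively initial) object, makes the computation essentially trivial. As in the proof of \Cref{OrderSheafCor}, the passage from $\mathrm{Top}$-valued sheaves to a general complete $\infty$-categorical target $\CC$ goes through the Yoneda embedding $\CC \hookrightarrow \mathcal{P}(\CC)$, which is limit-preserving, reducing everything to the space-valued case.
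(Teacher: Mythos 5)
Your proof is correct and follows essentially the same route as the paper's. The paper compresses your componentwise terminal-object argument into the one-line observation that the discrete subcategory of $(k+1)$-element subsets is final in the comma category, and its second step is exactly your point that $\FF$ converts the (pointwise coproduct) left Kan extension into a (pointwise product) right Kan extension, which combined with \Cref{OrderSheafCor} gives the identification of homotopy limits.
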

\begin{proof}
By definition, 
\[(\mathrm{LKan}_{G^{op}}\mathfrak{C}^{U,c})([k]) = \colim_{\varnothing\neq S \subset I, [k]\hookrightarrow [|S|-1]} \bigcap_{i\in S} U_i.\]
The index category has the discrete subcategory of all subsets of $I$ with $k+1$ elements as a final subcategory (note the op's). This proves the first part of the proposition. The second part follows from Corollary \ref{OrderSheafCor} and the fact that $\FF$ sends this left Kan extension to a right Kan extension. 
\end{proof}

In particular, for $X = U\cup V$, this implies that the square
\[\xymatrix{ \FF(X) \ar[d]\ar[r]& \FF(U) \ar[d] \\
\FF(V) \ar[r]& \FF(U\cap V) } \]
is (homotopy) cartesian. People familiar with Goodwillie calculus will notice that this special case actually implies the last proposition for arbitrary finite covers. 

\begin{remark}\label{AlgStackZar}Note that we can apply the whole discussion also to a Zariski covering $\{U_i \to X\}$ of an algebraic stack $X$ by using the underlying space of $X$ (see Corollary \ref{Zariski-Equivalence}).\end{remark}

\bibliographystyle{alpha}
\bibliography{affine}

\begin{thebibliography}{EKMM97}

\bibitem[Ada74]{Adams}
J.~F. Adams.
\newblock {\em Stable homotopy and generalised homology}.
\newblock University of Chicago Press, Chicago, Ill., 1974.
\newblock Chicago Lectures in Mathematics.

\bibitem[AHS01]{AHS}
M.~Ando, M.~J. Hopkins, and N.~P. Strickland.
\newblock Elliptic spectra, the {W}itten genus and the theorem of the cube.
\newblock {\em Invent. Math.}, 146(3):595--687, 2001.

\bibitem[AOV08]{AOV08}
Dan Abramovich, Martin Olsson, and Angelo Vistoli.
\newblock Tame stacks in positive characteristic.
\newblock {\em Ann. Inst. Fourier (Grenoble)}, 58(4):1057--1091, 2008.

\bibitem[AS69]{atiyahsegal}
M.~F. Atiyah and G.~B. Segal.
\newblock Equivariant {$K$}-theory and completion.
\newblock {\em J. Differential Geometry}, 3:1--18, 1969.

\bibitem[Bal13]{balmersep}
Paul Balmer.
\newblock Separable extensions in tt-geometry and generalized {Q}uillen
  stratification.
\newblock 2013.
\newblock Available at \url{http://arxiv.org/abs/1309.1808}.

\bibitem[Beh11]{Beh}
Mark Behrens.
\newblock Notes on the construction of $\tmf$.
\newblock 2011.
\newblock Available at
  \url{http://www-math.mit.edu/~mbehrens/papers/buildTMF.pdf}.

\bibitem[Be{\u\i}78]{Beilinson}
A.~A. Be{\u\i}linson.
\newblock Coherent sheaves on {${\bf P}^{n}$} and problems in linear algebra.
\newblock {\em Funktsional. Anal. i Prilozhen.}, 12(3):68--69, 1978.

\bibitem[BO12]{BO}
Mark Behrens and Kyle Ormsby.
\newblock On the homotopy of {$Q(3)$} and {$Q(5)$} at the prime $2$.
\newblock 2012.
\newblock Available at \url{http://arxiv.org/abs/1211.0076}.

\bibitem[BR07]{BakerRichteralgebraic}
Andrew Baker and Birgit Richter.
\newblock Realizability of algebraic {G}alois extensions by strictly
  commutative ring spectra.
\newblock {\em Trans. Amer. Math. Soc.}, 359(2):827--857 (electronic), 2007.

\bibitem[BR08]{BR2}
Andrew Baker and Birgit Richter.
\newblock Galois extensions of {L}ubin-{T}ate spectra.
\newblock {\em Homology, Homotopy Appl.}, 10(3):27--43, 2008.

\bibitem[{\v{C}}es15]{Ces15}
Kestutis {\v{C}}esnavi{\v{c}}ius.
\newblock A modular description of $\mathfrak{X}_0(n)$.
\newblock {\em arXiv preprint arxiv:1511.07475}, 2015.

\bibitem[Con05]{Con05}
Brian Conrad.
\newblock Keel--{M}ori theorem via stacks.
\newblock {\em preprint}, 2005.

\bibitem[Con07]{conrad}
Brian Conrad.
\newblock Arithmetic moduli of generalized elliptic curves.
\newblock {\em J. Inst. Math. Jussieu}, 6(2):209--278, 2007.

\bibitem[DFHH14]{TMF}
Christopher~L Douglas, John Francis, Andr{\'e}~G Henriques, and Michael~A Hill.
\newblock {\em Topological Modular Forms}, volume 201.
\newblock American Mathematical Soc., 2014.

\bibitem[DHS88]{DHS}
Ethan~S. Devinatz, Michael~J. Hopkins, and Jeffrey~H. Smith.
\newblock Nilpotence and stable homotopy theory. {I}.
\newblock {\em Ann. of Math. (2)}, 128(2):207--241, 1988.

\bibitem[DI04]{D-I04}
Daniel Dugger and Daniel~C. Isaksen.
\newblock Topological hypercovers and {$\Bbb A^1$}-realizations.
\newblock {\em Math. Z.}, 246(4):667--689, 2004.

\bibitem[DR73]{DR}
P.~Deligne and M.~Rapoport.
\newblock Les sch\'emas de modules de courbes elliptiques.
\newblock In {\em Modular functions of one variable, {II} ({P}roc. {I}nternat.
  {S}ummer {S}chool, {U}niv. {A}ntwerp, {A}ntwerp, 1972)}, pages 143--316.
  Lecture Notes in Math., Vol. 349. Springer, Berlin, 1973.

\bibitem[EKMM97]{EKMM}
A.~D. Elmendorf, I.~Kriz, M.~A. Mandell, and J.~P. May.
\newblock {\em Rings, modules, and algebras in stable homotopy theory},
  volume~47 of {\em Mathematical Surveys and Monographs}.
\newblock American Mathematical Society, Providence, RI, 1997.
\newblock With an appendix by M. Cole.

\bibitem[Gai15]{gaits}
Dennis Gaitsgory.
\newblock Sheaves of categories and the notion of 1-affineness.
\newblock In {\em Stacks and categories in geometry, topology, and algebra},
  volume 643 of {\em Contemp. Math.}, pages 127--225. Amer. Math. Soc.,
  Providence, RI, 2015.

\bibitem[GH04]{goersshopkins}
P.~G. Goerss and M.~J. Hopkins.
\newblock Moduli spaces of commutative ring spectra.
\newblock In {\em Structured ring spectra}, volume 315 of {\em London Math.
  Soc. Lecture Note Ser.}, pages 151--200. Cambridge Univ. Press, Cambridge,
  2004.

\bibitem[Goe09]{goerss}
Paul~G. Goerss.
\newblock Realizing families of {L}andweber exact homology theories.
\newblock In {\em New topological contexts for {G}alois theory and algebraic
  geometry ({BIRS} 2008)}, volume~16 of {\em Geom. Topol. Monogr.}, pages
  49--78. Geom. Topol. Publ., Coventry, 2009.

\bibitem[Gre92]{Greither}
Cornelius Greither.
\newblock {\em Cyclic {G}alois extensions of commutative rings}, volume 1534 of
  {\em Lecture Notes in Mathematics}.
\newblock Springer-Verlag, Berlin, 1992.

\bibitem[GW10]{G-W10}
Ulrich G{\"o}rtz and Torsten Wedhorn.
\newblock {\em Algebraic geometry {I}}.
\newblock Advanced Lectures in Mathematics. Vieweg + Teubner, Wiesbaden, 2010.
\newblock Schemes with examples and exercises.

\bibitem[HL16]{HillLawson}
Michael Hill and Tyler Lawson.
\newblock Topological modular forms with level structure.
\newblock {\em Invent. Math.}, 203(2):359--416, 2016.

\bibitem[Hop08]{hoptalk}
Michael~J. Hopkins.
\newblock The mathematical work of {D}ouglas {C}. {R}avenel.
\newblock {\em Homology, Homotopy Appl.}, 10(3):1--13, 2008.

\bibitem[HS98]{HS}
Michael~J. Hopkins and Jeffrey~H. Smith.
\newblock Nilpotence and stable homotopy theory. {II}.
\newblock {\em Ann. of Math. (2)}, 148(1):1--49, 1998.

\bibitem[HS99a]{hoveysadofsky}
Mark Hovey and Hal Sadofsky.
\newblock Invertible spectra in the {$E(n)$}-local stable homotopy category.
\newblock {\em J. London Math. Soc. (2)}, 60(1):284--302, 1999.

\bibitem[HS99b]{HoveyS}
Mark Hovey and Neil~P. Strickland.
\newblock Morava {$K$}-theories and localisation.
\newblock {\em Mem. Amer. Math. Soc.}, 139(666):viii+100, 1999.

\bibitem[Joy02]{Joy02}
A.~Joyal.
\newblock Quasi-categories and {K}an complexes.
\newblock {\em J. Pure Appl. Algebra}, 175(1-3):207--222, 2002.
\newblock Special volume celebrating the 70th birthday of Professor Max Kelly.

\bibitem[KM97]{Keel-Mori}
Se{\'a}n Keel and Shigefumi Mori.
\newblock Quotients by groupoids.
\newblock {\em Ann. of Math. (2)}, 145(1):193--213, 1997.

\bibitem[Lan76]{lex}
Peter~S. Landweber.
\newblock Homological properties of comodules over {$MU_*(MU)$} and
  {$BP_*(BP)$}.
\newblock {\em Amer. J. Math.}, 98(3):591--610, 1976.

\bibitem[LMB00]{L-M00}
G{\'e}rard Laumon and Laurent Moret-Bailly.
\newblock {\em Champs alg\'ebriques}, volume~39 of {\em Ergebnisse der
  Mathematik und ihrer Grenzgebiete. 3. Folge. A Series of Modern Surveys in
  Mathematics [Results in Mathematics and Related Areas. 3rd Series. A Series
  of Modern Surveys in Mathematics]}.
\newblock Springer-Verlag, Berlin, 2000.

\bibitem[LN14]{LN2}
Tyler Lawson and Niko Naumann.
\newblock Strictly commutative realizations of diagrams over the {S}teenrod
  algebra and topological modular forms at the prime 2.
\newblock {\em Int. Math. Res. Not. IMRN}, (10):2773--2813, 2014.

\bibitem[Lur09a]{HTT}
Jacob Lurie.
\newblock {\em Higher topos theory}, volume 170 of {\em Annals of Mathematics
  Studies}.
\newblock Princeton University Press, Princeton, NJ, 2009.

\bibitem[Lur09b]{survey}
Jacob Lurie.
\newblock A survey of elliptic cohomology.
\newblock In {\em Algebraic topology}, volume~4 of {\em Abel Symp.}, pages
  219--277. Springer, Berlin, 2009.

\bibitem[Lur10]{chromatic}
Jacob Lurie.
\newblock Chromatic homotopy theory, 2010.
\newblock Course notes available at
  \url{http://math.harvard.edu/~lurie/252x.html}.

\bibitem[Lur11a]{DAGss}
Jacob Lurie.
\newblock {DAG VII}: Spectral schemes.
\newblock 2011.
\newblock Available at \url{http://math.harvard.edu/~lurie}.

\bibitem[Lur11b]{DAGQC}
Jacob Lurie.
\newblock {DAG VIII}: Quasi-coherent sheaves and {T}annaka duality theorems.
\newblock 2011.
\newblock Available at \url{http://math.harvard.edu/~lurie}.

\bibitem[Lur11c]{DAGdesc}
Jacob Lurie.
\newblock {DAG XI}: Descent theorems.
\newblock 2011.
\newblock Available at \url{http://math.harvard.edu/~lurie}.

\bibitem[Lur11d]{DAGProp}
Jacob Lurie.
\newblock {DAG XII}: Proper morphisms, completions, and the {G}rothendieck
  existence theorem.
\newblock 2011.
\newblock Available at \url{http://math.harvard.edu/~lurie}.

\bibitem[Lur12]{higheralg}
Jacob Lurie.
\newblock {\em Higher algebra}.
\newblock 2012.
\newblock Available at \url{http://math.harvard.edu/~lurie/papers/HA2012.pdf}.

\bibitem[Mat16a]{galoischromatic}
Akhil Mathew.
\newblock The {G}alois group of a stable homotopy theory.
\newblock {\em Adv. Math.}, 291:403--541, 2016.

\bibitem[Mat16b]{htmf}
Akhil Mathew.
\newblock The homology of {$\mathrm{tmf}$}.
\newblock {\em Homology, Homotopy, and Applications}, 18(2):1--29, 2016.

\bibitem[Mei12]{meier}
Lennart Meier.
\newblock {\em United elliptic homology}.
\newblock PhD thesis, University of Bonn, 2012.

\bibitem[MS14]{Picard}
Akhil Mathew and Vesna Stojanoska.
\newblock {The Picard group of topological modular forms via descent theory}.
\newblock {\em arXiv preprint arXiv:1409.7702}, 2014.

\bibitem[Rav92]{ravenelorange}
Douglas~C. Ravenel.
\newblock {\em Nilpotence and periodicity in stable homotopy theory}, volume
  128 of {\em Annals of Mathematics Studies}.
\newblock Princeton University Press, Princeton, NJ, 1992.
\newblock Appendix C by Jeff Smith.

\bibitem[Rez07]{rezk512}
Charles Rezk.
\newblock Supplementary notes for math 512.
\newblock 2007.
\newblock Available at
  \url{http://www.math.uiuc.edu/~rezk/512-spr2001-notes.pdf}.

\bibitem[Rog08]{rognes}
John Rognes.
\newblock Galois extensions of structured ring spectra. {S}tably dualizable
  groups.
\newblock {\em Mem. Amer. Math. Soc.}, 192(898):viii+137, 2008.

\bibitem[SS03]{schwedeshipley}
Stefan Schwede and Brooke Shipley.
\newblock Stable model categories are categories of modules.
\newblock {\em Topology}, 42(1):103--153, 2003.

\bibitem[{Sta}15]{stacks-project}
The {Stacks Project Authors}.
\newblock {\itshape Stacks Project}.
\newblock \url{http://stacks.math.columbia.edu}, 2015.

\bibitem[Sti09]{Stix}
Jakob Stix.
\newblock A course on finite flat group schemes and p-divisible groups.
\newblock {\em preprint}, 2009.
\newblock Available at
  \url{http://www.math.uni-frankfurt.de/~stix/skripte/STIXfinflatGrpschemes20120918.pdf}.

\bibitem[Sto12]{St12}
Vesna Stojanoska.
\newblock Duality for topological modular forms.
\newblock {\em Documenta Math.}, (17):271--311, 2012.

\bibitem[To{\"e}06]{toen}
Bertrand To{\"e}n.
\newblock Champs affines.
\newblock {\em Selecta Math. (N.S.)}, 12(1):39--135, 2006.

\end{thebibliography}

\end{document}